\DeclareMathOperator\Ens{Ens}
\DeclareMathOperator\Chn{Chn}
\DeclarePairedDelimiter\ceil{\lceil}{\rceil}
\DeclarePairedDelimiter\floor{\lfloor}{\rfloor}
\newtheorem{theorem}{Theorem}[section]
\newtheorem{lemma}[theorem]{Lemma}
\newtheorem{proposition}[theorem]{Proposition}
\newtheorem{corollary}[theorem]{Corollary}
\theoremstyle{definition}
\newtheorem{definition}[theorem]{Definition}
\theoremstyle{remark}
\newtheorem{example}[theorem]{Example}
\newtheorem{remark}[theorem]{Remark}
\numberwithin{equation}{section}
\def \h {\mathfrak h}
\newcommand{\Exterior}{\mathchoice{{\textstyle\bigwedge}}%
    {{\bigwedge}}%
    {{\textstyle\wedge}}%
    {{\scriptstyle\wedge}}}
\newcommand{\slngeo}{\mathrm{I}(\mathfrak{sl}_n,V \oplus \Exterior^2)}
\newcommand{\slngeoForTitle}{\texorpdfstring{$\slngeo$}{\mathrm{I}(sl(n))} }
\newcommand{\glngeo}{\mathrm{I}(\mathfrak{gl}_n,V \oplus \Exterior^2)}
\newcommand{\glngeoForTitle}{\texorpdfstring{$\glngeo$}{\mathrm{I}(gl(n))} }
\newcommand{\slngeoSm}[1]{\mathrm{I}(\mathfrak{sl}_{#1},V \oplus \Exterior^2)}
\newcommand{\glngeoSm}[1]{\mathrm{I}(\mathfrak{gl}_{#1},V \oplus \Exterior^2)}
\newcommand{\rayFn}{\vec{v}}
\newcommand{\ray}[1]{\rayFn\left(#1 \right)}
\newcommand{\indFn}{\vec{v}^{-1}}
\newcommand{\ind}[1]{\indFn\left(#1 \right)}
\newcommand{\test}[2]{\text{\bf Test}_{#1}\left(#2 \right)}
\newcommand{\posetz}{\mathbb{E}^*}
\newcommand{\poset}{\mathbb{E}}
\newcommand{\poseti}{\widehat{\mathbb{E}}}
\newcommand{\weights}{\mathfrak{W}}
\DeclareMathOperator{\erays}{ExtRays}
\DeclareMathOperator{\vecspan}{{Span}_{\geq 0}}
\newcommand{\pref}{\prettyref}
\begin{document}
\title[Incidence Geometry in a   Weyl Chamber II: $SL_n$ ]
 {Incidence Geometry in a Weyl Chamber II: $SL_n$}
\author[Esole]{Mboyo Esole}
\author[Jackson]{Steven Glenn Jackson}
\author[Jagadeesan]{Ravi Jagadeesan}
\author[No\"el]{Alfred G. No\"el}

\address{Department of Mathematics\\
Harvard University\\ Cambridge, MA 02138, USA.}
\email{esole@math.harvard.edu}
\email{rjagadeesan@college.harvard.edu}
\address{Department of Mathematics\\
         University of Massachusetts\\
         Boston, MA 02125, USA.}
\email{jackson@math.umb.edu}
\email{anoel@math.umb.edu}

\subjclass[2010]{05E10, 52C35, 05A15, 17B10, 17B81}
\begin{abstract}
We study the polyhedral geometry of the hyperplanes orthogonal to the weights of the  first and the second fundamental representations of $\mathfrak{sl}_n$ inside the dual fundamental Weyl chamber.
We obtain generating functions that enumerate the flats and the faces of a fixed dimension.
In addition, we describe the extreme rays of the incidence geometry and classify simplicial faces.

From the perspective of supersymmetric gauge theories with 8 supercharges in five dimensional spacetime, the poset of flats is isomorphic to the network of mixed Coulomb-Higgs branches.
On the other hand, the poset of faces is conjectured to be isomorphic to the network of crepant partial resolutions of an elliptic fibration with gauge algebra $\mathfrak{sl}_n$ and ``matter representation" given by the sum of the first two fundamental representations.

\end{abstract}

\keywords{
Hyperplane arrangement, representation theory, Lie algebra, Weyl chambers, root systems, weights lattice, poset, extreme rays.}

\maketitle

\tableofcontents{}

\section{Introduction}
\label{sec:intro}

The theory of hyperplane arrangements has connections to many different areas of pure and applied mathematics \cite{MR1217488,MR2383131,MR0357135}.
A new type of hyperplane arrangement  \cite{gl} defined by a representation  ${\bf R}$ of a reductive Lie algebra $\mathfrak{g}$ has emerged from string geometry \cite{Intriligator:1997pq} and is relevant in the study of elliptic fibrations  \cite{box,ESY1,ESY2,ES1}.
Its hyperplanes are the kernels of the weights of ${\bf R}$. Moreover, the arrangement lives in a dual fundamental Weyl chamber of $\mathfrak{g}$ instead of a full Cartan subalgebra.  We will review the physical motivation for considering such hyperplane arrangements before discussing our results.

\subsection{Physical motivation}
In classical quantum field theory, interactions between fundamental particles are modeled by gauge theories, which are characterized by a reductive Lie group $G$ called the gauge group. 
The gauge theory is said to be non-abelian when $G$ is.
A non-abelian gauge theory is said to be in a {\em Coulomb phase} when its gauge group is broken to a Cartan subgroup.
In the Coulomb phase, the gauge theory is effectively an abelian gauge theory that can be thought as a generalization of electro-magnetic theory in which there are $r$ distinct electromagnetic fields.
Charged particles transform according to representations of the gauge group.
Fundamental particles correspond to irreducible representations of $G$.
In a Coulomb phase of such a theory, the possible charges of particles with respect to the Cartan subalgebra are the weights of the representation under which the particles transform. 

In string geometry, compactifications of M-theory on a Calabi-Yau threefold naturally give rise to gauge theories and particles called \emph{hypermultiplets}.
The resulting gauge theories are {\em minimal supersymmetric}  in five dimensional spacetime \cite{Intriligator:1997pq,Ferrara:1996wv}.

For minimal supersymmetric gauge theories in five dimensional spacetime, the Coulomb phase  
is characterized by a real function $\mathcal{F}: \mathfrak{h}\to  \mathbb{R}$ called the Intriligator-Morrison-Seiberg potential \cite{Intriligator:1997pq}.  The relevant term of $\mathcal{F}(\varphi),$ namely
$$
{\mathcal F}(\varphi) = \cdots 
 +\frac{1}{12}\Big(\sum_{\alpha} | \alpha\cdot \varphi|^3 + \sum_{\lambda} |\lambda\cdot \varphi|^3\Big),
$$
depends on the Higgs field $\varphi$, the positive roots $\alpha$ of the gauge algebra and the weights $\lambda$ of the matter representation.
The potential is  singular along the hyperplanes that are kernels of the weights of the representation $\mathbf R$.
A phase of the Coulomb branch of the theory is a connected region in which the quantities $\alpha\cdot \varphi$ and $\lambda\cdot \varphi$ each take fixed signs. 
We fix the sign of $\alpha\cdot \varphi$ by requiring $\varphi$ to be in the dual fundamental Weyl chamber.  This motivates the following definition.

\begin{definition}[\pref{gln-def:incGeoGeneral}]
Let $\mathfrak{g}$ be a reductive Lie algebra over $\mathbb{C}$, let $\mathfrak{h}$ be a split, real form of a Cartan subalgebra of $\mathfrak{g},$ and let ${\bf R}$ be a representation of $\mathfrak{g}$.
We denote by $\mathrm{I}(\mathfrak{g},{\bf R})$ the real hyperplane arrangement consisting of the kernels of the weights of ${\bf R}$ restricted to a dual fundamental Weyl chamber in $\mathfrak{h}$.
\end{definition}

\begin{remark}
Because all Cartan subalgebras are conjugate and all fundamental Weyl chambers canonically related by the Weyl group action, the incidence geometry $\mathrm{I}(\mathfrak{g},{\bf R})$ is independent of the choice of $\mathfrak{h}$ and of a dual fundamental Weyl chamber in $\mathfrak{h}$.
\end{remark}
\begin{remark}
 In the rest of the paper, we call the ``dual  fundamental Weyl chamber" the {\em Weyl chamber}. 
\end{remark}

The condition that $\lambda\cdot \varphi\neq 0$ for all $\lambda\in \mathbf{R}$ in a given phase implies a correspondence between Coulomb phases and chambers of $\mathrm{I}(\mathfrak{g}, \mathbf{R})$.
Flats of $\mathrm{I}(\mathfrak{g},{\bf R})$ correspond to to the mixed \emph{Coulomb-Higgs branches} of the gauge theory.
A general framework to discuss the chamber structure of the Coulomb branch appeared in \cite{box} (see \cite{hls} for the chambers of $\slngeoSm{5}$), and the case relevant to this paper is described in \pref{prop:signFlows}.  The idea was to use box diagrams to represent the sign vectors for the chambers of $\mathrm{I}(\mathfrak{g},\mathbf{R})$.  Allowing zero entries lets the box diagrams describe arbitrary faces~\cite{ESY1,ESY2}.

When the Calabi-Yau variety is elliptically fibered, it has been conjectured that the gauge algebra $\mathfrak{g}$ and the representation $\mathbf{R}$ can be determined from the singular fibers of the elliptic fibration over points of codimensions one and two in the base of the fibration, respectively 
\cite{Intriligator:1997pq,grassi2003group,Morrison:2011mb,Grassi:2011hq,Marsano:2011hv,Krause:2011xj,Anderson:2015cqy}. 
Chambers are conjectured to correspond to crepant resolutions of the Weierstrass model of the elliptic fibration, and the resolutions corresponding to chambers that share a facet are conjectured to be related by an (extremal) flop 
\cite{box, ESY1, ESY2, ES1, EY,MR1259932, hls}. 

An important example arises when the generic fiber degenerates to a Kodaira fiber of type $\mathrm{I}_n^s$ over a divisor on the base of the fibration.
Since the dual graph of a Kodaira fiber of type $\mathrm{I}_n^s$ is the affine Dynkin diagram of type $\tilde{A}_{n-1}$, the corresponding Lie algebra is $\mathfrak{su}_n,$ which is the compact real form of $\mathfrak{sl}_n$.
If in addition, the elliptic fibration has a Mordell-Weil group of rank one, the gauge algebra is $\mathfrak{u}_n,$ which is the compact real form of $\mathfrak{gl}_n$.
 
The representation ${\bf R} = V\oplus\Exterior^2$ given by the sum of the defining representation of $\mathfrak{gl}_n$ and its second exterior power are conjectured to appear as the matter representations of the $SU(N)$ and $U(N)$ gauge theories
engineered from many Weierstrass models with codimension 1 singular fibers of type $\mathrm{I}_n^s$. 
This motivates the study of the incidence geometries $\glngeo$ and $\slngeo$.

\subsection{Connection to the literature}

In a recent paper \cite{gl}, we studied the incidence geometry $\glngeo$.  
A formula for the number of chambers appeared already in \cite{box} where the techniques of box graphs were also introduced. The chambers of the very important special case $\slngeoSm{5}$ were discussed in \cite{hls}. 
In \cite{gl}, we proved that all the  chambers of $\glngeo$  (and therefore faces) are simplicial and its extreme rays have the structure of a finite partial ordered set (poset) isomorphic to $\posetz_n=\{ (x,y) \in \mathbb{N}^2  \mid   0< x+y\leq n\}$   equipped with the Cartesian order induced from the usual order in $\mathbb{N}$  \pref{gln-thm:rayStruture}.
Under the identification of the set of extreme rays with $\posetz_n$, the $k$-faces of $\glngeo$ are in bijection with the $k$-chains of the poset $\posetz_n,$ while the $k$-flats are identified with certain unions of intervals of $\posetz_n$ called  {\em  $k$-ensembles} \pref{gln-def:Ensemble}. 
These identifications \cite[Theorems~\ref*{gln-thm:Characterization.Faces} and~\ref*{gln-thm:Characterization.Flats}]{gl} reduce the enumeration of the $k$-faces and $k$-flats of $\glngeo$ to the study of the poset $\posetz_n$.
It was proved that the generating functions of the numbers of $k$-faces and $k$-flats are simple rational functions \cite[Theorems~\ref*{gln-thm:faceCount} and~\ref*{gln-thm:flatCount}]{gl}.

\subsection{Results of this paper}

The aim of this  paper is to study the combinatorial properties of the family of hyperplane arrangements 
$\slngeo$,  
building on the results of \cite{gl} regarding the incidence geometry of the family $\glngeo$.
Figures~\ref{fig:YT345},\ref{fig:SU2_Phases},\ref{fig:SU3_Phases},\ref{fig:SU4_Phases}, and~\ref{fig:SU5_Phases} on pages~\pageref{fig:YT345} and~\pageref{fig:SU5_Phases} show the geometry of $\slngeo$ for $n \le 5$ \cite{ESY1,ESY2}.
We study the chambers, extreme rays, faces, flats of $\slngeo$ by relating $\slngeo$ to $\glngeo$. This allows us to exploit the partial order on the extreme rays of $\glngeo$.

We prove closed-form expressions for the generating functions of the number of $k$-faces and $k$-flats in Theorems~\ref{thm:slnFaces} and~\ref{thm:slnFlats}. 
In contrast to the case of $\glngeo$, the generating functions are not rational:  the generating function for the number of $k$-flats (resp.\ $k$-faces)  of $\slngeo$ generates a quadratic (resp.\ quartic) extension of the field of rational polynomials in two variables.  As a result, we can easily obtain counts of the numbers of $k$-faces and $k$-flats in $\slngeo$ for small $n$ and $k$.

\begin{example}
After a Taylor expansion from \pref{thm:slnFaces},  we can read off the number $b(n,k)$ of $k$-faces in $\slngeo$ as the coefficient of $x^ny^k$ in the generating function $B(x,y)$.  This yields the values
$$
\begin{array}{c|c|c|c|c|c|c|c|c|c|c}
b(n,k) & 0 & 1 & 2 & 3 & 4 & 5 & 6 & 7 & 8 & 9\\ \hline
0 & 1\\ \hline
1 & 1\\ \hline
2 & 1 & 1\\ \hline
3 & 1 & 3 & 2\\ \hline
4 & 1 & 6 & 9 & 4\\ \hline
5 & 1 & 14 & 37 & 36 & 12\\ \hline
6 & 1 & 23 & 87 & 133 & 92 & 24\\ \hline
7 & 1 & 43 & 219 & 467 & 502 & 270 & 58\\ \hline
8 & 1 & 64 & 414 & 1152 & 1713 & 1428 & 632 & 116\\ \hline
9 & 1 & 104 & 826 & 2864 & 5501 & 6300 & 4300 & 1620 & 260\\ \hline
10 & 1 & 145 & 1382 & 5814 & 13865 & 20461 & 19140 & 11092 & 3644 & 520
\end{array}.
$$
\end{example}

\begin{example}
After a Taylor expansion from \pref{thm:slnFlats},  we can read off the number $f(n,k)$ of $k$-flats in $\slngeo$ as the coefficient of $x^ny^k$ in the generating function $F(x,y)$.  This yields the values
$$
\begin{array}{c|c|c|c|c|c|c|c|c|c|c}
f(n,k) & 0 & 1 & 2 & 3 & 4 & 5 & 6 & 7 & 8 & 9\\ \hline
0 & 1\\ \hline
1 & 1\\ \hline
2 & 1 & 1\\ \hline
3 & 1 & 1 & 1\\ \hline
4 & 1 & 2 & 3 & 1\\ \hline
5 & 1 & 4 & 9 & 9 & 1\\ \hline
6 & 1 & 7 & 15 & 22& 13 & 1\\ \hline
7 & 1 & 13 & 41 & 70 & 58 & 20 & 1\\ \hline
8 & 1 & 20 & 72 & 137 & 161 & 99 & 26 & 1\\ \hline
9 & 1 & 34 & 156 & 357 & 489 & 396 & 178 & 35 & 1\\ \hline
10 & 1 & 49 & 258 & 669 & 1072 & 1126 & 734 & 271 & 43 & 1
\end{array}.
$$
\end{example}

We also study the chambers in $\slngeo$.  Figures~\ref{fig:YT_SU6},~\ref{fig:SU6_Phases} and~\ref{fig:SU7_Phases} on pages~\pageref{fig:YT_SU6} and~\pageref{fig:SU7_Phases} describe the chambers of $\slngeoSm{6}$ and $\slngeoSm{7}$, including when two chambers share a common facet.
We classify the extreme rays and simplicial chambers of $\slngeo$ in \pref{thm:extremeRaysOfSln}.
In contrast to the case of $\glngeo$, most chambers of $\slngeo$ are not simplicial for $n \gg 0$. \pref{fig:SimplicialSU6} describes the simplicial chambers of $\slngeoSm{6}$.

\begin{figure}
{\begin{centering}

\begin{minipage}{\linewidth}
 \centering{\begin{tabular}{c  c }
$\{3\}$ & $\{2,1\}$ 
 \\
 \includegraphics[scale=.7]{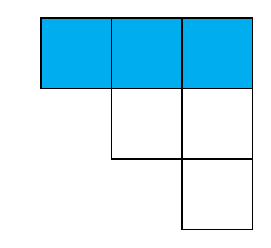}& 
    \includegraphics[scale=.7]{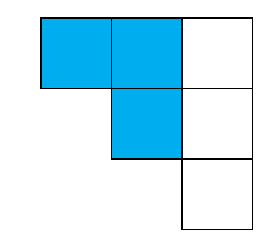}
   \end{tabular}
   }
\end{minipage}
   \subcaption{{\bf The 2 chambers of}  $\slngeoSm{3}$. 
\label{fig:YT_SU3}
 }
\vspace{1.5cm}

\begin{minipage}{\linewidth}
\centering
{ \begin{tabular}{c  c  c  c  }
$\{4\}$ & $\{4,1\}$ &  $\{3,2\}$ &  $\{3,2,1\}$ 
 \\
 \includegraphics[scale=.7]{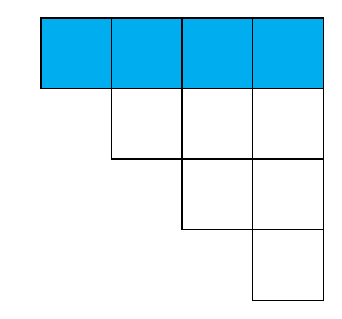}& 
    \includegraphics[scale=.7]{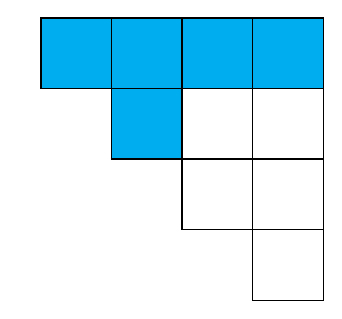}&
        \includegraphics[scale=.7]{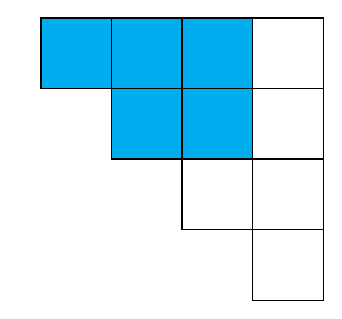}&
 \includegraphics[scale=.7]{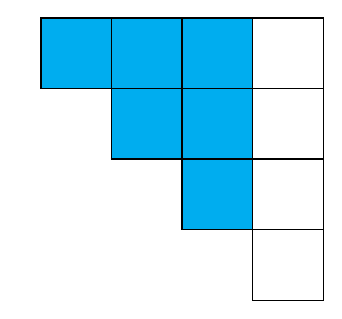}
   \end{tabular}
  }
 \end{minipage}
 \subcaption{{\bf The 4 chambers of}  $\slngeoSm{4}$. 
 \label{fig:YT_SU4}
}
\vspace{1.5cm}

\begin{minipage}{\linewidth}
\centering{
 \begin{tabular}{c  c  c  c  c  c  }
$\{5\}$ & $\{5,1\}$ &  $\{5,2\}$ &  $\{5,3\}$ & {$\{5,4\}$} &  $\{5,2,1\}$   
 \\
 \includegraphics[scale=.5]{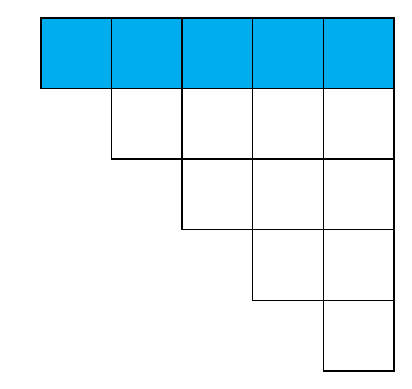}& 
    \includegraphics[scale=.5]{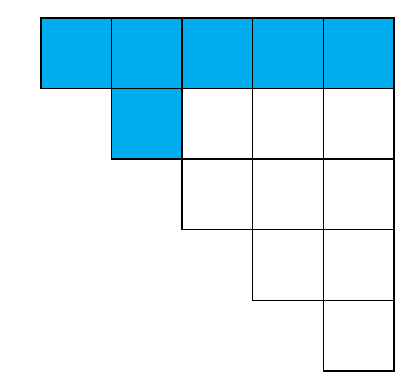}&
        \includegraphics[scale=.5]{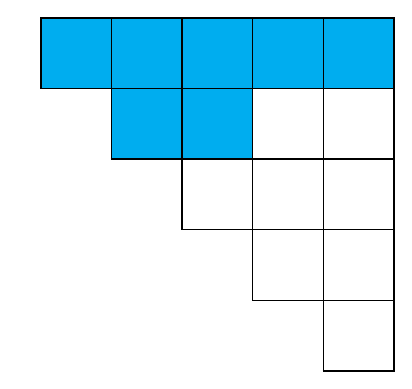}&
 \includegraphics[scale=.5]{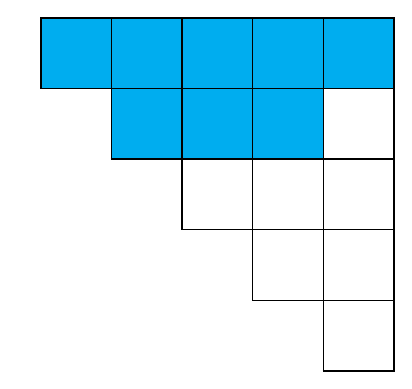}&
  \includegraphics[scale=.5]{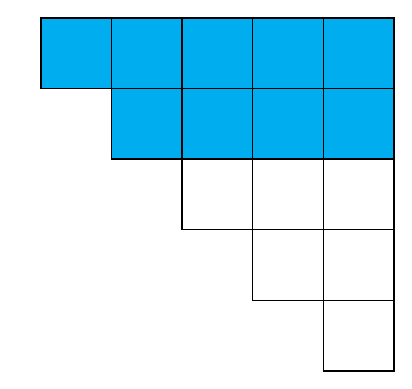}&
   \includegraphics[scale=.5]{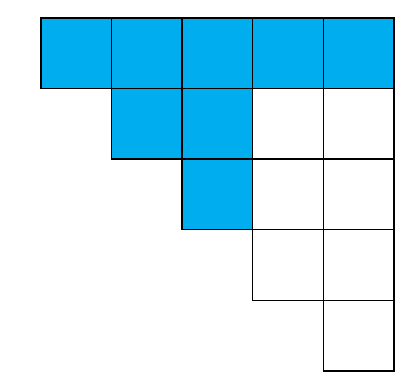}
 \\
 $\{4,3,2,1\}$ &  $\{4,3,2\}$ &  $\{4,3,1\}$ & $\{4,2,1\}$ & $\{3,2,1\}$ &  $\{4,3,2\}$\\
 \includegraphics[scale=.5]{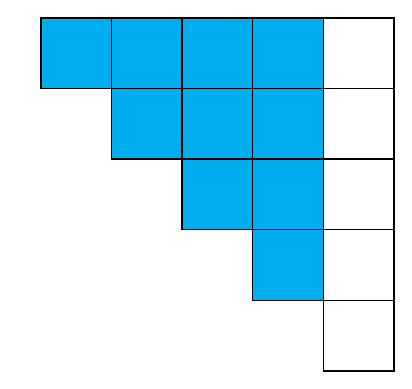}&      \includegraphics[scale=.5]{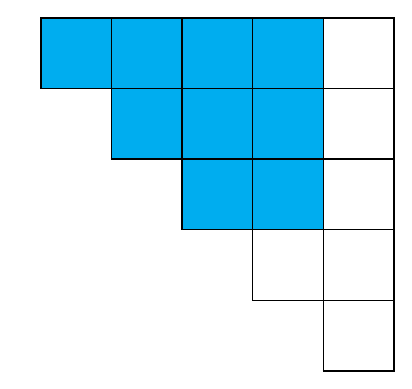}& \includegraphics[scale=.5]{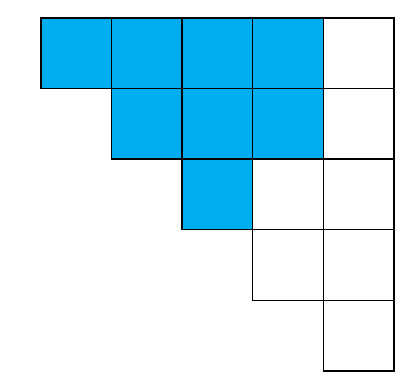}& \includegraphics[scale=.5]{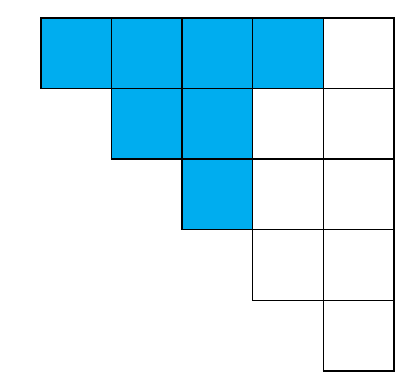}&  \includegraphics[scale=.5]{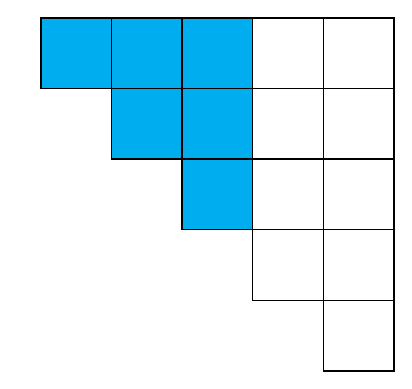}& \includegraphics[scale=.5]{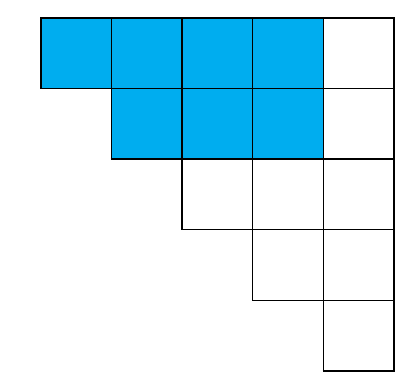}
   \end{tabular}}
   \end{minipage}
   \subcaption{{\bf The 12 chambers of}  $\slngeoSm{5}$. See \cite[Figure 9]{box}. 
 \label{fig:YT_SU5}    }
  \end{centering}
 }
 
 \vspace{1cm}
 \caption{ {\bf Chambers of $\slngeoSm{3}$, $\slngeoSm{4}$, $\slngeoSm{5}$}. 
Each chamber is labeled by a subset of $[n]=\{1,2,\ldots, n\}$. 
Elements of the labeling subset counts the number of boxes with $+$ signs in the rows of the diagram, and sign flows (see \pref{prop:signFlows}) ensure that the numbers of $+$ signs are pairwise distinct.
There is a $\mathbb{Z}_2$ symmetry connecting a chamber defined by a subset $S$ to a chamber defined by the complementary subset $S^C$.
The adjacency graph of the chambers is shown in  \pref{fig:SU5_Phases}. }
\label{fig:YT345}
\end{figure}

\begin{figure}
\includegraphics[scale=1.4]{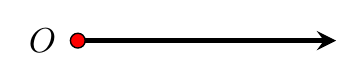}
\caption{    $\slngeoSm{2}$ consists of a half-line. See \cite[Figure 8]{ESY1}. \label{fig:SU2_Phases}}

\end{figure}

\begin{figure}

\begin{floatrow}
\ffigbox{
  \includegraphics[scale=1.5]{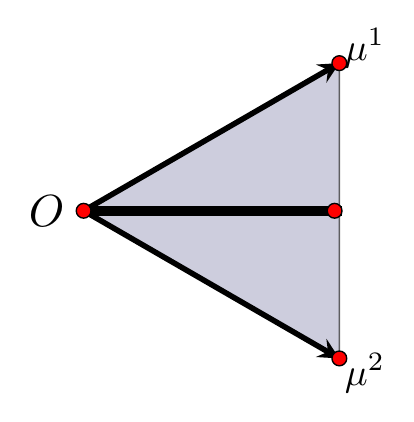}
}
{\caption{  $\slngeoSm{3}$ consists of two chambers separated by a half-line.  
See \cite[Figure 1]{ESY1}.
 \label{fig:SU3_Phases}}}

\ffigbox{
  \includegraphics[scale=.89]{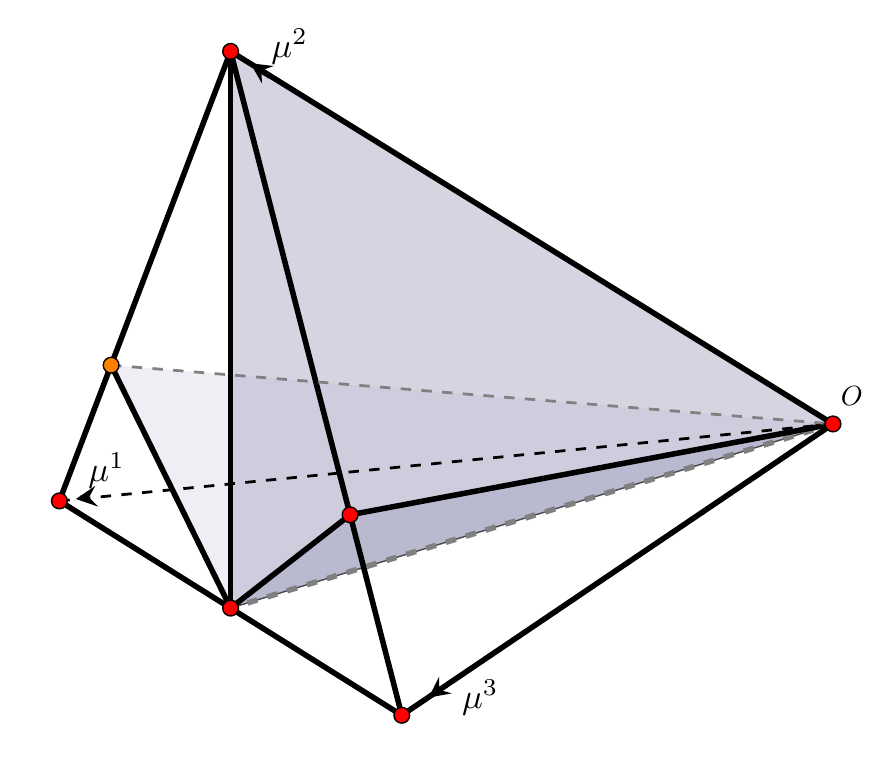}
}
{\caption{  $\slngeoSm{4}$ consists of four chambers.  The adjacency graph of these chambers is a linear chain. See \cite[Figure 2]{ESY1}. \label{fig:SU4_Phases} }   }
\end{floatrow}
\end{figure}

\begin{figure}
  \includegraphics[scale=1]{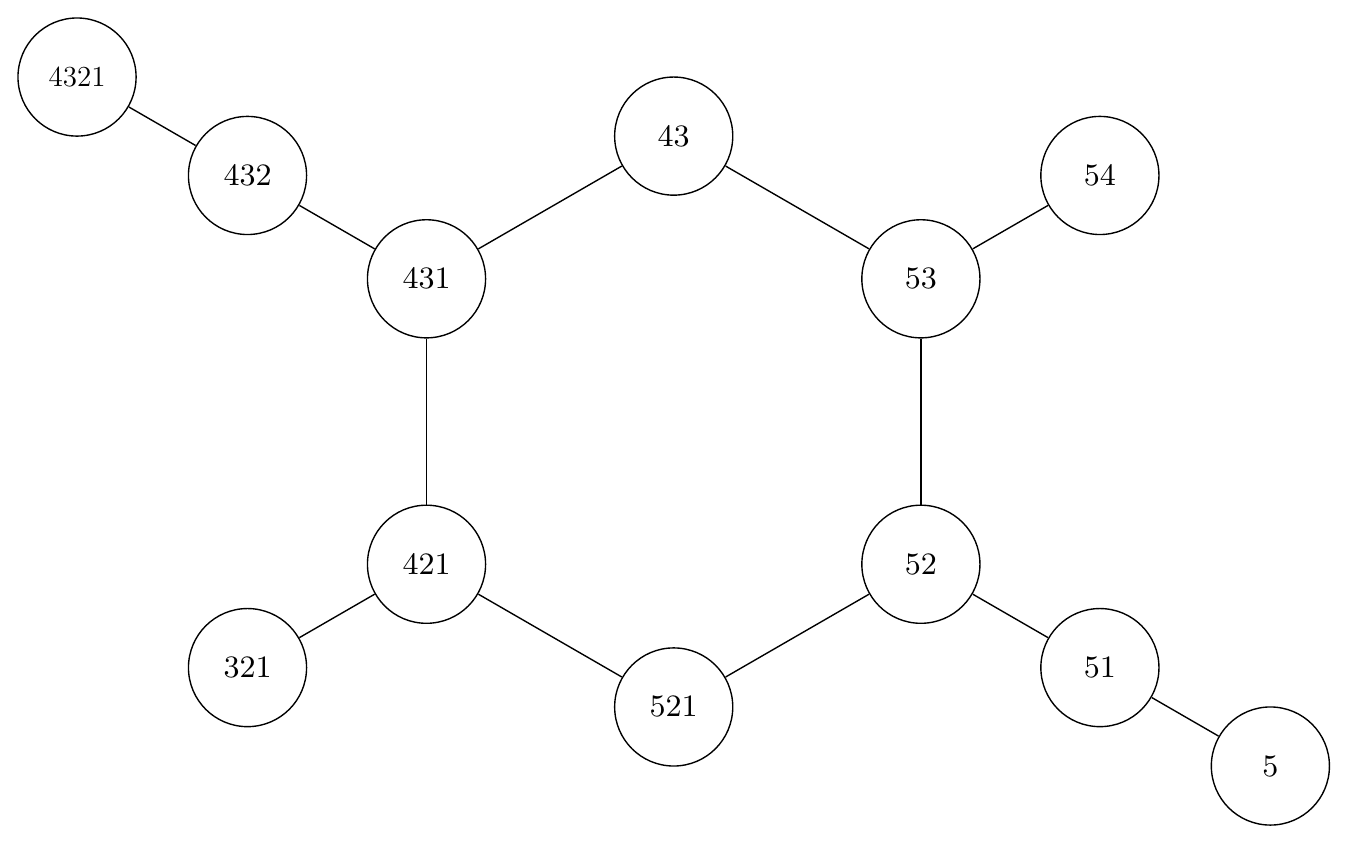}
\caption{Adjacency graph of the chambers of  $\slngeoSm{5}$. See \cite[Figure 2]{hls} and  \cite[Figure 17]{EY} for the central hexagon. 
Each node represents a chamber characterized by a subset  $S=\{a_1>a_2>\cdots >a_s\}$ of $\{1,2,3,4,5\}$. 
All the chambers are simplicial. The adjacency graph of the chambers of  $\slngeoSm{5}$ is realized by explicit resolutions of singularities of a $SU(5)$ Weierstrass model in \cite{ESY2}. 
 \label{fig:SU5_Phases}
}
\end{figure}

\begin{figure}
\scalebox{.8}{
   \begin{tabular}{c  c  c  c  c  c  }
$\{6\}$ & $\{6,1\}$ &  $\{6,2\}$ &  $\{6,3\}$ & {$\star \{6,4\}$} &  $\star  \{6,5\}$   
 \\
 \includegraphics[scale=.4]{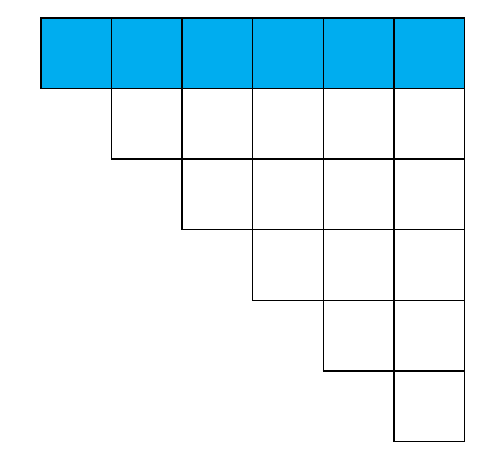}& 
    \includegraphics[scale=.4]{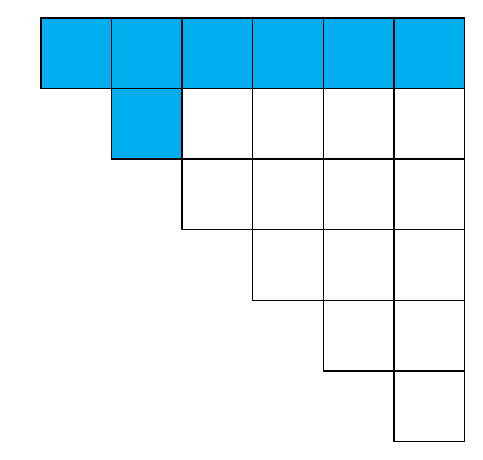}&
        \includegraphics[scale=.4]{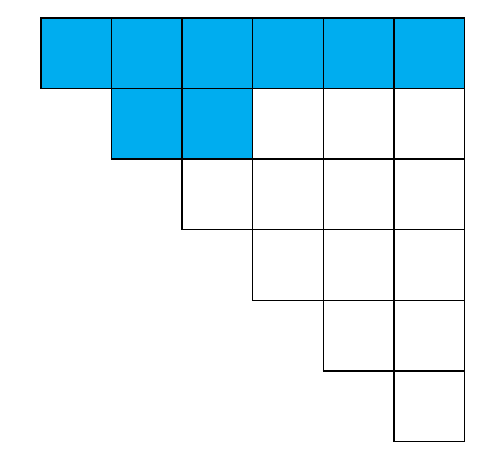}&
 \includegraphics[scale=.4]{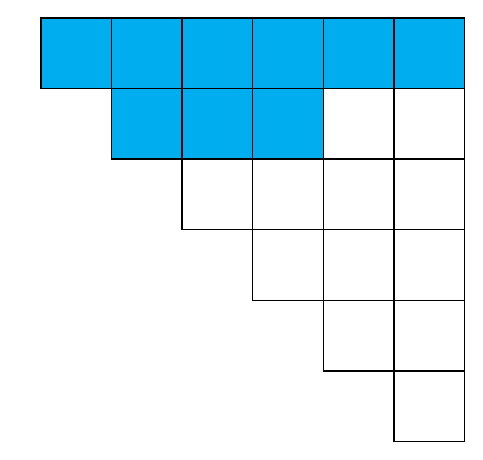}&
  \includegraphics[scale=.4]{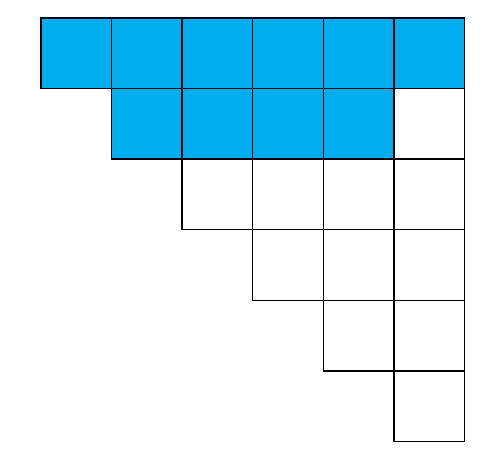}&
   \includegraphics[scale=.4]{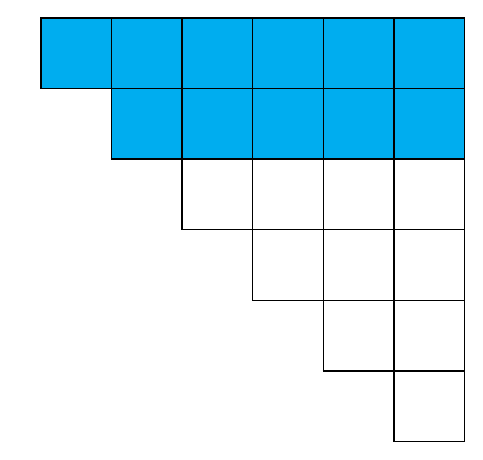}
 \\

 $\{6,2,1\}$ &  $\{6,3,1\}$ &  $\{6,4,1\}$ & $\{6,3,2\}$ & $\{6,3,2,1\}$ &  $\{6,5,1\}$\\
 \includegraphics[scale=.4]{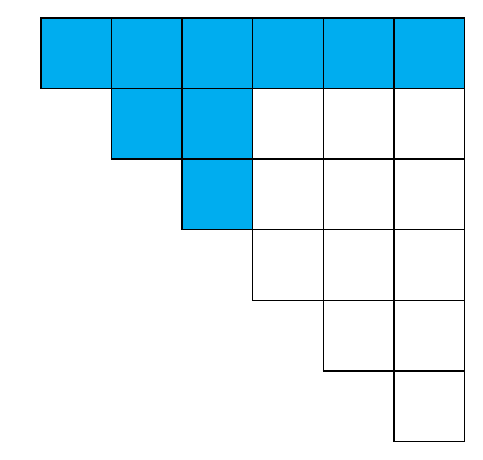}&      \includegraphics[scale=.4]{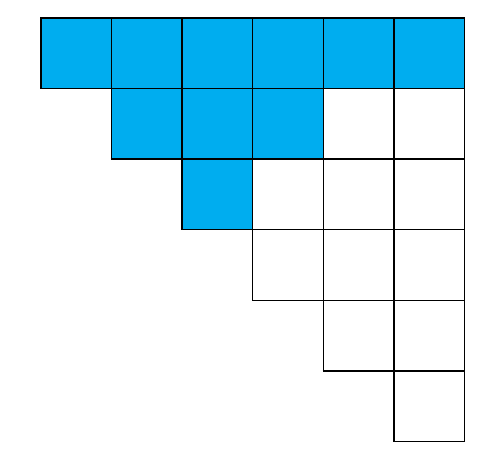}& \includegraphics[scale=.4]{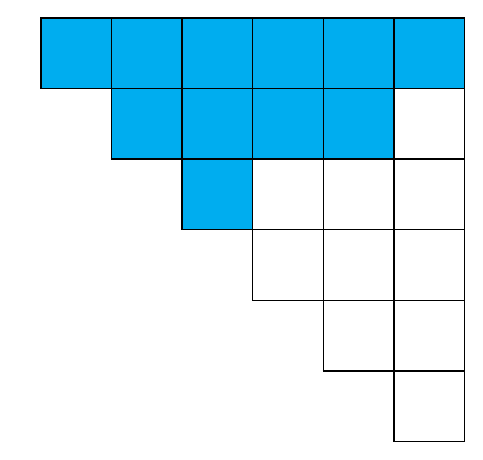}& \includegraphics[scale=.4]{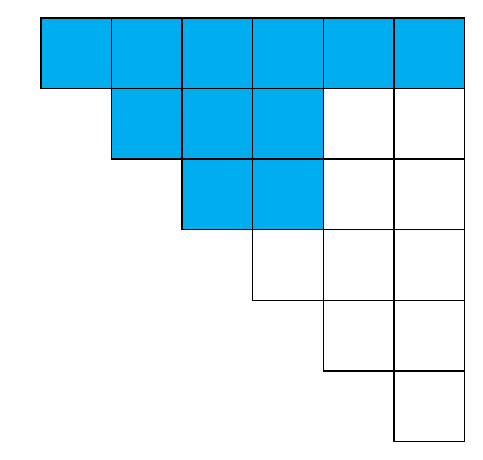}&  \includegraphics[scale=.4]{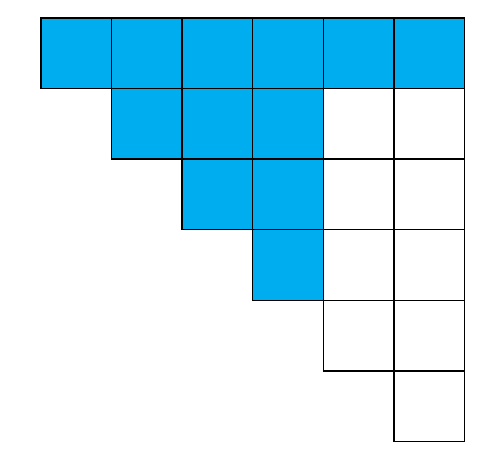}& \includegraphics[scale=.4]{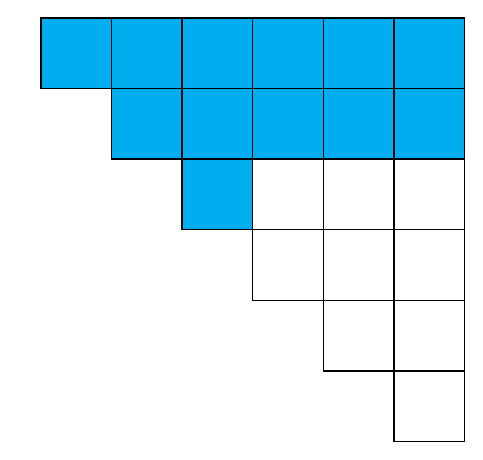}
\\  
$\{5,4,3\}$& $\{5,4,2\}$& $\{5,3,2\}$& $\{5,4,1\}$& $\{5,4\}$ &  $\{4,3,2\}$ \\
  \includegraphics[scale=.4]{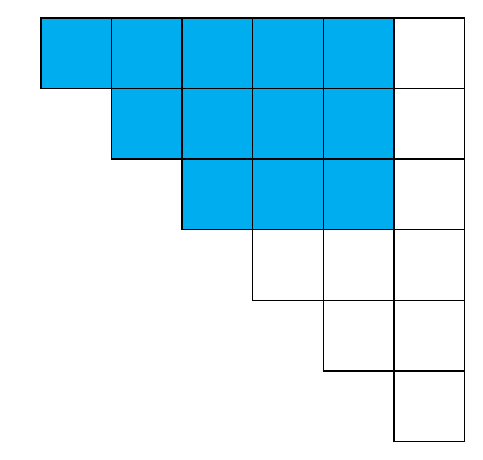}&
    \includegraphics[scale=.4]{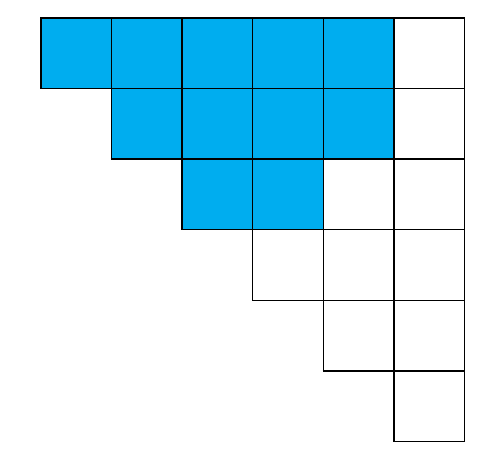}&
        \includegraphics[scale=.4]{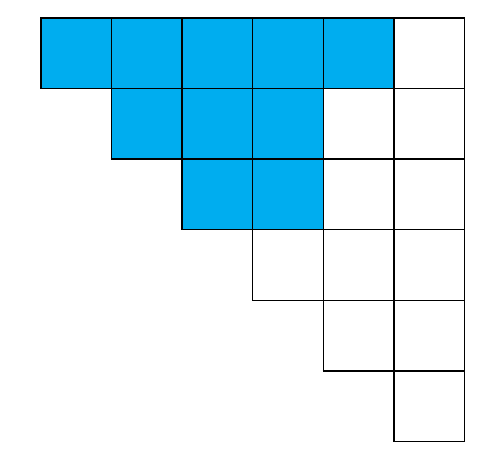} &
 \includegraphics[scale=.4]{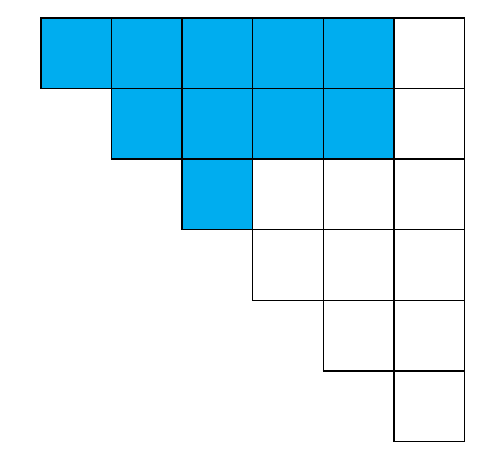}&
  \includegraphics[scale=.4]{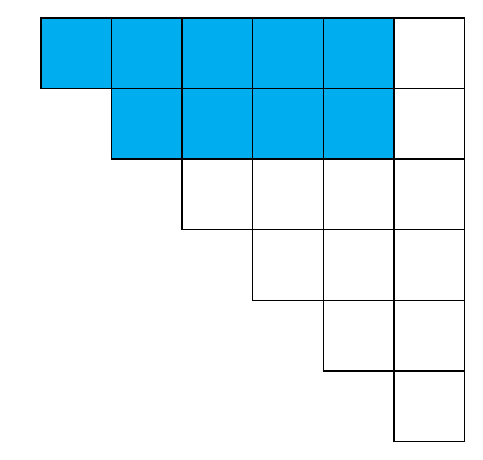} &
   \includegraphics[scale=.4]{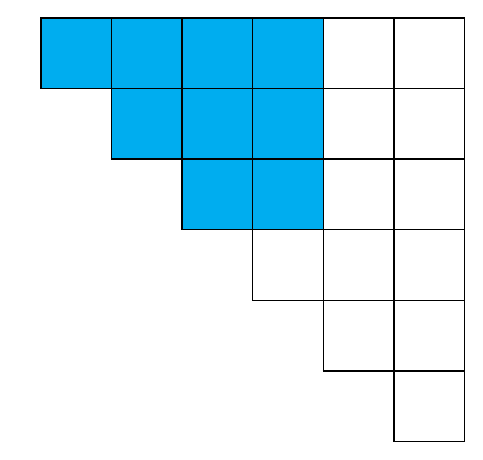}\\

$\{5,4,3,2,1\}$  &  $\{5,4,3,2\}$  & $\{5,4,3,1\}$& $\{5,4,2,1\}$& {$\star \{5,3,2,1\}$}& {$\star \{4,3,2,1\}$}\\
 \includegraphics[scale=.4]{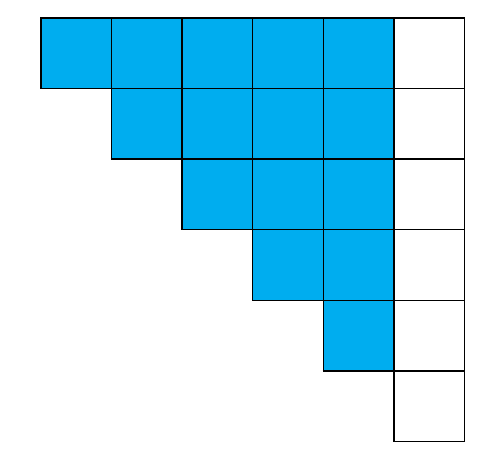}&
    \includegraphics[scale=.4]{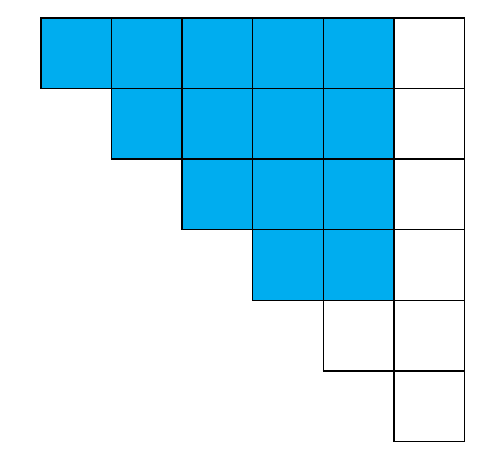}&
        \includegraphics[scale=.4]{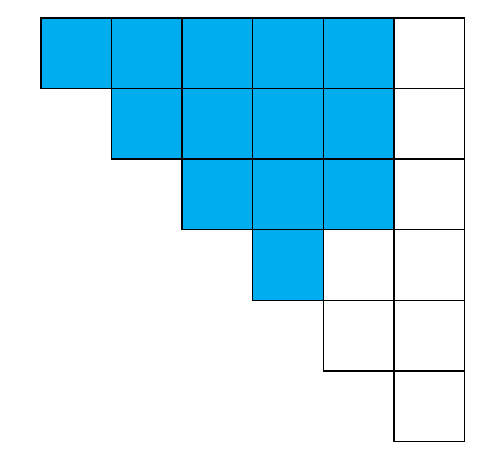}&
 \includegraphics[scale=.4]{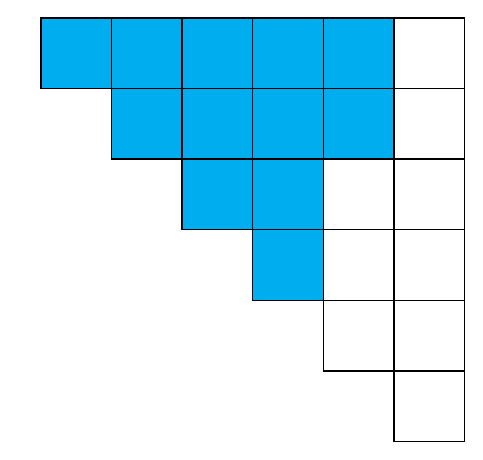}&
  \includegraphics[scale=.4]{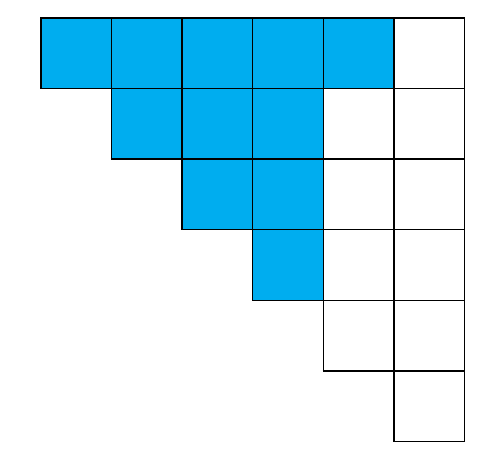}&
   \includegraphics[scale=.4]{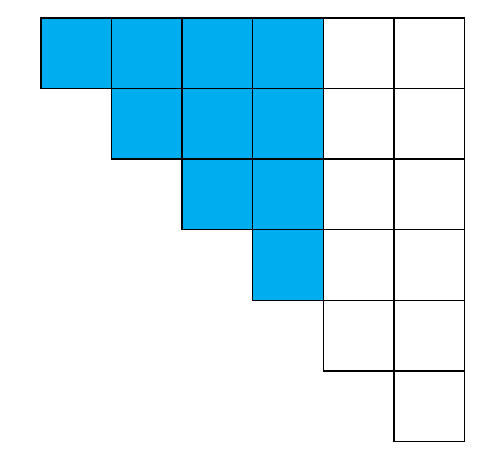}
\end{tabular}

  } 
\caption{{\bf The 24 chambers of}  $\slngeoSm{6}$.
 The four 
non-simplicial chambers of $\slngeoSm{6}$  are marked  with a  star  (see \pref{fig:SimplicialSU6}).
 }\label{fig:YT_SU6} 
\end{figure}

\begin{figure}
  \includegraphics[scale=3.2]{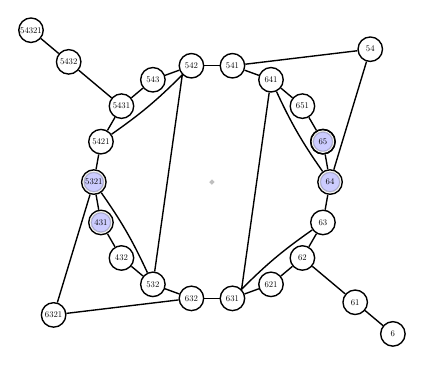}
\caption{{\bf Adjacency graph of the 24 chambers of }$\slngeoSm{6}$.
The four colored nodes are the  non-simplicial chambers. 
\label{fig:SU6_Phases}
}
\end{figure}

\begin{figure}
  \includegraphics[scale=.89]{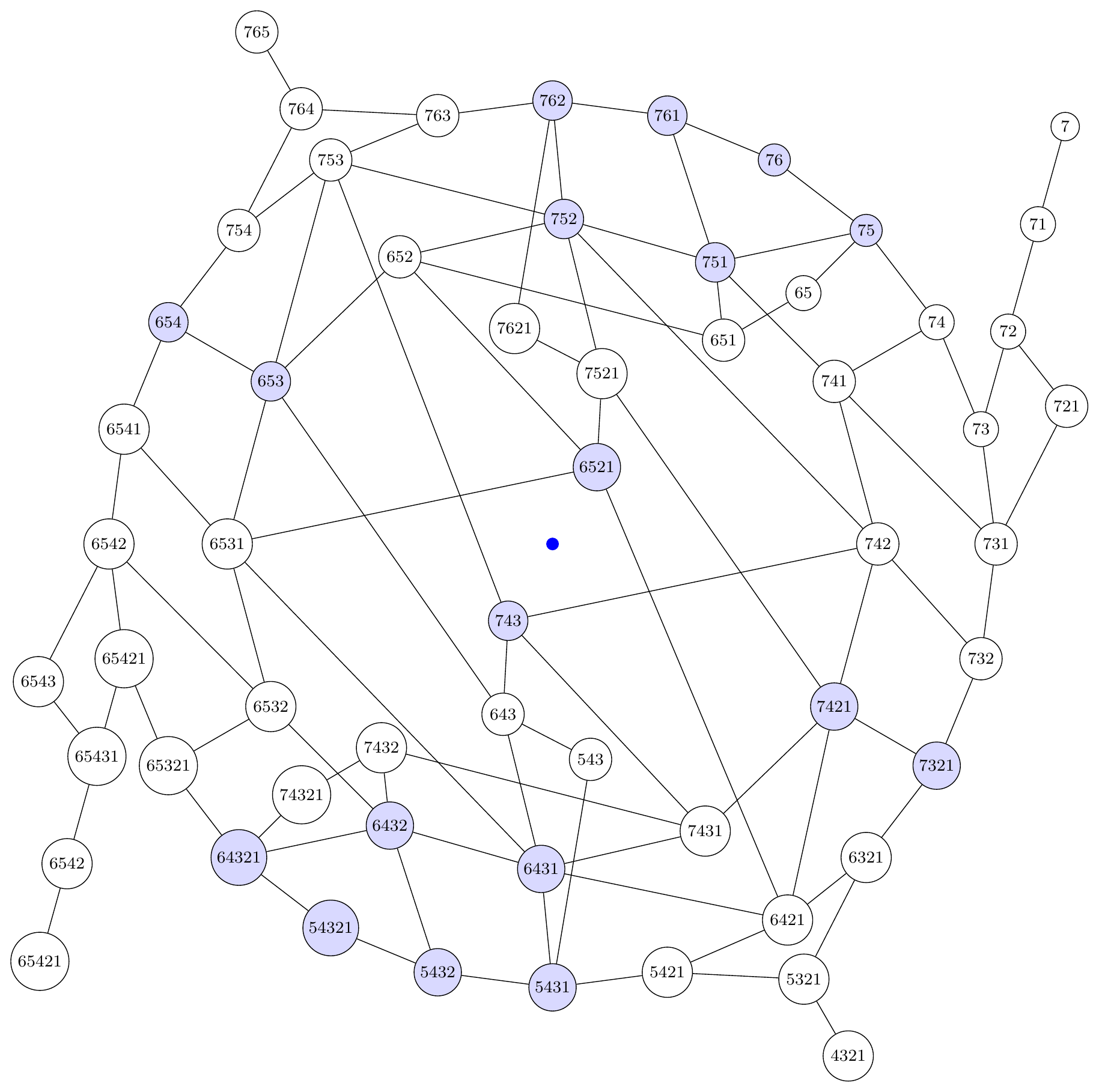}
\caption{{\bf Adjacency graph for the 58  chambers of }$\slngeoSm{7}$.
The 18  colored nodes are the  non-simplicial chambers. 
\label{fig:SU7_Phases}
}
\end{figure}

\begin{figure}
\begin{center}
\begin{minipage}{ \linewidth}
\centering{
\begin{tabular}{cccc}
$\{6,5\}$ &$\{6,4\}$ & $\{4,3,2,1\}$ & $\{5,3,2,1\}$\\
\ \   \includegraphics[scale=.6]{YT_6_65}   &
 \includegraphics[scale=.6]{YT_6_64}&
  \includegraphics[scale=.6]{YT_6_4321}&
  \includegraphics[scale=.6]{YT_6_5321}\\
 \includegraphics[scale=.7]{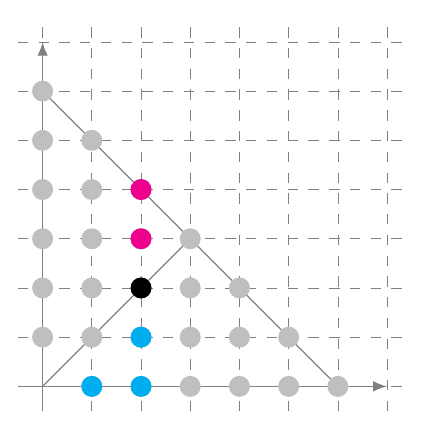}&
 \includegraphics[scale=.7]{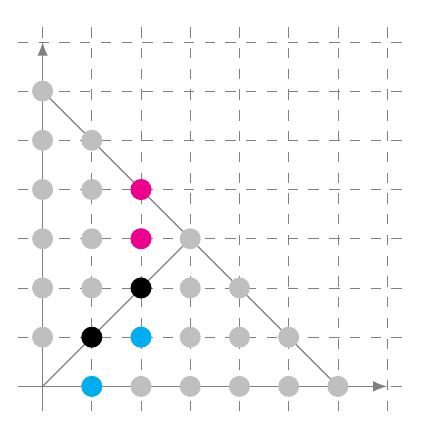}&
  \includegraphics[scale=.7]{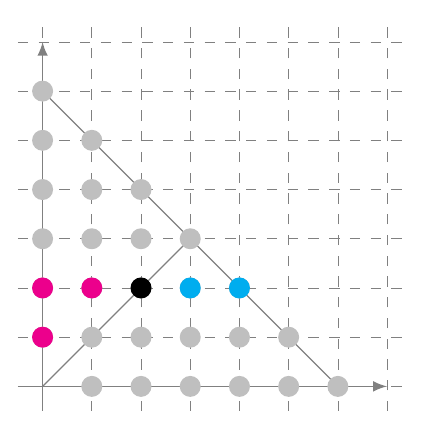}&
    \includegraphics[scale=.7]{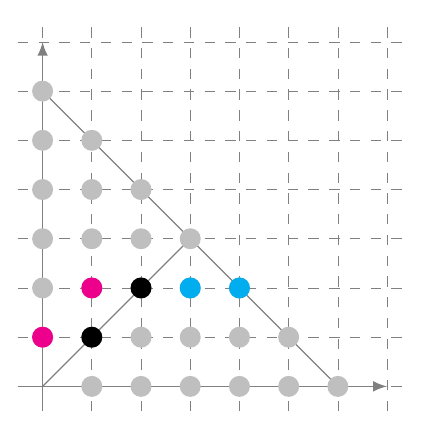}
    \end{tabular}
    }
\end{minipage}
\subcaption{ The four  non-simplicial chambers of $\slngeoSm{6}$. 
For each chamber, we denote by $\ell_+$, $\ell_-$, $\ell_0$, the number of extreme rays above, below, and on the diagonal $y=x$ respectively.  
 These are chambers of $\slngeoSm{6}$ since they cross the diagonal line. However, these chambers  are  {\bf not simplicial} since $\ell_\pm >1$. 
 The number of extreme rays of the chamber is $\ell_+\cdot \ell_-+\ell_0$. 
\label{fig:su6_non_simplicial}   }

\begin{minipage}{ \linewidth}
\centering{
\begin{tabular}{cccc}
$\{6\}$ &$\{6,3\}$ & $\{5,4,3,2,1\}$ & $\{5,4,2,1\}$\\
 \includegraphics[scale=.6]{YT_6_6}&
  \includegraphics[scale=.6]{YT_6_63}&
  \includegraphics[scale=.6]{YT_6_54321}&
  \includegraphics[scale=.6]{YT_6_5421}\\
 \includegraphics[scale=.7]{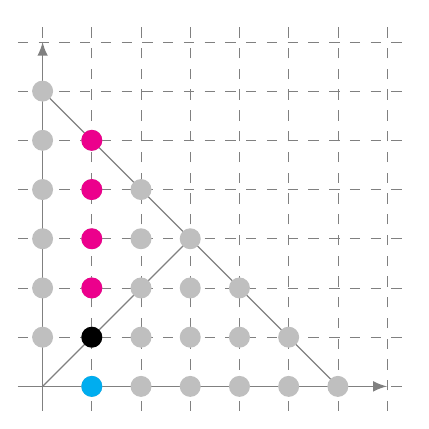}&
  \includegraphics[scale=.7]{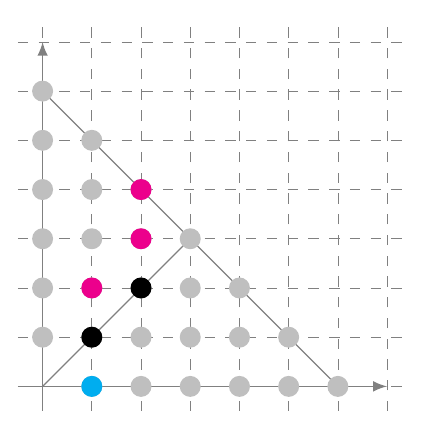}&
  \includegraphics[scale=.7]{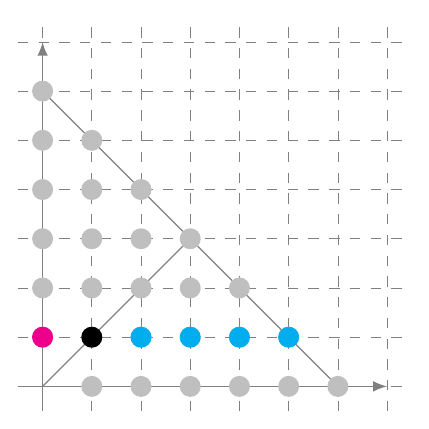}&
   \includegraphics[scale=.7]{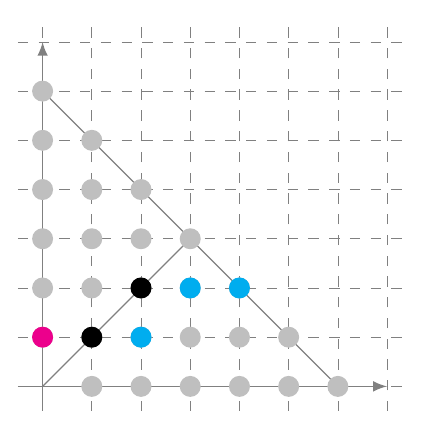}
  \end{tabular}}
\end{minipage}\label{fig:su6_simplicials}  
\subcaption{\footnotesize  
Simplicial chambers are such that  $min\{\ell_+, \ell_-\}=1$.  
 }

\end{center}
\caption{ {\bf Example of  simplicial and non-simplical chambers} of $\slngeoSm{6}$. 
A chamber of $\glngeoSm{6}$ gives a chamber of $\slngeoSm{6}$ if and only if ($\ell_+\cdot \ell_-\neq 0$).
The chamber is simplicial if and only if $min\{\ell_+, \ell_-\}=1$. The number of extreme rays of the chamber is $\ell_+\cdot \ell_-+\ell_0$. 
\label{fig:SimplicialSU6}
}  
\end{figure}

\clearpage
\subsection{Outline of the paper}

In \pref{sec:prelim}, we review some of the results of \cite{gl} and fix notation.
In \pref{sec:formalStatements}, we formally state our main results.
In \pref{sec:chambers}, we prove \pref{thm:chamberCount}.
In \pref{sec:relateSlnFacesAndFlatsToGln}, we prove bijections to connect flats and faces of $\slngeo$ to $\glngeo$, which we use in \pref{app:proofsOfSlnFacesAndFlats} to complete the proof of \pref{thm:slnFaces} and \pref{thm:slnFlats} modulo two technical combinatorial results (Propositions~\ref{prop:A0} and~\ref{prop:C0}), which are proved in the appendices.

\subsection{Acknowledgments}
M.E. is grateful to David Morrison and  Sakura Sch\"afer-Nameki for explaining different aspects of \cite{box}. M.E. would also like to thank  Shu-Heng Shao, and Shing-Tung Yau for helpful discussions. 
M.E. and A.N. thank William Massey and all the organizers of  the 20th Conference for African American Researchers in Mathematical Sciences (CAARMS 20) where this  work started. 
M.E. is supported in part by the National Science Foundation (NSF) grant DMS-1406925  ``Elliptic Fibrations and String
Theory". R.J. was supported by the Harvard College Research Program.

\section{Preliminaries}
\label{sec:prelim}

Basic notions of hyperplane arrangements are reviewed in section \pref{sec:IntroHyperArrang} and details regarding the Lie algebras $\mathfrak{gl}_n$ and $\mathfrak{sl}_n$ are reviewed in \pref{sec:typeArootSys}.
In \pref{sec:glnStuff}, we recall the combinatorial descriptions of faces, flats, and chambers in terms of extreme rays from \cite{gl}. In \pref{sec:notation}, we fix further notation for this paper.

\subsection{Introduction to hyperplane arrangements: chambers, flats, and faces}
\label{sec:IntroHyperArrang}
In this subsection, we give several basic notions related to hyperplane arrangements. 
For more details on hyperplane arrangements, we refer to  \cite{MR1217488,MR2383131,MR0357135}.

\subsubsection{Essential and central arrangements}
A hyperplane of  $\mathbb{R}^d$ is a $(d-1)$-dimensional affine subspace defined as  the vanishing locus of a polynomial equation of  degree one. 
A real hyperplane arrangement ${\mathcal A}$ is a finite set of linear hyperplanes $\lambda_i^\bot$ cut out by polynomials of degree one  $\lambda_i$ in the real affine space $\mathbb{R}^d$. 
A hyperplane arrangement is called {\em essential} if the normal vectors of its hyperplanes span the ambient space $\mathbb{R}^d$. 
A hyperplane arrangement is said to be {\em central} if the intersection of all its hyperplanes is non-empty. 
For a central and essential arrangement, the intersection of all the hyperplanes is a single point that we take to be the  origin of $\mathbb{R}^d$.

\subsubsection{Flats, chambers, and faces}
A {\em flat} of an arrangement $\mathcal A$ is an intersection  $\bigcap I$ of all the  elements of a subset $I$ of $\mathcal A$.
The ambient space itself is a flat  corresponding to the intersection of the empty family of hyperplanes of $\mathcal A$. 
The set of all flats form a semi-lattice $\mathfrak{L}(\mathcal{A})$ ordered by  reverse inclusion. 
The smallest element is the full ambient space. The  hyperplanes are the atoms of $\mathfrak{L}(\mathcal{A})$. 

The connected components of $\mathbb{R}^d\setminus \bigcup_i \lambda_i^\bot$ are the {\em chambers} of the arrangement.  
Each chamber is an open convex  polyhedron. For central arrangements, the faces are open convex polyhedral cones.
A (closed) {\em face} of the hyperplane arrangement $\mathcal A$ is by definition the closure of a chamber or its intersection with a flat of the arrangement $\mathcal A$. 
We denote by $F^0$ the relative interior of a face  $F$ in a hyperplane arrangement.  
The set of all faces of a hyperplane arrangement $\mathcal A$ form  a poset with $F_1 \leq F_2$ if the face $F_1$ lies in the closure of the face $F_2$. 

The number of chambers and the number of $k$-faces of a hyperplane arrangement can be computed using an elegant theorem of Zaslasky using Tutte-Grotendieck 
invariants such as  the M\"obius function and characteristic polynomial on the intersection semi-lattice  \cite{MR0357135}. 

The dimension of a face or a flat is defined as the dimension of  its linear span. 
A $k$-face  (resp.\ $k$-flat) is a face  (resp.\ a flat) of dimension $k$. 
A $0$-face is a {\em vertex}, a $1$-face is an {\em edge}.
A half-line that is a face is called an {\em extreme ray}.
For a central arrangement, the only vertex is the origin and the only edges are lines and extreme rays.
 
 In an essential central hyperplane arrangement in $\mathbb{R}^d$, a chamber is said to be {\em simplicial} if it is the positive span of $d$ independent vectors. 
 A hyperplane arrangement is said to be {\em simplicial} if all its chambers are simplicial.

\subsubsection{Sign vectors}
For a hyperplane arrangement, we choose  linear forms $\lambda$ such that each  hyperplane of $\mathcal{A}$   is the kernel $\lambda^\bot$ of a form $\lambda$. 
Geometrically, this is a choice of a normal direction for the hyperplane $\lambda^\bot$. 
We denote by $\lambda^+$ (resp.\ $\lambda^-$) the half-space on which $\lambda$ is 
 non-negative (resp.\ non-positive). 
For any point $p$ in the ambient space $\mathfrak{h}$, we attach a {\em sign vector } of dimension  $|\mathcal{A}|$  whose components are parametrized by the elements of $\mathcal{A}$.  The $\lambda$ component of the sign vector is the sign $sign(\lambda(p))\in \{-1, 0, +1\}$  of  $\lambda\in \mathcal{A}$ evaluated at $p$. We often write $\pm$ for $\pm 1$. 
The sign vector is an element of $\{ -, 0, +\}^{|\mathcal{A}|}$ uniquely determined by  the relative position of $p$ with respect to all the hyperplanes of the arrangement.  
The set of points having the same sign vector determines uniquely a face of the  hyperplane arrangement.

\subsection{Weyl chambers, roots and weights for \texorpdfstring{$\mathfrak{gl}_n$}{gl} and \texorpdfstring{$\mathfrak{sl}_n$}{sl}}
\label{sec:typeArootSys}

Let $\mathfrak{h} \subseteq \mathfrak{gl}_n$ denote the vector space of real, diagonal $n \times n$ matrices, which is the split, real form of a Cartan subalgebra of $\mathfrak{gl}_n$.  Let $x_i \in \mathfrak{h}^*$ denote the function that returns the $(i,i)$th entry of an element of $\mathfrak{gl}_n$. 
The {\em signature } of a vector  $(x_1, \ldots, x_n)$ is denoted by $$\sigma(x_1, \ldots, x_n)=x_1+\ldots +x_n.$$
The hyperplane $\mathfrak{h}_s$ of elements of $\mathfrak{h}$ of signature zero is the split, real form of a Cartan subalgebra of $\mathfrak{sl}_n$. 
The space $\mathfrak{h}_s \subset \mathfrak{gl}_n$ is the set of  $n\times n$ real diagonal matrices of trace zero.
The inclusion $\mathfrak{h}_s \to \mathfrak{h}$ gives rise to a surjection $\mathfrak{h}^* \to \mathfrak{h}_s^*$ by which we can obtain functionals on $\mathfrak{h}_s$ from functionals on $\mathfrak{h}$.  We abuse notation and identify functionals on $\mathfrak{h}$ with their restrictions to $\mathfrak{h}_s$.

A set of positive roots of $\mathfrak{gl}_n$ and of $\mathfrak{sl}_n$ is the set of differences $x_i-x_j$  for $1\leq i<j\leq n$. The open fundamental  Weyl chamber $W^0$ of $\mathfrak{gl}_n$ for this set of positive roots is the locus of points of $\mathfrak{h}$ whose coordinates  $(x_1, \ldots, x_n)$ form  a non-increasing sequence
$$W^0=\{\ (x_1,\ldots, x_n)\in \mathfrak{h}  \mid  x_1>x_2>\cdots> x_n\}.$$
Its closure is the dual fundamental Weyl chamber denoted by $W$. 
The open fundamental Weyl chamber of $\mathfrak{sl}_n$ and its closure are respectively denoted by $W^0_s$ and  $W_s$. 
They are the intersections of $W^0$ and $W_s$ with $\mathfrak{h}_s$.

The weights of $V$ are the $x_i$ ($i\in [n]$), while those of $\bigwedge^2$ are 
$x_i+x_j$  $(1\leq i < j\leq n)$.\footnote{Recall that $V$ restricts to the the first fundamental representation and $\bigwedge^2$ to the second fundamental representation of $\mathfrak{sl}_n$.} We denote by $\weights \subset \h^*$ the set of weights of $V \oplus \bigwedge^2$.  We can naturally regard $\weights$ as a subset of $\h_s^*$ for $n > 2$.

\subsection{Structure of chambers, faces, flats, and extreme rays of \glngeoForTitle}
\label{sec:glnStuff}
For more details regarding the results recalled in this section, see \cite{gl}.

\begin{figure}
\begin{multicols}{2}
\includegraphics[scale=1.2]{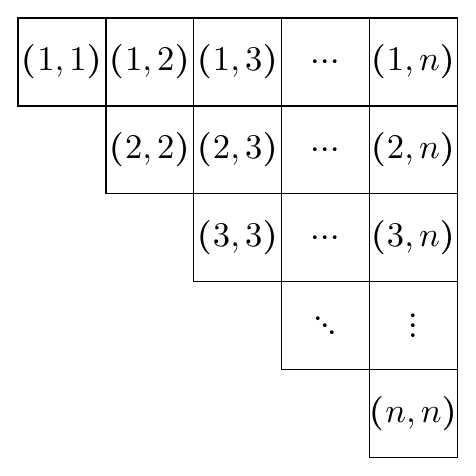}

\includegraphics[scale=1]{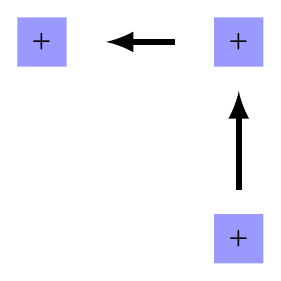}
 
\includegraphics[scale=1]{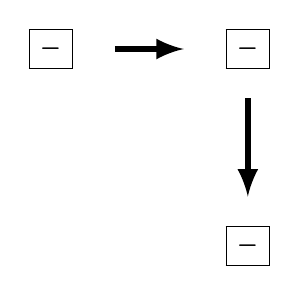}
\end{multicols}

\caption{{\bf Sign vectors of $\glngeo$.} 
The sign vectors of $\glngeo$ can be organized in a right-justified tableau  \protect{\cite[Section 2.4]{box}}. 
The box labeled by $(i,j)$ corresponds to the sign of the  weight $x_i+x_j$  (resp.\ $x_i$) when $i\neq j$  (resp.\ $i=j$).  
A sequence of signs corresponds to a chamber of $\glngeo$ if and only if it satisfies the sign flow condition of \pref{prop:signFlows}: positive (resp.\ negative) signs flow North and West (resp.\ South and East). 
   }
\label{fig:signTableau}
\end{figure}

A face is  determined by an assignment of signs to $x_i+x_j$ for all values of $(i,j)$ such that  $i,j\in \{1,2,\cdots, n\}$. 
These weights can be organized in a  right-justified Young tableau as in Figure \ref{fig:signTableau}.
It is important to realize that not all sign patterns are allowed.
Since we are in the Weyl chamber, we have
\[
i<j\iff x_i-x_j>0.
\]
Using the identities
\begin{align*}
x_i+x_{j+1} &=x_i+x_j-(x_{j}-x_{j+1})\\
x_{i+1}+x_{j} &=x_i+x_j- (x_{i}-x_{i+1}),\\
\end{align*}
one can prove the following simple sign rules 
 called {\em sign flows} in the physics literature  \cite{box}:
\begin{proposition}[\protect{\cite[Section 2.4]{box}}]
\label{prop:signFlows}
The signs in a tableau corresponding to a chamber satisfy the following conditions:
\begin{itemize}
\item All the boxes above or on the left of a box with positive entries are also positive. 
\item All the boxes below or on the right of a box with negative  entries are also  negative. 
\end{itemize}
\end{proposition}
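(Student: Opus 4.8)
The plan is to read off both sign-flow rules directly from the two displayed identities together with the defining inequalities of the Weyl chamber. Fix a chamber $c$ and any point $p=(x_1,\ldots,x_n)$ of $c$; since $c$ lies in the open Weyl chamber $W^{0}$ we have $x_1>x_2>\cdots>x_n$, so $x_j-x_{j+1}>0$ and $x_i-x_{i+1}>0$ for every relevant pair of indices. By definition the sign in box $(i,j)$ is the sign of $(x_i+x_j)(p)$ when $i\ne j$ and the sign of $x_i(p)$, equivalently of $2x_i(p)=(x_i+x_i)(p)$, when $i=j$; so it costs nothing to pretend that every box $(i,j)$ carries the weight $x_i+x_j$, with the understanding that on the diagonal this means $2x_i$.

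First I would establish monotonicity along rows and columns. The identity $x_i+x_{j+1}=(x_i+x_j)-(x_j-x_{j+1})$ gives $(x_i+x_{j+1})(p)<(x_i+x_j)(p)$, so moving one box East (increasing the column index) strictly decreases the value of the weight at $p$; likewise $x_{i+1}+x_j=(x_i+x_j)-(x_i-x_{i+1})$ shows that moving one box South strictly decreases it. These stay valid when one endpoint is a diagonal box: setting $j=i-1$ in the first identity reads $2x_i=(x_{i-1}+x_i)-(x_{i-1}-x_i)$, and setting $j=i$ in the second reads $x_i+x_{i+1}=2x_i-(x_i-x_{i+1})$, and a diagonal box is the bottom box of its column and the left-most box of its row, so these are the only moves out of it that require checking. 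Iterating, any two boxes with $i'\le i$ and $j'\le j$ satisfy $(x_{i'}+x_{j'})(p)\ge (x_i+x_j)(p)$, with equality only when $(i',j')=(i,j)$.

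Then I would propagate signs. If box $(i,j)$ is positive then $(x_i+x_j)(p)>0$, hence $(x_{i'}+x_{j'})(p)>0$ for every box with $i'\le i$ and $j'\le j$; in particular every box above box $(i,j)$ in its column and every box to its left in its row is positive, which is the first bullet. The second bullet follows by the mirror argument, reading each South or East step as a strict decrease of the value. I do not expect a genuine obstacle here: the whole argument is the two algebraic identities combined with the inequalities $x_i>x_{i+1}$, and the only bookkeeping is to keep the diagonal boxes, which record the length-one weights $x_i$ rather than a sum, consistent by reading $x_i$ as $2x_i=x_i+x_i$.
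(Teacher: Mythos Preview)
Your proof is correct and follows exactly the approach the paper sketches: the paper itself does not give a formal proof but simply displays the two identities $x_i+x_{j+1}=(x_i+x_j)-(x_j-x_{j+1})$ and $x_{i+1}+x_j=(x_i+x_j)-(x_i-x_{i+1})$, notes that $x_j-x_{j+1}>0$ and $x_i-x_{i+1}>0$ in the Weyl chamber, and declares that ``one can prove'' the sign flows. You have filled in precisely those details, and your handling of the diagonal boxes via the identification $x_i\leftrightarrow 2x_i$ is a correct way to make the one-step monotonicity uniform across the whole tableau; the paper's sketch leaves this point implicit.
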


A tableau satisfying these two rules corresponds to a unique chamber  
of $I(\mathfrak{gl}_n, V\oplus \Exterior^2)$.

\begin{remark}
A different notation for chambers will be more convenient for dealing with extreme rays.
The sign rules imply that the positive entries of each row are next to each other and start on the left border of the table.  It is therefore efficient to denote a given chamber by the numbers of positive entries on each row. 
The sign rules are then automatically satisfied if these numbers form a decreasing sequence 
$\{a_1>a_2> \cdots> a_k\}$, 
where we denote by $a_i$ the number of  positive entries on the $i$th row of the tableau.  
We don't count the rows that do not have any positive entries. 
An entry of the tableau located on the $i$th row and the $j$th column  (with $i\leq j$) is positive if and only if $i\leq k$ and $j\leq a_i$. It is negative otherwise. This justifies the following definition. 
\end{remark}

\begin{definition}[\protect{\pref{gln-def:subsetToChamber}}]
\label{def:subsetToChamber}
For $S = \{a_1 > \cdots > a_k\} \subseteq [n]$, define a  chamber  $C(S)$ of $I(\mathfrak{gl}_n, V\oplus \bigwedge{}^2)$ as the subset of $\mathfrak{h}$ on which  for  $1\leq i\leq j\leq n$.
\begin{align*}
x_i + x_j \ge 0 &\qquad \text{if } i \le k \text{ and } j \le a_i\\
x_i + x_j \le 0 &\qquad \text{otherwise}.
\end{align*}
We can equivalently write a subset $S \subseteq [n]$ as a \emph{characteristic vector} $s \in \{-1,1\}^n$, where $s =  (s_1,\ldots,s_n)$ and $s_i = 2\chi_S(n) - 1$.
  A characteristic vector $s = (s_1,\ldots,s_n)$ defines a subset $S\subset [n]$ such that  $i\in S$  (resp. $i\notin S$) if  and only if  $s_i=1$  (resp. $s_i=-1$). 
\end{definition}

\begin{table}
\begin{center}
\begin{tabular}{l | c|   c  |     l}
Tableau & Subset of $[n]$ & Characteristic vector & Interior points of the  chamber  \\
\hline
\rule{0pt}{6ex} 
\begin{ytableau}- & - & - \\ \none & - & -  \\ \none & \none & -    \end{ytableau} &   $\emptyset$ & $ (-1,-1,-1)$  &   $(-a_1, -a_1-a_2,-a_1-a_2-a_3)$\\
\rule{0pt}{6ex} 
\begin{ytableau} *(cyan)  +& - & - \\ \none & - & -  \\ \none & \none & -    \end{ytableau} &   $\{1\}$ & $(\ \ 1,-1,-1)$ & $(a_1,-a_1-a_2 ,-a_1-a_2-a_3)$\\
\rule{0pt}{6ex} 
\begin{ytableau} *(cyan)+ &*(cyan) + & - \\ \none & - & -  \\ \none & \none & -    \end{ytableau}  & $\{2\}$ &  $(-1,\ \  1,-1)$ &  $(a_1+a_2,-a_1 ,-a_1-a_2-a_3)$\\
\rule{0pt}{6ex} 
\begin{ytableau} *(cyan)+ & *(cyan)+  & *(cyan)+ \\ \none & - & -  \\ \none & \none & -    \end{ytableau}   &$\{3\}$ &  $(-1,-1,\ \ 1)$  &  $(a_1+a_2+a_3,-a_1-a_2 ,-a_1-a_2-a_3)$\\
\rule{0pt}{6ex} 
\begin{ytableau} *(cyan)+ & *(cyan)+ & *(cyan)+ \\ \none & *(cyan)+ & -  \\ \none & \none & -    \end{ytableau}   &$\{3,1\}$ & $(\ \  1,-1,\ \ 1)$ &   $(a_1+a_2+a_3,a_1 ,-a_1-a_2)$ \\
\rule{0pt}{6ex} 
\begin{ytableau} *(cyan)+ & *(cyan)+ & - \\ \none & *(cyan)+& -  \\ \none & \none & -    \end{ytableau}   &$\{2,1\}$ &  $(\ \ 1,\ \ 1,-1)$ &  $(a_1+a_2,a_1 ,-a_1-a_2-a_3)$\\
\rule{0pt}{6ex} 
\begin{ytableau} *(cyan)+ & *(cyan)+ & *(cyan)+ \\ \none & *(cyan)+ & *(cyan)+  \\ \none & \none & -    \end{ytableau}   &$\{3,2\}$  &  $(-1,\ \  1,\ \ 1)$ &  $(a_1+a_2+a_3,a_1+a_2 ,-a_1)$ \\
\rule{0pt}{6ex} 
\begin{ytableau} *(cyan)+& *(cyan)+ & *(cyan)+ \\ \none & *(cyan)+ & *(cyan)+  \\ \none & \none & *(cyan)+    \end{ytableau}  & $\{1,2,3\}$ & $(\ \ 1,\  \  1,\ \  1)$ &  $(a_1+a_2+a_3,a_1+a_2 ,a_1)$
 \end{tabular}
\vspace{.5cm}
\end{center}
\caption{{\bf Chambers of $\glngeoSm{3}$.}  See \cite{gl}. We show several different notations for the chambers.  In describing the sets of interior points, the variables $a_1,a_2,a_3$ denote positive real numbers.
In the rest of the paper, we use a color code in which positive entries are in color while negative entries are left empty. 
} 
\label{tab:example.gl3}
\end{table}

See  \pref{tab:example.gl3} for explicit examples of this notation in the case of $\glngeoSm{3}$. 
The notation of \pref{def:subsetToChamber} allows us to state the following classification of chambers and extreme rays.

\begin{theorem}[\protect{\pref{gln-thm:chamberStructure}}]
\label{thm:chamberStructure}
The chambers and extreme rays of $\glngeo$ satisfy the following properties.
\begin{enumerate}[(a)]
\item \label{assert:chamberBiject} The map $S \mapsto C(S)$ defines a bijection from $2^{[n]}$ to the set of chambers of $I(\mathfrak{gl}_n, V\oplus \bigwedge{}^2)$.  In particular, $I(\mathfrak{gl}_n, V\oplus \bigwedge{}^2)$ has $2^n$ chambers.
\item \label{assert:chamberRaysPrelim} The extreme rays of $C(S)$ are generated by the vectors $e_1^S,\ldots,e_n^S,$
 where
\[e_\ell^S = (\underbrace{1,\ldots,1}_{\pi_\ell^S},\underbrace{0,\ldots,0}_{n-\ell},\underbrace{-1,\ldots,-1}_{\ell-\pi_\ell^S}),\]
 where $\pi_\ell^S = \left|S \cap [n-\ell+1,\infty)\right|$  counts the elements of $S$ that are greater or equal to $n-\ell+1$, and the vectors $e_1^S,\ldots,e_n^S$ non-negatively span $C(S)$. 
In particular, the geometry $\glngeo$ is simplicial.
\end{enumerate}
\end{theorem}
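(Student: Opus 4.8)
The plan is to derive both parts from the tableau description of chambers recalled above together with \pref{def:subsetToChamber}.

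\textbf{Part (a).} By \pref{prop:signFlows} and the remark following it, a chamber of $\glngeo$ is the same as a filling of the right-justified staircase with signs in which positive boxes flow North and West and negative boxes flow South and East. The West/East rules force the positive boxes of each row to form an initial segment. The North rule then forces (i) that the rows containing a positive box are an initial block $1, \dots, k$ --- if some row $i$ were empty while row $i+1$ were not, the box North of the (positive) leftmost box of row $i+1$ would be forced positive --- and (ii) that the sequence $a_1 > a_2 > \dots > a_k$ of positive-counts on these rows is strictly decreasing, since the box North of the last positive box of row $i+1$ must be positive. Conversely, any strictly decreasing sequence in $[n]$ produces a filling obeying both flow rules, hence (by the statement recalled after \pref{prop:signFlows}) a unique chamber --- explicit interior points can be written down, as illustrated for $n = 3$ in \pref{tab:example.gl3}. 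Since strictly decreasing sequences $a_1 > \dots > a_k$ in $[n]$ are the same data as subsets $S = \{a_1, \dots, a_k\} \subseteq [n]$, and $C(S)$ of \pref{def:subsetToChamber} is precisely the chamber of this filling, $S \mapsto C(S)$ is a bijection from $2^{[n]}$ to the set of chambers, so there are $2^n$ of them.

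\textbf{Part (b).} Fix $S$; inside the closed Weyl chamber $W = \{x_1 \ge \dots \ge x_n\}$ the cone $C(S)$ is cut out by the weight conditions $\pm(x_i + x_j) \ge 0$ prescribed by \pref{def:subsetToChamber}, together with the walls $x_r - x_{r+1} \ge 0$ of $W$. \emph{Membership:} $e_\ell^S \in C(S)$, because its coordinates are weakly decreasing (so $e_\ell^S \in W$) and the value of a weight $x_i + x_j$ with $i \le j$ at $e_\ell^S$ is $\varepsilon_i + \varepsilon_j$, where $\varepsilon_p$ equals $+1$, $0$, or $-1$ according as $p \le \pi_\ell^S$, $\pi_\ell^S < p \le n - \ell + \pi_\ell^S$, or $p > n - \ell + \pi_\ell^S$; a short case check on $(\varepsilon_i, \varepsilon_j)$, using the dictionary between $\pi_\ell^S$ and $S$, shows this sign is compatible with the prescribed (in)equality. \emph{Independence:} with the convention $e_0^S = 0$, passing from $e_{\ell-1}^S$ to $e_\ell^S$ activates exactly one new nonzero coordinate --- a $+1$ in the leftmost still-zero slot if $n - \ell + 1 \in S$, a $-1$ in the rightmost still-zero slot otherwise --- so $e_\ell^S - e_{\ell-1}^S = s_\ell \mathbf{e}_{p_\ell}$ with $s_\ell \in \{\pm 1\}$ and $p_1, \dots, p_n$ a permutation of $[n]$; in the coordinates $(x_{p_1}, \dots, x_{p_n})$ the matrix with rows $e_1^S, \dots, e_n^S$ is lower triangular with $\pm 1$ on the diagonal, hence invertible, and in particular the $e_\ell^S$ span $\h \cong \mathbb{R}^n$.

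\emph{Non-negative spanning.} By that triangular structure every $x \in \h$ is uniquely $x = \sum_\ell c_\ell e_\ell^S$ with $c_\ell = s_\ell x_{p_\ell} - s_{\ell+1} x_{p_{\ell+1}}$ (the last term absent). Reading off $p_\ell$ and $s_\ell$ from the previous step, each $c_\ell$ is --- according to which of $n - \ell + 1$, $n - \ell$ lie in $S$ --- exactly one of the defining inequalities of $C(S)$: a wall $x_r - x_{r+1} \ge 0$, or the condition $x_i + x_{b_i} \ge 0$ that the last box of row $i$ is positive, or the condition $x_i + x_{b_i + 1} \le 0$ that the next box of row $i$ is negative, where $b_i$ is the column of the last positive box of row $i$. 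Hence $c_\ell \ge 0$ for all $x \in C(S)$, so $C(S) \subseteq \vecspan(e_1^S, \dots, e_n^S)$; together with membership this gives $C(S) = \vecspan(e_1^S, \dots, e_n^S)$. As the generators are linearly independent, this cone is simplicial with extreme rays exactly $\mathbb{R}_{\ge 0} e_\ell^S$, and since this holds for each of the $2^n$ chambers, $\glngeo$ is simplicial.

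\textbf{Main obstacle.} The argument is not deep; the entire burden is the bookkeeping in the membership and spanning steps, namely matching the piecewise-constant shape of $e_\ell^S$ (governed by $\pi_\ell^S$ and the activation positions $p_\ell$) against the combinatorics of $S$ (the positive-counts $a_i$, equivalently the columns $b_i$). The clean way to organize it is to index $\ell = 1, \dots, n$ by the elements $n, n - 1, \dots, 1$ of $[n]$ processed in decreasing order, tracking the running counts of processed elements lying in $S$ and in its complement; this pins down $p_\ell$ and $s_\ell$ and reduces both steps to a small case analysis.
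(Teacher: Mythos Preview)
This theorem is not proved in the present paper: it is \emph{recalled} from the companion paper \cite{gl} (note the citation \pref{gln-thm:chamberStructure} in the theorem header and the opening line of \pref{sec:glnStuff}), so there is no in-paper proof to compare against. Your argument is therefore an independent proof of a quoted result.

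That said, your approach is sound. Part (a) correctly extracts from the sign-flow rules that chambers are parametrized by strictly decreasing sequences in $[n]$, hence by subsets. For Part (b), the key device --- observing that $e_\ell^S - e_{\ell-1}^S = s_\ell \mathbf{e}_{p_\ell}$ for a sign $s_\ell$ and a permutation $(p_\ell)$ of $[n]$ --- is exactly right and makes the linear independence immediate. The computation $c_\ell = s_\ell x_{p_\ell} - s_{\ell+1} x_{p_{\ell+1}}$ then reduces the spanning claim to a four-case check on $(s_\ell, s_{\ell+1})$, each case producing either a Weyl wall $x_r - x_{r+1}$ or the ``last positive / first negative box in a row'' weight inequality. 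One caution: when you carry this out, use the interpretation that row $i$ occupies columns $i,\ldots,n$ with its $a_i$ positive boxes in columns $i,\ldots,i+a_i-1$; the literal inequality ``$j \le a_i$'' in \pref{def:subsetToChamber} as printed here does not match the examples in \pref{tab:example.gl3} (e.g.\ $S=\{2,1\}$ has box $(2,2)$ positive), so anchor your case analysis to the tableau picture rather than to that inequality verbatim. With that adjustment the four cases go through cleanly, as one can verify on the worked example $S=\{4,3,2,1\}$ in \pref{fig:ExampleER}.
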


In light of \pref{thm:chamberStructure}, we can introduce additional structure on the set of extreme rays of $\glngeo$.
We will equip the set of extreme rays with a partial order and use the combinatorics of the resulting partially ordered set (poset) to study faces and flats in $\slngeo$.

The following definitions describe the posets that will be relevant to us.

\begin{definition}[\protect{\pref{gln-def:quarterPlanePoset}}]
\label{def:quarterPlanePoset}
The \emph{discrete quarter plane poset} is the set  $\mathbb{N}^2$ endowed with  the Cartesian order induced by the usual order of the set $\mathbb{N}$ of  non-negative integers: 
$$
 \quad (a,b)\leq (c,d)\iff (a\leq c) \quad and \quad (b\leq d), \quad a,b,c,d\in \mathbb{N}.
$$ 
This is a graded poset with grading function  
$\ell(a,b)=a+b$.  We call $\ell(a,b)$ the \emph{ level}  of $(a,b)$. 
\end{definition}

\begin{example}\label{Example:ER}
Consider  $\glngeoSm{6}$ and the chamber $C(S)$ with $S=\{4,3,2,1\}$  (see \pref{fig:ExampleER}).  
The corresponding tableaux  and extreme rays are presented in \pref{fig:ExampleER}. To find $e_1^S$, we count the number of elements of $S$ greater or equal to $6-1+1=6$. That gives $\pi^S_1=0$. Since $1-\pi^S_1=1$, we have  $e_1^S=\ray{0,1}=(0,0,0,0,0,-1)$. In the same way, $\pi_\ell^S=0$ for $\ell=2$ since there are no elements of $S$ greater or equal to $5$. That means that $e_2^S=\ray{0,2}=(0,0,0,0,-1,-1)$.  
The first non vanishing value of $\pi_\ell^S$ is for $\ell=3$. 
 For $e_3^S$, we have $\pi_3^S=1$ since there is a unique element of $S$ greater or equal to $6-3+1=4$. 
 \end{example}

\begin{figure}
\begin{tabular}{ccc}
\raisebox{-2cm}{
 \includegraphics[scale=.9]{YT_6_4321}}&
$
\begin{aligned}
e_1^S=\ray{0,1} &=(\ 0,\  0,\  0,\  0,\ \   0,-1) \\
e_2^S=\ray{0,2} &=(\  0,\  0,\  0,\  0,-1,-1) \\
e_3^S=\ray{1,2}  &= (\  1,\  0,\  0,\   0,-1,-1)\\
e_4^S=\ray{2,2} &= (\  1,\  1,\ 0,\ 0,-1,-1)\\
e_5^S=\ray{3,2} &=(\  1,\  1,\  1,\ 0,-1,-1) \\
e_6^S=\ray{4,2}  &= (\  1,\  1,\  1\  ,1,-1,-1)
\end{aligned}
$
& 
 \raisebox{-2cm}{
  \includegraphics[scale=1]{ER_6_4321}}
\end{tabular}
\caption{ {\bf The chamber $C(S)$ of  $I(\mathfrak{gl}_6, V\oplus \bigwedge{}^2)$ with $S=\{4,3,2,1\}$ and its extreme rays}.
An extreme ray $\ray{a,b}$ is pictured by the colored dot $(a,b)$ in the poset $\poset_6$. The extreme rays of a chamber of  $I(\mathfrak{gl}_n, V\oplus \bigwedge{}^2)$  for a maximal chain of $\poset_n$.
The positive entries of the tableau are denoted in blue. The elements of $S$ are the number of positive entries in each row disregarding the rows with no positive entries. 
The determination of the extreme rays are discussed in Example \ref{Example:ER} on page \pageref{Example:ER}. 
 \label{fig:ExampleER}
 }
\end{figure}

\begin{definition}[\protect{\pref{gln-def:PosetForExtRays}}]
\label{def:PosetForExtRays}
We denote by $\poset_n$ the subset of the discrete quarter plane
 that consists of points at  level less or equal to $n$. 
$\poset_n$ is a poset with the order induced  by the Cartesian order defined above. 
We denote by $\poseti_n$ the poset $\poset_n\sqcup \{\infty\}$ where $\infty$ is greater than all the elements of $\poset_n$.    
We denote by $\posetz_n$ the poset $\poset_n$ with the origin removed: 
$$
\begin{aligned}
\poset_n &= \{ (a,b)\in \mathbb{N}^2 \quad | \quad 0\leq a + b\leq n \},\quad
 \quad
\poseti_n  = \poset_n \sqcup \{\infty\}, \quad \posetz_n =\poset_n \setminus \{ (0,0) \}.
\end{aligned}
$$
\end{definition}

We are now ready to relate $\posetz_n$ to the set of extreme rays of $\glngeo$.

\begin{definition}[\protect{\pref{gln-def:vFn}}]
\label{def:vFn}
Define a function $\vec{v}:\posetz_n \to W$ by
\[\ray{a,b}=(\underbrace{1,\cdots ,1}_{a}, \underbrace{0,\cdots ,0}_{n-a-b} , \underbrace{-1,\cdots ,-1}_{b}).\]
\end{definition}

The following theorem follows directly from \pref{thm:chamberStructure}.

\begin{theorem}[\protect{\pref{gln-thm:rayStruture}}]
\label{thm:rayStruture}
The extreme rays of $I(\mathfrak{gl}_n, V\oplus \bigwedge{}^2)$ satisfy the following properties.
\begin{enumerate}[(a)]
\item The function $\vec{v}$ defines a bijection from $\posetz_n$ to the set of extreme rays of $I(\mathfrak{gl}_n, V\oplus \bigwedge{}^2)$.
\item The set of extreme rays that lie in a chamber $C(S)$ are the extreme rays $\ray{\pi_\ell^S,\ell-\pi_\ell^S},$ where $\pi_\ell^S = \left|S \cap [n-\ell+1,\infty) \right|$. 
\end{enumerate}
\end{theorem}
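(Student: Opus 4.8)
The plan is to obtain \pref{thm:rayStruture} as an essentially formal consequence of \pref{thm:chamberStructure}, using in addition only the classical fact that the faces of $\glngeo$ contained in the closure $\overline{C}$ of a chamber $C$ are exactly the faces of $\overline{C}$ regarded as a polyhedral cone; in particular, every extreme ray of $\glngeo$ is an extreme ray of some chamber. The bridge between the two theorems is the bookkeeping identity $e_\ell^S=\ray{\pi_\ell^S,\ell-\pi_\ell^S}$, valid for all $S\subseteq[n]$ and all $\ell\in\{1,\dots,n\}$: the vector $e_\ell^S$ of \pref{thm:chamberStructure}(b) has, by inspection, $\pi_\ell^S$ leading $1$'s, then $n-\ell$ central $0$'s, then $\ell-\pi_\ell^S$ trailing $-1$'s, so it is precisely the vector $\ray{\pi_\ell^S,\ell-\pi_\ell^S}$ of \pref{def:vFn}; and since this point lies at level $\ell\in\{1,\dots,n\}$ it belongs to $\posetz_n$, where $\vec v$ is defined.

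Granting this, part (b) is immediate. Since $\glngeo$ is essential in the $n$-dimensional space $\mathfrak h$, and since by \pref{thm:chamberStructure} the cone $\overline{C(S)}$ is simplicial and non-negatively spanned by $e_1^S,\dots,e_n^S$ --- that is, by the rays $\ray{\pi_\ell^S,\ell-\pi_\ell^S}$ for $\ell=1,\dots,n$ --- these $n$ generators are pairwise distinct (their levels $1,\dots,n$ differ) and, by simpliciality, are exactly the extreme rays of $\overline{C(S)}$; by the classical fact above they are therefore exactly the extreme rays of $\glngeo$ that lie in $C(S)$. For part (a): $\vec v$ is injective because $\ray{a,b}$ has exactly $a$ entries equal to $1$ and exactly $b$ equal to $-1$, so $(a,b)$ is recovered from it, and distinct such vectors span distinct rays since each has an entry equal to $\pm1$. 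The image of $\vec v$ consists of extreme rays, because for $(a,b)\in\posetz_n$ we may set $\ell=a+b\in\{1,\dots,n\}$ and choose any $S\subseteq[n]$ with $|S\cap[n-\ell+1,\infty)|=a$ (possible as $0\le a\le\ell$), so that $\pi_\ell^S=a$ and $\ray{a,b}=e_\ell^S$ is an extreme ray of $C(S)$. Finally $\vec v$ is surjective: an extreme ray $R$ of $\glngeo$ contains a nonzero point $p$, and a small generic perturbation of $p$ lies in a chamber $C$ whose sign vector agrees with that of $p$ on every nonzero coordinate, so $p\in\overline{C}$ and hence $R\subseteq\overline{C}$; by the classical fact $R$ is then an extreme ray of the cone $\overline{C}$, and writing $C=C(S)$ via \pref{thm:chamberStructure}, part (b) identifies $R$ with one of the $\ray{\pi_\ell^S,\ell-\pi_\ell^S}$, so $R$ lies in the image of $\vec v$.

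The one step that is not pure translation is the classical identification of the faces of $\glngeo$ lying in a fixed closed chamber with the faces of that chamber viewed as a polyhedral cone --- this is what licenses interchanging ``extreme ray of $\glngeo$'' with ``extreme ray of some $C(S)$'' --- and it is the point I would pin down with a precise reference to the standard theory of hyperplane arrangements (\cite{MR1217488,MR2383131,MR0357135}) rather than leave implicit. Everything else is driven mechanically by the identity $e_\ell^S=\ray{\pi_\ell^S,\ell-\pi_\ell^S}$.
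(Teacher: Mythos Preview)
Your proposal is correct and is exactly what the paper intends: the paper gives no detailed proof here, stating only that the theorem ``follows directly from \pref{thm:chamberStructure}'' (and citing the companion paper), and your argument spells out precisely that derivation via the identity $e_\ell^S=\ray{\pi_\ell^S,\ell-\pi_\ell^S}$. Your flagging of the one genuinely external ingredient --- that the extreme rays of the arrangement lying in a closed chamber coincide with the extreme rays of that chamber as a cone --- is apt and is indeed the only point not already contained in \pref{thm:chamberStructure}.
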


We will characterize faces and flats by  the extreme rays they contain. For that reason we define a function which returns the extreme rays lying in a given subset of the Weyl chamber.

\begin{definition}[\protect{\pref{gln-def:raysOperator}}]
\label{def:raysOperator}
Define a function $\erays$ from $2^W$ to the power set of the set of extreme rays of $\glngeo$ as follows.  For $S \subseteq W,$ let $\erays(S)$ be the set of extreme rays of $\glngeo$ that lie in $S$.
\end{definition}

The following theorem relates faces in $\glngeo$ to the combinatorics of $\posetz_n$.

\begin{theorem}[\protect{\pref{gln-thm:Characterization.Faces}}]
\label{thm:Characterization.Faces}
For all $k$, the function $\vec{v}^{-1} \circ \erays$ induces a bijection from the set of $k$-faces of $\glngeo$ to the set of $k$-chains in $\posetz_n$.
\end{theorem}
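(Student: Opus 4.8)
The plan is to deduce this statement directly from Theorem~\ref{thm:chamberStructure} together with the combinatorial description of extreme rays in Theorem~\ref{thm:rayStruture}. The key geometric input is that $\glngeo$ is a simplicial arrangement in which each chamber $C(S)$ is the nonnegative span of the $n$ linearly independent extreme rays $e_1^S,\ldots,e_n^S$; consequently every face $F$ of a chamber $C(S)$ is the nonnegative span of a subset of $\{e_1^S,\ldots,e_n^S\}$, and $F$ is recovered from the set of extreme rays it contains, i.e.\ $F = \vecspan \erays(F)$. A $k$-dimensional face is then exactly the nonnegative span of $k$ of the $e_\ell^S$'s. So the map $F \mapsto \erays(F)$ is injective on faces, and $\vec{v}^{-1}$ is injective on extreme rays by Theorem~\ref{thm:rayStruture}(a); hence $\vec{v}^{-1}\circ\erays$ is injective on $k$-faces, and a $k$-face is sent to a $k$-element subset of $\posetz_n$.

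Next I would identify which $k$-element subsets of $\posetz_n$ arise. The crucial observation is that the extreme rays contained in a single chamber $C(S)$ form a \emph{maximal chain} of $\posetz_n$: by Theorem~\ref{thm:rayStruture}(b) they are the points $(\pi_\ell^S,\ell-\pi_\ell^S)$ for $\ell=1,\ldots,n$, which have levels $1,2,\ldots,n$ and satisfy $\pi_{\ell+1}^S - \pi_\ell^S \in \{0,1\}$, so successive points are comparable in the Cartesian order and the chain has one element at each level $1$ through $n$. (This is illustrated in \pref{fig:ExampleER}.) Therefore the extreme rays lying in \emph{any} face — which lies in some chamber — form a subchain of such a maximal chain, hence a chain in $\posetz_n$. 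This shows the image of $\vec{v}^{-1}\circ\erays$ on $k$-faces lands in the set of $k$-chains.

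For surjectivity, given a $k$-chain $c_1 < c_2 < \cdots < c_k$ in $\posetz_n$, I would first extend it to a maximal chain of $\posetz_n$: since $\posetz_n$ is graded by level with levels $1,\ldots,n$ and covering relations increase the level by exactly one, any chain extends to one containing a point at every level $1,\ldots,n$. Such a maximal chain is exactly the set of extreme rays of a unique chamber $C(S)$ — one reads off $S$ from the sequence of levels at which the first coordinate increments, matching the $\pi_\ell^S$ recipe. Then $F := \vecspan\{\ray{c_1},\ldots,\ray{c_k}\}$ is a face of $C(S)$ (a face of a simplex spanned by a subset of its vertices), it has dimension $k$ because the $\ray{c_i}$ are among the $n$ independent generators $e_\ell^S$, and $\erays(F) = \{\ray{c_1},\ldots,\ray{c_k}\}$ — here one needs that no \emph{other} extreme ray of the arrangement lies in $F$, which holds because any extreme ray in $F \subseteq C(S)$ is an extreme ray of $C(S)$, hence one of the $e_\ell^S$, hence one of the chosen $\ray{c_i}$ since $F$ is their span and the $e_\ell^S$ are linearly independent. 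Thus $\vec{v}^{-1}\erays(F) = \{c_1,\ldots,c_k\}$, proving surjectivity.

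The main obstacle I anticipate is the bookkeeping in the surjectivity step: one must check cleanly that extending a chain to a maximal chain indeed corresponds to a genuine chamber (the map from maximal chains of $\posetz_n$ to subsets $S\subseteq[n]$ is a bijection, essentially the inverse of $\ell\mapsto(\pi_\ell^S,\ell-\pi_\ell^S)$), and that $\erays$ of a sub-simplex picks out exactly the chosen vertices and nothing more. Both are routine consequences of linear independence of the $e_\ell^S$ and the fact that an extreme ray of the arrangement lying inside a chamber must be an extreme ray of that chamber, but they are the points where care is required. Everything else is formal from the simplicial structure established in \pref{thm:chamberStructure}.
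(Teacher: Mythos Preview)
The paper does not actually prove this theorem: it is recalled as a preliminary from the companion paper \cite{gl} (note the citation in the theorem header and the surrounding discussion in Section~\ref{sec:glnStuff}), so there is no proof in this paper to compare your argument against.

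That said, your argument is correct and is the natural way to deduce the statement from the simplicial structure established in Theorem~\ref{thm:chamberStructure} together with the bijection of Theorem~\ref{thm:rayStruture}. The two points you flag as requiring care are indeed the only nontrivial steps, and your treatment of both is sound: maximal chains of $\posetz_n$ biject with subsets $S\subseteq[n]$ via the lattice-path correspondence (the $\ell$th step is $(1,0)$ or $(0,1)$ according to whether $n-\ell+1\in S$, matching the formula for $\pi_\ell^S$), and the containment $\erays(F)\subseteq\{e_1^S,\ldots,e_n^S\}$ holds because any $1$-face of the arrangement contained in the simplicial cone $C(S)$ must be a sub-face of $C(S)$, hence one of its edges.
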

\begin{remark}
Here, we say that $S \subseteq \posetz_n$ is a $k$-chain if $S$ is a chain and $|S| = k$.
\end{remark}
  
To state an analogue of \pref{thm:Characterization.Faces} for flats, we will need to define structures called \emph{ensembles} that will play the role of chains.
  
\begin{definition}[\protect{\pref{gln-def:Ensemble}}]
\label{def:Ensemble}
An \emph{ensemble} is the restriction to $\posetz_n$ of  a union
\[
\left(\bigcup_{0\leq i\leq k}  [A_i,B_i] \right)\cap \posetz_n
\]
of intervals $[A_i,B_i]$ of $\poseti_n$ satisfying the following four conditions:
\begin{enumerate}[(1)]
\item $A_0=0;$
\item $A_i\leq B_i$ for $0 \le i \le k$;
\item $B_i +(1,1) \leq A_{i+1}$ for $0 \le i \le k-1$; and
\item $\ell(B_k)< n$ or $B_k=\infty$.  
\end{enumerate}
We say that $E$ is a \emph{$k$-ensemble} if $k = \left|\ell(E)\right|$, so that $k$ counts the number of distinct levels of elements of $E$.
\end{definition}

\begin{theorem}[\protect{\pref{gln-thm:Characterization.Flats}}]
\label{thm:Characterization.Flats}
For all $k$, the function $\vec{v}^{-1} \circ \erays$ induces a bijection from the set of $k$-flats of $\glngeo$ to the set of $k$-ensembles of $\posetz_n$.
\end{theorem}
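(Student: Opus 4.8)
The statement to prove is Theorem~\ref{thm:Characterization.Flats}: that $\vec{v}^{-1}\circ\erays$ induces a bijection from $k$-flats of $\glngeo$ to $k$-ensembles of $\posetz_n$. The plan is to leverage Theorem~\ref{thm:Characterization.Faces}, which already identifies faces with chains, together with the lattice structure of flats. Recall that a flat is an intersection of hyperplanes $\lambda^\perp$; inside the Weyl chamber $W$ it is a union of faces, and in fact every flat $L$ is the smallest flat containing any face $F$ with $F^0 \subseteq L$, and $L \cap W$ is a union of relatively-open faces. So the first step is to describe, for a flat $L$, the set $\erays(L)$ of extreme rays lying on $L$ directly in terms of the weights $\weights$: an extreme ray $\ray{a,b}$ lies on $L = \bigcap_{\lambda \in I}\lambda^\perp$ iff $\lambda(\ray{a,b}) = 0$ for all $\lambda \in I$. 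Since $\ray{a,b}$ has $a$ ones, then zeros, then $b$ minus-ones, one computes $x_i(\ray{a,b})$, $(x_i+x_j)(\ray{a,b}) \in \{-2,-1,0,1,2\}$ explicitly, so the vanishing condition on each weight cuts out an explicit down-set/up-set-type region of $\posetz_n$. This is the routine but essential computation identifying which subsets of $\posetz_n$ arise as $\indFn(\erays(L))$.

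The second step is to show these subsets are exactly the ensembles. Here I would argue in two directions. For "$\subseteq$": given a flat $L$, pick a face $F$ of $L$ of maximal dimension; by Theorem~\ref{thm:Characterization.Faces}, $\indFn(\erays(F))$ is a chain $C$ in $\posetz_n$, and $L$ is the span of $F$, so $\erays(L)$ consists of all extreme rays in that span. One then checks that the set of extreme rays lying in $\Span(F)$, translated via $\indFn$, is precisely the ensemble "generated" by the chain $C$ — the intervals $[A_i,B_i]$ being the connected components (in the level-grading sense) obtained by filling in the chain, with conditions (1)–(4) of Definition~\ref{def:Ensemble} reflecting, respectively, that $\rayFn$ is defined on $\posetz_n$ (so the origin is excluded but its role as a lower bound forces $A_0 = 0$), that intervals are nonempty, that consecutive faces/levels in a flat must be separated (two extreme rays at "adjacent" poset positions would already force more vanishing), and that the top interval cannot reach level $n$ unless it is the improper $\infty$ endpoint (coming from the signature / $\mathfrak{sl}$ vs $\mathfrak{gl}$ boundary behavior). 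For "$\supseteq$": given a $k$-ensemble $E$, exhibit a flat realizing it — e.g. take the chain consisting of one chosen element from each interval $[A_i,B_i]$, form the corresponding face $F$ via Theorem~\ref{thm:Characterization.Faces}, and let $L = \Span(F)$; then verify $\indFn(\erays(L)) = E$ using the step-one computation.

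The third step is injectivity: two flats $L_1, L_2$ with $\erays(L_1) = \erays(L_2)$ must be equal. This follows because a flat in this arrangement is spanned by the extreme rays it contains — indeed each flat is a cone (central arrangement) that is itself a union of faces, each face is the nonnegative span of its extreme rays by Theorem~\ref{thm:chamberStructure}(b) restricted appropriately, and hence $L = \Span(\erays(L))$; so $\erays$ is injective on flats, and composing with the bijection $\indFn$ on extreme rays preserves injectivity. Finally, the dimension bookkeeping: I must check that a $k$-flat maps to a $k$-ensemble, i.e. $\dim L = |\ell(\indFn(\erays(L)))|$. Since $L = \Span(\erays(L))$ and the vectors $\ray{a,b}$ for $(a,b)$ ranging over an ensemble are easily seen to have rank equal to the number of distinct levels appearing (the level, essentially the "spread" $a+b$, controls linear independence — vectors at the same level within one interval are dependent modulo lower ones, vectors at distinct levels are independent), the dimension equals the number of distinct levels, which is $k$ by definition of $k$-ensemble.

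\textbf{Main obstacle.} The crux is step two, and within it the precise matching of the combinatorial conditions (1)–(4) defining an ensemble with the geometry. Conditions (3) and (4) in particular are subtle: (3) encodes that you cannot have extreme rays at "too-close" poset positions both lying on a proper flat without forcing the flat to contain everything between them (so that a genuine gap of width $(1,1)$ must appear between consecutive intervals), and (4) encodes the boundary phenomenon where reaching the maximal level $n$ corresponds to the hyperplane $\sigma = 0$ and the behavior is governed by the $\infty$ element of $\poseti_n$. Getting the bookkeeping exactly right — especially ensuring the map is well-defined into ensembles and surjective onto all of them — is where the real work lies; the injectivity and the dimension count are comparatively formal once $L = \Span(\erays(L))$ is established.
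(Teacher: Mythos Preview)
This theorem is not proved in the present paper: it is quoted from the companion paper \cite{gl} (as indicated by the citation \pref{gln-thm:Characterization.Flats} in the theorem header), so there is no proof here to compare your proposal against. Your outline is broadly reasonable as a strategy --- reduce to the face/chain bijection of \pref{thm:Characterization.Faces}, identify $\erays(L)$ combinatorially, and match the result to the ensemble axioms --- but two points in your write-up are genuinely off and would need to be fixed.

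First, your reading of condition~(4) is wrong. You say it ``encodes the boundary phenomenon where reaching the maximal level $n$ corresponds to the hyperplane $\sigma = 0$,'' but this theorem is entirely about $\glngeo$; the signature hyperplane $\h_s$ plays no role. Condition~(4) is about what happens when the top interval of the ensemble reaches level $n$: an extreme ray $\ray{a,b}$ with $a+b=n$ has no zero coordinates, and the constraint that either $\ell(B_k)<n$ or $B_k=\infty$ reflects a genuine $\mathfrak{gl}_n$ phenomenon (roughly, that once a flat contains a ray at the top level, the interval structure at that level is forced to extend all the way to $\infty$ in $\poseti_n$), not anything about $\mathfrak{sl}_n$. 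Second, your dimension argument is too glib: you assert that ``vectors at the same level within one interval are dependent modulo lower ones,'' but for instance $\ray{2,1}$ and $\ray{1,2}$ are linearly independent for $n\ge 4$, so the claim that $\dim\Span\{\ray{p}:p\in E\}$ equals the number of levels of $E$ requires a real argument using the specific interval structure of an ensemble, not just a level count.
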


\subsection{Notation}
\label{sec:notation}
See \pref{tab:notations} for an exhaustive list.

We can naturally regard $\weights$ as a multiset of elements of $\h_s^*$ as well.
Given $\lambda \in \h^*$, define
\begin{align*}
\lambda^\perp &= \{v \in \h \mid \lambda(v) = 0\}\\
\lambda^+ &= \{v \in \h \mid \lambda(v) \ge 0\}\\
\lambda^- &= \{v \in \h \mid \lambda(v) \le 0\}.
\end{align*}
Denote by $\mathfrak{F}$ (resp.\ $\mathfrak{F}_s$) the set of faces and by $\mathfrak{L}$ (resp.\ $\mathfrak{L}_s$) the set of flats of $\glngeo$ (resp.\ $\slngeo$).

Given a subset $A \subset \h$, define following subsets of $\weights$:
\begin{align*}
\weights_0(A) &= \{\lambda \in \weights \mid \lambda(A) = 0\}\\
\weights_+(A) &= \{\lambda \in \weights \mid \lambda(A) \ge 0\} \setminus \weights_0(A) \\
\weights_-(A) &= \{\lambda \in \weights \mid \lambda(A) \le 0\} \setminus \weights_0(A).
\end{align*}
For all faces $F$ of $\slngeo$ or $\glngeo$, it follows from the definition of a face that
$$
\label{eq:faceSignPartition}
\weights = \weights_0(F) \sqcup \weights_+(F) \sqcup \weights_-(F)
$$
and that
$$
\label{eq:faceSignGeneralPoint}
\weights_\epsilon(F) = \weights_\epsilon(\{u\})
$$
for all $u \in F$ and $\epsilon \in \{0,+,-\}$.

Given a set $E$ of extreme rays of $\glngeo$ or a subset $E \subset \posetz_n$, denote by $E_0$ (resp.\ $E_+,$ $E_-$) the set of elements of $E$ with zero (resp.\ positive, negative) signature.

\section{Formal statements of results}
\label{sec:formalStatements}

In \pref{sec:faceFlatStatements}, we present generating functions for the numbers of chambers, $k$-faces and $k$-flats of $\slngeo$.
In \pref{sec:extremeRayStatements}, we discuss the structure of the set of extreme rays upon which the theorems of \ref{sec:faceFlatStatements} depend; as a corollary, we classify simplicial chambers in $\slngeo$.

\subsection{Enumerations of chambers, faces, and flats of \slngeoForTitle}
\label{sec:faceFlatStatements}

First, we recall the notation for formal power series.
Given a ring $A$ and a set $X$, we denote by $A\llbracket X \rrbracket$ the ring of formal power series with coefficients in $A$ in the alphabet $X$.  Let $R = \mathbb{Q}\llbracket x,y\rrbracket$.  We denote by $\sqrt{F}$ the principal square root of $F$ for $F \in R$ such that $F(0,0)=1$ (resp.\ $F \in R\llbracket z \rrbracket$ such that $F(0,0,0)=1$), whose existence and uniqueness are guaranteed by Hensel's Lemma.\footnote{Recall the statement of Hensel's Lemma: suppose that $A$ is a complete local ring with maximal ideal $\mathfrak{m}$ and residue field $k$ and that $p \in A[z]$.  Let $r \in k$ be such that $p(r) = 0$ (as an element of $k$).  If $p'(r) \not= 0$ (as an element of $k$), then there exists a unique $\tilde{r} \in A$ reducing to $r$ modulo $\mathfrak{m}$ such that $p(\tilde{r}) = 0$ (as an element of $A$).}

We denote by $b(n,k)$ the number of $k$-faces of $\slngeo$ and by
$$
B(x,y) = \sum_{n=0}^\infty \sum_{k=0}^\infty b(n,k)x^ny^k
$$
the corresponding generating function.

\begin{theorem}
\label{thm:slnFaces}
The generating function for the number of $k$-faces of $\mathrm{I}(\mathfrak{sl}_n, V\oplus \bigwedge{}^2)$ is
$$
\begin{aligned}
B(x,y) =& -\frac{4}{y\left(1 - 2 x + x^2 - 2 x y + 
 x^2 y + \sqrt{\alpha}\right)}\\
&+\frac{2 - 2 x - 2 x^2 + 2 x^3 + y - 3 x y - 3 x^2 y + 3 x^3 y - 2 x y^2 - 
 x^2 y^2 + x^3 y^2}{y (1 - 2 x + 2 x^3 - x^4 - 2 x y + 4 x^3 y - 2 x^4 y + 2 x^3 y^2 - x^4 y^2)}
 \end{aligned}
$$
in the fraction field of $R$, where $\alpha \in R$ is the polynomial defined as
$$
\alpha = 1 - 2 x^2 + x^4 - 6 x^2 y + 2 x^4 y - 4 x^2 y^2 + x^4 y^2
$$
\end{theorem}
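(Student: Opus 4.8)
The plan is to reduce the enumeration of $k$-faces of $\slngeo$ to a combinatorial problem on the poset $\posetz_n$, exploiting the relationship between $\slngeo$ and $\glngeo$ that is developed in Section~\ref{sec:relateSlnFacesAndFlatsToGln}. Recall from \pref{thm:Characterization.Faces} that $k$-faces of $\glngeo$ correspond bijectively to $k$-chains in $\posetz_n$; the first step is to identify which chains (or which modification of a chain) give rise to faces of the sliced arrangement $\slngeo$, i.e.\ faces that actually meet the signature-zero hyperplane $\h_s$. Concretely, a face $F$ of $\glngeo$ restricts to a face of $\slngeo$ precisely when $\h_s \cap F$ has the expected dimension, and this should translate into a condition on the chain $\indFn(\erays(F)) \subseteq \posetz_n$ about how it sits relative to the diagonal $\{(a,b) : a = b\}$ — roughly, the chain must ``cross or touch'' the diagonal, matching the $\ell_+, \ell_-, \ell_0$ bookkeeping in \pref{fig:SimplicialSU6}. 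I would set up this correspondence carefully, handling the dimension drop when intersecting with $\h_s$, so that $b(n,k)$ is expressed as a sum over chains in $\posetz_n$ weighted by combinatorial data recording their position relative to the diagonal.

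The second step is to compute the resulting generating function. Writing $\posetz_n = (\posetz_n)_+ \sqcup (\posetz_n)_0 \sqcup (\posetz_n)_-$ according to signature (sign of $a - b$), a chain decomposes into a part strictly above the diagonal, a part on it, and a part strictly below; since the diagonal part is itself a chain in a linearly ordered set and the $\mathbb{Z}_2$ symmetry $(a,b) \mapsto (b,a)$ swaps the two off-diagonal regions, the generating function factors in a controlled way. I expect the count of chains living strictly in the ``upper triangle'' region of $\posetz_n$ (bounded by $a + b \le n$ and $a > b$) to be governed by a generating function satisfying a quadratic equation — this is the technical combinatorial input isolated as \pref{prop:A0} (and its companion \pref{prop:C0}), proved in the appendices. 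Granting those propositions, $B(x,y)$ is assembled by combining the upper-triangle generating function, its diagonal-reflected copy, and the on-diagonal chain generating function (a simple geometric-type series in the level variable), with an inclusion–exclusion correction to avoid double-counting chains that could be interpreted as lying on either side. The appearance of $\sqrt{\alpha}$ with $\alpha = 1 - 2x^2 + x^4 - 6x^2 y + 2x^4 y - 4x^2 y^2 + x^4 y^2$ comes directly from solving that quadratic; one checks $\alpha(0,0) = 1$ so the principal square root is well-defined in $R$ by Hensel's Lemma as stated.

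The final step is bookkeeping: reconcile the low-dimensional boundary cases (the ambient space as the unique $0$-face when $n \le 1$, the behavior at $n = 2$ where $\weights$ is only honestly a subset of $\h_s^*$ for $n > 2$, and the overall chamber $\{$whole space$\}$ face of dimension $0$), which explains why $B$ is a genuine power series in $R$ with $b(0,0) = 1$ despite the $\tfrac{1}{y}$ prefactors — those prefactors must cancel against the numerators, a consistency check I would verify by a short Taylor-expansion computation matching the table in the $b(n,k)$ example. The second summand in the displayed formula, a rational function, should account for the contributions that require no square root (e.g.\ chains confined to the diagonal or configurations counted directly), while the first summand carries the algebraic part.

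The main obstacle will be the first step: correctly characterizing which $k$-chains in $\posetz_n$ yield $k$-faces of $\slngeo$ — in particular getting the dimension count right when a $k$-face of $\glngeo$ either survives with the same dimension, drops to dimension $k-1$, or fails to meet $\h_s$ at all — and packaging this as a clean weighting on chains that separates cleanly into the three diagonal regions. Once that combinatorial reformulation is in hand, the generating-function manipulation is an exercise modulo \pref{prop:A0} and \pref{prop:C0}.
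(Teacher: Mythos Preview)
Your proposal is essentially the paper's approach, and the overall architecture is correct: reduce to chains in $\posetz_n$, classify by how they sit relative to the diagonal, invoke \pref{prop:A0} for the hard enumeration, and assemble. A few sharpenings will make it land.

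First, the ``main obstacle'' you flag is resolved more cleanly than you anticipate. The paper shows (via the map $\psi$ of \pref{def:psiForFaces} and \pref{lem:imageOfPsiForFaces}) that a $k$-face of $\slngeo$ corresponds to \emph{either} a null $k$-chain (the face already lies in $\h_s$, no dimension drop) \emph{or} a bi-signed $(k+1)$-chain (the face straddles $\h_s$, dimension drops by one). There is no weighting and no overlap: the answer is simply
\[
b(n,k) = \left|\Chn_{\text{null}}(n,k)\right| + \left|\Chn_{\text{bi}}(n,k+1)\right|,
\]
as recorded in \pref{cor:numberOfSlnFacesBiject}. Mono-signed non-null chains contribute nothing.

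Second, the generating-function assembly is pure inclusion--exclusion rather than a factorization: the number of bi-signed $(k{+}1)$-chains is $g(n,k{+}1) - 2a(n,k{+}1,0) + d(n,k{+}1)$, where $g$ counts all chains, $a(\cdot,\cdot,0)$ counts chains with all signatures $\le 0$ (equivalently $\ge 0$ by your $\mathbb{Z}_2$ symmetry), and $d$ counts null chains. This gives directly
\[
B = \frac{G - 2A_0 + D}{y} + D,
\]
with $G$ from \pref{thm:glnFaces}, $D$ from \pref{prop:nullCount}, and $A_0$ from \pref{prop:A0}. The rational summand in the theorem is $(G+D)/y + D$; the algebraic summand is $-2A_0/y$.

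Finally, \pref{prop:C0} is not needed here---it is the flat analogue used only for \pref{thm:slnFlats}.
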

\pref{thm:slnFaces} is proved in \pref{app:proofsOfSlnFacesAndFlats}.

\begin{definition}
Suppose that $C$ is a chamber of $\glngeo$ with characteristic vector $s = (s_1,\ldots,s_n) \in \{-1,1\}^n$.
We say that an index $1 \le i \le n$ is a \emph{null} (resp.\ \emph{positive}, \emph{negative}) \emph{level for } $C$ if $s_{n-i+1}+\cdots+s_n$ is zero (resp.\ positive, negative).
\end{definition}

In \pref{prop:signOfaLevelMeaning}, we will see that $k$ is a null (resp.\ positive, negative) level for $C$ if and only if  the $k$th extreme ray of $e_k$ of $C$ has signature zero (resp.\ positive signature, negative signature), where $e_k$ is the extreme ray of $C$ defined in \pref{thm:chamberStructure}\pref{assert:chamberRaysPrelim}.

\begin{theorem}
\label{thm:chamberCount}
The chambers of $\slngeo$ satisfy the following properties.
\begin{enumerate}[(a)]
\item \label{assert:chamberBiSignBij} For $n \ge 3$, every chamber of $\slngeo$ is uniquely expressible as $C \cap \h_s,$ where $C$ is a chamber of $\glngeo$.
\item \label{assert:chamberCount} The number of chambers in $\slngeo$ is 
$$
2^n - 2\binom{n}{\floor*{\frac{n}{2}}} + \delta_{n,0} + \delta_{n,1} + \delta_{n,2},
$$
which is the coefficient of $x^n$ in the generating function
$$
\frac{1}{1-2 x}-\frac{\sqrt{1-4 x^2}+ 2x-1}{x (1-2 x)}+1+x+x^2=\frac{2 x^4+x^3+x^2+\sqrt{1-4 x^2}-1}{x (2 x-1)}.
$$
\end{enumerate}
Suppose that $C$ is a chamber of $\glngeo$ and that
$$
S = \{a_1 > \cdots > a_k\} \subseteq [n]
$$
corresponds to $C$ under the bijection of \pref{def:subsetToChamber}. Assume that $n \ge 3$.  Then:
\begin{enumerate}[(a)]
\setcounter{enumi}{2}
\item \label{assert:chamberSubsetBisign} The space $C \cap \h_s$ is a chamber of $\slngeo$ if and only if there exist indices $1 \le i,j \le \ceil*{\frac{n}{2}}$ such that
$$
\begin{aligned}
a_i &\geq  n+2-2i
a_j &< n+2-2j.
\end{aligned}
$$
This occurs if and only if $C$ has a positive level and a negative level.
\end{enumerate}
\end{theorem}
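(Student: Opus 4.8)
The plan is to treat the three assertions in order, with the main work being a careful analysis of how the hyperplane arrangement $\glngeo$ intersects the codimension-one subspace $\h_s$. The starting observation is that $\h_s$ is a hyperplane in $\h$ that is not itself one of the hyperplanes $\lambda^\perp$ of the arrangement (this is where $n \ge 3$ enters: for $n=2$ the weight $x_1+x_2$ has kernel exactly $\h_s$). Consequently $\h_s$ meets the interior of some chambers of $\glngeo$ transversally, and for such a chamber $C$, the set $C \cap \h_s$ is a nonempty relatively open convex polyhedral cone in $\h_s$ on which no weight vanishes; hence it is contained in a single chamber of $\slngeo$, and in fact equals one because the chambers of $\slngeo$ are exactly the connected components of $\h_s$ minus the traces of the hyperplanes. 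This gives a well-defined map from $\{C : C \cap \h_s \ne \emptyset\}$ to the chambers of $\slngeo$; I would show it is surjective (every chamber of $\slngeo$ is covered by at least one chamber of $\glngeo$, since $\h_s \setminus \bigcup \lambda^\perp \subseteq \h \setminus \bigcup \lambda^\perp$) and injective (two distinct chambers of $\glngeo$ are separated by some $\lambda^\perp$, and since neither meets $\lambda^\perp$, their intersections with $\h_s$ lie on opposite sides of the trace $\lambda^\perp \cap \h_s$, provided that trace is not all of $\h_s$ — again using $n \ge 3$ so that no weight is proportional to the signature functional $\sigma$). This proves \eqref{assert:chamberBiSignBij}.

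For \eqref{assert:chamberSubsetBisign}, I would translate the condition ``$C \cap \h_s \ne \emptyset$'' into the combinatorics of the box diagram. By \pref{prop:signFlows} and \pref{def:subsetToChamber}, the interior of $C$ consists of points $(x_1,\dots,x_n)$ with $x_1 > \cdots > x_n$ and with the signs of $x_i + x_j$ prescribed by $S$. The signature functional $\sigma = x_1 + \cdots + x_n$ restricted to $\overline{C}$ takes both signs on the interior of $C$ if and only if $\sigma$ is not constrained to be (weakly) one sign by the defining inequalities; the extreme rays of $C$ are the vectors $e_1^S, \dots, e_n^S$ of \pref{thm:chamberStructure}\eqref{assert:chamberRaysPrelim}, and $\sigma(e_\ell^S) = \pi_\ell^S - (\ell - \pi_\ell^S) = 2\pi_\ell^S - \ell$, which is exactly $s_{n-\ell+1} + \cdots + s_n$ in the characteristic-vector notation. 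Since $C$ is the nonnegative span of the $e_\ell^S$, the linear functional $\sigma$ takes both signs on the interior of $C$ precisely when some $e_\ell^S$ has positive signature and some $e_m^S$ has negative signature — that is, when $C$ has a positive level and a negative level. It then remains to unwind ``$2\pi_\ell^S - \ell > 0$ for some $\ell$'' and ``$< 0$ for some $\ell$'' into the stated inequalities $a_i \ge n+2-2i$ and $a_j < n+2-2j$; here one checks that $\pi_\ell^S$ jumps by one exactly at $\ell = n - a_i + 1$, so $2\pi_\ell^S > \ell$ for some $\ell$ iff $2i > n - a_i + 1$ for some $i$, i.e.\ $a_i \ge n + 2 - 2i$, and dually for the negative level; the bound $i, j \le \lceil n/2 \rceil$ is automatic since beyond that index the relevant inequality fails. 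I would also note the symmetric-chain/reflection symmetry $S \leftrightarrow S^c$ swaps positive and negative levels, which is a useful consistency check.

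For the count in \eqref{assert:chamberCount}, combining \eqref{assert:chamberBiSignBij} and \eqref{assert:chamberSubsetBisign} for $n \ge 3$ reduces the problem to counting subsets $S \subseteq [n]$ for which $C(S)$ has \emph{both} a positive and a negative level. By inclusion–exclusion on the complementary events, this is $2^n$ minus the number of $S$ with no positive level minus the number with no negative level plus the number with neither. A chamber has no positive level iff $2\pi_\ell^S \le \ell$ for all $\ell$, equivalently $|S \cap [n-\ell+1,n]| \le \ell/2$ for all $\ell$; I would recognize the count of such $S$ as a ballot-type number and identify it with $\binom{n}{\lfloor n/2 \rfloor}$ (e.g.\ via the standard reflection-principle bijection, or by noting it equals the number of lattice paths staying weakly below the diagonal, read off from the characteristic vector $s_n, s_{n-1}, \dots, s_1$). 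By the $S \leftrightarrow S^c$ symmetry the count of $S$ with no negative level is the same. Finally, an $S$ with neither a positive nor a negative level would force $2\pi_\ell^S = \ell$ for all $\ell$, which is impossible for $\ell$ odd, so this last term is $0$ for $n \ge 1$. This yields $2^n - 2\binom{n}{\lfloor n/2\rfloor}$ for $n \ge 3$, and the Kronecker-delta corrections $\delta_{n,0}+\delta_{n,1}+\delta_{n,2}$ handle the small cases $n=0,1,2$ by direct inspection (Figures~\ref{fig:SU2_Phases}, \ref{fig:YT_SU3}: one, one, and two chambers respectively, versus $2^n - 2\binom{n}{\lfloor n/2\rfloor}$ equal to $0, 0, 0$). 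The generating-function identity is then a routine manipulation using the well-known $\sum_n \binom{n}{\lfloor n/2 \rfloor} x^n = \frac{1}{x}\bigl(\frac{1}{\sqrt{1-4x^2}} \cdot \text{(something)}\bigr)$ — more precisely $\sum_{n\ge 0}\binom{n}{\lfloor n/2\rfloor}x^n = \dfrac{1-2x-\sqrt{1-4x^2}}{2x^2\cdot(\text{?})}$, which I would pin down and then simplify the resulting rational-plus-algebraic expression to the closed form stated.

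The main obstacle I anticipate is \eqref{assert:chamberSubsetBisign}: getting the index bookkeeping exactly right in passing between the three notations (sign tableau, subset $S = \{a_1 > \cdots > a_k\}$, characteristic vector $s$) and confirming that ``$\sigma$ changes sign on $C$'' is equivalent to ``$C$ has both a positive and a negative level'' — the latter requires knowing that $\sigma$ restricted to a simplicial cone takes both signs iff it is positive on some generator and negative on another, which is true but should be stated cleanly. The counting of ballot subsets in \eqref{assert:chamberCount} is standard but I would want to cite or reprove the identification with $\binom{n}{\lfloor n/2\rfloor}$ rather than leave it; and the final generating-function simplification, while purely mechanical, needs care with the $n=0,1,2$ boundary terms.
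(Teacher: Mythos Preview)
Your strategy coincides with the paper's: part~(a) via the bijection $C \mapsto C \cap \h_s$ from bi-signed $\glngeo$-chambers (\pref{prop:chamberBijection} and \pref{prop:bisignedEquivToGetChamber}), part~(c) via $\sigma(e_\ell^S) = 2\pi_\ell^S - \ell$ and translation into conditions on $S$, and part~(b) via inclusion--exclusion with the ballot-number identification of mono-signed chains as $\binom{n}{\lfloor n/2\rfloor}$ (\pref{prop:DyckWithNoRestrictedEndpt}). One slip to fix: your justification for the bound $i \le \lceil n/2\rceil$ is correct for the negative-level inequality $a_j < n+2-2j$ (which indeed fails for $j > \lceil n/2\rceil$), but reversed for the positive-level inequality $a_i \ge n+2-2i$, which is automatically \emph{satisfied} whenever $a_i$ exists with $i > \lceil n/2\rceil$ --- a short extra argument is needed there, and the paper sidesteps the issue by parametrizing via odd levels $\ell = 2i-1$ rather than via jump points.
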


\begin{figure}
\begin{multicols}{2}
\begin{minipage}{ \linewidth}
\centering{
{\raisebox{0cm}{  \includegraphics[scale=.58]{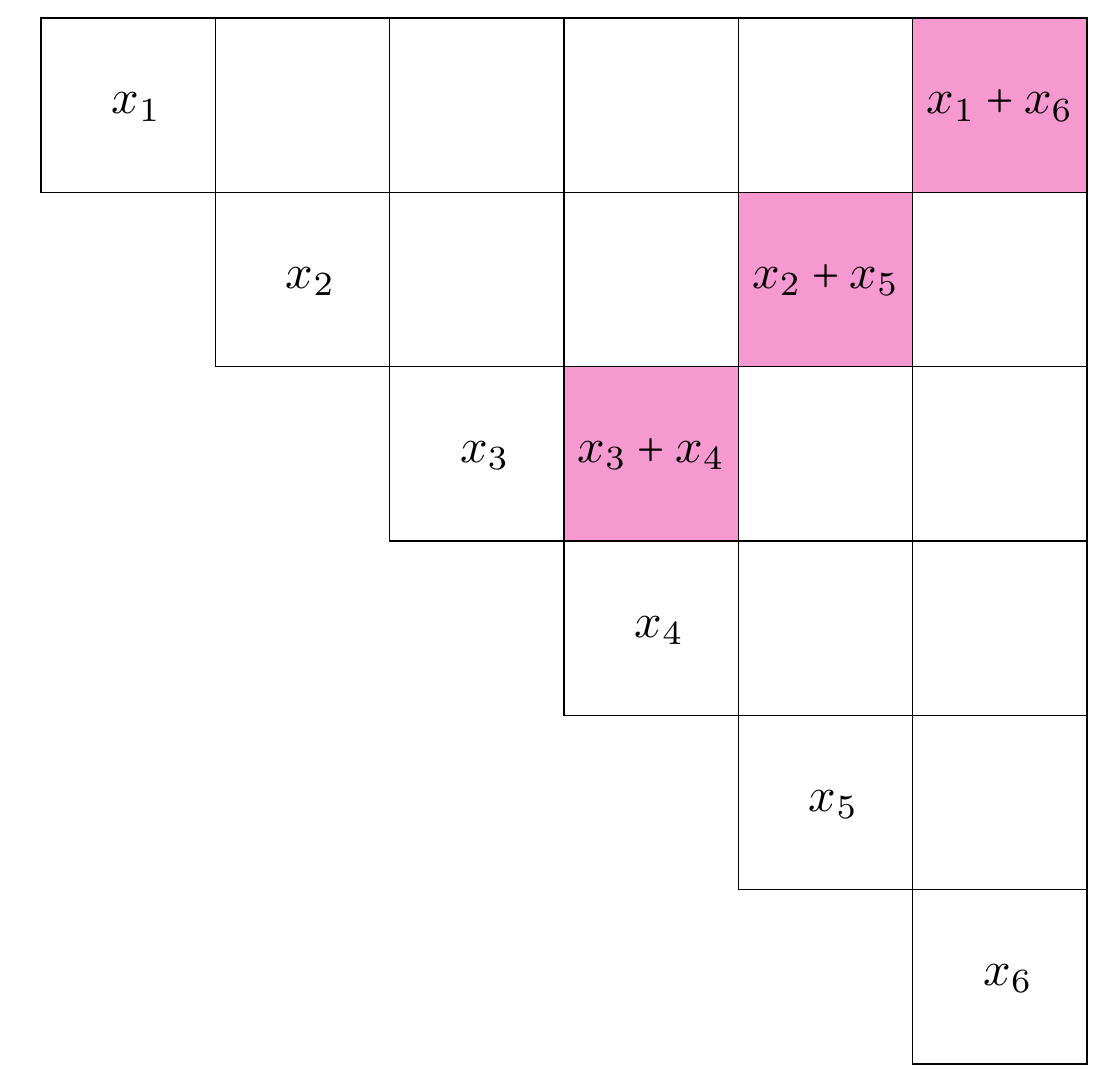}}}
}
\end{minipage}

\begin{minipage}{\linewidth}
 \centering{
  \includegraphics[scale=.55]{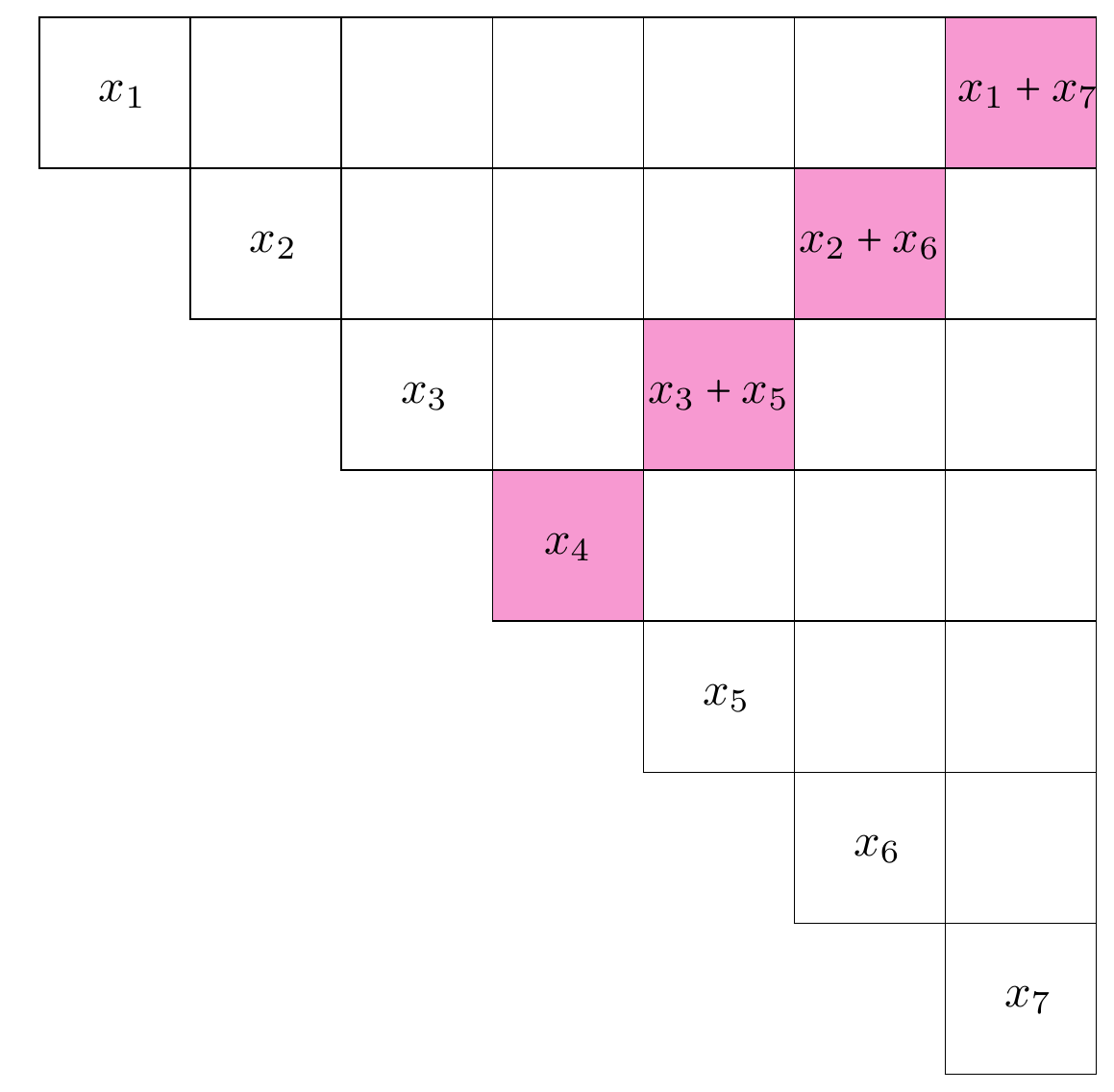}
  }
\end{minipage}
\end{multicols}
\caption{
{\bf Sign rules for chambers of $\slngeo$}.\pref{thm:chamberCount}\pref{assert:chamberSubsetBisign}.
For any $n\geq 3$, a tableau corresponds to a chamber of $\slngeo$ if and only if (1) it corresponds to a chamber of $\glngeo$ (it satisfies the sign rules discussed in Figure  \ref{fig:signTableau}); 
(2) The NE-SW diagonal (shaded in the figures) have entries of both positive and negative signs.
The second condition is clearly necessary since the  elements of the NE-SW diagonal add up to $x_1+\cdots+x_n$, which has to be zero. 
The condition is also sufficient as proven in \pref{thm:chamberCount}\pref{assert:chamberSubsetBisign}. 
The figure on the  left (resp.\ right) illustrate the NE-SW diagonal for  $n$ even  (resp.\ $n$ odd).  
 See for example \pref{fig:YT345}, \pref{fig:YT_SU6}, and \pref{fig:SimplicialSU6}. 
\label{fig:YToddEven}  }
\end{figure}

See \pref{fig:YToddEven} for a geometric formulation of \pref{thm:chamberCount}\pref{assert:chamberSubsetBisign}.

Theorem \ref{thm:chamberCount} is proved in  \pref{sec:chambers}.
We present a proof of \pref{thm:chamberCount}\pref{assert:chamberCount} that is similar to that of \cite{box} in order to explain the structure of the more delicate proofs of Theorems~\ref{thm:slnFaces} and~\ref{thm:slnFlats}.

\begin{remark}
\pref{thm:chamberCount}\pref{assert:chamberSubsetBisign} characterizes the chambers of $\slngeo$. It can be reformulated as follows. 
For $n$ even (resp.\ odd),  we denote by $\vec{n}$ the $\ceil*{\frac{n}{2}}$-vector   $(n,n-2, n-4, \cdots,  2)$ (resp.\ $\vec{n}=(n,n-2, \cdots, 1)$) whose components are  the first $\ceil*{\frac{n}{2}}$   even (resp.\ odd) positive integers rearranged in  decreasing order.   
We denote by   $\test{n}{S}$ the decreasing vector  whose components are  the first  $\ceil*{\frac{n}{2}}$ elements of $S$ rearranged in decreasing order and  padded with zeros at the end if $S$ has less than $\ceil*{\frac{n}{2}}$ elements.  
For $n\geq 3$, a subset $S\in 2^{[n]}$ corresponds to a chamber or $\slngeo$ if and only if 
  $$\test{n}{S} \not< \vec{n}\quad \text{ and } \quad \test{n}{S}\not\geq \vec{n},$$
componentwise.
\end{remark}

\begin{example}
If $n=5$, we identify the subsets that correspond to elements of $\slngeo$ by comparing  the vector formed by their  first $3$ bigger elements (padded with zeros at the end if necessary) with  
the vector  $\vec{5}=(5,3,1)$. The subset $\{5,4,2\}$ does not correspond to a chamber of  $\slngeoSm{5}$ since componentwise $(5,4,2)\geq (5,3,1)$. 
The subset   $\{5\}$ corresponds to a chamber of  $\slngeoSm{5}$ since $\test{5}{\{5\}}=(5,0,0)$ and $5\geq 5$ while $0<3$. 
The subset  $\{4,3,2,1\}$ corresponds to a chamber of  $\slngeoSm{5}$ since $\test{5}{\{4,3,2,1\}}=(4,3,2)$ and  $4\leq 5$ but $2>1$.
${}$
\end{example}

\begin{example} For  $\slngeoSm{6}$,  $\vec{6}=(6,4,2)$. 
The subset  $\{5,4,2\}$ does not correspond to a chamber of  $\slngeoSm{6}$ since $(5,4,2)\leq (6,4,2)$. 
 $\{5\}$ does not correspond to a chamber of  $\slngeoSm{6}$ since $(5,0,0)<(6,4,2)$.
The subset  $\{6,5\}$ does correspond to a chamber of  $\slngeoSm{6}$ since  $6\geq 6$ but $5>4$.
\end{example}

\begin{remark}\pref{thm:chamberCount}\pref{assert:chamberCount} can be derived directly from \pref{thm:slnFaces}. Recall that for $n \ge 1$, a chamber of $\slngeo$ is defined to be a $(n-1)$-face of $\slngeo$.  In particular, there are $b(n,n-1)$ chambers of $\slngeo$.  Using residues to compute an integral,  we obtain that
$$
\sum_{n=1}^\infty b(n,n-1)x^n = 
\underset{z=0}{Res}  
\frac{ x B(z,x z^{-1})}{z^2} =\frac{2 x^4+x^3+x^2+\sqrt{1-4 x^2}-1}{x (2 x-1)},
$$
and therefore recover \pref{thm:chamberCount}\pref{assert:chamberCount}.
\end{remark}

We denote by $f(n,k)$ the number of $k$-flats of $\slngeo$ and by
$$
F(x,y) = \sum_{n=0}^\infty \sum_{k=0}^\infty f(n,k)x^ny^k
$$
the corresponding generating function.  We will prove the following formula for $f(n,k)$.

\begin{theorem}
\label{thm:slnFlats}
The generating function counting flats of $\mathrm{I}(\mathfrak{sl}_n, V\oplus \bigwedge{}^2)$ by dimension is given by
$$
\begin{aligned}
F(x,y) = & \frac{2}{y(1-x)} + \frac{1 - x + 2 x^3 - 2 x^4 - x^5 + x^6 - 2 x^2 y + x^3 y + x^4 y - x^5 y}{(1-x)\left(1 - x^2 (1 + y)\right)^2}\\
&+ \frac{x - x^3 + x^4 - x^2 y + 2 x^3 y - x^4 y}{(1-x)(1 - 2 x + x^2 - 2 x y + 3 x^2 y - 2 x^3 y + x^2 y^2 - 2 x^3 y^2 + 
 x^4 y^2)}\\
&-\frac{8 \left(1 + x^2 - x y - x^3 y + x^4 y + x^2 y^2 + x^3 y^2 - x^3 y^3 + 
 x^4 y^3 - x^5 y^3 + x^6 y^3\right)}{y\left(\beta+x\sqrt{\gamma}+\sqrt{\zeta+\eta\sqrt{\gamma}}\right)\left(\beta-x\sqrt{\gamma}+\sqrt{\zeta-\eta\sqrt{\gamma}}\right)}
\end{aligned}
$$
in the fraction field of $R$, where $\beta,\gamma,\zeta,\eta \in R$ are polynomials defined as
$$
\begin{aligned}
\beta &= 1 - x + x^2 - x y - x^2 y - x^3 y + x^2 y^2 - x^3 y^2 + x^4 y^2\\
\gamma &= 1 - 2 y - 2 x^2 y + y^2 + 8 x^2 y^2 + x^4 y^2 - 2 x^2 y^3 - 2 x^4 y^3 + x^4 y^4\\
\zeta &= 1 + x^4 - 2 x^2 y - 2 x^4 y - 7 x^4 y^2 - 2 x^4 y^3 - 2 x^6 y^3 + x^4 y^4 + x^8 y^4\\
\eta &= 
x^2 (1+y)(1+x^2 y)
\end{aligned}
$$
\end{theorem}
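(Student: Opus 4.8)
The plan is to reduce the enumeration of $k$-flats of $\slngeo$ to an enumeration of ensembles of $\posetz_n$ constrained by the diagonal, and then to evaluate the resulting generating function; the two genuinely hard pieces get isolated as Propositions~\ref{prop:A0} and~\ref{prop:C0}. Write $\sigma=x_1+\dots+x_n$ for the signature functional, so $\h_s=\sigma^\perp$ and $\slngeo$ is the restriction of $\glngeo$ to $\h_s$. In \pref{sec:relateSlnFacesAndFlatsToGln} I would study the map $\Phi\colon\mathfrak{L}\to\mathfrak{L}_s$, $F'\mapsto F'\cap\h_s$. For a flat $G$ of $\slngeo$ set $F'_G=\bigcap_{\lambda\in\weights_0(G)}\lambda^\perp\cap W$; one checks that $F'_G$ is a flat of $\glngeo$ with $\Phi(F'_G)=G$, that $G\mapsto F'_G$ is injective, and that $\dim F'_G=\dim G$ if $F'_G\subseteq\h_s$ and $\dim G+1$ otherwise. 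The structural point is to characterize the image of $G\mapsto F'_G$: a flat $F'$ of $\glngeo$ equals $F'_{\Phi(F')}$ if and only if either $F'\subseteq\h_s$, or no weight of $\weights$ restricts to a nonzero multiple of $\sigma$ on the linear span of $F'$ (indeed, if some $\lambda\in\weights$ has $\lambda|_{\langle F'\rangle}=c\,\sigma|_{\langle F'\rangle}$ with $c\ne0$, then $\lambda$ vanishes on $F'\cap\h_s$ but not on $F'$, so $F'_{\Phi(F')}\subsetneq F'$; the converse is a dimension count). This yields $f(n,k)=a(n,k)+c(n,k)$, where $a(n,k)$ counts $k$-flats of $\glngeo$ contained in $\h_s$ and $c(n,k)$ counts $(k+1)$-flats $F'$ of $\glngeo$ on whose linear span no weight restricts to a nonzero multiple of $\sigma$ (such $F'$ automatically fail to lie in $\h_s$).

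Next I would pass to ensembles via \pref{thm:Characterization.Flats}: a $k$-flat $F'$ corresponds to the $k$-ensemble $E=\vec{v}^{-1}\circ\erays(F')$. Since the linear span of $F'$ is spanned by its extreme rays and $\ray{a,b}$ has signature $a-b$, one has $F'\subseteq\h_s$ if and only if $E$ lies in the diagonal $\Delta=\{(c,c)\}\subset\posetz_n$; unwinding \pref{def:Ensemble}, the ensembles contained in $\Delta$ are exactly the sets $\{(c_1,c_1),\dots,(c_k,c_k)\}$ with $1\le c_1<\dots<c_k$ and $2c_k<n$, so $a(n,k)=\binom{\lceil n/2\rceil-1}{k}$ for $n\ge1$. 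For $c(n,k)$ one must translate ``no weight restricts to a nonzero multiple of $\sigma$ on $\langle F'\rangle$'' into an intrinsic condition on $E$: evaluating each weight $x_i$ and $x_i+x_j$ on the generators $\ray{a,b}$, $(a,b)\in E$, turns this into an explicit combinatorial condition governing how $E$ may meet $\Delta$ and the two boundary rays of $\posetz_n$. With that condition in hand, $c(n,k)$ becomes the number of $(k+1)$-ensembles of $\posetz_n$ satisfying it, so that $F(x,y)=A(x,y)+C(x,y)$ with $A(x,y)=\sum a(n,k)x^ny^k$ and $C(x,y)=\sum c(n,k)x^ny^k$; the $1/y$ prefactors in the statement reflect the index shift $k\mapsto k+1$ coming from $c$, and a handful of low-degree terms (in particular $n\le2$) need separate bookkeeping.

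Finally I would compute the two series. Summing the binomials, $A$ is a geometric series in $x^2(1+y)$, which is the source of the $1-x^2(1+y)$ factors. For $C$ I would set up a transfer-type recursion that builds an admissible $(k+1)$-ensemble level by level along $\posetz_n$, recording separately its behavior on, above, and below $\Delta$; solving the recursion produces a square root from a Catalan-type subproblem, and the two ``sides'' of $\Delta$ contribute the nested radical $\sqrt{\zeta\pm\eta\sqrt{\gamma}}$. Rather than grinding through that algebra, I would package the two closed forms needed as Propositions~\ref{prop:A0} and~\ref{prop:C0}, prove them in the appendices, then assemble $F(x,y)=A(x,y)+C(x,y)$, simplify over the fraction field of $R$, and invoke the uniqueness clause of Hensel's Lemma to certify the radicals. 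The main obstacle is precisely this last stage for $C$: extracting an \emph{algebraic} (rather than rational, as for $\glngeo$) generating function requires carefully tracking the diagonal-crossing constraint through the recursion, and showing that the two propositions combine to the stated nested-radical formula is where the real work lies — the bijective reductions of the first two steps are comparatively routine. (The same two propositions, applied to chains rather than ensembles, drive the parallel proof of \pref{thm:slnFaces}.)
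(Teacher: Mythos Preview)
Your bijective framework via $G\mapsto F'_G$ is exactly the paper's map $\tau$, and the split into ``$F'_G\subseteq\h_s$'' versus ``$\dim F'_G=\dim G+1$'' is correct. However, your characterization of the image has a genuine error. You claim that a non-null flat $F'$ lies in the image if and only if no weight restricts to a nonzero multiple of $\sigma$ on $\langle F'\rangle$, and that ``the converse is a dimension count.'' This fails for \emph{mono-signed} non-null flats. Take $n=4$ and the $3$-flat $F'$ with extreme rays $\vec v(1,0),\vec v(2,0),\vec v(3,0)$: then $\langle F'\rangle=\{(a,b,c,0)\}$ and $\sigma|_{\langle F'\rangle}=a+b+c$, and one checks that no weight $x_i$ or $x_i+x_j$ restricts to a nonzero multiple of $a+b+c$. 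Yet $F'\cap\h_s=\{0\}$, so $F'_{\Phi(F')}=\{0\}\ne F'$. The hidden assumption in your ``dimension count'' is that $\langle F'\cap\h_s\rangle=\langle F'\rangle\cap\h_s$; this holds when $F'^{\,0}$ meets $\h_s$ (the bi-signed case) but not otherwise.

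The paper repairs this by a three-way case split on $\sigma(F')$: mono-signed non-null flats are shown never to lie in the image (directly, not via your weight criterion), null flats always do, and among bi-signed flats the image is characterized by a further condition the paper calls \emph{unbalanced} (a $(k{+}1)$-flat is balanced when it has exactly $k$ extreme rays on the diagonal; the balanced bi-signed flats are precisely those whose intersection with $\h_s$ is already a $\glngeo$-flat, hence not minimal). This gives $f(n,k)=|\Ens_{\text{null}}(n,k)|+|\Ens_{\text{bi,unbal}}(n,k{+}1)|$. The computation then proceeds by inclusion--exclusion rather than a direct transfer recursion: bi-signed unbalanced $=$ all $-$ mono-signed $-$ bi-signed balanced, where ``all'' is the known $H$ from the $\glngeo$ paper, ``mono-signed'' $=2\cdot(\text{non-positive})-\text{null}$, and the balanced bi-signed ensembles are counted by a separate explicit argument. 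The only genuinely hard piece is the generating function $C_0$ for ensembles with all signatures $\le 0$, which is where the kernel-method machinery and the nested radicals enter; Proposition~\ref{prop:A0} (about \emph{chains}, not ensembles) is used for faces, not flats.

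Two smaller points: your null-ensemble count $\binom{\lceil n/2\rceil-1}{k}$ is off --- the $B_k=\infty$ clause in the ensemble definition allows the top diagonal point when $n$ is even, so the correct count is $\binom{\lfloor n/2\rfloor}{k}$; and your parenthetical that admissible $F'$ ``automatically fail to lie in $\h_s$'' is true but for a reason you have not stated (on $\h_s$ every weight in $\weights_0(F')$ is trivially $c\cdot 0$).
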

Theorem \ref{thm:slnFlats}  is proved in \pref{app:proofsOfSlnFacesAndFlats}.

\subsection{Extreme rays and simplicial chambers of \slngeoForTitle}
\label{sec:extremeRayStatements}

First, we generalize the function $\rayFn$ to the case of $\slngeo$.

\begin{definition}
Let $\mathfrak{B}$ be  the set of ordered pairs of elements of $\posetz_n$ with  distinct (non-vanishing) signature
$$
\mathfrak{B} = \{(p,q) \in \left(\posetz_n\right)^2 \mid p < q \text{ and } \sigma(p)\sigma(q) < 0\},
$$
and let $\mathfrak{N}$ denote the set of elements of $\posetz_n$ of signature zero.
We refer to elements of $\mathfrak{N}$ as \emph{null vectors} and to elements of $\mathfrak{B}$ as \emph{bisigned pairs}.
The function $\rayFn: \mathfrak{B} \to \h_s$ associates to each pair $(p,q)\in \mathfrak{B}$, the unique convex combination of $\ray{p}$ and $\ray{q}$ with signature zero. 
For $(p,q) \in \mathfrak{B}$, the vector $\ray{p,q}$ is defined as follows
\begin{align*}
\ray{p,q} = & |\sigma(q)| \ray{p}+|\sigma(p)|\ray{q}\\
=& (\underbrace{\mu,\ldots,\mu}_a,\underbrace{\nu,\ldots,\nu}_{c-a},\underbrace{0,\ldots,0}_{n-c-d},\underbrace{-\nu,\ldots,-\nu}_{d-b},\underbrace{-\mu,\ldots,-\mu}_b),
\end{align*}
where $p=(a,b)$, $q=(c,d)$, $\mu =|a-b|+|c-d|$ and $\nu=|a-b|$. 
\end{definition}

If $E$ is any subset of $\posetz_n$ and $(p,q)\in\mathfrak{B}$, we abuse notation and write $(p,q) \in E$ to denote that $p \in E$ and $q \in E$.  The analogue of \pref{thm:chamberStructure}\pref{assert:chamberRaysPrelim} for $\slngeo$ is the following result.

\begin{theorem}
\label{thm:extremeRaysOfSln}
The function $\rayFn$ induces a bijection from $\mathfrak{N} \sqcup \mathfrak{B}$ to the set of extreme rays of $\slngeo$.
\end{theorem}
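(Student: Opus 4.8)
\textbf{Proof proposal for Theorem \ref{thm:extremeRaysOfSln}.}

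The plan is to reduce the statement to the known structure of $\glngeo$ via the slicing $F \mapsto F \cap \h_s$, combined with \pref{thm:chamberStructure}\pref{assert:chamberRaysPrelim} and the sign-level analysis of \pref{thm:chamberCount}. First I would recall that every face of $\slngeo$ is of the form $G \cap \h_s$ for a face $G$ of $\glngeo$ (a fact that follows from $\weights$ being naturally a multiset in $\h_s^*$, exactly as in \pref{thm:chamberCount}\pref{assert:chamberBiSignBij}), so it suffices to identify the extreme rays of each chamber $C(S) \cap \h_s$ and then check global injectivity. Fix a chamber $C = C(S)$ of $\glngeo$ whose intersection with $\h_s$ is a chamber of $\slngeo$; by \pref{thm:chamberStructure}\pref{assert:chamberRaysPrelim} the cone $C$ is the nonnegative span of $e_1^S, \ldots, e_n^S$, and by the discussion after \pref{thm:chamberCount} each $e_k^S$ has signature equal to the sign of the $k$th level of $C$. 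The cone $C \cap \h_s$ is then the intersection of the simplicial cone $\sum_k \mathbb{R}_{\ge 0} e_k^S$ with the hyperplane $\sigma = 0$.

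The key computation is the standard description of the extreme rays of a simplicial cone sliced by a hyperplane: writing $N$, $P$, $M$ for the index sets of null, positive, and negative levels of $C$, the extreme rays of $C \cap \h_s$ are exactly (i) the rays $\mathbb{R}_{\ge 0} e_k^S$ for $k \in N$, and (ii) for each pair $(i,j) \in P \times M$, the ray spanned by the unique signature-zero convex combination $|\sigma(e_j^S)| e_i^S + |\sigma(e_i^S)| e_j^S$. This is because a face of the sliced cone is cut out by setting some subset of the coordinates $e_k^S$ to zero, and for the result to be one-dimensional and meet $\sigma = 0$ nontrivially one must keep either a single null generator or exactly one positive and one negative generator. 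Translating through the bijection $\vec{v}$ of \pref{thm:rayStruture}: $e_k^S$ corresponds to $(\pi_k^S, k - \pi_k^S) \in \posetz_n$, which lies in $\mathfrak{N}$ precisely when $k$ is a null level, and lies in $P$ (resp.\ $M$) precisely when its signature is positive (resp.\ negative); since the $e_k^S$ for $k$ in a fixed chamber form a maximal chain in $\posetz_n$, the pairs arising in (ii) are exactly comparable pairs $(p,q)$ with $\sigma(p)\sigma(q) < 0$, i.e.\ elements of $\mathfrak{B}$, and the convex combination above is by definition $\ray{p,q}$. So $\rayFn$ indeed surjects onto the set of extreme rays, and the extreme rays lying in $C(S) \cap \h_s$ are precisely the $\ray{p}$ with $p \in \erays(C(S)) \cap \mathfrak{N}$ together with the $\ray{p,q}$ with $(p,q) \in \erays(C(S))$, $(p,q) \in \mathfrak{B}$.

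For injectivity I would argue as follows. Two distinct elements of $\mathfrak{N}$ give distinct rays already in $\glngeo$ by \pref{thm:rayStruture}, and such a ray lies in the coordinate hyperplane pattern determined by its support, so it cannot coincide with any $\ray{p,q}$, which has generic-sign support on both the positive and negative ends. Finally, to separate two bisigned pairs $(p,q) \ne (p',q')$ I would read off $p$ and $q$ from the vector $\ray{p,q} = (\mu,\ldots,\mu,\nu,\ldots,\nu,0,\ldots,0,-\nu,\ldots,-\nu,-\mu,\ldots,-\mu)$: the block lengths recover $a, c, d, b$ and hence both $p = (a,b)$ and $q = (c,d)$ uniquely (using $\mu > \nu > 0$, which holds since $p < q$ forces the level of $q$ to strictly exceed that of $p$ while signs differ, so $|c - d| > 0$). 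The main obstacle I anticipate is the first reduction step — carefully justifying that slicing faces of $\glngeo$ by $\h_s$ yields \emph{all} faces of $\slngeo$, and that extreme rays of $C \cap \h_s$ are obtained by the naive "slice a simplicial cone" recipe rather than some degenerate configuration — but since each $C$ is simplicial by \pref{thm:chamberStructure}\pref{assert:chamberRaysPrelim} and $\h_s$ is a single hyperplane in general position relative to the generators (no $e_k^S$ lies in $\sigma = 0$ unless $k \in N$, in which case it lies in the slicing hyperplane exactly), this recipe is exact and the bookkeeping is the only real work.
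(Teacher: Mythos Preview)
Your approach is geometrically more direct than the paper's and is essentially sound, but it has one real gap and a couple of loose ends.

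\textbf{Comparison with the paper.} The paper does not slice chambers. It builds a general injection $\psi\colon \mathfrak{F}_s \to \mathfrak{F}$ sending each face of $\slngeo$ to the smallest face of $\glngeo$ containing it, and proves (\pref{prop:bijectForFaces}) that $\indFn \circ \erays \circ \psi$ is a bijection from the set of $k$-faces of $\slngeo$ to $\Chn_{\text{null}}(n,k) \cup \Chn_{\text{bi}}(n,k+1)$. Specializing to $k=1$ and identifying $\Chn_{\text{null}}(n,1)\cong\mathfrak{N}$, $\Chn_{\text{bi}}(n,2)\cong\mathfrak{B}$ gives the theorem immediately. Your route trades that machinery for the concrete ``slice a simplicial cone by a hyperplane'' computation; this is more elementary and has the pleasant side-effect of proving \pref{thm:chamberRays}(b)--(d) simultaneously, whereas the paper deduces those from \pref{thm:extremeRaysOfSln} afterward. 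The cost is that your argument goes through chambers and so inherits the restriction $n\ge 3$ from \pref{thm:chamberCount}\pref{assert:chamberBiSignBij}, while the paper's $\psi$ works uniformly in $n$.

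\textbf{The gap.} You establish that $\rayFn$ is injective and that its image \emph{contains} every extreme ray of $\slngeo$, but you never check that the image is \emph{contained in} the set of extreme rays. Without this you only have an injection whose image might properly contain the extreme-ray set, not a bijection onto it. The fix is short. For $(p,q)\in\mathfrak{B}$, extend the $2$-chain $\{p,q\}$ to an $n$-chain; the corresponding chamber $C$ of $\glngeo$ is automatically bi-signed, and your slicing recipe then exhibits $\ray{p,q}$ as an extreme ray of $C\cap\h_s$. For $p\in\mathfrak{N}$ one similarly needs a bi-signed maximal chain through $p$, which one can write down for $n\ge 3$ (and $n\le 2$ is handled by hand). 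A cleaner alternative, closer in spirit to the paper, avoids chambers entirely: for $p\in\mathfrak{N}$ the ray $\ray{p}$ is a $1$-face of $\glngeo$ lying inside $\h_s$, hence a $1$-face of $\slngeo$; for $(p,q)\in\mathfrak{B}$ the $2$-face $F$ of $\glngeo$ with $\erays(F)=\{\ray{p},\ray{q}\}$ (which exists by \pref{thm:Characterization.Faces}) meets $\h_s$ in its relative interior, so $F\cap\h_s$ is the ray through $\ray{p,q}$ and is a $1$-face of $\slngeo$.

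\textbf{Two smaller points.} Your justification of $\mu>\nu>0$ invokes levels, which is irrelevant: what you need is simply $\nu=|\sigma(p)|>0$ and $\mu-\nu=|\sigma(q)|>0$, both immediate from $\sigma(p)\sigma(q)<0$. And the block-reading argument for injectivity on $\mathfrak{B}$ is delicate when some block lengths vanish (e.g.\ $a=0$ or $c=a$); a cleaner route is to note that $\ray{p,q}$ lies in the relative interior of the $2$-face of $\glngeo$ corresponding to the $2$-chain $\{p,q\}$, so the smallest $\glngeo$-face containing $\ray{p,q}$ recovers $\{p,q\}$ via \pref{thm:Characterization.Faces}.
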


Theorem \ref{thm:extremeRaysOfSln} is proved in \pref{app:extremeRaysAndConseqs}.
We will now apply   \pref{thm:extremeRaysOfSln} to obtain results regarding the extreme rays of a chamber of $\slngeo$.

\begin{theorem}
\label{thm:chamberRays}
The chambers of $\slngeo$ satisfy the following properties.
\begin{enumerate}[(a)]
\item \label{assert:countSimplicialChambers} 
The number of simplicial chambers in $\slngeo$ is
$$
2\binom{n-1}{\floor*{\frac{n-1}{2}}}-\delta_{n,1}-\delta_{n,2}-2\delta_{n,3}-2\delta_{n,4}
$$
It follows that the generating function for the number of simplicial chambers of $\slngeo$ is 
$$
\frac{4 x^5+2 x^4+3 x^2+\sqrt{1-4 x^2}-1}{1-2x}
$$
\end{enumerate}
Suppose that $C$ is a chamber of $\glngeo$ with characteristic vector $s = (s_1,\ldots,s_n) \in \{-1,1\}^n$.
 Assume that $n \ge 3$ and that $C \cap \h_s$ is a chamber of $\slngeo$.  Let $\ell_+$ (resp.\ $\ell_-$,$\ell_0$) denote the number of positive (resp.\ negative, null) levels for $C$. 
  Let $e_k$ denote the extreme ray of $C$ at level $k$. 
\begin{enumerate}[(a)]
\setcounter{enumi}{1}
\item \label{assert:nullRay} Let $1 \le i \le \floor*{\frac{n}{2}}$.  The ray $\ray{i,i}$ lies in $C \cap \h_s$ if and only if $\ray{i,i}$ is an extreme ray of $C$.  This occurs if and only if $2i$ is a null level for $C$.
\item \label{assert:nonNullRay} Let $(p_1,p_2) \in \mathfrak{B}$.  The ray $\ray{p_1,p_2}$ lies in $C \cap \h_s$ if and only if $\ray{p_1}$ and $\ray{p_2}$ are extreme rays of $C$.  This occurs if and only if the equality
$$
\sigma (e_{\ell_i})=
 \sigma(p_i),
$$
holds for $i = 1,2$ and $\ell_i=\ell(p_i)$.
\item \label{assert:extremeRayCount} The number of extreme rays of $C$ is $\ell_+\cdot\ell_- + \ell_0$.  In particular, $C \cap \h_s$ is simplicial if and only if $\min\{\ell_+,\ell_-\} = 1$.
\end{enumerate}
\end{theorem}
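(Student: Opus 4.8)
The plan is to deduce everything from \pref{thm:extremeRaysOfSln}, which identifies the extreme rays of $\slngeo$ with $\mathfrak{N} \sqcup \mathfrak{B}$, together with \pref{thm:rayStruture}, which says that the extreme rays of a chamber $C = C(S)$ of $\glngeo$ are exactly the $\ray{\pi_\ell^S, \ell - \pi_\ell^S}$ for $1 \le \ell \le n$. The first step is to record the dictionary between the "levels'' of Definition following \pref{thm:slnFaces} and the signatures of the extreme rays $e_k$ of $C$: by the displayed formula for $e_\ell^S$ in \pref{thm:chamberStructure}\pref{assert:chamberRaysPrelim}, the signature of $e_k$ is $\pi_k^S - (k - \pi_k^S) = 2\pi_k^S - k$, and $\pi_k^S = |S \cap [n-k+1,\infty)| = \tfrac12\big((s_{n-k+1}+\cdots+s_n) + k\big)$ in terms of the characteristic vector $s$; hence $\sigma(e_k) = s_{n-k+1}+\cdots+s_n$, which is the content of \pref{prop:signOfaLevelMeaning} (referenced in the excerpt). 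So "$k$ is a positive/negative/null level'' is literally "$\sigma(e_k)$ is positive/negative/zero,'' and $\ell_+ + \ell_- + \ell_0 = n$.

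For parts \pref{assert:nullRay} and \pref{assert:nonNullRay}, the key observation is that a ray of $\slngeo$ that lies in the closed chamber $\overline{C \cap \h_s}$ must in particular lie in $\overline{C}$, hence (being an extreme ray of the simplicial cone $C$ cut down by the single linear condition $\sigma = 0$) must either be one of the $e_k$ with $\sigma(e_k) = 0$, or be the intersection of $\{\sigma = 0\}$ with a $2$-face of $C$ spanned by two $e_k$'s of opposite signature. In the first case $e_k = \ray{i,i}$ with $k = 2i$, giving \pref{assert:nullRay}; conversely if $2i$ is a null level then $e_{2i}^S = \ray{i,i} \in \mathfrak{N}$ already lies in $\h_s$, and it is extreme in $C \cap \h_s$ because it is extreme in $C$. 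In the second case, write the two generating rays as $\ray{p_1}$, $\ray{p_2}$ with $p_1 < p_2$ in $\posetz_n$ (they are comparable since the $e_k$ form a chain by \pref{thm:rayStruture}) and $\sigma(p_1)\sigma(p_2) < 0$, so $(p_1,p_2) \in \mathfrak{B}$ and $\ray{p_1,p_2}$ is exactly the unique signature-zero convex combination, i.e. the ray in question; conversely, for $(p_1,p_2) \in \mathfrak{B}$ the ray $\ray{p_1,p_2}$ lies in $C \cap \h_s$ iff both $\ray{p_1}, \ray{p_2}$ are extreme rays of $C$, i.e. iff $p_i = (\pi_{\ell_i}^S, \ell_i - \pi_{\ell_i}^S)$ for $\ell_i = \ell(p_i)$, which by the signature computation above is equivalent to $\sigma(e_{\ell_i}) = \sigma(p_i)$ for $i = 1,2$. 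One must also check that no spurious extreme ray of $C \cap \h_s$ arises from a higher-dimensional face of $C$ meeting $\{\sigma=0\}$: a face of dimension $\ge 3$ intersected with a hyperplane through the origin has dimension $\ge 2$, so contributes no new extreme ray — this is the one genuinely geometric point and I expect it to be the main obstacle, handled by the observation that $C$ is simplicial so its faces are spanned by sub-chains of $\{e_1,\dots,e_n\}$, and $\{\sigma = 0\}$ is a single hyperplane.

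Part \pref{assert:extremeRayCount} then follows by counting: the extreme rays of $C \cap \h_s$ are the $\ell_0$ null rays $\ray{i,i}$ together with one ray $\ray{p_1,p_2}$ for each pair consisting of a positive-level $e_k$ and a negative-level $e_{k'}$ — and every such unordered pair $\{k, k'\}$ does occur, since the corresponding $p_1 = (\pi_k^S, k - \pi_k^S)$, $p_2 = (\pi_{k'}^S, k' - \pi_{k'}^S)$ automatically satisfy $p_1 < p_2$ or $p_2 < p_1$ (chain property) and have opposite signature by construction. That gives $\ell_+ \cdot \ell_- + \ell_0$ extreme rays. Since $\slngeo$ sits in an $(n-1)$-dimensional space, $C \cap \h_s$ is simplicial iff it has exactly $n-1$ extreme rays, i.e. $\ell_+\ell_- + \ell_0 = n - 1 = \ell_+ + \ell_- + \ell_0 - 1$, i.e. $(\ell_+ - 1)(\ell_- - 1) = 0$; since both are $\ge 1$ (the chamber condition \pref{thm:chamberCount}\pref{assert:chamberSubsetBisign}), this is $\min\{\ell_+,\ell_-\} = 1$. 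Finally, for \pref{assert:countSimplicialChambers}, I would count characteristic vectors $s \in \{-1,1\}^n$ whose partial sums $T_k := s_{n-k+1}+\cdots+s_n$ satisfy $\min\{\ell_+,\ell_-\} = 1$: say $\ell_- = 1$, meaning exactly one $k$ has $T_k < 0$; a short lattice-path argument (the sequence $T_1, \dots, T_n$ is a $\pm1$ walk from $0$, with $T_0 = 0$, and we want it to dip strictly below $0$ exactly once) shows the walk must touch $-1$ at a single step and otherwise stay $\ge 0$, and enumerating these via a reflection/ballot-type count yields $\binom{n-1}{\lfloor (n-1)/2\rfloor}$ for each of $\ell_-=1$ and $\ell_+=1$; subtracting the overcount where both hold (which forces small $n$, accounting for the $\delta$ corrections at $n = 1,2,3,4$) gives $2\binom{n-1}{\lfloor(n-1)/2\rfloor} - \delta_{n,1} - \delta_{n,2} - 2\delta_{n,3} - 2\delta_{n,4}$, and the stated generating function follows from the standard identity $\sum_n \binom{n-1}{\lfloor(n-1)/2\rfloor} x^n = \tfrac{1 - \sqrt{1-4x^2}}{2(1-2x)x}\cdot(\text{adjustment})$ together with polynomial corrections, which I would verify by extracting the residue exactly as in the remark following \pref{thm:chamberCount}.
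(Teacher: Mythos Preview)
Your proposal is correct. For parts \pref{assert:nullRay}, \pref{assert:nonNullRay}, and \pref{assert:extremeRayCount} you take a more direct geometric route than the paper: the paper first proves a general statement about arbitrary faces of $\slngeo$ (\pref{thm:faceRays}) by building the injection $\psi:\mathfrak{F}_s\to\mathfrak{F}$ and the poset isomorphism of \pref{prop:bijectForFaces}, and only then specializes to chambers; you instead analyze the slice of the simplicial cone $C$ by the single hyperplane $\{\sigma=0\}$ directly, observing that extreme rays of the slice are precisely the null $e_k$'s together with the $\{\sigma=0\}$-intersections of the $2$-faces $\vecspan\{e_k,e_{k'}\}$ with $\sigma(e_k)\sigma(e_{k'})<0$. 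Your argument is more elementary and self-contained for chambers, while the paper's buys the uniform statement for all faces at once. One point worth tightening: you use implicitly that an extreme ray of $\slngeo$ contained in $C\cap\h_s$ is automatically an extreme ray \emph{of} $C\cap\h_s$ (and conversely); this is true because $1$-faces of the arrangement contained in a closed chamber are $1$-faces of that chamber, but it deserves a sentence.

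For part \pref{assert:countSimplicialChambers} your approach coincides with the paper's: both count characteristic vectors with $\ell_-=1$ (equivalently, $\pm1$ walks of length $n$ dipping below zero exactly once) via the bijection that deletes the unique step at which the partial sum hits $-1$, yielding a nonnegative walk of length $n-1$ counted by $\binom{n-1}{\lfloor (n-1)/2\rfloor}$ (\pref{prop:DyckWithNoRestrictedEndpt}). Your explanation of the $\delta$-corrections as overlaps where $\ell_+=\ell_-=1$ is correct for $n=3,4$; the corrections at $n=1,2$ arise instead because the bi-signed hypothesis of \pref{thm:chamberCount}\pref{assert:chamberSubsetBisign} fails there, and the paper simply handles $n\le 4$ by direct inspection.
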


Theorem  \ref{thm:chamberRays}(\ref{assert:countSimplicialChambers})  is proven in  \pref{app:simplicialChambers} and the rest of theorem is proven in \pref{app:extremeRaysOfFacesAndChambers}.

\begin{table}
\begin{center}
\scalebox{.9}{
\begin{tabular}{ c| c |c}
 Chamber $C$  &Extr. rays of  $C$ $\glngeoSm{6}$ & Extr. rays  of $C\cap\h_s$ in $\slngeoSm{6}$\\
\hline 
\raisebox{-1cm}{\includegraphics[scale=.6]{YT_6_65}}
&

\begin{tabular}{p{3.2cm}c}
\raisebox{-1.5cm}{ \includegraphics[scale=.8]{ER_6_65}}
&
$
\begin{aligned}
\ray{1,0}\\
\ray{2,0} \\
 \ray{2,1} \\
  \ray{2,2}\\ 
   \ray{2,3}\\
    \ray{2,4}
\end{aligned}
$
\end{tabular}
& 
$
\begin{aligned}
\ray{2,2} &= (1,1,\  \   0,\  \   0,-1,-1)\\
\ray{(1,0),(2,3)} &= (2,1,\  \   0,-1,-1,-1)\\ 
\ray{(1,0),(2,4)} &=(3,1,-1,-1,-1,-1)\\
\ray{(2,0),(2,3)} &=(3,3,\  \   0,-2,-2,-2)\\
\ray{(2,0),(2,4)} &=(2,2,-1,-1,-1,-1)\\
\ray{(2,1),(2,3)} &= (2,2,\  \   0,-1,-1,-2)\\ 
\ray{(2,1),(2,4)} &=(3,3,-1,-1,-1,-3)
\end{aligned}
$
\end{tabular}
}
\end{center}
\caption{  
{\bf Example of  a non-simplicial chamber of $\slngeoSm{6}$}.  
The chamber $C$ defined by the subset $\{6,5\}$ of $\{1,2,\ldots, 6\}$ is simplicial as all the chambers of  
$\glngeoSm{6}$. Its extreme rays are listed in the second column of the table. 
Its intersection of $C$ with the hyperplane $ \h_s:x_1+\cdots+x_6=0$ determines a chamber $C\cap \h_s$ of $\slngeoSm{6}$. 
The chamber $C\cap \h_s$ is {\bf not simplicial} as discussed in Example \ref{Example:ER2} on page  \pageref{Example:ER2}.
The extreme rays of $C\cap \h_s$ as a chamber of $\slngeoSm{6}$ are listed on the third column and their construction is discussed in Example \ref{Example:ER2}.
\label{Table:C65}}
\end{table}

\begin{remark}
For $n \le 5,$ all chambers of $\slngeo$ are simplicial, so that $\slngeo$ is simplicial.  Non-simplicial chambers exist for $n \ge 6$.  This can be verified directly for $n \le 2$, and follows from \pref{thm:chamberCount}\pref{assert:chamberCount} and \pref{thm:chamberRays}\pref{assert:countSimplicialChambers} for $n \ge 3$.
\end{remark}
\begin{example}\label{Example:ER2}
We give an explicit example of a non-simplicial chamber of $\slngeoSm{6}$. Consider the chamber $C$ defined by the subset $\{6,5\}$ of $\{1,2,\ldots, 6\}$ or equivalently by the characteristic vector 
 $(-,-,-,-,+,+)$. See Table \ref{Table:C65}. 
Note that $C$ has positive,  null, and negative levels as follows (see Table  \ref{Table:C65} on page \pageref{Table:C65})
\begin{align*}
\text{positive levels for } C: &\quad 1,2,3\\
\text{null level for } C: &\quad 4\\
\text{negative levels for } C: &\quad 5,6.
\end{align*}
\pref{thm:chamberCount}\pref{assert:chamberSubsetBisign} guarantees that $C \cap \h_s$ is a chamber of $\slngeo$.  \pref{thm:chamberRays}\pref{assert:extremeRayCount} implies that $C \cap \h_s$ is not simplicial, and furthermore that it has 7 extreme rays in $\slngeo$.
It follows from \pref{thm:chamberRays}\pref{assert:nullRay} and \pref{thm:chamberRays}\pref{assert:nonNullRay} that the extreme rays of $C \cap \h_s$ in $\slngeo$ are
$$
\ray{2,2} \text{ and } \left(\ray{p,q} \text{ for } p \in \{(1,0),(2,0),(2,1)\} \text{ and } q \in \{(2,3),(2,4)\}\right).
$$
 The resulting seven extreme rays are given in the third column of  Table \ref{Table:C65}.
\pref{thm:chamberCount}\pref{assert:chamberCount} and \pref{thm:chamberRays}\pref{assert:countSimplicialChambers} guarantee that $\slngeoSm{6}$ has 4 non-simplicial chambers.  A direct calculation similar to the one presented in the previous paragraph shows that they are the chambers $C \cap \h_s$ when $C$ is a chamber of $\glngeo$ defined by one of the subsets $\{6,5\},\{6,4\},\{4,3,2,1\}$, and $\{5,3,2,1\}$. 
\end{example}

\section{Chambers of \slngeoForTitle: proof of \pref{thm:chamberCount}}
\label{sec:chambers}

In \pref{sec:countChambers}, we prove Part \pref{assert:chamberBiSignBij} and \pref{assert:chamberCount}
of \pref{thm:chamberCount}. In \pref{sec:bisignedChamberSubset}, we derive \pref{thm:chamberCount}\pref{assert:chamberSubsetBisign} from the auxiliary results of \pref{sec:countChambers}.

\subsection{Counting chambers of \slngeoForTitle}
\label{sec:countChambers}

The goal of this section is to prove Part \pref{assert:chamberBiSignBij} and \pref{assert:chamberCount} \pref{thm:chamberCount} by relating $\slngeo$ to $\glngeo$.  The proof of \pref{thm:chamberCount} is the prototype for the proofs of Theorems~\ref{thm:slnFaces} and~\ref{thm:slnFlats}: the later proofs are more delicate, but follow a similar structure to the proof presented in this section.

\begin{definition}
\label{def:phiForChambers}
We say that a chamber $C$ of $\glngeo$ is \emph{bi-signed} if $C^0 \cap \h_s \not= \varnothing$.
(This terminology is justified by \pref{lem:classifyBisignedChains}.)
We denote by $\mathfrak C_{\text{bi}}(n)$ the set of bi-signed chambers of $\glngeo$ and by $\mathfrak{C}_s(n)$ the set of chambers of $\slngeo$.
\end{definition}

First, we relate chambers of $\slngeo$ to those of $\glngeo$.

\begin{lemma}\label{lem:bisignedGivesSlnChamber}
If $C$ is a bi-signed chamber of $\glngeo$, then $C\cap \h_s$ is a chamber of $\slngeo$.
\end{lemma}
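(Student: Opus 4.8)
The plan is to argue that $C \cap \h_s$ is a single face of $\slngeo$ of full dimension $\dim \h_s = n-1$. First I would recall that a chamber of $\slngeo$ is a connected component of $\h_s \setminus \bigcup_{\lambda \in \weights} \lambda^\perp$, equivalently an open face of $\slngeo$ of dimension $n-1$; and similarly $C$ is an open face of $\glngeo$ of dimension $n$, cut out by requiring each weight $\lambda \in \weights$ to have a fixed strict sign. Since $C$ is bi-signed, by definition $C^0 \cap \h_s \neq \varnothing$; pick a point $u$ in this intersection. For every $\lambda \in \weights$, the sign $\mathrm{sign}(\lambda(u))$ is the same strict sign that $\lambda$ takes on all of $C^0$, because $u \in C^0$. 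So $u$ lies in the relative interior of a face of $\slngeo$ whose sign vector (as a point of $\{-,0,+\}^{|\weights|}$, restricting forms to $\h_s$) has no zero entries.

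Next I would show this forces that face to be $n-1$-dimensional, i.e.\ a chamber of $\slngeo$. The key point is that the locus of points of $\h_s$ sharing the sign vector of $u$ is exactly $\{v \in \h_s \mid \mathrm{sign}(\lambda(v)) = \mathrm{sign}(\lambda(u)) \text{ for all } \lambda \in \weights\}$, which is an intersection of open half-spaces of $\h_s$ and hence an open subset of $\h_s$; being nonempty (it contains $u$) and open in the $(n-1)$-dimensional space $\h_s$, its linear span is all of $\h_s$, so it is an $(n-1)$-face, i.e.\ a chamber of $\slngeo$. Finally I would identify this chamber with $C \cap \h_s$: a point $v \in \h_s$ lies in $C \cap \h_s$ iff $v \in C$, and since $C$ is the closed chamber (the closure of $C^0$), $v \in C$ iff $\lambda(v)$ has the (weak) sign of $\lambda$ on $C^0$ for every $\lambda$; intersecting with the open condition defining the $\slngeo$-chamber through $u$, one checks $C \cap \h_s$ is precisely the closure in $\h_s$ of that open chamber, hence a (closed) chamber of $\slngeo$.

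The main obstacle, and the place requiring care rather than a routine check, is the transition from ``$u$ has a zero-free sign vector in $\slngeo$'' to ``$u$ lies in a top-dimensional face of $\slngeo$'': one must be sure that no weight $\lambda \in \weights$ restricts to the zero functional on $\h_s$, so that a strict sign on $C^0 \subset \h$ genuinely descends to a strict sign on the chamber of $\slngeo$. For $n \ge 3$ this is automatic since the weights of $V \oplus \Exterior^2$ restrict to nonzero elements of $\h_s^*$ (as noted after the description of $\weights$ in Section \ref{sec:typeArootSys}); for $n = 2$ one handles the arrangement directly. With that in hand the dimension count and the identification with $C \cap \h_s$ are straightforward, completing the proof.
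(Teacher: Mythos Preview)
Your proposal is correct and takes essentially the same approach as the paper: both arguments pick a point $u \in C^0 \cap \h_s$ (using the bi-signed hypothesis) and observe that the set of points in $\h_s$ sharing its sign data is open and nonempty, hence top-dimensional. The paper does this in one line by noting that $C^0$ is open in $\h$, so $C^0 \cap \h_s = (C\cap\h_s)^0$ is open in $\h_s$; your version unpacks the same fact through sign vectors, and your extra care about weights restricting to zero on $\h_s$ is harmless but not needed for the topological argument.
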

\begin{proof}
We see immediately that $C \cap \h_s$ is a face of $\slngeo$.
Because $C$ is a chamber, the set $C^0$ is open in $\h$.  Therefore, the set $C^0 \cap \h_s = \left(C \cap \h_s\right)^0$ is open in $\h_s$.  It follows that $C \cap \h_s$ is a chamber of $\slngeo$.
\end{proof}

\begin{proposition}
\label{prop:chamberBijection}
The map $\phi: \mathfrak{C}_{\text{bi}}(n) \to \mathfrak{C}_s(n)$ defined by $\phi(C) = C\cap \h_s$ is a bijection for $n \ge 3$.
\end{proposition}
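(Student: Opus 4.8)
The plan is to show that $\phi$ is well-defined, injective, and surjective. Well-definedness is exactly Lemma~\ref{lem:bisignedGivesSlnChamber}: if $C \in \mathfrak{C}_{\text{bi}}(n)$ then $C \cap \h_s$ is a chamber of $\slngeo$, so $\phi(C) \in \mathfrak{C}_s(n)$. The substance is in the other two properties, and the key technical input will be a clean description of when two chambers $C, C'$ of $\glngeo$ satisfy $C \cap \h_s = C' \cap \h_s$, together with the observation that $\h_s$ is a hyperplane meeting the interior of some chambers of $\glngeo$ transversally.

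For surjectivity: let $D$ be a chamber of $\slngeo$, i.e.\ an open connected component of $\h_s$ minus the traces of the weight hyperplanes $\lambda^\perp$ ($\lambda \in \weights$) on $\h_s$. Pick a point $u \in D$. Since $u$ avoids every $\lambda^\perp$ with $\lambda|_{\h_s} \neq 0$, and the only weight vanishing identically on $\h_s$ is $\sigma = x_1+\cdots+x_n$ (for $n \geq 3$; here is where the hypothesis $n \ge 3$ enters, so that $\weights$ restricted to $\h_s^*$ still contains no zero vector besides the one coming from $\sigma$), the sign vector of $u$ in $\glngeo$ has no zero entries, hence $u$ lies in the interior $C^0$ of a unique chamber $C$ of $\glngeo$; moreover $u \in C^0 \cap \h_s$, so $C$ is bi-signed. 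Then $u \in C \cap \h_s = \phi(C)$, and since $\phi(C)$ is a chamber of $\slngeo$ containing the point $u \in D$, and chambers are the connected components, $\phi(C) = D$. Thus $\phi$ is surjective.

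For injectivity: suppose $\phi(C) = \phi(C')$ for bi-signed chambers $C, C'$. Then $C^0 \cap \h_s$ and $C'^0 \cap \h_s$ are both equal to the relative interior of the common chamber $\phi(C) = \phi(C')$ of $\slngeo$ (using that $C^0 \cap \h_s = (C\cap\h_s)^0$ as in the proof of Lemma~\ref{lem:bisignedGivesSlnChamber}), so in particular they intersect: there is a point $u \in C^0 \cap C'^0$. But the interiors of distinct chambers of $\glngeo$ are disjoint, so $C = C'$. Hence $\phi$ is injective, and combining the three steps, $\phi$ is a bijection for $n \ge 3$.

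The main obstacle — really the only place any care is needed — is the surjectivity step, specifically the claim that a general point of a chamber $D$ of $\slngeo$ has no zero entries in its $\glngeo$-sign vector. This rests on the fact that for $n \ge 3$ the only weight in $\weights$ restricting to $0 \in \h_s^*$ is (a scalar multiple of) $\sigma$; for $n = 2$ this fails (e.g.\ $x_1 + x_2 = \sigma$ is the only weight of $\bigwedge^2$, and $x_1, x_2$ restrict to $\pm$ the same nonzero functional, but the arithmetic of which weights collapse is degenerate), which is exactly why the statement excludes small $n$. I would state this as a short preliminary observation — any weight $x_i$ or $x_i + x_j$ that vanishes on all of $\h_s$ would have to be a multiple of $\sigma$, impossible for $n \ge 3$ since $x_i$ has one nonzero coordinate and $x_i+x_j$ has two, while $\sigma$ has $n \geq 3$ — and then the rest of the argument is the routine open-cover bookkeeping sketched above.
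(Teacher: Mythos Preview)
Your proof is correct and follows essentially the same route as the paper: well-definedness via Lemma~\ref{lem:bisignedGivesSlnChamber}, surjectivity by picking a point of $D$ off every $\mathfrak{gl}_n$-wall (using that no weight restricts to zero on $\h_s$ for $n\ge 3$), and injectivity by observing that $C^0\cap\h_s$ and $C'^0\cap\h_s$ share a point. The paper phrases injectivity via a convexity-of-$D$ segment argument rather than your direct ``shared interior point'' observation, but the content is the same; your explicit justification of why $n\ge 3$ is needed (no weight $x_i$ or $x_i+x_j$ is proportional to $\sigma$) is a welcome addition that the paper leaves more implicit.
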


\begin{proof} The map $\phi$ is well-defined by \pref{lem:bisignedGivesSlnChamber}. To prove surjectivity assume that $D$ is in $\mathfrak C_s$.  Because $n \ge 3$, the subspace $\h_s \subset \h$ is not contained in any of the $\mathfrak{gl}_n$-walls.  Hence, there exists a $x \in D$ that does not lie on any of the $\mathfrak{gl}_n$-walls.  There exists a unique chamber $C \in  \mathfrak C$ of $\glngeo$ containing $x$.  Because $x$ lies in the interior of $C$, the chamber $C$ must contain elements lying on both sides of $\h_s$.  It follows that $C \in \mathfrak{C_{\text{bi}}}$ and $\phi(C) = D$. 

Suppose that $\phi(C_1)=\phi(C_2)=D$. Let $x_1 \in C^0_1\cap \h_s$ and $x_2  \in C^0_2\cap \h_s$ then by convexity  the segment $[x_,x_2]$ lie in $D$. Also it does not intersect any wall. It follows that $x_1$ and $x_2$ belongs to the same $\mathfrak{gl}_n$-chamber. In other words, we have $C_1 = C_2$, which proves the injectivity of $\phi$.
\end{proof}

Next, we characterize the elements of $\mathfrak{C_{\text{bi}}}$ in terms of extreme rays.

\begin{lemma}\label{lem:classifyBisignedChains}
A chamber of $\glngeo$ is bi-signed if and only if it has extreme vectors of positive and negative signatures.
\end{lemma}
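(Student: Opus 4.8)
The statement asserts that a chamber $C$ of $\glngeo$ satisfies $C^0 \cap \h_s \neq \varnothing$ if and only if $C$ contains extreme rays of both positive and negative signature. I would prove the two directions separately, and the forward direction is the one that needs a genuine idea.

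For the easy direction, suppose $C$ has extreme rays of both signs. By \pref{thm:chamberStructure}\pref{assert:chamberRaysPrelim}, the chamber $C = C(S)$ is the nonnegative span of $e_1^S,\ldots,e_n^S$, and since $\glngeo$ is simplicial these are linearly independent. Pick extreme rays $e_i^S$ with $\sigma(e_i^S) > 0$ and $e_j^S$ with $\sigma(e_j^S) < 0$ (necessarily $i \neq j$). Then the point $v = |\sigma(e_j^S)|\, e_i^S + |\sigma(e_i^S)|\, e_j^S + \sum_{\ell \neq i,j} e_\ell^S$ is a strictly positive combination of all the generators, hence lies in $C^0$, and by construction $\sigma(v) = 0$, so $v \in C^0 \cap \h_s$. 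Thus $C$ is bi-signed.

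For the converse, suppose $C^0 \cap \h_s \neq \varnothing$; I must produce extreme rays of both signs. The cleanest route: note that $\sigma$ restricted to $C$ is a linear functional, and write any interior point $x \in C^0 \cap \h_s$ as $x = \sum_\ell c_\ell e_\ell^S$ with all $c_\ell > 0$ (possible because $x$ is interior and the $e_\ell^S$ span $C$ simplicially). Then $0 = \sigma(x) = \sum_\ell c_\ell \,\sigma(e_\ell^S)$ with all $c_\ell > 0$, which forces the numbers $\sigma(e_\ell^S)$ to not all have the same sign; since they cannot all be zero (e.g. $\sigma(e_n^S) = \sigma(1,\ldots,1 \text{ with } \pi_n^S \text{ ones}\ldots) $ — more simply, $e_1^S$ has signature $\pm 1$ and in fact at least one generator is strictly signed), we get at least one positive and at least one negative value. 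Hence $C$ has extreme rays of both signatures. In fact the same argument shows that if $C$ has only nonnegative (resp. nonpositive) signatures among its extreme rays, then $\sigma \geq 0$ (resp. $\leq 0$) on all of $C$ and vanishes on $\h_s \cap C$ only along a face, so $C^0 \cap \h_s = \varnothing$.

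The main obstacle is purely bookkeeping: one must be sure that the simplicial-span description lets us represent an \emph{interior} point with \emph{all} coefficients strictly positive, which is exactly what "interior of a simplicial cone" means, so this is immediate from \pref{thm:chamberStructure}. A secondary point to check is that not all $\sigma(e_\ell^S)$ vanish — but $e_\ell^S = \ray{\pi_\ell^S, \ell - \pi_\ell^S}$ has signature $\pi_\ell^S - (\ell - \pi_\ell^S) = 2\pi_\ell^S - \ell$, and for $\ell = 1$ this is $2\pi_1^S - 1 \in \{-1,+1\}$, never zero; so at least one generator is strictly signed, and combined with the vanishing convex relation above we are done. No deeper input is needed beyond \pref{thm:chamberStructure}.
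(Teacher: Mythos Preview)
Your argument for the ``only if'' direction (bi-signed $\Rightarrow$ extreme rays of both signs) is correct and is essentially the direct version of the paper's contrapositive argument: the paper assumes $|E_-|=0$, observes $\sigma(C)=\mathbb{R}_{\ge 0}$, and concludes $C\cap\h_s$ lies in the boundary; you instead write an interior point in simplicial coordinates and read off the sign constraint. These are the same idea.

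There is, however, a genuine slip in your ``if'' direction. Your vector
\[
v \;=\; |\sigma(e_j^S)|\, e_i^S \;+\; |\sigma(e_i^S)|\, e_j^S \;+\; \sum_{\ell \neq i,j} e_\ell^S
\]
does \emph{not} satisfy $\sigma(v)=0$: the first two terms cancel under $\sigma$, but the tail contributes $\sum_{\ell\ne i,j}\sigma(e_\ell^S)$, which has no reason to vanish (take $n=3$ with extreme-ray signatures $+1,-1,+1$). The fix is easy and your overall strategy is fine. For instance, set $u=\sum_\ell e_\ell^S\in C^0$; if $\sigma(u)=0$ you are done, and otherwise (say $\sigma(u)>0$) the ray $u+t\,e_j^S$ stays in $C^0$ for all $t>0$ and has $\sigma$ eventually negative, so the intermediate value theorem on this segment produces an interior point of signature zero. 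Alternatively, just observe that $\sigma(C^0)$ is an open interval containing values of both signs (perturb $e_i^S$ and $e_j^S$ slightly into the interior), hence contains $0$.

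It is worth noting that the paper's own proof only writes out the ``only if'' direction explicitly; the converse is used later (e.g.\ in \pref{prop:bisignedEquivToGetChamber}) but is left implicit. So your attempt to treat both directions is a good instinct --- just replace the faulty explicit $v$ with one of the arguments above.
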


\begin{proof}
Let $C$ be a chamber of $\glngeo$.  Consider the set $E = \erays(C)$ of extreme rays of $C$.  \pref{thm:chamberStructure}\pref{assert:chamberRaysPrelim} guarantees that $\left|E\right| = n$.
We see immediately that $\left|E_0\right| \le \floor*{\frac{n}{2}} < n$.  It follows that $\left|E_+\right| + \left|E_-\right| > 0$.

Suppose that $\left|E_-\right| = 0$, so that $\left|E_+\right| > 0$. A direct calculation from \pref{thm:chamberStructure}\pref{assert:chamberRaysPrelim} shows that $\sigma(C) = \mathbb{R}_{\ge 0}$.  It follows that $C \cap \h_s$ lies on the boundary of $C$, so that $C$ is not bi-signed.

By symmetry, we can also conclude that $\left|E_+\right| > 0$ if $C$ is bi-signed.  The lemma follows.
\end{proof}

We are now ready to prove \pref{thm:chamberCount}\pref{assert:chamberBiSignBij}.

\begin{proposition}
\label{prop:bisignedEquivToGetChamber}
Suppose that $n \ge 3$.  A chamber $C$ of $\glngeo$ is bi-signed if and only if $C \cap \h_s$ is a chamber of $\slngeo$.
\end{proposition}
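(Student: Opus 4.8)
The plan is to deduce this proposition by combining the three preceding lemmas, so that essentially no new work is required. The statement asserts an ``if and only if'': a chamber $C$ of $\glngeo$ is bi-signed precisely when $C \cap \h_s$ is a chamber of $\slngeo$.

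For the forward direction, I would simply invoke \pref{lem:bisignedGivesSlnChamber}: if $C$ is bi-signed, then by definition $C^0 \cap \h_s \ne \varnothing$, and the lemma already gives that $C \cap \h_s$ is a chamber of $\slngeo$. No hypothesis on $n$ is needed here.

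For the reverse direction, suppose $C \cap \h_s$ is a chamber of $\slngeo$. I would argue that this forces $C$ to have extreme rays of both positive and negative signature, and then apply \pref{lem:classifyBisignedChains} to conclude that $C$ is bi-signed. Concretely: by \pref{thm:chamberStructure}\pref{assert:chamberRaysPrelim} the $n$ extreme rays $e_1^S,\ldots,e_n^S$ non-negatively span $C$, so $\sigma(C) = \vecspan\{\sigma(e_1^S),\ldots,\sigma(e_n^S)\}$. If all the $\sigma(e_\ell^S)$ were $\ge 0$ (resp.\ $\le 0$), then $\sigma(C) \subseteq \mathbb{R}_{\ge 0}$ (resp.\ $\mathbb{R}_{\le 0}$), so $C \cap \h_s = C \cap \sigma^{-1}(0)$ would lie in the relative boundary of $C$, hence would have dimension $< n$ as a subset of the hyperplane $\h_s$, contradicting that it is a chamber (i.e.\ an $(n-1)$-face) of $\slngeo$. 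Here I use $n \ge 3$ only insofar as it is the standing hypothesis ensuring $\dim \h_s = n-1$ and that $\h_s$ is not contained in a $\mathfrak{gl}_n$-wall; in fact for the argument that an extreme ray of each sign exists, the cleanest route is to observe that since $C \cap \h_s$ is a chamber, it is full-dimensional in $\h_s$, so it meets $C^0$ (a chamber of $\slngeo$ has nonempty interior, which must meet the interior of the unique $\mathfrak{gl}_n$-chamber containing it), giving $C^0 \cap \h_s \ne \varnothing$, which is exactly bi-signedness — and then \pref{lem:classifyBisignedChains} is not even needed for this direction. I would present whichever of these two framings is shorter; the extreme-ray framing has the advantage of reusing \pref{lem:classifyBisignedChains} verbatim.

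The main (and only) subtlety is making rigorous the claim that if $C \cap \h_s$ is full-dimensional in $\h_s$ then $C^0 \cap \h_s \ne \varnothing$: one must rule out the degenerate possibility that $C \cap \h_s$ is full-dimensional in $\h_s$ yet entirely contained in $\partial C$. This is where $n \ge 3$ enters essentially — it guarantees $\h_s$ is not one of the $\mathfrak{gl}_n$-walls, so $\h_s \cap \partial C$ is a finite union of sets each contained in a proper affine subspace of $\h_s$ (namely $\h_s \cap H$ for walls $H \ne \h_s$), hence not full-dimensional in $\h_s$; therefore a full-dimensional $C \cap \h_s$ must meet $C^0$. I expect this point, and the bookkeeping of dimensions, to be the place where care is needed; everything else is a direct citation of \pref{lem:bisignedGivesSlnChamber}, \pref{lem:classifyBisignedChains}, and \pref{prop:chamberBijection}.
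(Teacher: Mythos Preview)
Your forward direction matches the paper exactly. For the reverse direction, your second framing---that a full-dimensional $C\cap\h_s$ must meet $C^0$ because, for $n\ge 3$, $\h_s$ coincides with none of the hyperplanes bounding $C$, so $\partial C\cap\h_s$ is a finite union of proper subspaces of $\h_s$---is correct and complete as you sketch it.

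Your first framing, as written, has a small gap: saying $C\cap\h_s$ ``would have dimension $<n$ as a subset of $\h_s$'' is vacuous, since $\dim\h_s=n-1$ already; you need dimension $<n-1$ to contradict being a chamber. This is fixable, and the fix is exactly the route the paper takes: the paper argues the contrapositive, observing that if $C$ is not bi-signed then (via \pref{lem:classifyBisignedChains}) $C\cap\h_s$ is the non-negative span of $E_0$, and then uses the explicit bound $|E_0|\le\lfloor n/2\rfloor$ together with $\lfloor n/2\rfloor<n-1$ for $n\ge 3$ to conclude the dimension is too small.

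So the paper's argument and your second framing are genuinely different. The paper's dimension count via $|E_0|$ is more concrete and leans on the simplicial structure of $C$; your topological argument is cleaner and uses $n\ge 3$ in the more conceptually natural way---precisely to ensure $\h_s$ is not itself one of the bounding hyperplanes. (Incidentally, you do not need \pref{prop:chamberBijection} here; it is a separate statement.)
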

\begin{proof}
\pref{lem:bisignedGivesSlnChamber} proved the ``only if direction."  We now prove the ``if" direction.

Suppose that $C$ is a chamber of $\glngeo$ that is not bi-signed and let $E = \erays(C)$.  Without loss of generality, in light of \pref{lem:classifyBisignedChains}, assume that all the extreme rays of $C$ have non-negative signature.  A direct calculation from \pref{thm:chamberStructure}\pref{assert:chamberRaysPrelim} shows that $C \cap \h_s$ is non-negatively spanned by $E_0$.  Note that $\left|E_0\right| \le \floor*{\frac{n}{2}}$ due to \pref{thm:chamberStructure}\pref{assert:chamberRaysPrelim}.  It follows that $C \cap \h_s$ has dimension at most $\floor*{\frac{n}{2}}.$  Because $\floor*{\frac{n}{2}} < n-1$ for $n \ge 3,$ it is not possible for $C \cap \h_s$ to be a chamber of $\slngeo$.
\end{proof}

\begin{proof}[Proof of \pref{thm:chamberCount}\pref{assert:chamberBiSignBij}]
\pref{prop:bisignedEquivToGetChamber} ensures that $C \cap \h_s$ is not a chamber of $\slngeo$ when $C$ is not bi-signed, and so we can restrict to the case when $C$ is a bi-signed chamber of $\glngeo$.
\pref{prop:chamberBijection} ensures that every chamber in $\slngeo$ is uniquely expressible in the form $C \cap \h_s,$ where $C$ is a bi-signed chamber of $\glngeo,$ for $n \ge 3$, which is what was to be proved.
\end{proof}

\pref{lem:classifyBisignedChains} motivates the following definition, which will be useful in later sections as well.

\begin{definition}
We say that a subset of $\posetz_n$ is \emph{mono-signed} if all of its elements have non-negative signature or all of its elements have non-positive signature. On the other hand, we say that a subset of $\posetz_n$ \emph{bi-signed} if it is not mono-signed: that is, if it has at least one element of negative signature and at least one element of positive signature.
\end{definition}

To complete the proof of \pref{thm:chamberCount}, we simply need to count bi-signed $n$-chains in $\posetz_n$ in light of the preceding results and \pref{thm:Characterization.Faces}.  

\begin{proof}[Proof of \pref{thm:chamberCount}\pref{assert:chamberCount}]
The theorem is obvious for $n \le 2$.  For the remainder of the proof, we assume that $n \ge 3$.

\pref{prop:chamberBijection} and \pref{lem:classifyBisignedChains} guarantee that there are the same number of chambers of $\slngeo$ as there are bi-signed $n$-chains in $\posetz_n$.
Theorems~\ref{thm:chamberStructure} and~\ref{thm:Characterization.Faces} guarantee that $2^n$ counts the number of $n$-chains in $\posetz_n$.  Therefore, it suffices to prove that there are $2\binom{n}{\floor*{\frac{n}{2}}}$ mono-signed $n$-chains.

First, we count $n$-chains in $\posetz_n$ all of whose elements have non-negative signature.  Following~\cite{gl}, we can regard $n$-chains in $\posetz_n$ as lattice paths from $(0,0)$ to the line $x+y=n$ with permissible steps $(1,0)$ and $(0,1)$ by regarding $\posetz_n$ as a subset of $\mathbb{Z}^2$.  An $n$-chain consists solely of elements of non-negative signature if and only if the corresponding lattice path lies in the half-plane $y \le x$.  \pref{prop:DyckWithNoRestrictedEndpt} therefore implies that  $\binom{n}{\floor*{\frac{n}{2}}}$ counts the number of $n$-chains in $\posetz_n$ consisting solely of elements of non-negative signature.
By symmetry, $\binom{n}{\floor*{\frac{n}{2}}}$ is also the number of $n$-chains in $\posetz_n$ consisting solely of elements of non-positive signature.

Because $n \ge 1$, any $n$-chain in $\posetz_n$ must contain one of $(1,0)$ and $(0,1)$.  Hence, it is not possible for a $n$-chain in $\posetz_n$ to consist solely of elements of signature 0.  Therefore, the set of $n$-chains in $\posetz_n$ consisting of elements of non-negative signature is disjoint from the set of $n$-chains in $\posetz_n$ consisting of elements of non-positive signature. It follows that there are $2\binom{n}{\floor*{\frac{n}{2}}}$ mono-signed $n$-chains in $\posetz_n$, as desired.  We have to introduce the correction terms $\delta_{n,0} + \delta_{n,1} + \delta_{n,2}$ in order to deal with the case of $n \le 2$.
\end{proof}

\subsection{Simple criterion for a chamber of \glngeoForTitle to be bi-signed}
\label{sec:bisignedChamberSubset}

To prove \pref{thm:chamberCount}\pref{assert:chamberSubsetBisign}, we translate the bi-signed condition of the previous subsection to a constraint on the subset of $[n]$ corresponding to a chamber of $\glngeo$ using the following proposition.

\begin{proposition}
\label{prop:signOfaLevelMeaning}
Suppose that $C$ is a chamber of $\glngeo$.  A level $k \in [n]$ is a null (resp.\ positive, negative) level for $C$ if and only if $\sigma(e_k) = 0$ (resp.\ $\sigma(e_k) > 0,$ $\sigma_{e_k} < 0$), where $e_k$ is the extreme ray of $C$ defined in \pref{thm:chamberStructure}\pref{assert:chamberRaysPrelim}.
\end{proposition}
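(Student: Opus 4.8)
The plan is to unpack the definition of ``null/positive/negative level'' and match it against the explicit formula for the extreme ray $e_k$ recorded in \pref{thm:chamberStructure}\pref{assert:chamberRaysPrelim}. Recall that if $C$ has characteristic vector $s = (s_1,\ldots,s_n) \in \{-1,1\}^n$, then by definition the level $k$ is null (resp.\ positive, negative) according to the sign of the partial sum $s_{n-k+1} + \cdots + s_n$. On the other hand, \pref{thm:chamberStructure}\pref{assert:chamberRaysPrelim} tells us that
\[
e_k = (\underbrace{1,\ldots,1}_{\pi_k^S},\underbrace{0,\ldots,0}_{n-k},\underbrace{-1,\ldots,-1}_{k-\pi_k^S}),
\]
where $\pi_k^S = |S \cap [n-k+1,\infty)|$ counts the elements of $S$ that are at least $n-k+1$. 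Hence $\sigma(e_k) = \pi_k^S - (k - \pi_k^S) = 2\pi_k^S - k$.

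First I would observe that, under the identification of $S$ with its characteristic vector $s$ via \pref{def:subsetToChamber} (where $i \in S \iff s_{i} = 1$, being careful about the index convention there), the count $\pi_k^S = |S \cap [n-k+1, n]|$ is precisely the number of indices $i \in \{n-k+1,\ldots,n\}$ with $s_i = 1$. Writing $p$ for the number of $+1$'s and $m = k - p$ for the number of $-1$'s among $s_{n-k+1},\ldots,s_n$, we get on one hand $s_{n-k+1}+\cdots+s_n = p - m = 2p - k$, and on the other hand $\sigma(e_k) = 2\pi_k^S - k = 2p - k$. So the two quantities are literally equal, and in particular they have the same sign (including the case of being zero). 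This immediately gives all three equivalences at once: $k$ is a null (resp.\ positive, negative) level for $C$ iff $s_{n-k+1}+\cdots+s_n = 0$ (resp.\ $>0$, $<0$) iff $\sigma(e_k) = 0$ (resp.\ $>0$, $<0$).

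There is essentially no obstacle here — the proposition is a bookkeeping identity, and the only thing requiring care is reconciling the two indexing conventions: the ``level'' convention counts entries from the right end of $s$ (namely $s_{n-k+1},\ldots,s_n$), while the definition of $\pi_k^S$ counts elements of $S$ that are $\ge n-k+1$, i.e.\ again the last $k$ coordinates; these agree, so the proof is just the two-line computation above together with an explicit statement of that reconciliation. The only genuinely confirmatory step is checking that the partial sum $s_{n-k+1}+\cdots+s_n$ in the \emph{definition} of level and the quantity $2\pi_k^S - k$ refer to the same block of coordinates, which follows directly from \pref{def:subsetToChamber}.
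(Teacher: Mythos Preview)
Your proposal is correct and takes essentially the same approach as the paper: both compute $\sigma(e_k) = 2\pi_k^S - k$ and identify this with the partial sum $s_{n-k+1}+\cdots+s_n$ of the characteristic vector, then read off the three cases. The paper's proof is slightly terser (it simply asserts $\sigma(e_k) = s_{n-k+1}+\cdots+s_n$ from the definitions), while you spell out the intermediate step $2p-k$, but the content is identical.
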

\begin{proof}
It follows from the definition of $e_k$ and the definition of the signature that
$$\sigma(e_k) = s_{n-k+1}+\cdots+s_n,
$$
where $(s_1,\ldots,s_n) \in \{-1,1\}^n$ is the characteristic vector of $C$.
We obtain the proposition immediately from the definition of a null (resp.\ positive, negative) level for $C$.
\end{proof}

\begin{proof}[Proof of \pref{thm:chamberCount}\pref{assert:chamberSubsetBisign}]
Let $s = (s_1,\ldots,s_n) \in \{-1,1\}^n$ denote the characteristic vector of $C$, which is also by definition the characteristic vector of a subset $S=\{a_1 > a_2>\ldots> a_r \}$ of $[n]$.
Let $e_1,\ldots,e_n$ denote the extreme rays of $C$ defined in
\pref{thm:chamberStructure}\pref{assert:chamberRaysPrelim}.
For $n \ge 3$, \pref{lem:classifyBisignedChains} and \pref{prop:bisignedEquivToGetChamber} ensure that $C \cap \h_s$ is a chamber of $\slngeo$ if and only if there exist $i,j$ such that $\sigma(e_i) > 0 > \sigma(e_j)$.
It follows from \pref{prop:signOfaLevelMeaning} that $C \cap \h_s$ is a chamber of $\slngeo$ if and only if $C$ has a positive level and a negative level, provided that $n \ge 3$.

Since 
$
 \sigma(e_\ell)=2\pi_\ell -\ell
$,
the extreme ray $e_\ell$ has positive signature if and only if there are at least $\floor*{\ell/2 }+1$ elements of $S$ in $[n-\ell+1,n]$. 
As the elements  $a_i$ of $S$ are in decreasing order, we deduce  that 
\begin{align*}
\ell \text{ is a positive level for } C &\Leftrightarrow a_{\floor*{\frac{\ell}{2}}+1} \geq n-\ell+1, \\
\ell \text{ is a negative level for } C & \Leftrightarrow   a_{\ceil*{\frac{\ell}{2}}}\   \    < n-\ell+1.
\end{align*}
In particular, if an even level $\ell$  is positive (resp.\ negative), so is the odd level $\ell+1$ (resp.\ $\ell-1$). 
It is therefore enough to consider only the signature of odd levels. A direct calculation  using the odd level 
$\ell=2i-1$ 
shows that there exists a positive (resp.\ negative) level for $C$ if and only if there exists $i$ such that $a_i \leq n-2i+2$ (resp.\ $a_j < n-2i+2$). The theorem follows.
\end{proof}

\section{Relating faces and flats of \slngeoForTitle to faces and flats of \glngeoForTitle}
\label{sec:relateSlnFacesAndFlatsToGln}

We would like to prove analogues of \pref{prop:chamberBijection} for faces and flats.
We do so in Sections~\ref{sec:bijectForFaces} and~\ref{sec:bijectForFlats}, respectively.
The arguments of this section are more delicate that those of \pref{sec:countChambers} because the natural analogue of \pref{prop:bisignedEquivToGetChamber} does not hold any longer.
For this reason, we will construct mappings from faces (resp.\ flats) in $\slngeo$ to faces (resp.\ flats) in $\glngeo$ instead of dealing with maps in the other direction.
In the notation of \pref{def:phiForChambers}, we work with $\phi^{-1}$ instead of $\phi$.

\subsection{Bijection for faces}
\label{sec:bijectForFaces} We define a map $\psi$ from the set  $\mathfrak{F}_s$ of faces of $\slngeo$ to the set $\mathfrak{F}$ of faces of $\glngeo$, prove that it is injective, and determine its image.

\begin{definition}
\label{def:psiForFaces}
Define a function $\psi: \mathfrak{F}_s \to \mathfrak{F}$ by
$$
\psi(F_s) = W \cap \left(\bigcap_{\lambda \in \weights_0(F_s)} \lambda^\perp\right) \cap \left(\bigcap_{\lambda \in \weights_+(F_s)} \lambda^+\right) \cap \left(\bigcap_{\lambda \in \weights_-(F_s)} \lambda^-\right),
$$
so that $\psi(F_s)$ is the intersection of all half-spaces (bounded by a weight hyperplane) and all weight hyperplanes containing $F_s$.
\end{definition}
\begin{remark}
We could define $\psi$ more succinctly as
$$
\psi(F_s) = \bigcap_{\lambda \in \weights_0(F_s)} \lambda^+,
$$ but we will not need this definition.
\end{remark}

\begin{lemma} \label{lem:propertiesOfPsiForFaces}The correspondence $\psi$ is a well-defined function and satisfies the following properties.
\begin{enumerate}[(a)]
\item \label{assert:injPsi} $\psi$ is injective;
\item  \label{assert:weightPsi} $\weights_\epsilon(\psi(F_s)) = \psi(F_s)$ for all $\epsilon \in \{0,+,-\}$;
\item \label{assert:intersectPsi} $\psi(F_s) \cap \h_s = F_s$; and
\item \label{assert:minimalPsi} $\psi(F_s)$ is the smallest face of $\glngeo$ containing $F_s$;
\end{enumerate}
\smallskip
for all faces $F_s$ of $\slngeo$.
\end{lemma}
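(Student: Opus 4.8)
The plan is to verify the four assertions in sequence, using throughout the decomposition $\weights = \weights_0(F_s) \sqcup \weights_+(F_s) \sqcup \weights_-(F_s)$ and the fact (from the displayed equations in Section~\ref{sec:notation}) that $\weights_\epsilon(F_s) = \weights_\epsilon(\{u\})$ for any relative-interior point $u$ of $F_s$. First I would establish that $\psi$ is well-defined by noting that $\psi(F_s)$ is, by construction, an intersection of $W$ with weight hyperplanes and closed half-spaces bounded by weight hyperplanes; such an intersection is a (closed) face of $\glngeo$ provided it is nonempty, and it is nonempty because it contains $F_s$ itself. Indeed any $v \in F_s$ satisfies $\lambda(v) = 0$ for $\lambda \in \weights_0(F_s)$, $\lambda(v) \ge 0$ for $\lambda \in \weights_+(F_s)$, and $\lambda(v) \le 0$ for $\lambda \in \weights_-(F_s)$, so $F_s \subseteq \psi(F_s)$. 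To see it is genuinely a face and not merely a polyhedron, I would argue that picking a relative-interior point $u$ of $F_s$ and recalling $\weights_\epsilon(\{u\}) = \weights_\epsilon(F_s)$, the sign vector of $u$ within $\glngeo$ is constant on a neighborhood in $W$, so $\psi(F_s)$ is exactly the closure of the $\glngeo$-face through $u$.

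Next, for part \pref{assert:weightPsi}, I would show the inclusions $\weights_0(F_s) \subseteq \weights_0(\psi(F_s))$, etc. The nontrivial containment is the reverse: a weight $\lambda \notin \weights_0(F_s)$ is not identically zero on $\psi(F_s)$. But $\psi(F_s) \supseteq F_s$ and $\lambda$ does not vanish identically on $F_s$ when $\lambda \in \weights_+(F_s) \cup \weights_-(F_s)$; hence $\lambda \notin \weights_0(\psi(F_s))$, and the sign of $\lambda$ on $\psi(F_s)$ agrees with its sign on $F_s$ by the half-space constraint. This gives $\weights_\epsilon(\psi(F_s)) = \weights_\epsilon(F_s)$ for all $\epsilon$. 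For part \pref{assert:intersectPsi}, the inclusion $F_s \subseteq \psi(F_s) \cap \h_s$ is immediate from $F_s \subseteq \h_s$ and $F_s \subseteq \psi(F_s)$. Conversely, a point $w \in \psi(F_s) \cap \h_s$ lies in $W_s = W \cap \h_s$ and satisfies the sign constraints of $F_s$ for every weight (vanishing on $\weights_0(F_s)$, nonneg on $\weights_+(F_s)$, nonpos on $\weights_-(F_s)$); since those constraints together with membership in $W_s$ cut out precisely $F_s$ as a face of $\slngeo$ (using that a face of a hyperplane arrangement is determined by its sign vector), we get $w \in F_s$.

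Finally, for part \pref{assert:minimalPsi}, I would take any face $G$ of $\glngeo$ with $F_s \subseteq G$ and show $\psi(F_s) \subseteq G$. Since $F_s \subseteq G$, every weight hyperplane containing $F_s$ either contains $G$ or not; but $G$'s relative interior has a fixed sign vector, and $F_s$ lies in $G$, so any $\lambda \in \weights_0(F_s)$ must vanish on all of $G$ (if it were nonzero somewhere on $G$ it would be nonzero on a dense subset, contradicting that it vanishes on $F_s \subseteq \overline{G}$ — more carefully, $\lambda$ vanishes on the affine span of $F_s$, which must be contained in $\lambda^\perp$, and if $\lambda$ did not vanish on $G$ then $F_s$ would lie in a proper face of $G$, i.e.\ in $\lambda^\perp \cap G$, which is consistent — so instead I argue directly: $\weights_0(G) \subseteq \weights_0(F_s)$, hence $G = \bigcap_{\lambda \in \weights_0(G)}\lambda^{\mathrm{sgn}} \supseteq \bigcap_{\lambda \in \weights_0(F_s)}\lambda^{\mathrm{sgn}}$, where the sign superscript records the sign of $\lambda$ on the relevant face, and $\psi(F_s)$ is exactly the latter intersection with $W$). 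Then \pref{assert:injPsi} follows formally from \pref{assert:intersectPsi}: if $\psi(F_s) = \psi(F_s')$ then intersecting with $\h_s$ gives $F_s = F_s'$.

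\textbf{Expected main obstacle.} The delicate point is part \pref{assert:minimalPsi} (and implicitly the well-definedness): I need to be careful that ``smallest face of $\glngeo$ containing $F_s$'' is meaningful — i.e.\ that the faces of $\glngeo$ containing a given set are closed under intersection, or at least have a unique minimal element — and that the sign-vector bookkeeping relating $\weights_\epsilon(F_s)$ to $\weights_\epsilon(G)$ for a containing face $G$ is done correctly, since $F_s$ may lie on the relative boundary of $G$ and then $\weights_0(G) \subsetneq \weights_0(F_s)$. The cleanest route is to fix a relative-interior point $u$ of $F_s$, observe $\weights_\epsilon(\{u\}) = \weights_\epsilon(F_s)$, note that the smallest face of $\glngeo$ containing $u$ is precisely the one with sign vector matching $u$ — which is $\psi(F_s)$ — and that any face containing $F_s$ contains $u$, hence contains that smallest face.
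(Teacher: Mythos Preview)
Your proposal is correct and follows essentially the same route as the paper: both arguments use the partition $\weights = \weights_0(F_s)\sqcup\weights_+(F_s)\sqcup\weights_-(F_s)$ to see that $\psi(F_s)$ is a face of $\glngeo$, verify \pref{assert:weightPsi} directly from the definition, obtain \pref{assert:intersectPsi} from \pref{assert:weightPsi} and the fact that a face is determined by its sign vector, deduce \pref{assert:injPsi} from \pref{assert:intersectPsi}, and prove \pref{assert:minimalPsi} via the sign-vector containments $\weights_\pm(G)\supseteq\weights_\pm(F_s)$ and $\weights_0(G)\subseteq\weights_0(F_s)$ for any face $G\supseteq F_s$. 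Your ``cleanest route'' at the end --- picking a relative-interior point $u$ of $F_s$ and identifying $\psi(F_s)$ as the unique face of $\glngeo$ whose sign vector matches that of $u$ --- is exactly the paper's argument phrased in terms of points rather than sign vectors; your initial attempt at \pref{assert:minimalPsi} wanders a bit, but you correctly diagnose and resolve the issue.
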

\begin{proof}
Suppose that $F$ is a face of $\slngeo$.  Recall that because $F$ is a face of $\slngeo$, \pref{eq:faceSignPartition} guarantees that the sets $\weights_0(F),\weights_+(F),\weights_-(F)$
form a partition of $\weights$. The definition of $\psi$ ensures that $\psi(F)$ is the subset of the Weyl chamber on which elements of $\weights_0(F)$ (resp.\ $\weights_+(F)$, $\weights_-(F)$) are zero (resp.\ non-negative, non-positive).  The fact $\weights =\weights_0(F)\sqcup\weights_+(F)\sqcup\weights_-(F)$ implies that each weight is assigned a unique sign, which ensures that $\psi(F)$ is a face of $\glngeo$.
Therefore, $\psi$ is well-defined.

A direct calculation from the definition of $\psi$ shows that Property \pref{assert:weightPsi} holds.  The definition of faces of $\slngeo$ and Property \pref{assert:weightPsi} imply that Property \pref{assert:intersectPsi} holds as well.  Property \pref{assert:injPsi} follows immediately from Property \pref{assert:intersectPsi}.

We now prove Property \pref{assert:minimalPsi}.  Suppose that $F_s$ is a face of $\slngeo$ and let $F = \psi(F_s)$.  Property \pref{assert:intersectPsi} implies that $F \supseteq F_s$.  Suppose that $G \supset F_s$ is a face of $\glngeo$ that contains $F_s$.  We see immediately that $\weights_\epsilon(G) \supseteq \weights_\epsilon(F_s)$ for $\epsilon \in \{+,-\}$ and $\weights_0(G) \subseteq \weights_0(F_s)$.  Property \pref{assert:weightPsi} yields that $\weights_\epsilon(G) \supseteq \weights_\epsilon(F)$ for $\epsilon \in \{+,-\}$ and $\weights_0(G) \subseteq \weights_0(F)$, so that $F$ is a sub-face of $G$.  In particular, we have $G \supseteq F,$ as desired.
\end{proof}

The following definition will help us characterize the extreme rays of elements of the image of $\psi$.

\begin{definition}
A subset of $\h$ is \emph{null} if all the extreme rays it contains have signature zero. It is said to be \emph{mono-signed} if the signatures of its extreme vectors are either all non-negative or all non-positive, and \emph{bi-signed} if it is not mono-signed.  Let $\mathfrak{F}_{\text{null}}(n,k)$ (resp.\ $\mathfrak{F}_{\text{bi}}(n,k)$) denote the set of null (resp.\ bi-signed) $k$-faces of $\glngeo$.  Let $\mathfrak{L}_{\text{null}}(n,k)$ (resp.\ $\mathfrak{L}_{\text{bi}}(n,k)$) denote the set of null (resp.\ bi-signed) $k$-flats of $\glngeo$.
\end{definition}

We are now ready to determine the image of $\psi$.

\begin{lemma}
\label{lem:imageOfPsiForFaces}
Let $F$ be a $k$-face of $\glngeo$.  Exactly one of the following holds.
\begin{enumerate}[(1)]
\item $F$ is mono-signed but neither null nor an element of the image of $\psi$.
\item $F$ is null and $\psi^{-1}(F)$ consists of a single $k$-face of $\slngeo$.
\item $F$ is bi-signed and $\psi^{-1}(F)$ consists of a single $(k-1)$-face of $\slngeo$.
\end{enumerate}
\end{lemma}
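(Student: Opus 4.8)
The plan is to analyze a $k$-face $F$ of $\glngeo$ according to the signs of the signatures of its extreme rays, using \pref{thm:Characterization.Faces} to translate statements about faces into statements about chains in $\posetz_n$. Under that bijection $F$ corresponds to a $k$-chain $E = \ind{\erays(F)} \subseteq \posetz_n$, and the trichotomy (mono-signed non-null / null / bi-signed) is exactly the trichotomy for $E$ under the notion of mono-signed, null, and bi-signed subsets of $\posetz_n$ introduced just before \pref{sec:bisignedChamberSubset}. These three cases are patently mutually exclusive and exhaustive, so the content is to identify, in each case, $\psi^{-1}(F)$.

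First I would dispose of case (3). Suppose $F$ is bi-signed, so $E$ contains an element $p$ with $\sigma(p) > 0$ and an element $q$ with $\sigma(q) < 0$. Since $F$ meets both open half-spaces cut out by $\h_s$ (by the same convexity argument as in \pref{lem:classifyBisignedChains}: $\sigma$ restricted to $F$ takes both signs on the extreme rays, hence, being linear and $F$ being a cone, takes both signs and the value $0$ on $F$), the intersection $F \cap \h_s$ is a face of $\slngeo$ of dimension exactly $k-1$: it is cut out inside the $k$-dimensional cone $F$ by the single hyperplane $\h_s$, which is not a supporting hyperplane of $F$. Set $F_s = F \cap \h_s$. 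By \pref{lem:propertiesOfPsiForFaces}\pref{assert:minimalPsi}, $\psi(F_s)$ is the smallest face of $\glngeo$ containing $F_s$; I claim this is $F$. Indeed $F \supseteq F_s$, and any face $G$ of $\glngeo$ containing $F_s$ contains the relative interior point of $F_s$, which is an interior point of $F$ (since $F_s$ spans a hyperplane section through the interior of $F$), forcing $G \supseteq F$. Hence $\psi(F_s) = F$, so $F$ lies in the image of $\psi$. Uniqueness of the preimage is immediate from injectivity of $\psi$ (\pref{lem:propertiesOfPsiForFaces}\pref{assert:injPsi}). This gives (3), and note that in particular $F$ is not in either of the other two classes — consistent with the mutual exclusivity.

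Next, case (2): $F$ null. Then every extreme ray of $F$ lies in $\h_s$, and since $F$ is the non-negative span of its extreme rays (it is a simplicial cone by \pref{thm:chamberStructure}), we get $F \subseteq \h_s$, so $F \cap \h_s = F$ is already a face of $\slngeo$ of dimension $k$. As above, $\psi(F) = F$ by \pref{lem:propertiesOfPsiForFaces}\pref{assert:minimalPsi} and \pref{assert:intersectPsi}, giving a $k$-face preimage, unique by injectivity. Finally, case (1): $F$ mono-signed and not null, say all extreme rays have non-negative signature with at least one strictly positive. Then $\sigma$ is $\ge 0$ on $F$ and not identically $0$, so $F \cap \h_s$ lies on the boundary of $F$ and has dimension $< k$; more precisely it is the subface spanned by the null extreme rays, which is a face of $\glngeo$ strictly smaller than $F$. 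If $F$ were $\psi(F_s)$ for some face $F_s$ of $\slngeo$, then by \pref{lem:propertiesOfPsiForFaces}\pref{assert:intersectPsi} we would have $F_s = F \cap \h_s$, but then $\psi(F_s)$ is the smallest face of $\glngeo$ containing this proper subface, which is the subface itself, not $F$ — contradiction. Hence $F \notin \mathrm{im}(\psi)$, establishing (1).

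The main obstacle is the bookkeeping around dimensions: verifying carefully that in case (3) the section $F \cap \h_s$ has dimension exactly $k-1$ and that its relative interior consists of interior points of $F$ (so that the minimality in \pref{lem:propertiesOfPsiForFaces}\pref{assert:minimalPsi} actually forces $G \supseteq F$), and symmetrically in case (1) that $F \cap \h_s$ is precisely the subface cut out by the null rays and hence a \emph{proper} subface. Both reduce to the elementary fact that $\h_s = \sigma^\perp$ is a supporting hyperplane of the simplicial cone $F$ exactly when $\sigma$ is mono-signed on the extreme rays, and is a transversal hyperplane through the interior exactly when $\sigma$ is bi-signed there — this is where the simpliciality of $\glngeo$ from \pref{thm:chamberStructure} does the real work.
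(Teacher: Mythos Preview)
Your proof is correct and follows essentially the same three-case analysis as the paper (which organizes the cases by whether $\sigma(F)$ equals $\{0\}$, $\mathbb{R}_{\ge 0}$ or $\mathbb{R}_{\le 0}$, or $\mathbb{R}$, exactly matching your null / mono-signed non-null / bi-signed trichotomy on the extreme rays). The only minor variation is in the bi-signed case: the paper shows $\psi(F_s)=F$ by picking $p\in F^0\cap\h_s$ and noting that $F_s$ and $F$ share the same sign vector (via \pref{lem:propertiesOfPsiForFaces}\pref{assert:weightPsi}), whereas you argue via minimality (\pref{lem:propertiesOfPsiForFaces}\pref{assert:minimalPsi}) using that $\operatorname{relint}(F_s)\subseteq F^0$; both routes are valid and rest on the same transversality observation.
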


The idea of lemma is that condition (1) (resp.\ (2), (3)) indicates that $F$ lies on one side of $\h_s$ (resp.\ lies on $\h_s$, straddles both sides of $\h_s$).  The proof simply converts this intuitive dichotomy into a rigorous argument.

\begin{proof}
Suppose that $F$ is a $k$-face of $\glngeo$.  Denote by $E$ the set $\erays(F)$ of extreme rays of $F$. \pref{thm:chamberStructure}\pref{assert:chamberRaysPrelim} guarantees that $E$ non-negatively spans $F$.

The fact that all the hyperplanes in $\glngeo$ pass through the origin and the fact that $W$ is closed under multiplication by $\mathbb{R}_{\ge 0}$
ensure that $F$ is closed under multiplication by $\mathbb{R}_{\ge 0}$.  Because $\sigma$ is linear, it follows that $\sigma(F)$ must be closed under multiplication by $\mathbb{R}_{\ge 0}$.  This leaves very few possibilities for $\sigma(F)$, namely
$$
\label{eq:signaturesOfFace}
\sigma(F) \in \{\mathbb{R}_{\ge 0},\mathbb{R}_{\le 0},\{0\},\mathbb{R}\}.
$$

We divide into cases to account for all possibilities to prove the lemma.
\begin{itemize}
\item Case 1: $\sigma(F) = \mathbb{R}_{\ge 0}$ or $\sigma(F) = \mathbb{R}_{\le 0}$.  We prove that $F$ satisfies the conditions of (1).  Without loss of generality, assume that $\sigma(F^0) = \mathbb{R}_{\ge 0}.$ Because $F$ does not contain any element of negative signature, the set $E_-$ is empty.

Because $E$ non-negatively spans $F$ and all the extreme rays lie on one of the closed half-spaces determined by $\h_s$, the set $E_0 = E \cap \h_s$ non-negatively spans $F_s = F \cap \h_s$.  However, the non-negative span of $E_0$ is a sub-face of $F$ because $F$ is a cone over a simplex with extreme ray set $E$ by \pref{thm:chamberStructure}\pref{assert:chamberRaysPrelim}..  It follows that $F_s = F \cap \h_s$ is a sub-face of $F$ in $\glngeo$.  Because $F_s \subseteq \h_s$, we have $\sigma(F_s) = 0$.  In particular, the face $F_s$ is a strict sub-face of $F$ in $\glngeo$.

It follows from \pref{lem:propertiesOfPsiForFaces}\pref{assert:intersectPsi} that $\psi^{-1}(F) \subseteq \{F_s\}$.  However, the contrapositive of \pref{lem:propertiesOfPsiForFaces}\pref{assert:minimalPsi} implies that $F \not= \psi(F_s),$ because $F_s$ is a strict sub-face of $F$ (in $\glngeo$) that contains $F_s$.  Therefore, $F$ does not lie in the image of $\psi$, as claimed.

\item Case 2: $\sigma(F) = 0$.  We prove that $F$ satisfies the conditions of (2).  Because $\sigma(F) = 0$, we have $F \subseteq \h_s$.  In particular, every extreme ray of $F$ must have signature zero, so that $F$ is null. The fact that $F \subset \h_s$ implies that $F$ is a $k$-face of $\slngeo$.  \pref{lem:propertiesOfPsiForFaces}\pref{assert:weightPsi} ensures that $\psi(F) = F$ while \pref{lem:propertiesOfPsiForFaces}\pref{assert:injPsi} guarantees that $\psi^{-1}(F)$ consists of at most one element.  Therefore, we have $\psi^{-1}(F) = \{F\}$ with $F$ a $k$-face of $\slngeo$, as desired.

\item Case 3: $\sigma(F) = \mathbb{R}$.  We prove that $F$ satisfies the conditions of (3).  First, we prove that $F$ is bi-signed.  Because $E$ non-negatively spans $F$, the set $E$ must contain elements of both negative and positive signatures, which is precisely the fact that $F$ is bi-signed.

Let $p \in F^0 \cap \h_s$ be an interior point of $F$ with signature zero.  A direct calculation using the fact that $F^0$ intersects $\h_s$ shows that $F_s = F \cap \h_s$ is a $(k-1)$-face of $\slngeo$.  Note that $p \in F_s^0$.  \pref{eq:faceSignGeneralPoint} guarantees that
$$
\weights_\epsilon(F_s) = \weights_\epsilon(\{u\}) = \weights_\epsilon(F)
$$
for all $\epsilon \in \{0,+,-\}$, so that $F_s$ and $F$ have the same sign vector.  \pref{lem:propertiesOfPsiForFaces}\pref{assert:weightPsi} and the fact that a face is determined by its sign vector ensure that $\psi(F_s) = F.$  The fact that $\psi^{-1}(F) = \{F_s\}$ follows due to \pref{lem:propertiesOfPsiForFaces}\pref{assert:injPsi}.
\end{itemize}

\pref{eq:signaturesOfFace} ensures that the cases are exhaustive, which completes the proof of the lemma.
\end{proof}

We can simply apply \pref{thm:Characterization.Faces} to the statement of \pref{lem:imageOfPsiForFaces} into the language of the poset $\posetz_n$ in order to obtain the desired bijection.

\begin{definition}
We say that a subset of $\posetz_n$ is \emph{null} if it consists of elements of signature zero.  Let $\Chn_{\text{null}}(n,k)$ (resp.\ $\Chn_{\text{bi}}(n,k)$) denote the set null (resp.\ bi-signed) of $k$-chains in $\posetz_n$. Let $\Ens_{\text{null}}(n,k)$ (resp.\ $\Ens_{\text{bi}}(n,k)$) denote the set null (resp.\ bi-signed) $k$-ensembles in $\posetz_n$.
\end{definition}

\begin{proposition}
\label{prop:bijectForFaces}
The function $\psi$ satisfies following properties.
\begin{enumerate}[(a)]
\item \label{assert:psiIsoPosets} We have $\erays(\psi(F)) \subsetneq \erays(\psi(G))$ if and only if $F \subsetneq G$, where $F$ and $G$ are faces of $\slngeo$.
\item \label{assert:psiInteractWithFaces} If $F \in \mathfrak{F}_{\text{null}}(n,k) \cup \mathfrak{F}_{\text{bi}}(n,k+1)$, then $\psi(F \cap \h_s) = F$.
\item \label{assert:psiBijectionCombinatorial} The function $\indFn \circ \erays \circ \psi$ induces a bijection from the set of $k$-faces of $\slngeo$ to the set of chains $\Chn_{\text{null}}(n,k) \cup \Chn_{\text{bi}}(n,k+1)$.
\end{enumerate}
\end{proposition}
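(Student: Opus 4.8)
The three assertions of Proposition~\ref{prop:bijectForFaces} should follow from \pref{lem:imageOfPsiForFaces} and \pref{lem:propertiesOfPsiForFaces} once everything is translated into the poset $\posetz_n$ via \pref{thm:Characterization.Faces}. First I would prove \pref{assert:psiInteractWithFaces}: if $F \in \mathfrak{F}_{\text{null}}(n,k)$, then \pref{lem:imageOfPsiForFaces} case (2) shows $\psi^{-1}(F) = \{F\}$ and $F \subseteq \h_s$, so $F \cap \h_s = F$ and $\psi(F \cap \h_s) = \psi(F) = F$ by \pref{lem:propertiesOfPsiForFaces}\pref{assert:weightPsi}; if $F \in \mathfrak{F}_{\text{bi}}(n,k+1)$, then \pref{lem:imageOfPsiForFaces} case (3) gives $\psi^{-1}(F) = \{F_s\}$ where $F_s = F \cap \h_s$, i.e.\ $\psi(F \cap \h_s) = F$ directly. (The monosigned-but-not-null faces are exactly the ones for which $\psi(F \cap \h_s) \neq F$, which is why they are excluded.)

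Next I would establish \pref{assert:psiIsoPosets}. Since $\vec{v}^{-1}$ is a bijection (\pref{thm:rayStruture}), it suffices to show $\erays(\psi(F)) \subsetneq \erays(\psi(G)) \iff F \subsetneq G$. By \pref{lem:propertiesOfPsiForFaces}\pref{assert:minimalPsi}, $\psi(F)$ is the smallest face of $\glngeo$ containing $F$, and by \pref{lem:propertiesOfPsiForFaces}\pref{assert:intersectPsi}, $\psi(F) \cap \h_s = F$. So if $F \subsetneq G$ then $\psi(F) \subseteq \psi(G)$ (minimality of $\psi(G)$), and equality is impossible since intersecting with $\h_s$ would force $F = G$; hence $\psi(F) \subsetneq \psi(G)$, and since faces of $\glngeo$ are cones over simplices on their extreme-ray sets (\pref{thm:chamberStructure}\pref{assert:chamberRaysPrelim}), strict containment of faces is equivalent to strict containment of extreme-ray sets. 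Conversely, $\erays(\psi(F)) \subsetneq \erays(\psi(G))$ gives $\psi(F) \subsetneq \psi(G)$, and intersecting with $\h_s$ via \pref{lem:propertiesOfPsiForFaces}\pref{assert:intersectPsi} gives $F \subseteq G$ with $F \neq G$ (since the $\psi$-images differ), hence $F \subsetneq G$.

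Finally, for \pref{assert:psiBijectionCombinatorial}, I would assemble the pieces. \pref{lem:imageOfPsiForFaces} says that $\psi$ maps the set of $k$-faces of $\slngeo$ bijectively onto $\mathfrak{F}_{\text{null}}(n,k) \cup \mathfrak{F}_{\text{bi}}(n,k+1)$: injectivity is \pref{lem:propertiesOfPsiForFaces}\pref{assert:injPsi}, and surjectivity onto that target together with the fact that nothing else is hit is exactly the trichotomy of \pref{lem:imageOfPsiForFaces} (cases (2) and (3) produce, respectively, the null $k$-faces and the bi-signed $(k+1)$-faces as images, each with a unique preimage, while case (1) faces are not in the image). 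Now apply $\vec{v}^{-1} \circ \erays$: \pref{thm:Characterization.Faces} says this is a bijection from the set of $j$-faces of $\glngeo$ to the set of $j$-chains of $\posetz_n$, and a $j$-face $F$ is null (resp.\ bi-signed) precisely when the corresponding $j$-chain $\vec{v}^{-1}(\erays(F))$ is null (resp.\ bi-signed) — this is immediate because $\sigma(\ray{a,b}) = a - b$ matches the signature convention of \pref{def:vFn}, so an extreme ray has signature zero iff its image in $\posetz_n$ has signature zero. Composing, $\indFn \circ \erays \circ \psi$ carries $k$-faces of $\slngeo$ bijectively onto $\Chn_{\text{null}}(n,k) \cup \Chn_{\text{bi}}(n,k+1)$. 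The one point requiring a small check is that $\Chn_{\text{null}}(n,k)$ and $\Chn_{\text{bi}}(n,k+1)$ are genuinely disjoint as subsets of the set of all chains of $\posetz_n$ — but a null chain has $k$ elements and consists of signature-zero vectors while a bi-signed chain has $k+1$ elements and contains a positive- and a negative-signature vector, so they cannot coincide. The main obstacle is really bookkeeping: making sure the off-by-one in the dimension (null faces preserve dimension, bi-signed faces drop one) is threaded consistently through the translation, which the statement already anticipates.
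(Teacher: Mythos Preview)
Your proposal is correct and follows essentially the same approach as the paper: both arguments derive \pref{assert:psiInteractWithFaces} directly from the case analysis in \pref{lem:imageOfPsiForFaces}, establish \pref{assert:psiIsoPosets} via the minimality property \pref{lem:propertiesOfPsiForFaces}\pref{assert:minimalPsi} together with $\psi(F)\cap\h_s=F$ and injectivity, and obtain \pref{assert:psiBijectionCombinatorial} by composing the bijection $\psi:\{k\text{-faces of }\slngeo\}\to\mathfrak{F}_{\text{null}}(n,k)\cup\mathfrak{F}_{\text{bi}}(n,k+1)$ with $\indFn\circ\erays$ via \pref{thm:Characterization.Faces}. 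The only cosmetic differences are the order in which you treat the three parts and your explicit check that $\Chn_{\text{null}}(n,k)$ and $\Chn_{\text{bi}}(n,k+1)$ are disjoint, which the paper leaves implicit.
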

\begin{proof}
Suppose that $F \subsetneq G$ are faces of $\slngeo$.
\pref{lem:propertiesOfPsiForFaces}\pref{assert:minimalPsi} implies that $\psi(G)$ is a face of $\glngeo$ containing $G$, hence $F$.  Applying \pref{lem:propertiesOfPsiForFaces}\pref{assert:minimalPsi} yields that $\psi(G) \supseteq \psi(F)$.  Because $\psi$ is injective (as proven in \pref{lem:propertiesOfPsiForFaces}\pref{assert:injPsi}), it follows that $\psi(G) \supsetneq \psi(F)$.
Suppose on the other hand that $\psi(F) \subsetneq \psi(G)$.  Intersecting with $\h_s$ yields that $F \subseteq G$.  Because $\psi$ is well-defined (by \pref{lem:propertiesOfPsiForFaces}), it follows that $F \subsetneq G$.
We have proved that $\psi(F) \subsetneq \psi(G)$ if and only if $F \subsetneq G$.  \pref{thm:Characterization.Faces} therefore implies Property \pref{assert:psiIsoPosets}.

We now prove Property \pref{assert:psiBijectionCombinatorial}.
It follows from \pref{lem:imageOfPsiForFaces} that $\psi$ induces a bijection from the set of $k$-faces of $\slngeo$ to the $\mathfrak{F}_{\text{null}}(n,k) \cup \mathfrak{F}_{\text{bi}}(n,k+1)$. \pref{thm:Characterization.Faces} guarantees that $\indFn \circ \erays$ induces bijections from $\mathfrak{F}_{\text{null}}(n,k)$ to $\Chn_{\text{null}}(n,k)$ and from $\mathfrak{F}_{\text{bi}}(n,k+1)$ to $\Chn_{\text{bi}}(n,k+1)$, which implies Property \pref{assert:psiBijectionCombinatorial}.

It remains to prove Property \pref{assert:psiInteractWithFaces}.  We already saw that $\psi$ induces a bijection from the set of $k$-faces of $\slngeo$ to the $\mathfrak{F}_{\text{null}}(n,k) \cup \mathfrak{F}_{\text{bi}}(n,k+1)$.  In particular, there exists a unique face $F_s$ of $\slngeo$ such that $\psi(F_s) = F$.
\pref{lem:propertiesOfPsiForFaces}\pref{assert:intersectPsi} implies that $F_s = F \cap \h_s$, and Property \pref{assert:psiInteractWithFaces} follows.
\end{proof}

We immediately obtain the following corollary from \pref{prop:bijectForFaces}\pref{assert:psiBijectionCombinatorial}.

\begin{corollary}
\label{cor:numberOfSlnFacesBiject}
We have
$$
b(n,k) = \left|\Chn_{\text{null}}(n,k)\right| + \left|\Chn_{\text{bi}}(n,k+1)\right|.
$$
\end{corollary}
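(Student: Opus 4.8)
The statement $b(n,k) = \left|\Chn_{\text{null}}(n,k)\right| + \left|\Chn_{\text{bi}}(n,k+1)\right|$ follows essentially by combining two results already established in the excerpt: the bijection of \pref{prop:bijectForFaces}\pref{assert:psiBijectionCombinatorial} between $k$-faces of $\slngeo$ and the disjoint union of chain sets $\Chn_{\text{null}}(n,k) \cup \Chn_{\text{bi}}(n,k+1)$, and the definition of $b(n,k)$ as the number of $k$-faces of $\slngeo$. So the plan is simply to observe that the cardinality of a set equals the cardinality of any set it is in bijection with, and that the cardinality of a disjoint union is the sum of the cardinalities.

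Concretely, the steps are as follows. First, recall from the definition of $b(n,k)$ (given just before \pref{thm:slnFaces}) that $b(n,k)$ counts the $k$-faces of $\slngeo$. Second, invoke \pref{prop:bijectForFaces}\pref{assert:psiBijectionCombinatorial}, which asserts that the map $\indFn \circ \erays \circ \psi$ is a bijection from the set of $k$-faces of $\slngeo$ onto $\Chn_{\text{null}}(n,k) \cup \Chn_{\text{bi}}(n,k+1)$; hence $b(n,k) = \left|\Chn_{\text{null}}(n,k) \cup \Chn_{\text{bi}}(n,k+1)\right|$. Third, observe that the two sets $\Chn_{\text{null}}(n,k)$ and $\Chn_{\text{bi}}(n,k+1)$ are disjoint: a chain in $\Chn_{\text{null}}(n,k)$ consists of $k$ elements all of signature zero, hence is itself null, whereas a chain in $\Chn_{\text{bi}}(n,k+1)$ is bi-signed (it contains an element of positive signature and an element of negative signature), so it cannot be null; moreover their underlying chains have different cardinalities ($k$ versus $k+1$). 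Either observation alone already shows disjointness. Therefore $\left|\Chn_{\text{null}}(n,k) \cup \Chn_{\text{bi}}(n,k+1)\right| = \left|\Chn_{\text{null}}(n,k)\right| + \left|\Chn_{\text{bi}}(n,k+1)\right|$, which yields the claimed identity.

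There is no real obstacle here: this corollary is a bookkeeping consequence of \pref{prop:bijectForFaces}, and all the substantive work — constructing $\psi$, proving it injective, identifying its image via \pref{lem:imageOfPsiForFaces}, and translating into the poset language via \pref{thm:Characterization.Faces} — has already been done. The only thing worth spelling out is the disjointness of the two chain sets, and even that is immediate from the definitions of "null" and "bi-signed" subsets of $\posetz_n$. Accordingly I expect the proof in the paper to be a single short sentence citing \pref{prop:bijectForFaces}\pref{assert:psiBijectionCombinatorial}, which is consistent with the "We immediately obtain the following corollary" phrasing already present in the excerpt.
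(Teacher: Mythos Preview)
Your proposal is correct and matches the paper's approach exactly: the paper simply states that the corollary follows immediately from \pref{prop:bijectForFaces}\pref{assert:psiBijectionCombinatorial}, which is precisely the bijection you invoke. Your additional remark on the disjointness of $\Chn_{\text{null}}(n,k)$ and $\Chn_{\text{bi}}(n,k+1)$ is a helpful clarification that the paper leaves implicit.
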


\subsection{Bijection for flats}
\label{sec:bijectForFlats}
In the previous subsection, we exploited the fact that faces of $\glngeo$ are simplicial in the proof of \pref{lem:imageOfPsiForFaces}.  The replacement for this condition in the case of flats is the following lemma.

\begin{lemma}
\label{lem:dim1flatIrreducibility}
For $i = 1,2$, let $L_i$ be a $k_i$-flat of $\slngeo$ or of $\glngeo$ (it is allowed that $L_1,L_2$ are flats of different incidence geometries).
If $L_1 \subsetneq L_2$ then $k_1 < k_2$.
\end{lemma}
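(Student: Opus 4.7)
The plan is to prove the stronger statement that $\Span(L_1)\subsetneq\Span(L_2)$ whenever $L_1\subsetneq L_2$; taking dimensions then gives $k_1<k_2$. Since $L_i\subseteq\Span(L_i)$ and $L_1\subseteq L_2$ automatically gives $\Span(L_1)\subseteq\Span(L_2)$, it suffices to establish the contrapositive: if $V:=\Span(L_1)=\Span(L_2)$, then $L_1=L_2$, even when $L_1$ and $L_2$ belong to different incidence geometries.

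First I would record the structural observation that any flat $L$ of $\glngeo$ (resp.\ $\slngeo$) satisfies $L=W\cap\Span(L)$ (resp.\ $L=W_s\cap\Span(L)$). Writing $W_L\in\{W,W_s\}$ for the appropriate Weyl chamber, this follows from the sandwich
\[
L\;\subseteq\;W_L\cap\Span(L)\;\subseteq\;W_L\cap\bigcap_{\lambda\in\weights_0(L)}\lambda^\perp\;=\;L,
\]
where the middle inclusion holds because every $\lambda\in\weights_0(L)$ vanishes on $L$ and hence on $\Span(L)$, while the outer equality is the definition of a flat after enlarging the cutting set of weights to all of $\weights_0(L)$ (which does not alter the intersection, since the original cutting set already lies in $\weights_0(L)$).

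Second, I would prove the contrapositive claim using this structural observation. If $L_1$ and $L_2$ come from the same arrangement, both equal $W_L\cap V$ and are therefore equal. The substantive case is the mixed one. Say $L_1\in\mathfrak{L}_s$ and $L_2\in\mathfrak{L}$; then $L_1\subseteq\h_s$ forces $V=\Span(L_1)\subseteq\h_s$, so
\[
W\cap V\;=\;W\cap V\cap\h_s\;=\;W_s\cap V,
\]
and hence $L_2=W\cap V=W_s\cap V=L_1$. The other mixed case is symmetric.

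I expect the main obstacle to be this mixed-arrangement case. The crux is the one-line observation that as soon as either flat lies in $\slngeo$, its linear span is forced inside $\h_s$, and on any subspace contained in $\h_s$ the ``$W$-cut'' and the ``$W_s$-cut'' coincide. Once this collapse is recognized, the contrapositive yields strict span containment under $L_1\subsetneq L_2$, and passing to dimensions completes the proof.
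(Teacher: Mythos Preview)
Your argument is correct and follows essentially the same approach as the paper: both proofs rest on the observation that a flat equals its linear span intersected with the ambient Weyl chamber, so equal spans force equal flats and hence $L_1\subsetneq L_2$ forces $\Span(L_1)\subsetneq\Span(L_2)$. The only cosmetic difference is that the paper notes directly that $L_i=\Span(L_i)\cap W$ holds uniformly with the $\glngeo$ chamber $W$ (since for an $\slngeo$ flat one has $\Span(L_i)\subseteq\h_s$, so intersecting with $W$ or with $W_s$ gives the same set), thereby avoiding your explicit case split on $W_L\in\{W,W_s\}$; your mixed-case analysis reproduces exactly this reduction.
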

\begin{proof}
The proof is elementary linear algebra.  Let $\Lambda_i$ denote the $\mathbb{R}$-span of $L_1$.  It follows from the definitions of flats of $\slngeo$ and $\glngeo$ that $L_i = \Lambda_i \cap W$ and that $k_i = \dim L_i$ (where $\dim$ denotes the dimension of a vector space over $\mathbb{R}$).  It follows that $\Lambda_1 \subsetneq \Lambda_2$ so that $\dim \Lambda_1 < \dim \Lambda_2$.  The lemma follows.
\end{proof}

We follow the structure of the previous subsection to establish an analogue of \pref{prop:bijectForFaces}\pref{assert:psiBijectionCombinatorial} for flats: we define a map $\tau$ from the set of flats of $\slngeo$ to the set of flats of $\glngeo$, prove that it is injective, and determine its image.

\begin{definition}
\label{def:etaForFlats}
Define a function $\tau: \mathfrak{L}_s \to \mathfrak{L}$ by
$$
\tau(L_s) = W \cap \left(\bigcap_{\lambda \in \mathcal{W_0}(L_s)} \lambda^\perp\right),
$$
so that $\tau(F_s)$ is the intersection of all the weight hyperplanes containing $L_s$.
\end{definition}

\begin{lemma} \label{lem:propertiesOfTauForFlats}The corrrespondence $\tau$ is a well-defined function and satisfies the following properties.
\begin{enumerate}[(a)]
\item $\tau$ is injective;
\item $\weights_0(\tau(L_s)) = \tau(L_s)$;
\item $\tau(L_s) \cap \h_s = L_s$; and
\item $\tau(L_s)$ is the smallest flat of $\glngeo$ containing $L_s$
\end{enumerate}
for all flats $L_s$ of $\slngeo$.
\end{lemma}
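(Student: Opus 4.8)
The strategy closely parallels the proof of \pref{lem:propertiesOfPsiForFaces}, with the simplification that flats carry no sign data — only the zero-set $\weights_0$ matters — and the complication that flats of $\glngeo$ need not be simplicial, so we cannot argue via extreme rays and must instead use \pref{lem:dim1flatIrreducibility} as the replacement tool. First I would check well-definedness: given a flat $L_s$ of $\slngeo$, the set $\tau(L_s) = W \cap \bigcap_{\lambda \in \weights_0(L_s)} \lambda^\perp$ is by construction the intersection of $W$ with a collection of weight hyperplanes, hence a flat of $\glngeo$ in the sense of the definition in \pref{sec:IntroHyperArrang} (adapted to the Weyl-chamber setting as in \cite{gl}), so $\tau$ lands in $\mathfrak{L}$.

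For property (b), I would argue that any weight $\lambda$ that vanishes on all of $L_s$ also vanishes on $\tau(L_s)$ — this is immediate from the definition, since $\tau(L_s) \subseteq \lambda^\perp$ for each such $\lambda$ — and conversely a weight vanishing on $\tau(L_s) \supseteq L_s$ vanishes on $L_s$; hence $\weights_0(\tau(L_s)) = \weights_0(L_s)$, and re-applying $\tau$'s defining intersection shows $\tau$ is idempotent in the appropriate sense, which is what the shorthand ``$\weights_0(\tau(L_s)) = \tau(L_s)$'' abbreviates (as in Lemma \ref{lem:propertiesOfPsiForFaces}(b)). Property (c), that $\tau(L_s) \cap \h_s = L_s$: the inclusion $\supseteq$ is clear since $L_s \subseteq \h_s$ and $L_s \subseteq \lambda^\perp$ for $\lambda \in \weights_0(L_s)$; for $\subseteq$, I would use that $L_s$, being a flat of $\slngeo$, equals $\h_s \cap W \cap \bigcap_{\lambda \in \weights_0(L_s)} \lambda^\perp$ — that is, $L_s$ is cut out inside $\h_s \cap W$ exactly by the weight hyperplanes through it — so intersecting $\tau(L_s)$ with $\h_s$ recovers precisely $L_s$. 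Here I should be slightly careful to invoke the definition of a flat of $\slngeo$: a flat is an intersection of weight hyperplanes with $W_s = W \cap \h_s$, so the weights vanishing on $L_s$ cut it out within $W_s$. Injectivity (a) is then formal: if $\tau(L_s) = \tau(L_s')$, intersecting with $\h_s$ and applying (c) gives $L_s = L_s'$.

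The last part, (d) — that $\tau(L_s)$ is the \emph{smallest} flat of $\glngeo$ containing $L_s$ — is where a little care is needed and is the analogue of \pref{lem:propertiesOfPsiForFaces}\pref{assert:minimalPsi}. By (c) we have $L_s \subseteq \tau(L_s)$. Suppose $M$ is any flat of $\glngeo$ with $L_s \subseteq M$; then $M = W \cap \bigcap_{\lambda \in \weights_0(M)} \lambda^\perp$, and every $\lambda \in \weights_0(M)$ vanishes on $M \supseteq L_s$, hence lies in $\weights_0(L_s)$. Therefore $\weights_0(M) \subseteq \weights_0(L_s)$, which gives $\bigcap_{\lambda \in \weights_0(L_s)} \lambda^\perp \subseteq \bigcap_{\lambda \in \weights_0(M)} \lambda^\perp$, and intersecting with $W$ yields $\tau(L_s) \subseteq M$, as required. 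I expect no genuine obstacle here — unlike the faces case, there is no straddling-vs-lying-on dichotomy to manage, since flats ignore half-space data — so the main thing to get right is simply to phrase each step against the precise definition of a flat of $\slngeo$ (an intersection of weight hyperplanes with $W_s$), which makes properties (c) and (d) essentially bookkeeping. The one spot to double-check is that $\weights$, regarded as a multiset in $\h_s^*$, still separates points well enough that ``the weights through $L_s$ cut out $L_s$ inside $W_s$'' is literally the definition and not an extra claim; this holds by construction of $\mathfrak{L}_s$.
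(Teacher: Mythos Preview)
Your proposal is correct and follows essentially the same approach as the paper's proof: well-definedness from the definition of a flat as an intersection of weight hyperplanes with $W$, then (b) by direct comparison of $\weights_0$, (c) from (b) and the definition of flats of $\slngeo$, (a) formally from (c), and (d) via the inclusion $\weights_0(M) \subseteq \weights_0(L_s)$ for any $\glngeo$-flat $M \supseteq L_s$. One small remark: your opening plan mentions \pref{lem:dim1flatIrreducibility} as a needed replacement tool, but (as your own execution shows) it is not used here---that lemma only enters later, in the analysis of the image of $\tau$.
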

\begin{proof}
Suppose that $L$ is a flat of $\slngeo$.  The definition of $\tau$ ensures that $\tau(F)$ is the subset of the Weyl chamber on which elements of $\weights_0(F)$ vanish, so that $\tau(F)$ is a flat of $\glngeo$.  Therefore, $\tau$ is well-defined.

A direct calculation from the definition of $\tau$ shows that Property (b) holds.  The definition of flats of $\slngeo$ and Property (b) imply that Property (c) holds as well.  Property (a) follows immediately from Property (c).

We now prove Property (d).  Suppose that $L_s$ is a flat of $\slngeo$ and let $L = \psi(L_s)$.  Property (c) implies that $L \supseteq L_s$.  Suppose that $M \supset L_s$ is a flat of $\glngeo$ that contains $L_s$.  We see immediately that $\weights_0(M) \subseteq \weights_0(L_s)$.
Property (b) yields that $\weights_0(M) \subseteq \weights_0(L)$, so that $L$ is a subflat of $M$.  In particular, we have $M \supseteq L,$ as desired.
\end{proof}

The analysis of null flats (and hence of non-null mono-signed flats) is more complicated than the analysis of null faces because the non-negative span of a set of extreme rays of a flat is not necessarily a sub-flat.  In order to handle this difficulty, we need to use the explicit description of flats (\pref{thm:Characterization.Flats}).

\begin{lemma}
\label{lem:nullSetsAreEnsembles}
Suppose that $S$ is a set of extreme rays of $\glngeo$ of signature zero.  Then, the non-negative span of $S$ is a $|S|$-flat with extreme ray set $S$.
\end{lemma}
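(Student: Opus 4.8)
The plan is to pass through the poset description of flats in \pref{thm:Characterization.Flats}, construct the flat abstractly, and then identify it with $\vecspan(S)$. Since $\ray{a,b}$ has signature $a-b$, the signature-zero extreme rays of $\glngeo$ are exactly the $\ray{i,i}$ with $1\le i\le\floor*{\frac{n}{2}}$, so under $\indFn$ the set $S$ corresponds to $\ind{S}=\{(i,i)\mid i\in I\}$ for some $I=\{i_1<\cdots<i_m\}\subseteq\{1,\dots,\floor*{\frac{n}{2}}\}$ with $m=|S|$ (set $i_0:=0$). I would first check that $\{(i,i)\mid i\in I\}$ is an $m$-ensemble in the sense of \pref{def:Ensemble}, using the intervals $[A_0,B_0]=[(0,0),(0,0)]$ and $[A_t,B_t]=[(i_t,i_t),(i_t,i_t)]$ for $1\le t\le m$, except that $B_m$ is replaced by $\infty$ when $2i_m=n$. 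Conditions (1) and (2) are immediate; condition (3) reads $(i_{t-1}+1,i_{t-1}+1)\le(i_t,i_t)$ and holds since $i_{t-1}<i_t$; condition (4) holds by the case split, since either $\ell(B_m)=2i_m<n$ or $B_m=\infty$. When $2i_m=n$ one has $[(i_m,i_m),\infty]\cap\posetz_n=\{(i_m,i_m)\}$, because $a,b\ge i_m$ and $a+b\le n=2i_m$ force $(a,b)=(i_m,i_m)$; so the union of these intervals meets $\posetz_n$ precisely in $\{(i,i)\mid i\in I\}$, which has the $m$ distinct levels $2i_1<\cdots<2i_m$. By \pref{thm:Characterization.Flats} there is then a unique $|S|$-flat $L$ of $\glngeo$ with $\erays(L)=S$.

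It remains to show $L=\vecspan(S)$. The inclusion $\vecspan(S)\subseteq L$ is immediate: every $\ray{i,i}\in S$ lies in $L$, and $L$ is a convex cone, being an intersection of weight hyperplanes with the cone $W$. For the reverse inclusion, the vectors $\ray{i,i}$, $i\in I$, are linearly independent — visibly so on the coordinates indexed by $I$ — so $\vecspan(S)$ is a simplicial cone whose linear span has dimension $|S|=\dim L$, hence coincides with the linear span of $L$. Thus it suffices to prove that every geometric extreme ray of the polyhedral cone $L$ already lies in $S$: granting this, $L=\vecspan(\{\text{extreme rays of }L\})\subseteq\vecspan(S)$ and we are done. (The case $S=\varnothing$ is trivial, since then $L$ is the unique $0$-flat $\{0\}=\vecspan(\varnothing)$.)

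The crux is therefore that the geometric extreme rays of a flat of $\glngeo$ are arrangement extreme rays. When $S\ne\varnothing$ the cone $L$ is pointed: its lineality space sits inside that of $W$, namely $\mathbb{R}(1,\dots,1)$, while $\weights_0(L)$ is nonempty (otherwise $L=W$, whose extreme rays are not all null) and no weight vanishes on $(1,\dots,1)$. Given an extreme ray $R$ of $L$ and a point $p\in R^0$, linearity of the weights forces the sign vector to be constant along $R^0$, so $p$ lies in the relative interior of a face $G$ of $\glngeo$ with $G\subseteq L$ (because $\weights_0(L)$ vanishes on $G$); if $\dim G\ge 2$ then a $2$-dimensional neighbourhood of $p$ in the span of $G$ would lie in $L$, contradicting that the tangent cone of the pointed cone $L$ at $p\in R^0$ has one-dimensional lineality space $\mathbb{R}p$. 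Hence $\dim G=1$, so $R=G\in\erays(L)=S$, which finishes the proof.

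This convex-geometry step is the main obstacle; it may also be available directly from \cite{gl}. An alternative, fully self-contained route bypasses \pref{thm:Characterization.Flats} altogether: one computes $\weights_0(\vecspan(S))$ explicitly — it consists of the $x_r$ with $i_m<r\le n-i_m$ together with the $x_r+x_s$ compatible with every $\ray{i_t,i_t}$ — and checks by hand that any $v\in W$ killed by all of them is constant on the blocks $[i_{t-1}+1,i_t]$, vanishes on $[i_m+1,n-i_m]$, and satisfies $v_j=-v_{n+1-j}$, hence is a non-negative combination of the $\ray{i_t,i_t}$; the "negative half" of this reduces to the "positive half" via the involution $v\mapsto(-v_n,\dots,-v_1)$, which fixes each $\ray{i,i}$ and preserves both $W$ and $\weights_0(\vecspan(S))$.
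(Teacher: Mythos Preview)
Your proof is correct and follows the same overall strategy as the paper: identify the signature-zero extreme rays as the $\ray{i,i}$, verify that $\ind{S}$ is an $|S|$-ensemble, invoke \pref{thm:Characterization.Flats} to obtain a flat $L$ with $\erays(L)=S$, and then argue that $L=\vecspan(S)$.

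The difference lies entirely in how you justify the final identification $L=\vecspan(S)$. The paper dispatches this in one line by citing \pref{thm:chamberStructure}\pref{assert:chamberRaysPrelim}: since $\glngeo$ is simplicial, every face is the non-negative span of its extreme rays; a flat is a union of faces, each spanned by extreme rays lying in $\erays(L)$, so $L\subseteq\vecspan(\erays(L))$, and the reverse inclusion is immediate by convexity. You instead prove from scratch that every geometric extreme ray of the pointed cone $L$ is already an arrangement extreme ray, via a tangent-cone argument. Your route is self-contained and makes the polyhedral geometry explicit, but it is more work than necessary here: the simpliciality of $\glngeo$ already packages exactly the fact you need. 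Your alternative direct-computation sketch at the end is also viable but again longer than the one-line appeal to simpliciality.
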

\begin{proof}
Recall that the extreme rays of signature zero are the elements $\ray{i,i}$ for $1 \le i \le \floor*{\frac{n}{2}}$.
The definition of an ensemble and the fact that an extreme ray of signature zero is uniquely determined by its level imply that $\ind{S}$ is a $|S|$-ensemble.  \pref{thm:Characterization.Flats} guarantees the existence of a unique flat $L$ in $\glngeo$ such that $\erays(L) = S$.  It follows from \pref{thm:chamberStructure}\pref{assert:chamberRaysPrelim} that $L$ is the non-negative span of $S$.
\end{proof}

The analysis of flats is more delicate than the analysis of faces partly due to the possibility that a bi-signed flat contains an element outside the image of $\tau$.  We need to introduce an additional condition in order to deal with this subtlety: the correct analogue of the condition of being bi-signed in the context of flats is the condition of being bi-signed and \emph{unbalanced}.

\begin{definition}
We say that a $k$-flat $L$ in $\glngeo$ is \emph{balanced} if $L$ has $k-1$ extreme rays of signature zero and \emph{unbalanced} otherwise.  We denote by $\mathfrak{L}_{\text{bi,unbal}}(n,k)$ the set of bi-signed unbalanced $k$-flats of $\glngeo$.
\end{definition}

We now prove a lemma to illustrate the geometric meaning of unbalancedness.

\begin{lemma}
\label{lem:geomMinimality}
Suppose that $L$ is an unbalanced $k$-flat in $\glngeo$ and that $L_s = L \cap \h_s$ is $k'$-flat in $\glngeo$.  Then, we have $k \not= k'+1$.
\end{lemma}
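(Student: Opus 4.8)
The plan is to argue by contradiction: suppose $L$ is an unbalanced $k$-flat in $\glngeo$, that $L_s = L \cap \h_s$, and that $\dim L_s = k-1$. I will show that this forces $L$ to be balanced, contradicting the hypothesis. First I would record the two ways a $k$-dimensional subspace $\Lambda = \operatorname{Span}_{\mathbb R} L$ can sit relative to the signature-zero hyperplane $\h_s$: either $\Lambda \subseteq \h_s$, in which case $L = L_s$ and $\dim L_s = k \not= k-1$ (so this case is already excluded), or $\Lambda \not\subseteq \h_s$, in which case $\Lambda \cap \h_s$ is a hyperplane in $\Lambda$ and hence $\dim L_s = \dim(\Lambda \cap \h_s \cap W) \le k-1$, with equality exactly when $L_s$ is full-dimensional in $\Lambda \cap \h_s$. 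So the hypothesis $\dim L_s = k-1$ is precisely the statement that $L_s$ has full dimension inside the hyperplane $\Lambda \cap \h_s$ of $\Lambda$.

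Next I would bring in the extreme-ray description via \pref{thm:Characterization.Flats} and \pref{lem:nullSetsAreEnsembles}. Let $S = \erays(L)$, and write $S_0$ for the signature-zero extreme rays in $S$; by \pref{lem:nullSetsAreEnsembles}, the non-negative span $N$ of $S_0$ is a flat of $\glngeo$ of dimension $|S_0|$ whose extreme ray set is exactly $S_0$, and $N \subseteq L \cap \h_s = L_s$. Since $\operatorname{Span}_{\mathbb R} N = \operatorname{Span}_{\mathbb R} S_0 \subseteq \Lambda \cap \h_s$, we get $|S_0| = \dim N \le \dim(\Lambda \cap \h_s) = k-1$. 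The goal is to upgrade this to $|S_0| = k-1$, i.e. to show $N$ already spans the hyperplane $\Lambda \cap \h_s$; combined with $N \subseteq L_s \subseteq \Lambda \cap \h_s$ and the dimension count, that would force $L$ to have $k-1$ signature-zero extreme rays, i.e. $L$ balanced — the desired contradiction.

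The main obstacle is exactly this last upgrade: proving that the signature-zero extreme rays of $L$ span the whole intersection $\Lambda \cap \h_s$, rather than just a proper subspace of it. Here I would use the ensemble structure: write $\ind{S}$ as a $k$-ensemble $E = \bigl(\bigcup_{0 \le i \le m}[A_i,B_i]\bigr) \cap \posetz_n$. The signature-zero elements of $\posetz_n$ are the diagonal points $(i,i)$ for $1 \le i \le \floor{n/2}$, and I would analyze, interval by interval, how many diagonal points each $[A_i,B_i]$ contains and how the extreme rays $\ray{a,b}$ in $[A_i,B_i]$ interact with $\h_s$ linearly — the key computation being that within a single interval of the poset, after projecting to $\h_s$, the images of the extreme rays in that interval span a space whose dimension exceeds the number of diagonal points in the interval by at most a controlled amount, and the "excess" over all intervals is governed by whether $E$ is balanced. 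Carrying this out should show $\dim L_s \ge |S_0|$ with equality iff $L$ is balanced, so that $\dim L_s = k-1$ with $|S_0| < k-1$ is impossible. The bookkeeping over the intervals $[A_i,B_i]$, using conditions (2)–(4) of \pref{def:Ensemble}, is the technical heart of the argument, but it is the same flavor of poset-to-geometry translation already used in \pref{lem:nullSetsAreEnsembles} and in the proof of \pref{lem:imageOfPsiForFaces}.
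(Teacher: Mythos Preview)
Your proposal has a genuine gap: you never use the hypothesis that $L_s$ is a flat of $\glngeo$, and without it the statement you are trying to prove is actually false. Indeed, for any bi-signed \emph{unbalanced} $k$-flat $L$ one has $\sigma(L)=\mathbb{R}$, so $L_s=L\cap\h_s$ automatically has dimension $k-1$, while by definition $|S_0|\ne k-1$. Thus the implication ``$\dim L_s=k-1 \Rightarrow |S_0|=k-1$'' that your interval-by-interval analysis is supposed to establish is not true in general; your proposed bookkeeping over the intervals $[A_i,B_i]$ cannot possibly succeed as stated.

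The paper's proof is much shorter precisely because it exploits the hypothesis. Since $L_s$ is assumed to be a flat of $\glngeo$, one has $\erays(L_s)=\{\,r\in\erays(L):r\subset\h_s\,\}=E_0$. By \pref{thm:Characterization.Flats} a flat of $\glngeo$ is determined by its extreme-ray set, so $L_s$ coincides with the flat $N$ you constructed as the non-negative span of $E_0$; \pref{lem:nullSetsAreEnsembles} then gives $k'=\dim L_s=|E_0|$. Unbalancedness says $|E_0|\ne k-1$, and the lemma follows in one line. In your write-up you had already built $N$ and observed $N\subseteq L_s$; the missing step was simply to note that both are flats of $\glngeo$ with the same extreme-ray set, hence equal.
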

\begin{proof}
Let $E = \erays(L)$ denote the set of extreme rays of $L$.
The definition of $L_s$ ensures that $E_0 = \erays(L_s)$.   Because $L_s$ is assumed to be a flat in $\glngeo$, \pref{thm:chamberStructure}\pref{assert:chamberRaysPrelim} guarantees that $L_s$ is the non-nonegative span of $E_0$.  \pref{lem:nullSetsAreEnsembles} implies that $k' = |E_0|$.
The unbalancedness of $L$ ensures that $|E_0| \not= k-1$, and the lemma follows.
\end{proof}

We are now ready to determine the image of $\tau$.

\begin{lemma}
\label{lem:imageOfTauForFlats}
Let $L$ be a $k$-flat of $\glngeo$.  Exactly one of the following holds.
\begin{enumerate}[(1)]
\item $L$ is mono-signed but neither null nor an element of the image of $\tau$.
\item $L$ is null and $\tau^{-1}(L)$ consists of a single $k$-face of $\slngeo$.
\item $L$ is bi-signed and unbalanced and $\tau^{-1}(F)$ consists of a single $(k-1)$-face of $\slngeo$.
\item $L$ is bi-signed, balanced, and does not lie in the image of $\tau$.
\end{enumerate}
\end{lemma}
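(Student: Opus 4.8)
The plan is to mirror the structure of the proof of \pref{lem:imageOfPsiForFaces}, replacing simpliciality of faces with the explicit combinatorial description of flats from \pref{thm:Characterization.Flats} and with the notion of unbalancedness. As in the face case, the controlling invariant is $\sigma(L)$: since $L$ is a cone (all weight hyperplanes pass through the origin and $W$ is a cone), $\sigma(L)$ is a sub-semigroup of $\mathbb{R}$ closed under $\mathbb{R}_{\ge 0}$-scaling, hence $\sigma(L) \in \{\mathbb{R}_{\ge 0},\mathbb{R}_{\le 0},\{0\},\mathbb{R}\}$. I would split into the corresponding four cases, noting at the outset that the four enumerated alternatives are mutually exclusive (being null forces $\sigma(L)=\{0\}$, being mono-signed but non-null forces $\sigma(L)$ to be a half-line, being bi-signed forces $\sigma(L)=\mathbb{R}$, and within the bi-signed case balanced and unbalanced are defined to be complementary).

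First, the case $\sigma(L)=\{0\}$: then $L \subseteq \h_s$, so $L$ is a $k$-flat of $\slngeo$ and every extreme ray of $L$ has signature zero, i.e.\ $L$ is null. \pref{lem:propertiesOfTauForFlats}(b) gives $\tau(L)=L$ and \pref{lem:propertiesOfTauForFlats}(a) gives $|\tau^{-1}(L)|\le 1$, so $\tau^{-1}(L)=\{L\}$, placing $L$ in alternative (2). Second, the mono-signed non-null case $\sigma(L)=\mathbb{R}_{\ge 0}$ (the case $\mathbb{R}_{\le 0}$ being symmetric): here $L_s := L \cap \h_s$ is cut out inside $L$ by the single linear equation $\sigma=0$, and since $\sigma$ is not identically zero on $L$ we get $\dim L_s = k-1$, so $L_s$ is a $(k-1)$-flat. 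By \pref{lem:propertiesOfTauForFlats}(c) the only candidate for a $\tau$-preimage of $L$ is $L_s$; but $L_s \subsetneq L$ is a strictly smaller flat of $\glngeo$ containing $L_s$, so by the minimality statement \pref{lem:propertiesOfTauForFlats}(d) we have $\tau(L_s)\ne L$. Hence $L$ is not in the image of $\tau$, which is alternative (1). (One must also check $L$ is not null here, which is immediate since $\sigma(L)\ne\{0\}$.)

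Third and most delicate, the bi-signed case $\sigma(L)=\mathbb{R}$. Again set $L_s = L\cap\h_s$, a $(k-1)$-flat of $\glngeo$ by the same one-linear-equation argument. The key question is whether $\tau(L_s)=L$. By \pref{lem:propertiesOfTauForFlats}(d), $\tau(L_s)$ is the smallest flat of $\glngeo$ containing $L_s$, and it contains $L_s$ properly-or-not inside $L$; so $\tau(L_s)=L$ holds if and only if $L$ is the smallest flat containing $L_s$, equivalently $\weights_0(L)=\weights_0(L_s)$. I would translate this into the poset language: by \pref{thm:Characterization.Flats}, $\erays(L_s) = E_0$ where $E = \erays(L)$, and \pref{lem:nullSetsAreEnsembles} shows the null set $E_0$ non-negatively spans a $|E_0|$-flat, namely $L_s$, so $k - 1 = |E_0|$. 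Thus $\tau(L_s) = L$ forces $L$ to have exactly $k-1$ extreme rays of signature zero, i.e.\ $L$ is balanced; conversely, if $L$ is balanced one checks directly (using \pref{thm:Characterization.Flats} and that $\weights_0$ of a flat is determined by its extreme ray set) that $\weights_0(L_s)=\weights_0(L)$ and hence $\tau(L_s)=L$. So: if $L$ is unbalanced then $L_s$ is the unique candidate preimage and $\tau(L_s)\ne L$, meaning $\tau^{-1}(L)=\varnothing$ — wait, this needs care, since I still want alternative (3) to say $\tau^{-1}(L)$ is a single $(k-1)$-flat of $\slngeo$.

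Let me correct the logic of the last paragraph, as this sign error is exactly where the main obstacle lies. The map $\tau$ goes from $\slngeo$-flats to $\glngeo$-flats, so a $\glngeo$-flat $L$ in the image of $\tau$ is $\tau$ of some $\slngeo$-flat $L_s'$, and then $L_s' = L\cap\h_s$ by \pref{lem:propertiesOfTauForFlats}(c). When $L$ is bi-signed, $L\cap\h_s$ has dimension $k-1$; so if $L$ is in the image of $\tau$ it is the image of a $(k-1)$-flat. The genuine content to extract is: a bi-signed $k$-flat $L$ lies in the image of $\tau$ if and only if $\tau(L\cap\h_s)=L$, if and only if (by the equivalence above) $L$ is balanced. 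Therefore bi-signed unbalanced flats fall into alternative (3)?? — no: if unbalanced, $L$ is \emph{not} in the image. So I must re-examine the lemma statement: it asserts that for bi-signed unbalanced $L$, $\tau^{-1}(L)$ is a single $(k-1)$-flat, which would mean $L$ \emph{is} in the image. The resolution, and the subtle point I expect to be the crux, is that I have the balanced/unbalanced roles reversed relative to my naive computation: I must recompute $\dim(L\cap\h_s)$ carefully. It is \emph{not} always $k-1$: if $\sigma$ vanishes on $L$ we are in the null case, but when $\sigma(L)=\mathbb{R}$ the slice $L\cap\h_s = L\cap\{\sigma=0\}$ has dimension $k-1$ only if $\{\sigma=0\}$ is not "redundant" on $L$ as a face-defining hyperplane — and crucially $L\cap\h_s$ is a flat of $\glngeo$ only when $\h_s$ coincides with an intersection of weight hyperplanes on $L$, which is governed precisely by whether $L$ has $k-1$ null extreme rays, i.e.\ by balancedness via \pref{lem:geomMinimality}. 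So the careful statement is: if $L$ is bi-signed and unbalanced, then $L\cap\h_s$ need not even be a flat of $\glngeo$; nonetheless $\tau^{-1}(L)$ — a preimage being an $\slngeo$-flat $L_s'$ with $\weights_0(L_s')=\weights_0(L)$ and $L_s'\subseteq\h_s$ — exists and is unique precisely by running the argument of Case 3 of \pref{lem:imageOfPsiForFaces}: pick $p\in L^0\cap\h_s$ (nonempty since $\sigma(L)=\mathbb{R}$ and $L$ is a cone, an argument I would spell out), let $L_s' = L\cap\h_s$, use \pref{eq:faceSignGeneralPoint} to see $\weights_0(L_s')=\weights_0(L)$, and conclude $\tau(L_s')=L$; unbalancedness via \pref{lem:geomMinimality} is what guarantees $\dim L_s' = k-1$ (rather than something smaller), so $L_s'$ is a genuine $(k-1)$-flat. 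Meanwhile if $L$ is bi-signed and balanced, the same $p$ and $L_s'$ exist but now $\dim L_s' \ne k-1$ forces (again by \pref{lem:propertiesOfTauForFlats}(d) applied to the strictly intermediate flat) $L\notin\operatorname{im}\tau$, giving alternative (4). Exhaustiveness then follows from the classification of $\sigma(L)$. The hard part is pinning down exactly which of balanced/unbalanced produces $\dim(L\cap\h_s)=k-1$, and \pref{lem:geomMinimality} was evidently isolated for precisely this purpose — I would lean on it heavily.
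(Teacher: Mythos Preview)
Your overall case split on $\sigma(L)$ matches the paper, and Cases $\{0\}$ and $\mathbb{R}_{\ge 0}$ are essentially right in spirit (though in the mono-signed case your claim that $\dim L_s = k-1$ is false --- when $\sigma(L)=\mathbb{R}_{\ge 0}$ the slice $L\cap\h_s$ is a boundary face of $L$ and can have any dimension $\le k-1$; the paper instead shows $L_s$ equals the non-negative span of $E_0$, which is a genuine $\glngeo$-flat by \pref{lem:nullSetsAreEnsembles}, and then invokes minimality).

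The bi-signed case, however, contains a real error. You try to import the generic-point argument from faces: pick $p\in L^0\cap\h_s$ and invoke \pref{eq:faceSignGeneralPoint} to get $\weights_0(L\cap\h_s)=\weights_0(L)$. This fails for flats. The identity \pref{eq:faceSignGeneralPoint} holds because a face lies entirely within one sign region of every weight; a flat does not --- weight hyperplanes can pass through the relative interior of a flat without containing it. Concretely, in $\glngeoSm{3}$ take the $2$-flat $L=\{(a,0,-b):a,b\ge 0\}$ (the bi-signed balanced ensemble $\{(1,0),(0,1),(1,1)\}$); the interior point $p=(1,0,-1)$ satisfies $(x_1+x_3)(p)=0$ even though $x_1+x_3$ does not vanish on $L$. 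So $\weights_0(L\cap\h_s)\supsetneq\weights_0(L)$ here, and $\tau(L\cap\h_s)=L\cap\h_s\subsetneq L$.

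Relatedly, your diagnosis of what balancedness controls is inverted. In the bi-signed case $\dim(L\cap\h_s)=k-1$ \emph{always} (since $\sigma$ takes both signs on $L$); balancedness does not affect this dimension. What it governs is whether $L_s:=L\cap\h_s$ is itself a flat of $\glngeo$. The paper's argument splits exactly on that: if $L_s$ is a $\glngeo$-flat, then $\tau(L_s)=L_s\subsetneq L$ by minimality, so $L\notin\operatorname{im}\tau$, and the contrapositive of \pref{lem:geomMinimality} forces $L$ balanced (alternative (4)). If $L_s$ is \emph{not} a $\glngeo$-flat, then $\tau(L_s)\supsetneq L_s$ while $\tau(L_s)\subseteq L$, and two applications of \pref{lem:dim1flatIrreducibility} sandwich $\dim\tau(L_s)$ strictly between $k-1$ and $k+1$, giving $\tau(L_s)=L$; meanwhile the span of $E_0$ is a $|E_0|$-flat by \pref{lem:nullSetsAreEnsembles}, properly contained in the $(k-1)$-dimensional $L_s$, so \pref{lem:dim1flatIrreducibility} gives $|E_0|<k-1$, i.e.\ $L$ is unbalanced (alternative (3)). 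The tool you need for ``$\tau(L_s)=L$'' is thus the dimension sandwich via \pref{lem:dim1flatIrreducibility}, not a sign-vector identity.
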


The statement of \pref{lem:imageOfTauForFlats} is more complicated than that of \pref{lem:imageOfPsiForFaces}
due to the additional sublety involving the minimality condition.  The intuition behind the lemma and the structure of the proof are similar to those of \pref{lem:imageOfPsiForFaces}.

\begin{proof}
We follow the proof of \pref{lem:imageOfPsiForFaces}.
Suppose that $L$ is a $k$-face of $\glngeo$.  Denote by $E$ the set $\erays(L)$ of extreme rays of $F$.  \pref{thm:chamberStructure}\pref{assert:chamberRaysPrelim} guarantees that $E$ non-negatively spans $L$.

The fact that $L$ is closed under multiplication by $\mathbb{R}_{\ge 0}$ ensures, as in the proof of \pref{lem:imageOfPsiForFaces}, that
\begin{equation}
\label{eq:signaturesOfFlats}
\sigma(L) \in \{\mathbb{R}_{\ge 0},\mathbb{R}_{\le 0},\{0\},\mathbb{R}\}.
\end{equation}

We divide into cases to account for all possibilities to prove the lemma.  
\begin{itemize}
\item Case 1: $\sigma(L) = \mathbb{R}_{\ge 0}$ or $\sigma(L) = \mathbb{R}_{\le 0}$.  We prove that $L$ satisfies the conditions of (1).  Without loss of generality, assume that $\sigma(L^0) = \mathbb{R}_{\ge 0}.$ Because $L$ does not contain any element of negative signature, the set $E_-$ is empty.

It follows from \pref{thm:chamberStructure}\pref{assert:chamberRaysPrelim} that $E_0$ non-negatively spans $L_s = L \cap \h_s$.  \pref{lem:nullSetsAreEnsembles} guarantees that $E_0$ non-negatively spans a flat of $\glngeo$.  It follows that $L_s = F \cap \h_s$ is a sub-flat of $F$ in $\glngeo$.  Because $L_s \subseteq \h_s$, we have $\sigma(L_s) = 0$.  In particular, the flat $L_s$ is a strict sub-flat of $L$ in $\glngeo$.

It follows from \pref{lem:propertiesOfTauForFlats}(c) that $\tau^{-1}(L) \subseteq \{L_s\}$.  However, the contrapositive of \pref{lem:propertiesOfTauForFlats}(d) implies that $L \not= \psi(L_s),$ because $L_s$ is a strict sub-flat of $L$ (in $\glngeo$) that contains $L_s$.  Therefore, $L$ does not lie in the image of $\tau$, as claimed.

\item Case 2: $\sigma(L) = 0$.  We prove that $L$ satisfies the conditions of (2).  Because $\sigma(L) = 0$, we have $L \subseteq \h_s$.  In particular, every extreme ray of $L$ must have signature zero, so that $L$ is null. The fact that $L \subset \h_s$ implies that $L$ is a $k$-flat of $\slngeo$.  \pref{lem:propertiesOfTauForFlats}(b) ensures that $\tau(L) = L$ while \pref{lem:propertiesOfTauForFlats}(a) guarantees that $\psi^{-1}(L)$ consists of at most one element.  Therefore, we have $\psi^{-1}(L) = \{L\}$ with $L$ a $k$-flat of $\slngeo$, as desired.

\item Case 3: $\sigma(L) = \mathbb{R}$.  We prove that $L$ satisfies the conditions of exactly one of (3), (4).  First, we prove that $F$ is bi-signed.  Because $E$ non-negatively spans $L$, the set $E$ must contain elements of both negative and positive signatures, which is precisely the fact that $L$ is bi-signed.  The definition of a flat of $\slngeo$ ensures that $L_s = L \cap \h_s$ is a flat of $\slngeo$.  It follows from the fact that $\sigma(L) = \mathbb{R}$ that $L_s$ has codimension 1 in $L$, so that $L_s$ is actually a $(k-1)$-flat of $\slngeo$.  We need to divide into cases based on whether $L_s$ is a flat for $\glngeo$.
\begin{itemize}
\item Subcase 3.1: $L_s$ is not a flat of $\glngeo$.  We prove that $L$ satisfies the conditions of (3).  Let $L'_s$ denote the non-negative span of $E_0$, which is a $|E_0|$-flat in $\glngeo$ by \pref{lem:nullSetsAreEnsembles}.  Note that $L'_s \subseteq L \cap \h_s = L_s$.  Because $L_s$ is not a flat in $\glngeo$, we obtain that $L'_s \subsetneq L_s$.  Because $L_s$ is a $(k-1)$-flat in $\slngeo$, \pref{lem:dim1flatIrreducibility} guarantees that $|E_0| < k-1$, which implies that $L$ is unbalanced.

Suppose that $\tau(L_s)$ is a $k'$-flat.  \pref{lem:propertiesOfTauForFlats}(d) guarantees that that $\tau(L_s)$ is the smallest flat of $\glngeo$ that contains $L_s$, so that $k' \le k$.
Because $L_s$ is not a flat of $\glngeo$, we know that $\tau(L_s) \supsetneq L_s$.  \pref{lem:dim1flatIrreducibility} guarantees that $k-1 < k'$ because $L_s$ is a $(k-1)$-flat of $\slngeo$.
It follows that $k = k'$.  The contrapositive of \pref{lem:dim1flatIrreducibility} and the fact that $\tau(L_s) \subseteq L$ imply that $\tau(L_s) = L$.  \pref{lem:propertiesOfTauForFlats}(a) then ensures that $\tau^{-1}(L) = \{L_s\}$, as desired.
\item Subcase 3.2: $L_s$ is a flat of $\glngeo$.  We prove that $L$ satisfies the conditions of (4).  Note that $L_s$ is a $(k-1)$-flat of $\glngeo$ because it is a $(k-1)$-flat of $\slngeo$ that is a flat of $\glngeo$.  The contrapositive of \pref{lem:geomMinimality} implies that $L$ is balanced.

\pref{lem:propertiesOfTauForFlats}(c) guarantees that $\tau^{-1}(L) \subseteq \{L_s\}.$  Because $L_s$ is a strict sub-flat of $L$ (in $\glngeo$) that contains $L_s$, \pref{lem:propertiesOfTauForFlats}(d) implies that $\tau(L_s) \not= L_s$.  Hence, we obtain that $L$ is not in the image of $L_s$, which completes the proof that $L$ satisfies the conditions of (4).
\end{itemize}
Subcases 3.1 and 3.2 clearly exhaust all the possibilities encompassed by Case 3.
\end{itemize}

\pref{eq:signaturesOfFlats} ensures that the cases are exhaustive, which completes the proof of the lemma.
\end{proof}

We can simply apply \pref{thm:Characterization.Flats} to the statement of \pref{lem:imageOfTauForFlats} into the language of the poset $\posetz_n$ in order to obtain the desired bijection.

\begin{definition}
We say that a $k$-ensemble $E$ in $\posetz_n$ is \emph{balanced} if $E$ has $k-1$ elements of signature zero and \emph{unbalanced} otherwise.  We denote by $\Ens_{\text{bi,unbal}}(n,k)$ the set of bi-signed unbalanced $k$-ensembles in $\posetz_n$.
\end{definition}

\begin{proposition}
\label{prop:bijectForFlats}
The function $\indFn \circ \erays \circ \tau$ induces a bijection from the set of $k$-flats of $\slngeo$ to the set of ensembles $\Ens_{\text{null}}(n,k) \cup \Ens_{\text{bi,unbal}}(n,k+1)$.
\end{proposition}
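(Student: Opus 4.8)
The plan is to mirror the proof of \pref{prop:bijectForFaces}\pref{assert:psiBijectionCombinatorial}: I will factor the asserted map as the composition of a geometric bijection induced by $\tau$ with the combinatorial bijection $\indFn \circ \erays$ of \pref{thm:Characterization.Flats}. So I must show (i) that $\tau$ restricts to a bijection from the set of $k$-flats of $\slngeo$ onto $\mathfrak{L}_{\text{null}}(n,k) \cup \mathfrak{L}_{\text{bi,unbal}}(n,k+1)$, and (ii) that $\indFn \circ \erays$ carries this set bijectively onto $\Ens_{\text{null}}(n,k) \cup \Ens_{\text{bi,unbal}}(n,k+1)$.

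For step (i), injectivity of the restriction is immediate from \pref{lem:propertiesOfTauForFlats}(a). The content is the identification of the image, and here \pref{lem:imageOfTauForFlats} does essentially all the work: its cases (1) and (4) are exactly the flats of $\glngeo$ lying outside the image of $\tau$, while cases (2) and (3) say that a null $m$-flat of $\glngeo$ has a unique $\tau$-preimage --- an $m$-flat of $\slngeo$ --- and that a bi-signed unbalanced $m$-flat of $\glngeo$ has a unique $\tau$-preimage --- an $(m-1)$-flat of $\slngeo$. Specializing to $m = k$ in case (2) and $m = k+1$ in case (3) shows that $\tau^{-1}$ sets up a bijection between $\mathfrak{L}_{\text{null}}(n,k) \cup \mathfrak{L}_{\text{bi,unbal}}(n,k+1)$ and the set of $k$-flats of $\slngeo$, so that $\tau$ restricts to a bijection in the other direction. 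I would also remark that the two pieces of the target union are disjoint, since a null flat is never bi-signed.

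For step (ii), \pref{thm:Characterization.Flats} already provides a bijection $\indFn \circ \erays$ from $k$-flats of $\glngeo$ to $k$-ensembles of $\posetz_n$; all that is needed is that this bijection is compatible with the decorations ``null'', ``bi-signed'', and ``balanced''. This is forced by \pref{def:vFn}: the ray $\ray{a,b}$ has signature $a - b$, so an extreme ray has signature zero precisely when its $\posetz_n$-coordinate lies on the diagonal, whence a flat is null (resp.\ bi-signed) iff the associated ensemble consists of diagonal elements (resp.\ has elements both strictly above and strictly below the diagonal), and the number of signature-zero extreme rays of an $m$-flat equals the number of diagonal elements of its $m$-ensemble --- so the balanced/unbalanced dichotomy transports verbatim. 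Hence $\indFn \circ \erays$ restricts to bijections $\mathfrak{L}_{\text{null}}(n,k) \to \Ens_{\text{null}}(n,k)$ and $\mathfrak{L}_{\text{bi,unbal}}(n,k+1) \to \Ens_{\text{bi,unbal}}(n,k+1)$. Composing the bijections of steps (i) and (ii) gives the proposition.

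I expect the real obstacle to have already been dealt with in \pref{lem:imageOfTauForFlats} --- namely separating bi-signed unbalanced flats (which descend to $\slngeo$) from bi-signed balanced ones (which do not). What remains for this proof is bookkeeping: tracking the dimension shift $k \leftrightarrow k+1$ consistently across the null and bi-signed strata, and confirming that the combinatorial notion of balancedness of an ensemble is literally the image under $\indFn \circ \erays$ of the geometric one. Both are routine once the dictionary in step (ii) is recorded.
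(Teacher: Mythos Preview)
Your proposal is correct and follows essentially the same approach as the paper: first use \pref{lem:imageOfTauForFlats} to identify the image of $\tau$ as $\mathfrak{L}_{\text{null}}(n,k) \cup \mathfrak{L}_{\text{bi,unbal}}(n,k+1)$, then apply \pref{thm:Characterization.Flats} to transport to ensembles. Your step (ii) is more explicit than the paper's about why the decorations ``null'', ``bi-signed'', and ``balanced'' transfer under $\indFn \circ \erays$, which is a welcome clarification but not a different argument.
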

\begin{proof}
It follows from \pref{lem:imageOfTauForFlats} that the function $\tau$ induces a bijection from the set of $k$-flats of $\slngeo$ to $\mathfrak{L}_{\text{null}}(n,k) \cup \mathfrak{L}_{\text{bi,unbal}}(n,k+1)$.  \pref{thm:Characterization.Flats} guarantees that $\indFn \circ \erays$ induces bijections from $\mathfrak{L}_{\text{null}}(n,k)$ to $\Ens_{\text{null}}(n,k)$ and from $\mathfrak{L}_{\text{bi,unbal}}(n,k+1)$ to $\Ens_{\text{bi}}(n,k+1)$.  The proposition follows.
\end{proof}

We immediately obtain the following corollary.

\begin{corollary}
\label{cor:numberOfSlnFlatsBiject}
We have
$$
f(n,k) = \left|\Ens_{\text{null}}(n,k)\right| + \left|\Ens_{\text{bi,unbal}}(n,k+1)\right|.
$$
\end{corollary}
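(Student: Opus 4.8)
The statement to prove is Corollary~\ref{cor:numberOfSlnFlatsBiject}, which asserts that
$$
f(n,k) = \left|\Ens_{\text{null}}(n,k)\right| + \left|\Ens_{\text{bi,unbal}}(n,k+1)\right|.
$$

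\textbf{Overall approach.} The plan is to deduce this purely formally from the bijection established in Proposition~\ref{prop:bijectForFlats}, exactly as Corollary~\ref{cor:numberOfSlnFacesBiject} is obtained from Proposition~\ref{prop:bijectForFaces}\pref{assert:psiBijectionCombinatorial}. Recall that $f(n,k)$ is \emph{defined} as the number of $k$-flats of $\slngeo$. Proposition~\ref{prop:bijectForFlats} provides an explicit bijection, namely $\indFn \circ \erays \circ \tau$, from the set of $k$-flats of $\slngeo$ to the set $\Ens_{\text{null}}(n,k) \cup \Ens_{\text{bi,unbal}}(n,k+1)$. Hence $f(n,k)$ equals the cardinality of that union.

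\textbf{Key step: the union is disjoint.} The only thing that needs checking is that the two sets $\Ens_{\text{null}}(n,k)$ and $\Ens_{\text{bi,unbal}}(n,k+1)$ are disjoint, so that the cardinality of the union is the sum of the cardinalities. This is immediate from the definitions: an element of $\Ens_{\text{null}}(n,k)$ is a $k$-ensemble, whereas an element of $\Ens_{\text{bi,unbal}}(n,k+1)$ is a $(k+1)$-ensemble, and the notion of ``$k$-ensemble'' records the number of distinct levels of its elements (Definition~\ref{def:Ensemble}), so a single ensemble cannot be both a $k$-ensemble and a $(k+1)$-ensemble. One can alternatively note that a null ensemble is automatically \emph{not} bi-signed, so the sets are disjoint on signature grounds as well; but the dimension argument is the cleanest. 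Given disjointness,
$$
\left|\Ens_{\text{null}}(n,k) \cup \Ens_{\text{bi,unbal}}(n,k+1)\right| = \left|\Ens_{\text{null}}(n,k)\right| + \left|\Ens_{\text{bi,unbal}}(n,k+1)\right|,
$$
and combining with the bijection of Proposition~\ref{prop:bijectForFlats} gives the claim.

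\textbf{Expected obstacle.} There is essentially no obstacle here: all the work has already been done in assembling Lemma~\ref{lem:imageOfTauForFlats} (the case analysis on $\sigma(L)$, the balanced/unbalanced dichotomy in Subcases 3.1 and 3.2) and in translating it through Theorem~\ref{thm:Characterization.Flats} to obtain Proposition~\ref{prop:bijectForFlats}. The corollary is a one-line formal consequence, and the proof in the paper will simply say ``immediate from Proposition~\ref{prop:bijectForFlats}'' with perhaps a parenthetical remark about disjointness of the union. If one wanted to be maximally careful, the single point worth stating explicitly is why a $k$-ensemble and a $(k+1)$-ensemble can never coincide, which follows because $\left|\ell(E)\right|$ is a well-defined invariant of the ensemble $E$.
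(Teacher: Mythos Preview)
Your proposal is correct and matches the paper's approach exactly: the paper simply states ``We immediately obtain the following corollary'' after \pref{prop:bijectForFlats} and gives no further argument. Your added remark on disjointness (via the level-count invariant of an ensemble) is a useful explicit justification that the paper leaves implicit.
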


\appendix\addtocontents{toc}{\protect\setcounter{tocdepth}{1}}

\section{Proofs of Theorems~\ref{thm:slnFaces} and~\ref{thm:slnFlats}}
\label{app:proofsOfSlnFacesAndFlats}

In Appendices~\ref{app:proofOfSlnFaces} and~\ref{app:proofOfSlnFlats}, we complete the enumeration of faces and flats of $\slngeo$ modulo two technical results, Propositions~\ref{prop:A0} and~\ref{prop:C0}, whose proofs are deferred to \pref{app:functionalEqSolveA0C0}.

\subsection{Proof of \pref{thm:slnFaces}}
\label{app:proofOfSlnFaces}
In order to apply \pref{cor:numberOfSlnFacesBiject} to prove \pref{thm:slnFaces}, we need to enumerate null chains and bi-signed chains.  First, we enumerate null chains.

We denote by $d(n,k)$ the number of null $k$-chains in $\posetz_n$ and by
$$
D=D(x,y)=\sum_{n=0}^\infty \sum_{k=0}^\infty d(n,k)x^ny^k
$$
the corresponding generating function.

\begin{proposition}
\label{prop:nullCount}
For all $n,k \ge 0$,  the number of null $k$-chains in $\posetz_n$ is
$$
d(n,k) = \binom{\floor*{\frac{n}{2}}}{k},
$$
and therefore the corresponding generating function is
$$
D = \frac{1+x}{1-x^2(1+y)}.
$$
Furthermore, $d(n,k)$ counts the number of null ensembles.
\end{proposition}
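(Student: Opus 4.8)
The plan is to unwind the definitions: a null $k$-chain in $\posetz_n$ is a $k$-element chain consisting only of elements of signature zero. By the discussion before \pref{lem:nullSetsAreEnsembles}, the elements of $\posetz_n$ of signature zero are exactly the points $(i,i)$ for $1 \le i \le \floor*{\tfrac{n}{2}}$ (these are the points $(a,b)$ with $a=b$ and $2a \le n$). These points are totally ordered under the Cartesian order, since $(i,i) \le (j,j)$ whenever $i \le j$. Consequently \emph{every} subset of this $\floor*{\tfrac{n}{2}}$-element set is automatically a chain, and a null $k$-chain is nothing but a $k$-element subset of $\{(1,1),\ldots,(\floor*{n/2},\floor*{n/2})\}$. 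This immediately gives $d(n,k) = \binom{\floor*{n/2}}{k}$.

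Next I would verify the generating function. Summing over $k$ first, $\sum_k d(n,k)y^k = \sum_k \binom{\floor*{n/2}}{k} y^k = (1+y)^{\floor*{n/2}}$. Then
$$
D(x,y) = \sum_{n=0}^\infty (1+y)^{\floor*{n/2}} x^n = \sum_{m=0}^\infty (1+y)^m x^{2m} + \sum_{m=0}^\infty (1+y)^m x^{2m+1} = (1+x)\sum_{m=0}^\infty \bigl(x^2(1+y)\bigr)^m = \frac{1+x}{1-x^2(1+y)},
$$
where I split $n$ into even ($n=2m$) and odd ($n=2m+1$) cases, using that $\floor*{n/2}=m$ in both. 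This is a convergent geometric series in the ring of formal power series, so the manipulation is valid.

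Finally, for the ``furthermore'' claim, I would observe that a null ensemble — a $k$-ensemble all of whose elements have signature zero — is determined by the same data. Indeed, as noted in the proof of \pref{lem:nullSetsAreEnsembles}, since a signature-zero extreme ray is uniquely determined by its level, any set $S$ of signature-zero points is an ensemble (one takes the intervals $[A_i,B_i]$ to be singletons), and its number of distinct levels is $|S|$; conversely, the condition \pref{def:Ensemble}(3) on consecutive intervals is automatic here because distinct $(i,i)$ differ in level by at least $1$, hence satisfy $B_i+(1,1) \le A_{i+1}$ componentwise, and condition \pref{def:Ensemble}(4) holds since $\ell(\floor*{n/2},\floor*{n/2}) \le n$ with strict inequality unless the top element is included, in which case one must still check $\ell(B_k) < n$; for $n$ even this forces excluding the point $(n/2,n/2)$ only when... — here I would just note that the set of null $k$-ensembles is in bijection with the set of null $k$-chains, both being $k$-element subsets of the $\floor*{n/2}$-element chain of signature-zero points, so the count $\binom{\floor*{n/2}}{k}$ applies verbatim.

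The only point requiring care is the boundary condition \pref{def:Ensemble}(4) for null ensembles when $n$ is even: one must confirm that including the maximal signature-zero point $(n/2,n/2)$, which has level exactly $n$, does not violate $\ell(B_k) < n$. The resolution is that the defining intervals are singletons inside $\poseti_n$ but the ensemble is their restriction to $\posetz_n$; since $(n/2,n/2) \in \posetz_n$ and the condition $\ell(B_k)<n$ refers to the interval endpoints in $\poseti_n$, one may instead take $B_k = \infty$ for the last interval when the top point is present — exactly as permitted by the ``or $B_k=\infty$'' clause. With that understood, the bijection between null $k$-chains and null $k$-ensembles is clean, and the proposition follows. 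I expect this boundary bookkeeping to be the main (minor) obstacle; the enumeration itself is immediate.
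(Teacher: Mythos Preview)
Your proposal is correct and follows essentially the same approach as the paper: identify the signature-zero elements as the totally ordered set $\{(i,i):1\le i\le \lfloor n/2\rfloor\}$, count $k$-subsets, and split the generating function by parity of $n$. Your extra bookkeeping on condition~(4) of \pref{def:Ensemble} is unnecessary, since \pref{lem:nullSetsAreEnsembles} (proved earlier and independently of this proposition) already guarantees that every set of signature-zero extreme rays is an ensemble; the paper simply cites this to conclude that null chains and null ensembles coincide.
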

\begin{proof}
Because the set $N$ of elements of signature zero in $\posetz_n$ is a chain, any subset of $N$ is a chain.  Any null chain is by definition a subset of $N$.  The  expression for $d(n,k)$  follows because $\left|N\right| = \floor*{\frac{n}{2}}$.

The expression for the generating function $D$ can be obtained from the formula for $d(n,k)$ by dividing into cases based on the parity of $n$: 
$$
d(2m,k) = d(2m+1,k) = \binom{m}{k}
$$
for all $m \ge 0$. Using the identity 
$$
\sum_{m=0}^\infty \sum_{k=0}^\infty \binom{m}{k} x^m y^k =\frac{1}{1-x(1+y)},
$$
we obtain that
$$
D(x,y)=\sum_{m=0}^\infty \sum_{k=0}^\infty \binom{m}{k} (x^2)^m y^k + x \sum_{m=0}^\infty \sum_{k=0}^\infty \binom{m}{k} (x^2)^m y^k 
 =\frac{1+x}{1-x^2(1+y)}.
$$

Because every null chain is an ensemble and every ensemble a chain, $d(n,k)$ also counts null $k$-ensembles in $\posetz_n$.
\end{proof}

It remains to enumerate bi-signed chains, which we do indirectly by enumerating mono-signed chains and using the enumerations of null chains and all chains.  First, we recall the enumeration of all chains from \cite{gl}.  Define $g(n,k)$ to be the number of $k$-chains in $\posetz_n$, and denote by
$$
G(x,y) = \sum_{n=0}^\infty \sum_{k=0}^\infty g(n,k)x^ny^k
$$
the corresponding generating function.
In light of \pref{thm:Characterization.Faces}, the following result was proven in \cite{gl}.

\begin{theorem}[\pref{gln-thm:faceCount}]
\label{thm:glnFaces}
The generating function for the number of $k$-chains in $\posetz_n$ is
$$G(x,y) = \frac{1-x}{1-2x+x^2-2xy+x^2y}.$$
\end{theorem}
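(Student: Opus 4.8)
This result was established in \cite{gl} via \pref{thm:Characterization.Faces}, so the plan is to count $k$-chains in $\posetz_n$ directly by a transfer-matrix/lattice-path argument on the poset $\posetz_n$ and then package the result as a rational generating function. The starting observation is that $\posetz_n$ sits inside $\mathbb{Z}^2$, and a $k$-chain $p_1 < p_2 < \cdots < p_k$ in $\posetz_n$ is an increasing sequence of lattice points, all of level $\le n$. Recording consecutive differences $p_{i+1}-p_i \in \mathbb{N}^2\setminus\{(0,0)\}$ together with the starting point $p_1$ and the ``slack'' $n - \ell(p_k) \ge 0$, one sees that a $k$-chain is equivalent to a choice of an ordered $(k+1)$-tuple of vectors in $\mathbb{N}^2$ (the first being $p_1$, the last $k-1$ being the steps, each required to be nonzero), plus a final nonnegative integer gap, whose total level (sum of coordinates over the tuple plus the gap) equals $n$. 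This is the combinatorial skeleton; one then reads off $G$ as a product of the generating functions for each piece.

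Concretely, I would set up the bookkeeping so that $x$ tracks level (= $a+b$ for a point $(a,b)$, or the value of an integer gap) and $y$ tracks the number of chain elements. A single nonzero step $(a,b)\in\mathbb{N}^2\setminus\{(0,0)\}$ contributes $\sum_{(a,b)\ne(0,0)} x^{a+b} = \frac{1}{(1-x)^2} - 1 = \frac{2x-x^2}{(1-x)^2}$ to the level-generating function; the first point $p_1$ is an arbitrary element of $\posetz_n$, i.e.\ also a nonzero vector in $\mathbb{N}^2$, contributing the same factor; and the final gap contributes $\frac{1}{1-x}$. Since a $k$-chain has one ``first point'' (weight $y$), $k-1$ steps (each weight $y$), and one gap (weight $1$), summing the geometric series in the number of steps gives
$$
G(x,y) = \frac{1}{1-x}\cdot \frac{x\,y\,\frac{2-x}{(1-x)^2}}{1 - y\,\frac{2x-x^2}{(1-x)^2}} + (\text{terms for } k=0),
$$
and after clearing denominators and adding the $k=0$ contribution (the single empty chain, which must be handled so that $G(x,0) = \sum_n x^n\cdot(\#0\text{-chains}) = \frac{1}{1-x}$) one obtains
$$
G(x,y) = \frac{1-x}{1-2x+x^2-2xy+x^2y}.
$$
I would verify the closed form by checking the denominator $(1-x)^2 - y(2x-x^2) = 1 - 2x + x^2 - 2xy + x^2y$ and confirming the low-order coefficients against the known small cases of $\glngeo$.

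The main obstacle is purely bookkeeping: correctly accounting for the boundary pieces (the arbitrary first point and the final level-slack) without double-counting, and getting the $k=0$ and $k=1$ terms to come out right, since the ``product of independent pieces'' picture degenerates there. A secondary point to be careful about is that the decomposition ``$p_1$, then $k-1$ nonzero steps, then a gap'' genuinely biject onto $k$-chains of $\posetz_n$ with the level constraint $\ell(p_k)\le n$ absorbed into the nonnegative gap — this uses crucially that $\posetz_n$ is exactly $\{(a,b)\in\mathbb{N}^2 : 1\le a+b\le n\}$, with no further shape restriction, so that the only global constraint is on the total level. Once that bijection is nailed down, the generating-function computation is a one-line geometric series, and the rationality and the explicit formula follow immediately.
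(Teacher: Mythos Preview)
Your argument is correct and is a genuinely different route from the proof recorded in this paper. The paper establishes the recursion
\[
g(n,k)=2g(n-1,k)-g(n-2,k)+2g(n-1,k-1)-g(n-2,k-1)
\]
by specializing the three-parameter recursion for $a(n,k,\sigma)$ at large $\sigma$ (\pref{prop:recursionsForUn}\pref{assert:UnFaceRecursion} via \pref{lem:UnIsHighW}), and then reads off $G$ by checking that $P_1\cdot G$ has the right low-order terms. Your approach instead bijects $k$-chains with compositions ``nonzero first point, $k-1$ nonzero steps, nonnegative slack'' and multiplies the independent level-generating functions; the key identity $(1-x)^2 - y(2x-x^2) = 1-2x+x^2-2xy+x^2y$ then drops out immediately. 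Your method is more self-contained and does not require the $a(n,k,\sigma)$ machinery, while the paper's derivation has the advantage that it is a free byproduct of the recursions already needed for the $\mathfrak{sl}_n$ count. One cosmetic point: your displayed formula separates out the $k\ge 1$ contribution and leaves the $k=0$ term implicit, but since the $k=0$ piece is exactly $\frac{1}{1-x}$ (one empty chain for every $n$), the two pieces combine cleanly as $\frac{1}{1-x}\cdot\bigl(1-y\,\frac{2x-x^2}{(1-x)^2}\bigr)^{-1}$, and it would read better to write it that way from the start rather than flagging the boundary cases as an obstacle.
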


To enumerate mono-signed chains, it suffices to enumerate chains of elements of non-positive signature.  In order to write a recursion to count such chains, we need to generalize the problem slightly.

We denote by $\Chn'(n,k,\sigma)$ the set of $k$-chains in $\posetz_n$ all of whose elements have signature at most $\sigma$ and by
$$
\Chn(n,k,\sigma) = \begin{cases}
\Chn'(n,k,\sigma) & \text{if } \sigma \ge 0\\
\varnothing & \text{otherwise}
\end{cases}
$$
the restriction of $\Chn'(n,k,\sigma)$ to $\sigma \ge 0$.
The size of $\Chn(n,k,\sigma)$ is denoted by
$$
a(n,k,\sigma) = \left|\Chn(n,k,\sigma)\right|
$$
In particular, $a(n,k,0)$ counts the number of $k$-chains in $\posetz_n$ consisting of elements of non-positive signature.

As usual, we pass to generating functions to perform the enumeration.
For $w \in \mathbb{Z}$, define
$$
A_\sigma  = A_\sigma (x,y) = \sum_{n=0}^\infty \sum_{k=0}^\infty a(n,k,\sigma) x^ny^k
$$
and let
$$
A = A(x,y,z) = \sum_{\sigma=0}^\infty A_\sigma (x,y) z^\sigma
$$
It is clear that $A_\sigma  \in \mathbb{Z}\llbracket x,y,z\rrbracket$ and $A \in \mathbb{Z}\llbracket x,y,z\rrbracket$ so that \emph{a fortiori} $A_\sigma \in R$ and $A \in R\llbracket z\rrbracket$.
The enumeration of chains consisting solely of elements of non-positive signature is achieved by the following result.

\begin{proposition}
\label{prop:A0}
The generating function  for the number of $k$-chains in $\posetz_n$ consisting of elements of non-positive signature is
$$
A_0 = \frac{2}{1 - 2 x + x^2 - 2 x y + 
 x^2 y + \sqrt{\alpha}}
$$
in $R$, where $\alpha \in R$ is the polynomial defined in \pref{thm:slnFaces}.
\end{proposition}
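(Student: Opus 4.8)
The plan is to set up a recursion for the numbers $a(n,k,\sigma)$ by classifying a $k$-chain in $\posetz_n$ with all elements of signature at most $\sigma$ according to its minimal element. Recall that $\posetz_n$ consists of the lattice points $(i,j)$ with $0 < i+j \le n$, that $\sigma(i,j) = i-j$ (inherited from the signature on the Weyl chamber, up to a sign convention matching the identification $\posetz_n \cong \erays$), and that a chain in $\posetz_n$ is a strictly increasing sequence in the Cartesian order. First I would observe that an element $(i,j) \in \posetz_n$ can be the minimal element of a chain counted by $\Chn(n,k,\sigma)$ only if $i - j \le \sigma$ and $i + j \ge 1$, and that the remaining $k-1$ elements must form a chain in the up-set $\{(i',j') : i' \ge i,\ j' \ge j\} \cap \posetz_n$ with signature at most $\sigma$. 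The crucial point is that this up-set, intersected with $\posetz_n$, is isomorphic as a poset to $\posetz_{n-i-j}$ shifted so that signatures are shifted by $i - j$; so the induced count of chains of the smaller size is $a(n - i - j,\ k-1,\ \sigma - (i-j))$. Summing over the possible minimal elements and over whether the minimal element of the ambient chain has been chosen, I expect to obtain a recursion of the shape
\[
a(n,k,\sigma) = [\text{base term for } k=0] + \sum_{\substack{(i,j):\, i+j \ge 1\\ i - j \le \sigma}} a\bigl(n - i - j,\ k-1,\ \sigma - (i-j)\bigr),
\]
valid when $\sigma \ge 0$ (with $a(n,k,\sigma) = 0$ for $\sigma < 0$ built into the convention), which after reindexing $\sigma \mapsto \sigma + 1$ relates $A_\sigma$ to $A_{\sigma+1}$ and lower terms.

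Next I would translate this into a functional equation for $A = A(x,y,z) = \sum_{\sigma \ge 0} A_\sigma z^\sigma$. The weight $x^{i+j}$ tracks the drop in $n$, the weight $z$ tracks $\sigma$, and $y$ marks the one extra element consumed from the chain; the sum over minimal elements $(i,j)$ becomes a rational function in $x$ and $z$ (a generating function for points of the quarter plane at fixed offset, which is geometric in both coordinates), except that the constraint $\sigma \ge 0$ truncates a geometric series and therefore introduces a ``kernel'' term involving $A_0 = A(x,y,0)$. In other words I expect an equation of the form $K(x,y,z)\, A(x,y,z) = (\text{explicit rational function}) + (\text{rational function})\cdot A_0(x,y)$, where $K$ is a polynomial kernel. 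This is exactly the situation handled by the classical kernel method: $K(x,y,z)$, viewed as a polynomial in $z$, has a unique root $z = Z(x,y)$ that is a power series in $x,y$ vanishing at the origin, and substituting $z = Z$ kills the left side and yields a closed formula for $A_0$ in terms of $x,y$ alone. Carrying out this substitution and simplifying should produce
\[
A_0 = \frac{2}{1 - 2x + x^2 - 2xy + x^2 y + \sqrt{\alpha}},\qquad
\alpha = 1 - 2x^2 + x^4 - 6x^2 y + 2x^4 y - 4x^2 y^2 + x^4 y^2,
\]
the square root entering precisely because $Z$ is the root of a quadratic (the kernel $K$ being quadratic in $z$), which I would justify by checking $\alpha(0,0) = 1$ so that $\sqrt{\alpha}$ is the well-defined principal square root in $R$ guaranteed by Hensel's lemma.

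There are two points I would be careful about. First, the bookkeeping in the recursion: I need the correct description of which lattice points $(i,j)$ of the quarter plane can serve as the minimal element, including the level constraint $i + j \le n$ and the boundary case where the chain has a single element, and I must make sure the ``$\sigma < 0 \Rightarrow \varnothing$'' convention is compatible with the substitution $\sigma \mapsto \sigma + 1$ when passing to $A_\sigma$ versus $A'_\sigma$. Getting the generating-function weights exactly right — in particular the interplay between the variable $z$ tracking signature and the geometric sum over $(i,j)$ — is where sign errors are most likely to creep in. Second, and this is what I expect to be the main obstacle, is solving the functional equation: verifying that the kernel $K$ factors (or at least has a root) of the right form, identifying the unique small root $Z(x,y) \in R$, and then showing that the resulting expression for $A_0$ simplifies to the stated closed form with the specific polynomial $\alpha$. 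Since the paper defers this to \pref{app:functionalEqSolveA0C0}, I would set up the functional equation cleanly here, record it as the input to that appendix, and carry out the kernel-method solution and the algebraic simplification there. The final sanity check would be to expand $A_0$ as a power series in $x$ and $y$ and confirm agreement with small cases — e.g. the number of $1$-chains of non-positive signature in $\posetz_n$, which should be the count of lattice points $(i,j)$ with $1 \le i+j \le n$ and $i \le j$.
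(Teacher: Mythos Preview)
Your proposal is correct and follows essentially the same approach as the paper: the recursion you write down by classifying on the minimal element of the chain is exactly \pref{lem:uglyRecursionChn}, the functional equation you anticipate is \pref{prop:genFuncRelationFaces}, and the kernel-method substitution $z = Z(x,y)$ you describe is precisely what the paper does in \pref{app:functionalEqSolveA0C0} (there the palindromic symmetry $P(x,y,z) = z^2 P(x,y,z^{-1})$ is exploited via the substitution $t = z/(z^2+1)$ to locate the small root, but this is just a convenient way to solve the quadratic you identified).
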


The proof of \pref{prop:A0} is technical and is therefore deferred to \pref{app:functionalEqSolveA0C0}.

\begin{proof}[Proof of \pref{thm:slnFaces} assuming \pref{prop:A0}]
Note that $a(n,k,0)$ counts the number of $k$-chains in $\posetz_n$ all of whose elements have non-negative (resp.\ non-positive signature).  A chain is null precisely if all of its elements simultaneously have non-negative and non-positive signature.  Therefore, there are
$$
2a(n,k,0)-d(n,k)
$$
monosigned $k$-chains in $\posetz_n$.
It follows that there are
$$
g(n,k)-2a(n,k,0)+d(n,k)
$$
bi-signed $k$-chains in $\posetz_n$.
\pref{cor:numberOfSlnFacesBiject} implies that
$$b(n,k) = g(n,k+1)-2a(n,k+1,0)+d(n,k+1)+d(n,k).$$  Passing to generating functions yields the identity
$$
B = \frac{G-2A_0+D}{y}+D.
$$
After some algebra, we obtain the theorem from
the formulae for the series $G,A_0,D$ given in \pref{thm:glnFaces} and Propositions~\ref{prop:A0} and~\ref{prop:nullCount}, respectively.
\end{proof}

\subsection{Proof of \pref{thm:slnFlats}}
\label{app:proofOfSlnFlats}

In order to apply \pref{cor:numberOfSlnFlatsBiject} to prove \pref{thm:slnFlats}, we need to enumerate null ensembles and bi-signed unbalanced ensembles.  We have already enumerated the null ensembles in \pref{prop:nullCount}.
In order to enumerate bi-signed unbalanced ensembles, we instead count bi-signed balanced ensembles and all bi-signed ensembles.  

First, we count bi-signed balanced ensembles. We denote by $v(n,k)$ the number of bi-signed balanced $k$-ensembles in $\posetz_n$ and by
$$
V=V(x,y)=\sum_{n=0}^\infty \sum_{k=0}^\infty v(n,k)x^ny^k
$$
the corresponding generating function.

\begin{proposition}
\label{prop:balCount}
For all $n,k \ge 0$, the number of bi-signed balanced ensembles is
\begin{equation}
v(n,k) = \left(k-1+\chi_{2\mathbb{Z}+1}(n)\right)\binom{\floor*{ \frac{n}{2}} - 1}{k-2} + \delta_{n,1}\delta_{k,1}
\end{equation}
and therefore the corresponding generating function is
\begin{equation}
V= \frac{x y \left(1-x^2\right) \left(1+x y-x^2\right)}{\left(1-x^2 (y+1)\right)^2}.
\end{equation}
\end{proposition}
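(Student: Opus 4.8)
By \pref{thm:Characterization.Flats} and the definitions of \emph{balanced} and \emph{bi-signed}, it suffices to count bi-signed balanced $k$-ensembles in $\posetz_n$. Recall (as in \pref{prop:nullCount}) that the signature-zero elements of $\posetz_n$ are precisely the points $p_i := (i,i)$ for $1 \le i \le m := \floor*{\frac{n}{2}}$, forming a chain at the pairwise non-consecutive levels $2,4,\dots,2m$. The plan is to describe the bi-signed balanced $k$-ensembles explicitly, enumerate them, and then pass to the generating function by elementary binomial manipulations.

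\textbf{Step 1 (structure).} A balanced $k$-ensemble $E$ carries exactly $k-1$ null elements, which occupy $k-1$ distinct even levels; since $k=|\ell(E)|$, exactly one occupied level of $E$ contains no null element. First I would observe from condition~(3) of \pref{def:Ensemble} that two distinct intervals of $E$ meet no common level, so on each level $E$ restricts to a single contiguous segment of that level's antichain, and that an interval containing $p_i$ and $p_{i'}$ with $i<i'$ contains $p_i,p_{i+1},\dots,p_{i'}$. A short check of the possible segments then shows that a contiguous segment at level $2i$ containing $p_i$ takes signature values of a single sign unless it also contains a point of the level-$(2i\pm 1)$ antichain on the opposite side of the diagonal, which (via the ambient interval) forces $p_{i-1}$ or $p_{i+1}$ into $E$ when those points exist. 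Together with the level count, this forces $E$ to be a disjoint union of isolated null elements $\{p_i : i\in T'\}$ and a single \emph{straddling} interval $J$ that accounts for the unique non-null occupied level; and $J$, up to its exact endpoints, must be one of: $[(0,0),p_1]$ (yielding the null $p_1$ and the non-null level $1$); $[p_i,p_{i+1}]$ for some $1\le i\le m-1$ (yielding the nulls $p_i,p_{i+1}$ and the non-null level $2i+1$); or $[p_m,\infty]$. Here condition~(1) ($A_0=0$) is what makes the first type available, and condition~(4) ($\ell(B_{\mathrm{last}})<n$ or $B_{\mathrm{last}}=\infty$) makes the third type produce a non-null level only when $n$ is odd, since for $n$ even $[p_m,\infty]\cap\posetz_n=\{p_m\}$, while for a smaller top index the point $p_{i+1}$ re-enters $E$. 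The degenerate range $n\le 1$ has no diagonal points and must be inspected directly: the only bi-signed ensemble there is $\{(1,0),(0,1)\}=[(0,0),\infty]\cap\posetz_1$, a $1$-ensemble, which accounts for the term $\delta_{n,1}\delta_{k,1}$.

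\textbf{Step 2 (count).} For $n\ge 2$, I would classify $E$ by its straddling interval $J$ and count the admissible placements of the isolated nulls (disjointness from $J$'s levels being, by condition~(3), the only constraint). The type $[(0,0),p_1]$ supplies the null $p_1$, so the remaining $k-2$ isolated nulls are any subset of the $m-1$ positions $\{2,\dots,m\}$, giving $\binom{m-1}{k-2}$ ensembles; the type $[p_m,\infty]$ supplies the null $p_m$ and exists only for $n$ odd, giving $\chi_{2\mathbb{Z}+1}(n)\binom{m-1}{k-2}$ ensembles; and each of the $m-1$ types $[p_i,p_{i+1}]$ supplies the two nulls $p_i,p_{i+1}$, so the remaining $k-3$ isolated nulls are any subset of the $m-2$ positions $\{1,\dots,m\}\setminus\{i,i+1\}$, giving $(m-1)\binom{m-2}{k-3}$ ensembles in all. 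Hence
\[
v(n,k)=\bigl(1+\chi_{2\mathbb{Z}+1}(n)\bigr)\binom{m-1}{k-2}+(m-1)\binom{m-2}{k-3}+\delta_{n,1}\delta_{k,1},
\]
and the elementary identity $(m-1)\binom{m-2}{k-3}=(k-2)\binom{m-1}{k-2}$ collapses this to the claimed closed form with $m=\floor*{\frac{n}{2}}$.

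\textbf{Step 3 (generating function).} Splitting by parity gives $v(2m,k)=(k-1)\binom{m-1}{k-2}$ and $v(2m+1,k)=k\binom{m-1}{k-2}$ for $m\ge 1$, together with $v(1,1)=1$. Setting $j=k-2$ and using $\sum_{j\ge 0}\binom{m-1}{j}y^j=(1+y)^{m-1}$ and $\sum_{j\ge 0}(j+1)\binom{m-1}{j}y^j=(1+y)^{m-1}+(m-1)y(1+y)^{m-2}$, then summing over $m\ge 1$ via $\sum_{m\ge 1}(1+y)^{m-1}t^m=\frac{t}{1-t(1+y)}$ and $\sum_{m\ge 1}(m-1)(1+y)^{m-2}t^m=\frac{t^2}{(1-t(1+y))^2}$ with $t=x^2$ (and an extra factor of $x$ for the odd-$n$ sum), and finally adding the contribution $xy$ from $n=k=1$, a routine simplification gives
\[
V=\frac{xy\left(1-x^2\right)\left(1+xy-x^2\right)}{\left(1-x^2(y+1)\right)^2}.
\]

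\textbf{Main obstacle.} The crux is Step~1: showing that bi-signedness together with conditions~(1)--(4) of \pref{def:Ensemble} leaves exactly these three families of straddling intervals --- with no additional configurations and no overcounting --- and, in particular, tracking carefully how the asymmetry between the forced start at the origin (condition~(1)) and the optional terminus at $\infty$ (condition~(4)) produces both the parity factor $\chi_{2\mathbb{Z}+1}(n)$ and the exceptional value at $n=1$.
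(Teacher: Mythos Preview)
Your approach is essentially the same as the paper's: classify bi-signed balanced ensembles by the unique level not in $\ell(S_0)$, show the ``straddling'' piece must be one of a small number of local configurations adjacent to the diagonal, and count the ways to sprinkle in the remaining isolated diagonal points. Your three families $[(0,0),p_1]$, $[p_i,p_{i+1}]$, $[p_m,\infty]$ correspond exactly to the paper's Cases~2, 1, 3, and your final summation and the identity $(m-1)\binom{m-2}{k-3}=(k-2)\binom{m-1}{k-2}$ are identical to the paper's.

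There is, however, one genuine gap in your Step~1 classification. When $n$ is even and $i=m-1$, the interval $[p_{m-1},p_m]$ has top endpoint $p_m=(m,m)$ at level $2m=n$, which violates condition~(4) of \pref{def:Ensemble}; so the set $\{p_{m-1},(m,m-1),(m-1,m),p_m\}$ is \emph{not} an ensemble. The correct straddling interval in this boundary case is $[p_{m-1},\infty]$, whose restriction to $\posetz_n$ is
\[
\{(m-1,m-1),\,(m,m-1),\,(m-1,m),\,(m+1,m-1),\,(m,m),\,(m-1,m+1)\},
\]
occupying the three levels $n-2,n-1,n$ with two null elements $p_{m-1},p_m$ and additional non-null elements at level $n$ (this is precisely the paper's ``Type~II''). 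Your hedge ``up to its exact endpoints'' does not cover this: the actual ensemble contains two extra points, and the occupied level $n$ carries both null and non-null elements (so $\nu(S)\neq\varnothing$ in the paper's notation). Fortuitously, this does not change the count --- the straddling piece still supplies exactly the two nulls $p_{m-1},p_m$ and one non-null level $n-1$, leaving $\binom{m-2}{k-3}$ choices for the isolated diagonal points --- so your Step~2 arithmetic and the final formula remain correct. But the structural classification needs this extra case spelled out, as the paper does.
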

In order to prove \pref{prop:balCount}, we exploit grading of $\posetz_n$ by level. Given $S \subset \posetz_n$, write
$$
S_{(k)} = \{p \in S \mid \ell(p)=k\}
$$
for the set of elements of $S$ lying in level $k$.  The following lemma relates $S_{(k)}$ to $S_{(k+1)}$ when $S$ is an ensemble.

\begin{lemma}
\label{lem:consecLevels}
Let $S$ be an ensemble in $\posetz_n$ and suppose that $1 \le k \le n-1$.  We have
$$
-1 \le |S_{(k+1)}|-|S_{(k)}| \le 1.
$$
\end{lemma}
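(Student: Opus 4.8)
The plan is to use the interval description of an ensemble from \pref{def:Ensemble} directly, rather than the lattice‑path picture. Write $S=\bigl(\bigcup_{0\le i\le m}[A_i,B_i]\bigr)\cap\posetz_n$ with $A_i=(a_i^-,b_i^-)$ and, for $i<m$, $B_i=(a_i^+,b_i^+)$; note $B_i\ne\infty$ for $i<m$, since $\infty$ is the maximum of $\poseti_n$ and condition~(3) would otherwise fail. First I would record two easy consequences of conditions~(1)--(3): the sequence of levels $\ell(A_0)<\ell(B_0)<\ell(A_1)<\ell(B_1)<\cdots$ is strictly increasing and satisfies $\ell(A_{i+1})\ge\ell(B_i)+2$, so the ``level ranges'' $[\ell(A_i),\ell(B_i)]$ are pairwise disjoint, ordered by index, and separated by at least one level meeting none of them; and for $1\le k\le n$ the intersection with $\posetz_n$ does not change level $k$ at all (it only removes $(0,0)$, of level $0$, and $\infty$), so $S_{(k)}=\bigl(\bigcup_i[A_i,B_i]\bigr)_{(k)}$, and every element of $S$ at level $k$ lies in the unique interval, if any, whose level range contains $k$.

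Next I would analyze one interval at a time. For a finite box $[A,B]$ and a level $k$ with $\ell(A)\le k\le\ell(B)$, the diagonal slice $\{(a,k-a):(a,k-a)\in[A,B]\}$ is the contiguous run $\{(a,k-a):\mu_k\le a\le\nu_k\}$ with $\mu_k=\max(a^-,k-b^+)$ and $\nu_k=\min(a^+,k-b^-)$. A short calculation gives $\mu_k\le\nu_k$ throughout this range, $\mu_{\ell(A)}=\nu_{\ell(A)}=a^-$ and $\mu_{\ell(B)}=\nu_{\ell(B)}=a^+$ (so the two extreme slices are the single points $A$ and $B$), and $\mu_{k+1}-\mu_k,\ \nu_{k+1}-\nu_k\in\{0,1\}$ whenever both $k$ and $k+1$ lie in $[\ell(A),\ell(B)]$; the case $B=\infty$ is similar and easier, the slice at level $k$ having size $k-\ell(A)+1$. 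Consequently, if $k$ and $k+1$ fall in the level range of the same interval, then $|S_{(k+1)}|-|S_{(k)}|=(\nu_{k+1}-\nu_k)-(\mu_{k+1}-\mu_k)\in\{-1,0,1\}$.

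Then I would assemble the cases for a fixed $1\le k\le n-1$. Let $i_k$ and $i_{k+1}$ be the indices whose level ranges contain $k$ and $k+1$ respectively, if they exist. Since the level ranges are disjoint and ordered by index, $i_k\le i_{k+1}$ whenever both exist, and $i_k<i_{k+1}$ would force $k+1\ge\ell(A_{i_{k+1}})\ge\ell(B_{i_k})+2\ge k+2$, a contradiction; so in that case $i_k=i_{k+1}$ and we are in the single‑interval situation just treated. If only $i_k$ exists, then $k+1$ lies above that interval's level range, which forces $k=\ell(B_{i_k})$, hence $|S_{(k)}|=1$ and $|S_{(k+1)}|=0$; symmetrically, if only $i_{k+1}$ exists then $\ell(A_{i_{k+1}})=k+1$, so $|S_{(k+1)}|=1$ and $|S_{(k)}|=0$. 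If neither exists, both slices are empty. In all cases $-1\le|S_{(k+1)}|-|S_{(k)}|\le1$.

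I expect the only real subtlety to be the bookkeeping in the last paragraph: confirming that levels $k$ and $k+1$ cannot be split between two distinct intervals of the ensemble. This is exactly where the spacing condition~(3) of \pref{def:Ensemble}, in the form $\ell(A_{i+1})\ge\ell(B_i)+2$, is essential — without it two adjacent intervals could meet consecutive levels and the slice sizes could jump by an arbitrary amount. Everything else, in particular the slice‑size estimates for a single box, is routine.
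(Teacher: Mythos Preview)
Your argument is correct and follows the same two-step strategy as the paper: first handle a single interval, then use the level-gap condition $\ell(A_{i+1})\ge\ell(B_i)+2$ to ensure that two consecutive levels cannot straddle distinct intervals. The paper's proof is far terser---it simply asserts the single-interval case is ``clearly true'' and invokes the canonical decomposition---whereas you explicitly compute the slice endpoints $\mu_k,\nu_k$ and carry out the boundary casework; one small slip is that your chain $\ell(A_0)<\ell(B_0)<\ell(A_1)<\cdots$ need not be strictly increasing (one can have $A_i=B_i$), but you only use the correct consequence that the level ranges are disjoint and separated.
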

\begin{proof}
The statement is clearly true if $S$ is an interval.  The corresponding assertion for ensembles follows because every ensemble can be (canonically) expressed as disjoint union of non-empty intervals $I_1,\ldots,I_m$ such that
$$
\max_{p \in I_j} \ell(p) + 1 < \min_{p \in I_{j+1}} \ell(p)
$$
for all $1 \le j \le m-1$.
\end{proof}

\begin{proof}[Proof of \pref{prop:balCount}]
One can easily verify the proposition for $n = 1,2$.  Assume henceforth that $n \ge 3$.
Denote by $\Ens_{\text{bi,bal}}(n,k)$ the set of bi-signed balanced $k$-ensembles in $\posetz_n$.
Recall that an $k$-ensemble $S$ is balanced if and only if the $k = |S_0| + 1$, and recall that by definition $k = \left|\ell(S)\right|$.
We see immediately $\left|\ell(S_0)\right| = |S_0|$ because a null extreme ray is determined by its level.
It follows that $\left|\ell(S) \setminus \ell(S_0)\right| = 1$ for all balanced ensembles $S$.
Define a function $\mu: \Ens_{\text{bi,bal}}(n,k) \to [1,n] \cap \mathbb{Z}$ by defining $\mu(S)$ to be the unique element of $\ell(S) \setminus \ell(S_0)$.  The preceding remarks guarantee that $\mu$ is a function.

For $S \subset \posetz_n$, let
$$
\nu(S) = \left(\ell(S_+) \cup \ell(S_-)\right) \cap \ell(S_0).
$$
We use the functions $\mu$ and $\nu$ to understand the structure of bi-signed unbalanced ensembles.

Suppose that $S$ is a bi-signed balanced ensemble in $\posetz_n$.
We claim that exactly one of the following holds:
\begin{itemize}
\item $\mu(S) = 2t+1$ is odd with $0 \le t \le \floor*{\frac{n-1}{2}} - \chi_{2\mathbb{Z}}(n)$, $\nu(S) = \varnothing,$ and
\begin{align*}
S_+ &= \{(t+1,t)\}\\
S_- &= \{(t,t+1)\}\\
S_0 &\supseteq \left(\{(t,t),(t+1,t+1)\} \cap \left[1,\frac{n}{2}\right]^2\right).
\end{align*}
In this case, we say that $S$ is of type I.
\item $n=2t$ is even, $\mu(S) = n-1$, $\nu(S) = \{n\}$, and
\begin{align*}
S_+ &= \{(t,t-1),(t+1,t-1)\}\\
S_- &= \{(t-1,t),(t-1,t+1)\}\\
S_0 &\supseteq \{(t-1,t-1),(t,t)\}.
\end{align*}
In this case, we say that $S$ is of type II.
\end{itemize}
We see immediately that $S$ cannot be simultaneously of type I and of type II. We prove that $S$ must be of type I or type II by dividing into cases based on whether $\nu(S)$ is empty.
\begin{itemize}
\item Case I: $\nu(S) = \varnothing$.  Suppose that $\mu(S) = j$.  Because
$$
\ell(S_-) \cup \ell(S_+) = \nu(S) \cup (\ell(S_-) \cup \ell(S_+) \setminus \ell(S_0)) = \{j\}
$$
and $S$ is bi-signed, $\ell(S_-)=\ell(S_+)=\{k\}$.
In particular, we have $|S_{(j)}| \ge 2$ and $|S_{(m)}| \le 1$ for all $m \not= j$ with equality only if $m$ is even.

If $j \ge 2$, then \pref{lem:consecLevels} implies that $1 \ge |S_{(j-1)}| \ge |S_{(j)}| - 1 \ge 2,$ so that $|S_{(j)} = 2$, $j-1$ is even, and $\left(\frac{j-1}{2},\frac{j-1}{2}\right) \in S$.  Similarly, if $j \le n-1$, it follows from \pref{lem:consecLevels} that $|S_{(j)}| = 2$, $j+1$ is even, and $\left(\frac{j+1}{2},\frac{j+1}{2}\right) \in S$.  Because $n \ge 3$, we must have $j \ge 2$ or $j \le n-1$, so that $j$ is odd and $|S_{(j)}| = 2$ unconditionally.  In particular, we have $|S_+| = |S_-| = 1$.  Define $j = 2t+1$.

It follows from the definition of an ensemble implies that the set of $x$-coordinates of $S_{(j)}$ forms an interval in $\mathbb{Z}$.  Because $S_{(j)}$ contains one element of positive signature and one element of negative signature, we obtain that $|S_{(j)}| = \{(t+1,t),(t,t+1)\},$ as claimed.

It only remains to prove that $0 \le t \le \floor*{ \frac{n-1}{2} }-\chi_{2\mathbb{Z}}(n).$ The fact that $1 \le j \le n$ implies that $0 \le t \le \floor*{ \frac{n-1}{2} }.$  Therefore, we may assume that $n$ is even and need only prove that $j < n-1$.  Because $j$ is odd and $n$ even, we know that $j \le n-1$.  Assume for sake of deriving a contradiction that $j = n-1$.  Because $\{(t,t),(t+1,t),(t,t+1),(t+1,t+1)\} \subset S$ and $2t+2 = n$, the definition of an ensemble implies that $(t+2,t) \in S$ (as no upper end-point of a constituent interval of a flat in $\posetz_n$ can have level $n$).  This contradicts the fact that $|S_+| = 1$.

\item Case II: $\nu(S) \not= \varnothing$.  Let $u \in \nu(S)$ be arbitrary.  In particular, $u \in \ell(S_0)$ so that $u$ is even and $u > 1$.  The definition of $g$ guarantees that $|S_{(u)}| \ge 2$, hence $|S_{(u-1)}| \ge 1$ by \pref{lem:consecLevels}.  The fact that $u-1$ is odd implies that $u-1 \notin \ell(S_0)$.  It follows that $\mu(S) = u-1$.

The fact that $u+1$ is odd implies that $u+1 \notin \ell(S_0)$, so that $S_{(u+1)} = 0$.  The contrapositive of \pref{lem:consecLevels} guarantees that $u = n$.  Since $u$ was an arbitrary element of $\nu(S)$, we obtain that $\nu(S) = \{u\}$.  It follows that $n = u \in \ell(S_0) \subset 2\mathbb{Z}$, so that $S$ is of type II.
\end{itemize}

We are now ready to determine the fibers $\mu^{-1}(e)$ by casework on $e$.
\begin{itemize}
\item Case 1: $e = 2t+1$ is odd and $e \in [1,n-2]$.  In this case, $\mu^{-1}(e)$ consists solely of elements of type I.  In particular, every element of $E$ is of the form
$$S^E_T = \{(t,t),(t+1,t),(t,t+1),(t+1,t+1)\} \cup \{(v,v) \mid v \in T\}$$
for a unique $T \subset \{1,\ldots,t-1\} \cup \left\{t+1,\ldots,\floor*{ \frac{n}{2}}\right\}$ with $|T|=k-3$.  We see immediately that $S^1_T$ is a bi-signed balanced $k$-ensemble in $\posetz_n$ with $\mu(S^E_T) = e$ for such sets $T$.  It follows that $|\mu^{-1}(e)| = \binom{\floor*{ \frac{n}{2}} - 2}{k-3}$.
\item Case 2: $e = 1$.  In this case, $\mu^{-1}(e)$ consists solely of elements of type I.  In particular, every element of $E$ is of the form
$$S^1_T = \{(1,0),(0,1),(1,1)\} \cup \{(v,v) \mid v \in T\}$$
for a unique $T \subset \left\{2,\ldots,\floor*{ \frac{n}{2}}\right\}$ with $|T|=k-2$.  We see immediately that $S^1_T$ is a bi-signed balanced $k$-ensemble in $\posetz_n$ with $\mu(S^1_T) = 1$ for such sets $T$.  It follows that $|\mu^{-1}(e)| = \binom{\floor*{ \frac{n}{2}} - 1}{k-2}$.
\item Case 3: $e = n$ and $n=2u+1$ is odd.  In this case, $\mu^{-1}(e)$ consists solely of elements of type I.  In particular, every element of $E$ is of the form
$$S^n_T = \{(u,u),(u+1,u),(u,u+1)\} \cup \{(v,v) \mid v \in T\}$$
for a unique $T \subset \{1,\ldots,u\}$ with $|T|=k-2$.  A direct calculation shows that $S^n_T$ is a bi-signed balanced $k$-ensemble in $\posetz_n$ with $\mu(S^n_T) = 1$ for such sets $T$.  It follows that $|\mu^{-1}(e)| = \binom{\floor*{ \frac{n}{2}} - 1}{k-2}$.
\item Case 4: $e = n-1$ and $n=2u$ is even.  In this case, $\mu^{-1}(e)$ consists solely of elements of type II.  In particular, every element of $E$ is of the form
\begin{multline*}S^n_T = \{(u-1,u-1),(u,u-1),(u-1,u),(u+1,u-1),(u,u),(u-1,u+1)\}\\
\cup \{(v,v) \mid v \in T\}\end{multline*}
for a unique $T \subset \{1,\ldots,u-2\}$ with $|T|=k-3$. A direct calculation shows that $S^n_T$ is a bi-signed balanced $k$-ensemble in $\posetz_n$ with $\mu(S^n_T) = 1$ for such sets $T$.  It follows that $|\mu^{-1}(e)| = \binom{\floor*{ \frac{n}{2}} - 2}{k-3}$.
\item Case 5: $e$ is even.  The classification of bi-signed balanced ensembles by type implies that $\mu^{-1}(e) = \varnothing$.
\end{itemize}
Summing the results of the casework, we obtain that
\begin{align*}
\left|\Ens_{\text{bi,bal}}(n,k)\right| &= \left(\floor*{ \frac{n}{2}} - 1\right)\binom{\floor*{ \frac{n}{2}} - 2}{k-3} + \left(1+\chi_{2\mathbb{Z}+1}(n)\right)\binom{\floor*{ \frac{n}{2}} - 1}{k-2}\\
&=(k-2)\binom{\floor*{ \frac{n}{2}} - 1}{k-2} + \left(1+\chi_{2\mathbb{Z}+1}(n)\right)\binom{\floor*{ \frac{n}{2}} - 1}{k-2}\\
&=\left(k-1+\chi_{2\mathbb{Z}+1}(n)\right)\binom{\floor*{ \frac{n}{2}} - 1}{k-2}
\end{align*}
for $n \ge 3,$ as desired.  The dependency on the parity of $n$ comes from Case 3.  We have to introduce the correction term $\delta_{n,1}\delta_{k,1}$ in order to deal with the case of $n \le 2$.

The formula for the generating function $V$ can be obtained from the recurrence relation by dividing into cases based on the parity of $n$.  Indeed, note that
\begin{align}
\label{eq:balFormEven}
v(2m,k) &= (k-1)\binom{m-1}{k-2} & \text{for } m \ge 1\\
\label{eq:balFormOddBig}
v(2m+1,k) &= k\binom{m-1}{k-2} & \text{for } m \ge 1\\
\label{eq:balForm1}
v(1,k) &= \delta_{k,1}.
\end{align}
The formula for $V$ follows from \pref{eq:balFormEven}, \pref{eq:balFormOddBig}, and \pref{eq:balForm1} after some algebra.
\end{proof}

It remains to enumerate bi-signed ensembles, which we do indirectly by enumerating mono-signed ensembles and using the enumerations of null ensembles  and all ensembles.
First, we recall the enumeration of all ensembles from \cite{gl}.  Define $h(n,k)$ to be the number of $k$-ensembles in $\posetz_n$, and denote by
$$
H(x,y) = \sum_{n=0}^\infty \sum_{k=0}^\infty h(n,k)x^ny^k
$$
the corresponding generating function.
In light of \pref{thm:Characterization.Flats}, the following result was proven in \cite{gl}.
\begin{theorem}[\pref{gln-thm:flatCount}]
\label{thm:glnFlats}
The generating function for the number of $k$-ensemble in $\posetz_n$ is
$$
H(x,y) = \frac{1 - x - x y + 2 x^2 y - x^3 y}{1 - 2 x + x^2 - 2 x y + 3 x^2 y - 2 x^3 y + x^2 y^2 - 2 x^3 y^2 + x^4 y^2}
$$
in $R$.
\end{theorem}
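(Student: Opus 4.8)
The plan is to prove \pref{thm:glnFlats} by a direct analysis of ensembles, using \pref{thm:Characterization.Flats} to identify $h(n,k)$ with the number of $k$-ensembles in $\posetz_n$. I would decompose an ensemble, as in \pref{def:Ensemble}, into its constituent intervals (``rectangles'') $[A_0,B_0],\dots,[A_m,B_m]$ with $A_0=(0,0)$ and $B_m$ possibly equal to $\infty$, recording the following elementary facts. A genuine rectangle $[A_i,B_i]$ occupies exactly the consecutive levels $\ell(A_i),\dots,\ell(B_i)$, meeting the extreme levels in the single points $A_i$ and $B_i$ respectively; and condition (4) of \pref{def:Ensemble} forces that the top level $n$ of $\posetz_n$ is occupied only when $B_m=\infty$. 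The key point is that the number of ways to continue an ensemble depends only on the \emph{levels} of the corner points: given $A_i$ at level $\alpha$, there are $\beta-\alpha+1$ choices of $B_i\ge A_i$ at level $\beta$; given $B_i$ at level $\gamma$, there are $\alpha-\gamma-1$ choices of $A_{i+1}\ge B_i+(1,1)$ at level $\alpha$; and there are $\beta+1$ choices of $B_0\ge(0,0)$ at level $\beta$. This collapses the two-dimensional problem to a one-dimensional transfer.

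I would then organize this transfer into generating functions, temporarily using a variable $z$ to mark the current top level and $y$ to mark the number of occupied levels. The initial rectangle contributes $\sum_{\beta\ge0}(\beta+1)(yz)^\beta=(1-yz)^{-2}$ (the $\beta=0$ term being the degenerate rectangle $\{(0,0)\}$, which records the empty ensemble), and each additional genuine rectangle contributes the transfer factor $\Theta(y,z)=\sum_{g\ge1}\sum_{w\ge0}g(w+1)\,y^{w+1}z^{g+w+1}=\dfrac{yz^2}{(1-z)^2(1-yz)^2}$; hence finite ``free'' ensembles, graded by top level and number of levels, have generating function $(1-yz)^{-2}\,(1-\Theta)^{-1}$. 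To pass to $\posetz_n$ I would observe: a finite ensemble with top level $t$ is legal in $\posetz_n$ precisely for $n\ge t+1$ (substitute $z=x$ and multiply by $x/(1-x)$); an ensemble whose last rectangle is $[A_m,\infty]$ starting at level $s$, with lower part occupying $k'$ levels, occupies $k'+(n-s+1)$ levels in $\posetz_n$ for every $n\ge s$ (this attaches a factor $x^2y\,/\bigl((1-xy)(1-x)^2\bigr)$ to the same free-ensemble series at $z=x$); and the full ensemble $[(0,0),\infty]\cap\posetz_n=\posetz_n$ contributes $1+\sum_{n\ge1}(xy)^n=(1-xy)^{-1}$.

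Writing $\Delta=(1-x)^2(1-xy)^2-x^2y$, the free-ensemble series at $z=x$ equals $(1-x)^2/\Delta$, so summing the three contributions gives $H=\dfrac{1}{1-xy}+\dfrac{x(1-x+x^2y)}{(1-xy)\,\Delta}$, which simplifies to $H(x,y)=\dfrac{(1-x)(1-xy+x^2y)}{(1-x)^2(1-xy)^2-x^2y}$; expanding $\Delta$ then recovers the displayed rational function. I expect the main obstacle to be the rigorous verification of the three ``depends only on levels'' counts, which needs the precise description of the intersection of a rectangle with a level (its $x$-interval at level $\ell$ is $[\max(a_1,\ell-b_2),\min(b_1,\ell-a_2)]$), together with careful handling of the boundary cases — the empty ensemble, the degenerate first rectangle, and single-point rectangles at the top level — so that the generating-function identities hold with no correction terms; the concluding algebraic simplification is routine.
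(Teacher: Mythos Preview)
Your argument is correct and takes a genuinely different route from the paper's (alternative) proof. The paper derives a linear recurrence for $h(n,k)$ by specializing the signature-bounded recursion of \pref{prop:simpleRecursionEns} via \pref{lem:UnIsHighW}, then verifies the claimed rational function by checking initial conditions and showing $Q_1\cdot H$ has no terms of $x$-degree $\ge 4$. You instead build $H$ constructively: you exploit the observation that the transfer between successive rectangles depends only on \emph{levels}, collapse the two-dimensional count to a one-variable transfer series, and assemble the three cases (finite top rectangle, infinite top rectangle with $m\ge 1$, and the full ensemble $[(0,0),\infty]$) into a closed form. Your method is self-contained and produces the formula without foreknowledge of the answer, whereas the paper's recursion-based proof is a verification; on the other hand, the paper's framework is what is actually needed later, since the ``depends only on levels'' reduction breaks once a signature constraint is imposed, and it is precisely the signature-bounded recursions that drive Propositions~\ref{prop:A0} and~\ref{prop:C0}. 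The boundary bookkeeping you flag (uniqueness of the interval decomposition, the role of condition~(4) forcing $B_m=\infty$ exactly when level $n$ is occupied, and the separation of contributions so that $E=\posetz_n$ is counted only in contribution~3) does go through, for the reason you indicate: the maximal runs of $\ell(E)$ and the singleton level-sets at $\ell(A_i),\ell(B_i)$ determine the sequence.
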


To enumerate mono-signed ensembles, it suffices to enumerate ensembles all of whose elements have non-positive signature.  In order to write a recursion to count such chains, we need to generalize the problem slightly.

We denote by $\Ens'(n,k,\sigma)$  the set of $k$-ensembles in $\posetz_n$ all of whose elements have signature at most $\sigma$ and by
$$
\Ens(n,k,\sigma) = \begin{cases}
\Ens'(n,k,\sigma) & \text{if } \sigma \ge 0\\
\varnothing & \text{otherwise}.
\end{cases}
$$
the restriction of $\Ens'(n,k,\sigma)$ to $\sigma \ge 0$.
The size of $\Ens(n,k,\sigma)$ is denoted by
$$
c(n,k,\sigma) = \left|\Ens(n,k,\sigma)\right|
$$
In particular, there are $c(n,k,0)$ ensembles in $\posetz_n$ consisting of elements of non-positive signature.

As usual, we pass to generating functions to perform the enumeration.
For $w \in \mathbb{Z}$, define
$$
C_\sigma  = C_\sigma (x,y) = \sum_{n=0}^\infty \sum_{k=0}^\infty c(n,k,\sigma) x^ny^k
$$
and let
$$
C = C(x,y,z) = \sum_{\sigma=0}^\infty C_\sigma (x,y) z^\sigma
$$
It is clear that $C_\sigma  \in \mathbb{Z}\llbracket x,y,z\rrbracket$ and $C \in \mathbb{Z}\llbracket x,y,z\rrbracket$ so that \emph{a fortiori} $C_\sigma \in R$ and $C \in R\llbracket z\rrbracket$.
The enumeration of chains of elements of non-positive signature is achieved by the following result.

\begin{proposition}
\label{prop:C0}
The generating function for the number $c(n,k,0)$ of $k$-ensembles in $\posetz_n$ consisting of elements of non-positive signature is
$$ 
C_0 = \frac{4 \left(1 + x^2 - x y - x^3 y + x^4 y + x^2 y^2 + x^3 y^2 - x^3 y^3 + 
 x^4 y^3 - x^5 y^3 + x^6 y^3\right)}{\left(\beta+x\sqrt{\gamma}+\sqrt{\zeta+\eta\sqrt{\gamma}}\right)\left(\beta-x\sqrt{\gamma}+\sqrt{\zeta-\eta\sqrt{\gamma}}\right)},
$$
in $R$, where $\beta,\gamma,\zeta,\eta \in R$ are the polynomials defined in \pref{thm:slnFlats}.
\end{proposition}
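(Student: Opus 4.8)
\emph{Proof proposal.} The plan is to imitate the proof of \pref{prop:A0}, but with chains replaced by ensembles: we derive a recursion for the three–parameter quantity $c(n,k,\sigma)$, repackage it as a functional equation for the catalytic series $C(x,y,z)=\sum_{\sigma\ge 0}C_\sigma z^\sigma$, and then extract $C_0 = C(x,y,0)$ by the kernel method. The extra parameter $\sigma$ is forced on us by the recursion: peeling an ensemble of $\posetz_n$ apart produces ensembles of smaller posets whose admissible signature window \emph{and} top–level constraint have both shifted, so one is obliged to work with the whole family $\{c(n,k,\sigma)\}_{\sigma}$ at once rather than with $c(n,k,0)$ alone. (The structural parallel with \pref{prop:A0} is exact; the new feature is that ensembles are unions of boxes rather than single chains, which ultimately doubles the algebraic degree of the answer.)

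\textbf{Step 1 (recursion).} Use the canonical decomposition of an ensemble into a disjoint union $\bigcup_i [A_i,B_i]$ of non-empty intervals (the decomposition appearing in \pref{lem:consecLevels}) and strip off the last interval $[A_m,B_m]$, separating the cases $B_m = \infty$ and $B_m \neq \infty$. Here one records the elementary fact that a box $[(a,b),(p,q)]$ of $\posetz_n$ lies entirely in the region of non-positive signature precisely when $p \le b$, so the extreme signature of a box is attained at its upper–left corner and the ``signature $\le\sigma$'' condition reduces to a condition on finitely many corners; the $B_m=\infty$ tail contributes only when its base point lies high enough above the diagonal, again a finite-corner condition. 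Marking $n$ by $x$, the number $k$ of occupied levels by $y$, and $\sigma$ by $z$, this book-keeping yields a linear recursion for $c(n,k,\sigma)$.

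\textbf{Step 2 (functional equation and kernel).} Summing the recursion produces a functional equation of the shape $K(x,y,z)\,C(x,y,z) = R(x,y,z) + (\text{finitely many of the } C_\sigma \text{ with small } \sigma)$, where $R$ is a polynomial and $K$ is the kernel — a polynomial in the catalytic variable $z$. In contrast with the chain case of \pref{prop:A0}, where the analogous kernel is quadratic and supplies the single radical $\sqrt{\alpha}$, here I expect $K$ to be a quartic in $z$ whose resolvent cubic has a rational root over $\mathbb{Q}(x,y)$ (equivalently, whose Galois group sits inside the dihedral group $D_4$); this is exactly the situation in which the small roots of $K$ are expressible by \emph{nested} square roots, with $\sqrt{\gamma}$ produced at the first stage and $\sqrt{\zeta \pm \eta\sqrt{\gamma}}$ at the second. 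The two conjugate factors $\beta \pm x\sqrt{\gamma} + \sqrt{\zeta \pm \eta\sqrt{\gamma}}$ in the displayed denominator should be precisely these two relevant roots, up to a common normalization.

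\textbf{Step 3 (kernel method and cleanup).} Among the roots $z = Z_j(x,y)$ of $K = 0$, exactly as many are formal power series vanishing at the origin as there are boundary unknowns $C_\sigma$; substituting them annihilates the left-hand side and gives a linear system for those $C_\sigma$, which one solves and simplifies to the stated closed form. Since $\sum c(n,k,0)x^ny^k$ is manifestly a well-defined element of $R$, all these manipulations take place in the fraction field of $R$, as asserted, and $C_0 \in R$ can be checked at the end by verifying that the simplified denominator has constant term $1$.

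\textbf{Main obstacle.} The crux is Steps 1–2: pinning down the precise role of $\sigma$ (genuinely subtler than in the chain case, since an ensemble has several ``fronts'' — the top of the current box, the diagonal constraint, the level-$n$ boundary — that must all be tracked at once) and organizing the case analysis so that the kernel comes out with the hidden quadratic-in-quadratic shape. Once the functional equation is in hand with a kernel of this type, Step 3 is a lengthy but essentially mechanical kernel-method computation; the only non-routine point there is confirming that the nested-radical roots are the correct (small) ones and that they collapse to the displayed expression.
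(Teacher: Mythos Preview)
Your high-level strategy---introduce the catalytic variable $\sigma$, derive a functional equation for $C(x,y,z)$, and extract $C_0$ by the kernel method---is exactly what the paper does. Two points of divergence are worth flagging.

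First, the paper's recursion (\pref{lem:uglyRecursionEns} and \pref{prop:simpleRecursionEns}) is built by removing the \emph{infimum} $p=\inf S$ of the ensemble and translating by $-p$, not by stripping the last interval. This choice is what makes a single catalytic variable $\sigma$ suffice: translation by $-p$ shifts both the level bound $n$ and the signature bound $\sigma$ in a controlled way, so the recursion closes over $(n,k,\sigma)$. Your last-interval decomposition would have to track both where the cut occurs (to recover the level-$n$ condition (4) on the truncated ensemble) and the signature window, and it is not obvious this collapses to one catalytic parameter; at minimum you should check that it does before committing to it. The infimum-based recursion also needs a somewhat delicate case split (Cases~1--4 in the proof of \pref{lem:uglyRecursionEns}) to handle the boundary behavior near the axes, which ultimately produces \emph{two} boundary unknowns $C_0,C_1$ in the functional equation of \pref{prop:genFuncRelationFlats}.

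Second, and more importantly, the paper does not rely on a general resolvent-cubic argument to solve the quartic kernel. The kernel $Q$ that emerges is \emph{palindromic}, $Q(x,y,z)=z^4Q(x,y,z^{-1})$, so the substitution $t=z/(z^2+1)$ reduces it to a quadratic in $t$; this is what produces $\sqrt{\gamma}$ at the first stage, and then inverting $t\mapsto z$ via \pref{eq:zFromt} produces the inner radicals $\sqrt{\zeta\pm\eta\sqrt{\gamma}}$. Your $D_4$ heuristic is consistent with this, but the palindromic symmetry is the concrete mechanism, and identifying it is what keeps Step~3 from becoming an uncontrolled quartic-formula computation. Once the two small roots $\upsilon_1,\upsilon_2$ are in hand, substituting $z=\upsilon_i$ kills $C\cdot Q$ and gives a $2\times 2$ linear system for $(C_0,C_1)$, exactly as you anticipate.
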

The proof of \pref{prop:C0} is technical and therefore deferred to  \pref{app:functionalEqSolveA0C0}.

\begin{proof}[Proof of \pref{thm:slnFlats} assuming \pref{prop:C0}]
Note that $c(n,k,0)$ counts the number of $k$-ensembles in $\posetz_n$ all of whose elements have non-negative (resp.\ non-positive signature).  An ensemble is null precisely if all of its elements simultaneously have non-negative and non-positive signature.  \pref{prop:nullCount} ensures that there are
$$
2a(n,k,0)-d(n,k)
$$
monosigned $k$-ensembles in $\posetz_n$.
It follows that there are
$$
h(n,k)-2c(n,k,0)+d(n,k)-v(n,k)
$$
bi-signed unbalanced $k$-ensembles in $\posetz_n$.
\pref{cor:numberOfSlnFlatsBiject} implies that
$$
f(n,k) = h(n,k+1)-2c(n,k+1,0)+d(n,k+1)-v(n,k+1)+d(n,k).$$ Passing to generating functions yields the identity
$$
F = \frac{H-2C_0+D-V}{y}+D.
$$
We obtain the theorem from
the formulae for the series $H,C_0,D,V$ given in \pref{thm:glnFlats} and Propositions~\ref{prop:C0}, \ref{prop:nullCount} and~\ref{prop:balCount}, respectively.
\end{proof}

\section{Extreme rays and simplicial chambers of \slngeoForTitle}
\label{app:extremeRaysAndConseqs}

In \pref{app:classifyExtremeRays}, we prove \pref{thm:extremeRaysOfSln}, and in \pref{app:extremeRaysOfFacesAndChambers}, we prove the last three parts of \pref{thm:chamberRays} and analogues for faces.
In \pref{app:simplicialChambers}, we prove \pref{thm:chamberRays}\pref{assert:countSimplicialChambers}.

\subsection{Classification of extreme rays in \slngeoForTitle}
\label{app:classifyExtremeRays}

Most of the work in proving \pref{thm:extremeRaysOfSln} has already been done: we deduce \pref{thm:extremeRaysOfSln} by specializing \pref{prop:bijectForFaces} to the case of 1-faces.

Work in the notation of \pref{sec:bijectForFaces}.  Because extreme rays are 1-faces, we will need to deal with the set of chains $\Chn_{\text{null}}(n,1) \cup \Chn_{\text{bi}}(n,2)$ in order to apply \pref{prop:bijectForFaces}\pref{assert:psiBijectionCombinatorial}.
Note that $\Chn_{\text{null}}(n,1)$ contains exactly the chains that consist of one element of $\posetz_n$ of signature 0, while $\Chn_{\text{bi}}(n,2)$ contains exactly the chains that consist of two comparable elements of $\posetz_n$ that have signatures of opposite signs.
In particular, the maps $\omega: \mathfrak{N} \to \Chn_{\text{null}}(n,1)$ and $\omega: \mathfrak{B} \to \Chn_{\text{bi}}(n,2)$ defined as
\begin{align*}
\omega(p) &= \{p\}\\
\omega(p,q) &= \{p,q\}
\end{align*}
are bijective.  Therefore, $\omega$ induces a bijection from $\mathfrak{N} \cup \mathfrak{B}$ to $\Chn_{\text{null}}(n,1) \cup \Chn_{\text{bi}}(n,2)$.
Note that $u \in C$ if and only if $\omega{u} \subseteq C$ for all $C \subseteq \posetz_n$ and $u \in \mathfrak{N} \cup \mathfrak{B}$.

We will use the following lemma repeatedly when discussing extreme rays of $\slngeo$ in order to relate the function $\rayFn : \mathfrak{N} \cup \mathfrak{B} \to \h_s$ to the function $\psi$.

\begin{lemma}
\label{lem:explainDefOfRayForSln}
For all $u \in \mathfrak{N} \cup \mathfrak{B},$ the ray $\ray{u}$ is an extreme ray of $\slngeo$ and satisfies $\ind{\erays(\psi(\ray{u}))} = \omega(u)$.
\end{lemma}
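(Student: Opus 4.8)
The plan is to verify the two claims of the lemma separately, handling the null case $u \in \mathfrak{N}$ and the bisigned case $u \in \mathfrak{B}$ in parallel, since both reduce to unwinding the definition of $\rayFn$ together with \pref{thm:rayStruture} and the characterization of faces in \pref{prop:bijectForFaces}. First I would observe that by the construction of $\rayFn: \mathfrak{B} \to \h_s$ (as the signature-zero convex combination of $\ray{p}$ and $\ray{q}$) and the fact that $\ray{i,i} \in \h_s$ for $u=(i,i) \in \mathfrak{N}$, the vector $\ray{u}$ always lies in $\h_s$. To see that $\ray{u}$ spans an extreme ray of $\slngeo$, I would identify the face of $\slngeo$ whose relative interior contains $\ray{u}$ and show it is one-dimensional. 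For this it is cleanest to pass through $\glngeo$: the point $\ray{u}$ lies in the relative interior of a unique face $F$ of $\glngeo$, and since $\ray{u} \in \h_s$, the face $F$ either lies in $\h_s$ (the null case) or straddles $\h_s$ (the bisigned case), corresponding exactly to cases (2) and (3) of \pref{lem:imageOfPsiForFaces}.

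The key computation is to determine $\erays(F)$ where $F$ is the smallest face of $\glngeo$ containing $\ray{u}$, and to check that $\ind{\erays(F)} = \omega(u)$; then $\psi(\ray{u}) = F$ follows from \pref{lem:propertiesOfPsiForFaces}\pref{assert:minimalPsi} once we know $\ray{u}$ is genuinely a face of $\slngeo$. In the null case $u = (i,i)$, the vector $\ray{i,i}$ has exactly one nonzero ``block structure'' and one checks directly from \pref{thm:rayStruture} that it generates an extreme ray $\ray{i,i}$ of $\glngeo$ lying in $\h_s$; hence the smallest face of $\glngeo$ containing it is that very extreme ray, so $\erays$ of it is $\{\ray{i,i}\}$ and $\ind$ of that is $\{(i,i)\} = \omega(u)$. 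In the bisigned case $u = (p,q)$ with $p = (a,b)$, $q = (c,d)$, I would use the explicit formula
$$
\ray{p,q} = (\underbrace{\mu,\ldots,\mu}_a,\underbrace{\nu,\ldots,\nu}_{c-a},\underbrace{0,\ldots,0}_{n-c-d},\underbrace{-\nu,\ldots,-\nu}_{d-b},\underbrace{-\mu,\ldots,-\mu}_b)
$$
and read off that the $\mathfrak{gl}_n$-face $F$ containing $\ray{p,q}$ in its interior is cut out by the sign conditions making precisely the rays $\ray{p}=\ray{a,b}$ and $\ray{q}=\ray{c,d}$ extreme; concretely, $F$ is the cone spanned by the $e_\ell^S$'s whose $(\pi_\ell^S, \ell-\pi_\ell^S)$ equals either $(a,b)$ or $(c,d)$, together with — if $a<c$ and $b<d$ is not forced — nothing else. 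The point is that the weight signs of $\ray{p,q}$ coincide with those on the edge $[\ray p,\ray q]$, so $\erays(F) = \{\ray p, \ray q\}$ and $\ind{\erays(F)} = \{p,q\} = \omega(u)$.

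The main obstacle, and the only genuinely non-formal part, is the verification that $\erays(\psi(\ray u))$ is exactly $\{\ray p,\ray q\}$ (resp.\ $\{\ray{i,i}\}$) and not something larger — i.e.\ that no third extreme ray of $\glngeo$ lies on the segment joining $\ray p$ and $\ray q$ inside the Weyl chamber. This is where one must use that $\glngeo$ is simplicial (\pref{thm:chamberStructure}): the chain $\{p,q\}$ in $\posetz_n$ is a $2$-chain, and by \pref{thm:Characterization.Faces} the corresponding $2$-face of $\glngeo$ has exactly the two extreme rays $\ray p, \ray q$; since the relative interior of a $2$-face of a simplicial fan is the open cone on the edge between its two generators, $\ray{p,q}$ (being a positive combination of both with positive coefficients, as $\mu,\nu>0$) lies in that relative interior. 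Hence $F$ is precisely this $2$-face, $\ray u$ lies in $F^0\cap\h_s$, so by case (3) of \pref{lem:imageOfPsiForFaces} $\ray u$ is a $1$-face of $\slngeo$ and $\psi(\ray u)=F$, giving $\ind{\erays(\psi(\ray u))}=\{p,q\}=\omega(u)$. The null case is the analogous but easier statement using the $1$-chain $\{(i,i)\}$. Once both cases are in hand, the lemma is proved.
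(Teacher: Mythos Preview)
Your proposal is correct and follows essentially the same route as the paper's proof: split into the null case $u\in\mathfrak{N}$ and the bisigned case $u\in\mathfrak{B}$, identify the relevant face $F$ of $\glngeo$ via \pref{thm:Characterization.Faces} (the $1$-chain $\{(i,i)\}$ or the $2$-chain $\{p,q\}$), and then invoke the face correspondence to conclude $\psi(\ray{u})=F$. The only cosmetic difference is that the paper appeals directly to \pref{prop:bijectForFaces}\pref{assert:psiInteractWithFaces} (which packages the needed direction of \pref{lem:imageOfPsiForFaces}), whereas you go back to \pref{lem:imageOfPsiForFaces} case~(3) itself; and the paper simply states $F_s=F\cap\h_s$ from the definition of $\rayFn$, while you spell out that $\mu,\nu>0$ forces $\ray{p,q}$ into the relative interior of the $2$-face.
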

\begin{proof}
Recall that extreme rays of an incidence geometry are 1-faces.

First, suppose that $u \in \mathfrak{N}$.
\pref{thm:chamberStructure}\pref{assert:chamberRaysPrelim} guarantees that $\ray{u}$ is an extreme ray, hence of 1-face, of $\glngeo$.  Because $\sigma(u) = 0$, we see immediately that $\sigma(\ray{u}) = 0$ so that $\ray{u}$ lies in $\h_s$.  In particular, it follows that $\ray{u}$ defines a 1-face, hence an extreme ray, of $\slngeo$.
Because $\ray{u}$ is null, \pref{prop:bijectForFaces}\pref{assert:psiInteractWithFaces} guarantees that $\ray{u} = \psi(\ray{u} \cap \h_s) = \psi(\ray{u})$. In particular, we obtain that
$$
\erays(\psi(\ray{u})) = \erays(\ray{u}) = \{u\} = \omega(u),
$$
as desired.

Next, suppose that $u \in \mathfrak{B}$, and suppose that $u = (p,q)$.  Let $F_s$ denote the ray $\ray{u}$ and let $F$ denote the unique face of $\glngeo$ such that $\indFn \circ \erays(F) = \{p,q\},$ whose existence and uniqueness are guaranteed by \pref{thm:Characterization.Faces}.  \pref{thm:chamberStructure}\pref{assert:chamberRaysPrelim} implies that $F$ is non-negatively spanned by $\ray{p}$ and $\ray{q}$.  It follows from the definition of $\rayFn$ on elements of $\mathfrak{B}$ that $F_s = F \cap \h_s$.
The definition of $\mathfrak{B}$ implies that $F$ is bi-signed, and therefore \pref{prop:bijectForFaces}\pref{assert:psiInteractWithFaces} guarantees that $F = \psi(F_s)$.  In particular, we obtain that
$$
\erays(\psi(\ray{u})) = \erays(\psi(F_s)) = \erays(F) = \omega(u),
$$
as desired.
\end{proof}

\begin{proof}[Proof of \pref{thm:extremeRaysOfSln}]
It follows from \pref{lem:explainDefOfRayForSln} that $\rayFn$ defines an injection from $\mathfrak{N} \cup \mathfrak{B}$ to the set of extreme rays of $\slngeo$.

To finish the proof, we need to show that every extreme ray of $\slngeo$ is of the form $\ray{u}$ for some $u \in \mathfrak{N} \cup \mathfrak{B}$.
Recall that extreme rays of an incidence geometry are 1-faces.
Suppose that $r$ is an extreme ray of $\glngeo$.  Let $u = \omega^{-1}\left(\ind{\erays(\psi(r))}\right)$, which exists by \pref{prop:bijectForFaces}\pref{assert:psiBijectionCombinatorial}.  \pref{lem:explainDefOfRayForSln} guarantees that
$$
\ind{\erays(\psi(\ray{u}))} = \omega(u) = \ind{\erays(\psi(r))}.
$$
Because $\ray{u}$ and $r$ are 1-faces of $\slngeo$, \pref{prop:bijectForFaces}\pref{assert:psiBijectionCombinatorial} implies that $r = \ray{u}$.  The theorem follows. 
\end{proof}

\subsection{Extreme rays in chambers and faces of \slngeoForTitle}
\label{app:extremeRaysOfFacesAndChambers}

To classify the extreme rays of $\slngeo$ that lie in a given face, we will use \pref{lem:explainDefOfRayForSln}.  We will prove the following generalization of Parts \pref{assert:nullRay}, \pref{assert:nonNullRay}, and \pref{assert:extremeRayCount} of \pref{thm:chamberRays}.

\begin{theorem}
\label{thm:faceRays}
Let $F$ be a $k$-face of $\slngeo$ and let $C = \erays\left(\psi(F)\right)$, where $\psi$ denotes the bijection defined in \pref{def:psiForFaces}.
\begin{enumerate}[(a)]
\item \label{assert:extremeRayCriterion} An extreme ray $r$ of $\slngeo$ lies in $F$ if and only if $\ind{r} \in C$.
\item \label{assert:extremeRayFaceCount} $F$ has $|C_+|\cdot|C_-| + |C_0|$ extreme rays in $\slngeo$.  In particular, $F$ is simplicial if and only if $\min\{|C_+|,|C_-|\} \le 1$.
\end{enumerate}
\end{theorem}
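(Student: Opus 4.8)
The plan is to transport the problem from $\slngeo$ to $\glngeo$ using the bijection $\psi$ from \pref{def:psiForFaces} and the already-established dictionary between faces of $\glngeo$ and chains in $\posetz_n$ (\pref{thm:Characterization.Faces}), and then to do a small amount of purely combinatorial bookkeeping. For part \pref{assert:extremeRayCriterion}, let $r$ be an extreme ray of $\slngeo$. By \pref{thm:extremeRaysOfSln}, $r = \ray{u}$ for a unique $u \in \mathfrak{N} \sqcup \mathfrak{B}$, and by \pref{lem:explainDefOfRayForSln} we have $\ind{\erays(\psi(\ray{u}))} = \omega(u)$, which is $\{u\}$ if $u \in \mathfrak{N}$ and $\{p,q\}$ if $u = (p,q) \in \mathfrak{B}$. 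Now $r \subseteq F$ iff $r$ is a sub-face of $F$ in $\slngeo$, which by \pref{prop:bijectForFaces}\pref{assert:psiIsoPosets} happens iff $\erays(\psi(r)) \subseteq \erays(\psi(F))$, i.e.\ iff $\omega(u) \subseteq C$. Since (as noted just before \pref{lem:explainDefOfRayForSln}) $\omega(u) \subseteq C$ is equivalent to $u \in C$ in the abuse-of-notation sense, and $\ind{r} = u$, this is exactly the condition $\ind{r} \in C$. I would write this out carefully, being explicit about the two cases $u \in \mathfrak{N}$ (so $\ind{r}$ is a single null element, which must lie in $C_0$) and $u \in \mathfrak{B}$ (so $\ind{r}$ is a bisigned pair, which must consist of a positive-signature element of $C$ and a negative-signature element of $C$).

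For part \pref{assert:extremeRayFaceCount}, combine \pref{assert:extremeRayCriterion} with \pref{thm:extremeRaysOfSln}: the extreme rays of $\slngeo$ lying in $F$ correspond bijectively, via $\ind{\cdot}$ composed with $\rayFn$, to the elements $u$ of $\mathfrak{N} \sqcup \mathfrak{B}$ with $u \in C$. Those in $\mathfrak{N}$ with $u \in C$ are exactly the elements of $C_0$, contributing $|C_0|$. Those in $\mathfrak{B}$ with $(p,q) \in C$ are the pairs $p < q$ in $C$ with $\sigma(p)\sigma(q) < 0$; here I would invoke the fact that $\psi(F) = \erays^{-1}(C)$ is a face of $\glngeo$, hence (via \pref{thm:Characterization.Faces}) $C$ is a chain, so \emph{any} $p \in C_+$ and $q \in C_-$ are automatically comparable — that is the point where totality of the order on $C$ is used. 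Hence the number of bisigned pairs in $C$ is exactly $|C_+|\cdot|C_-|$, giving the total $|C_+|\cdot|C_-| + |C_0|$. For the simpliciality statement: $F$ is a $k$-face, so it is simplicial iff it has exactly $k$ extreme rays. I would compute $k = |C_0| + |C_+| + |C_-| - [\,C \text{ bi-signed}\,]$ using \pref{lem:imageOfPsiForFaces} (which tells us that $\psi(F)$ has $\dim \psi(F) \in \{k, k+1\}$ according to whether $F$ is null or bi-signed, and $|C| = \dim \psi(F)$ since faces of $\glngeo$ are simplicial by \pref{thm:chamberStructure}), and then compare $|C_+|\cdot|C_-| + |C_0|$ with this value of $k$, checking that equality holds precisely when $\min\{|C_+|, |C_-|\} \le 1$ (treating the null case $C_- = \varnothing$ or $C_+ = \varnothing$ and the bi-signed case separately).

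The main obstacle I anticipate is not any single hard step but keeping the case analysis clean: there are really three regimes — $F$ null (so $C$ mono-signed, in fact $C = C_0$ and $F = \psi(F)$ lives in $\h_s$), $F$ mono-signed but not null (which by \pref{lem:imageOfPsiForFaces} cannot occur in the image of $\psi$, so this regime is vacuous and should be noted), and $F$ bi-signed (so $|C| = k+1$ and $C_+, C_-$ both nonempty) — and the simpliciality count $|C_+||C_-| + |C_0|$ versus $|C_+| + |C_-| + |C_0| - [C\text{ bi-signed}]$ must be verified to coincide exactly on $\min\{|C_+|,|C_-|\} \le 1$ in each regime. Concretely, in the bi-signed case one needs $|C_+||C_-| + |C_0| = |C_+| + |C_-| + |C_0| - 1$, i.e.\ $(|C_+|-1)(|C_-|-1) = 0$; in the null case one needs $0 + |C_0| = |C_0|$, which is automatic and consistent with $\min = 0 \le 1$. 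I would present this comparison as a short explicit inequality argument rather than a table, and that finishes the proof.
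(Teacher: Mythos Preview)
Your proposal is correct and follows essentially the same route as the paper: transport via $\psi$ and \pref{lem:explainDefOfRayForSln} to reduce part \pref{assert:extremeRayCriterion} to $\omega(u)\subseteq C$, then for part \pref{assert:extremeRayFaceCount} use that $C$ is a chain (via \pref{prop:bijectForFaces}\pref{assert:psiBijectionCombinatorial}, equivalently your appeal to \pref{lem:imageOfPsiForFaces}) so that every element of $C_+$ is comparable to every element of $C_-$, giving $|C_+|\cdot|C_-|$ bisigned pairs, and finish with the two-case simpliciality check. Your identity $(|C_+|-1)(|C_-|-1)=0$ in the bi-signed case is the right one; the paper's displayed ``$=1$'' at that spot is a typo, as its next sentence (``$1\in\{|C_+|,|C_-|\}$'') makes clear.
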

\begin{proof}
Suppose that $r$ is an extreme ray of $\slngeo$.  \pref{lem:explainDefOfRayForSln} implies that
$$
\ind{\erays(\psi(r))} = \omega(\ind{r}).
$$
Because $r$ and $F$ are faces of $\slngeo$, \ref{prop:bijectForFaces}(\ref{assert:psiIsoPosets}) implies that
$$
r \subseteq F \Leftrightarrow \ind{\erays(\psi(r))} \subseteq C.
$$
It follows
$$
r \subseteq F \Leftrightarrow \omega(\ind{r}) \subseteq C \Leftrightarrow \ind{r} \in C,
$$
which is Part \pref{assert:extremeRayCriterion}.

It remains to prove Part \pref{assert:extremeRayFaceCount}.
Note that $C \cap \mathfrak{N} = \left|C_0\right|$.
\pref{prop:bijectForFaces}\pref{assert:psiBijectionCombinatorial} guarantees that $C$ is either a null $k$-chain or a bi-signed $(k+1)$-chain in $\posetz_n$.  Because $C$ is a chain, every pair of elements of $C$ is comparable.  We see immediately that $\left|\mathfrak{B} \cap C^2\right| = \left|C_+\right| \cdot \left|C_-\right|$.
It follows that $C$ contains $|C_+|\cdot|C_-| + |C_0|$.  Part \pref{assert:extremeRayCriterion} implies that $F$ contains $|C_+|\cdot|C_-| + |C_0|$ extreme rays of $\slngeo$.

A direct calculation shows that $F$ is the non-negative span of the extreme rays of $\slngeo$ that it contains.  It follows that $F$ is simplicial if and only if $|C_+|\cdot|C_-| + |C_0| = k$.  We divide into cases based on whether $C$ is null to prove that $|C_+|\cdot|C_-| + |C_0| = k$ if and only if $\min\{|C_+|,|C_-|\} \le 1$.
\begin{itemize}
\item Case 1: $C$ is a null $k$-chain.  Then, we have $C_+ = C_- = \varnothing$ by definition, so that $\min\{|C_+|,|C_-|\} \le 1$.
We also obtain that 
$$
|C_+|\cdot|C_-| + |C_0| = |C_0| = |C| = k
$$
so that $F$ is simplicial.
\item Case 2: $C$ is a bi-signed $(k+1)$-chain.   Note that
$$
|C_+| + |C_-| = |C| - |C_0| = k+1-|C_0|.
$$
A direct calculation shows that
$$
|C_+|\cdot|C_-| + |C_0| = k \Leftrightarrow (|C_+|-1)(|C_-|-1)=1.
$$
Therefore, $F$ is simplicial if and only if $1 \in \{|C_+|,|C_-|\}.$
The fact $C$ is bi-signed implies that $|C_+|,|C_-| \ge 1$, so that $F$ is simplicial if and only if $\min\{|C_+|,|C_-|\} \le 1$.
\end{itemize}
\pref{prop:bijectForFaces}\pref{assert:psiBijectionCombinatorial} guarantees that the cases exhaust all the possibilities for the chain $C$.  The theorem follows.
\end{proof}

In order to connect extreme rays of $\glngeo$ to subsets, we will need the following simple lemma.

\begin{lemma}
\label{lem:charactVectorAndExtRays}
Suppose that $C$ is a chamber of $\glngeo$ and that $p \in \posetz_n$.  Then, $p$ is an extreme ray of $C$ if and only if
\[\sigma(p) = \sum_{k=n-\ell(p)+1}^n s_k.\]
\end{lemma}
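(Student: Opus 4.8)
The plan is to reduce the statement to the explicit description of the extreme rays of $C = C(S)$ furnished by \pref{thm:rayStruture}\pref{assert:chamberRaysPrelim} (equivalently \pref{thm:chamberStructure}\pref{assert:chamberRaysPrelim}), combined with a one-line evaluation of the partial sum $\sum_{k=n-\ell(p)+1}^n s_k$ in terms of $S$. I would present it as a short self-contained argument rather than routing through an auxiliary sub-lemma.

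First I would unwind the conventions. Write $p=(a,b)$, so that $\ell(p)=a+b$ and $\sigma(p)=\sigma(\ray{a,b})=a-b$. Let $S=\{a_1>\cdots>a_r\}\subseteq[n]$ be the subset corresponding to $C$, so that (with the typo corrected) the characteristic vector $s=(s_1,\ldots,s_n)$ of $C$ satisfies $s_k=1$ if $k\in S$ and $s_k=-1$ if $k\notin S$. By \pref{thm:rayStruture}\pref{assert:chamberRaysPrelim} the extreme rays of $C(S)$ are precisely the vectors $\ray{\pi_\ell^S,\ell-\pi_\ell^S}$ for $1\le\ell\le n$, where $\pi_\ell^S=\bigl|S\cap[n-\ell+1,n]\bigr|$. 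Since the level of $\ray{\pi_\ell^S,\ell-\pi_\ell^S}$ is $\ell$, the point $p=(a,b)$ is an extreme ray of $C$ if and only if $a=\pi_{\ell(p)}^S$.

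Second I would compute the right-hand side of the asserted identity. Put $m=\ell(p)=a+b$. The interval $[n-m+1,n]$ has $m$ elements, of which $\pi_m^S$ lie in $S$ and $m-\pi_m^S$ do not; hence $\sum_{k=n-m+1}^n s_k = \pi_m^S-(m-\pi_m^S)=2\pi_m^S-m$. On the other hand $\sigma(p)=a-b=2a-m$. Therefore $\sigma(p)=\sum_{k=n-m+1}^n s_k$ holds if and only if $a=\pi_m^S=\pi_{\ell(p)}^S$, which by the previous paragraph is exactly the condition that $p$ be an extreme ray of $C$. This proves both implications simultaneously.

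There is essentially no obstacle; the proof is pure bookkeeping once \pref{thm:rayStruture} is invoked. The only point meriting care is keeping the two conventions straight — that a point $p=(a,b)\in\posetz_n$ has level $a+b$ and signature $a-b$, and that the characteristic vector uses $s_k=+1\iff k\in S$ — after which the lemma is immediate.
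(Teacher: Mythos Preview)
Your proof is correct and is exactly the unpacking of what the paper means by ``Immediate from \pref{thm:chamberStructure}\pref{assert:chamberRaysPrelim}'': you identify the unique extreme ray of $C(S)$ at level $\ell(p)$ as $\ray{\pi_{\ell(p)}^S,\ell(p)-\pi_{\ell(p)}^S}$ and then observe that both $\sigma(p)$ and $\sum_{k=n-\ell(p)+1}^n s_k$ equal $2a-\ell(p)$ and $2\pi_{\ell(p)}^S-\ell(p)$ respectively, so they agree iff $a=\pi_{\ell(p)}^S$. There is no substantive difference from the paper's approach.
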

\begin{proof}
Immediate from \pref{thm:chamberStructure}\pref{assert:chamberRaysPrelim}.
\end{proof}

Interpreting \pref{thm:faceRays} in the language of subsets and characteristic vectors of a chamber yields Parts \pref{assert:nullRay}, \pref{assert:nonNullRay}, and \pref{assert:extremeRayCount} of \pref{thm:chamberRays}.

\begin{proof}[Proof of Parts \pref{assert:nullRay}, \pref{assert:nonNullRay}, and \pref{assert:extremeRayCount} of \pref{thm:chamberRays}]
Note that because $n \ge 3$, there are no null $(n-1)$-chains in $\posetz_n$.
It follows from \pref{prop:chamberBijection} and \pref{prop:bijectForFaces}\pref{assert:psiInteractWithFaces}
that $\phi$ and $\psi$ define inverse bijections between the set  $\mathfrak{C}_{\text{bi}}$ of bi-signed chambers of $\glngeo$ and the set $\mathfrak{C}_s$ of chambers of $\slngeo$, where $\phi$ was defined in \pref{def:phiForChambers} and $\psi$ was defined in \pref{def:psiForFaces}.

Suppose that $C$ is a bi-signed chamber of $\glngeo$ and let $C_s = \phi(C) = C \cap \h_s$ denote the corresponding chamber of $\slngeo$.  Let $M = \ind{\erays(C)}$ denote the $n$-chain in $\posetz_n$ corresponding to $C$.  \pref{thm:faceRays}\pref{assert:extremeRayCriterion} implies that an extreme ray $\ray{u}$ (with $u \in \mathfrak{N} \cup \mathfrak{B}$) of $\slngeo$ lies in $C_s$ if and only if $u \in M$.  In particular, we obtain that if $p \in \mathfrak{N}$ (resp.\ $(p_1,p_2) \in \mathfrak{B}$), then $\ray{p} \in C_s$ (resp.\ $\ray{p_1,p_2} \in C_s$) if and only if $p \in M$ (resp.\ $p_1,p_2 \in M$), which occurs if and only if $\ray{p} \in C$ (resp.\ $\ray{p_i} \in C$ for $i=1,2$).
A direct calculation from \pref{lem:charactVectorAndExtRays} yields Parts \pref{assert:nullRay} and \pref{assert:nonNullRay} of the theorem.

It remains to prove Part \pref{assert:extremeRayCount}.
Note that $\ell_0$ (resp.\ $\ell_+,\ell_-$) counts the number of elements of $M$ of zero (resp.\ positive, negative) signature.
Therefore, \pref{thm:faceRays}\pref{assert:extremeRayFaceCount} implies that $C_s$ has $\ell_+ \cdot \ell_- + \ell_0$ extreme rays in $\slngeo$ and that $C_s$ is simplicial if and only if $\min\{\ell_+,\ell_-\} \le 1$.  Recall that $M$ is bi-signed (because $n \ge 3$) so that $\ell_+,\ell_- \ge 1$.  It follows that $C_s$ is simplicial if and only if $\min\{\ell_+,\ell_-\} \le 1,$ and the theorem follows.
\end{proof}

\subsection{Simplicial chambers of \slngeoForTitle}
\label{app:simplicialChambers}

We now apply a bijective argument  to deduce \pref{thm:chamberRays}\pref{assert:countSimplicialChambers} from \pref{thm:faceRays}\pref{assert:extremeRayFaceCount}.

\begin{proof}[Proof of \pref{thm:chamberRays}\pref{assert:countSimplicialChambers}]
The theorem is obvious for $n \le 4.$  We assume for the remainder of the proof that $n \ge 5$.

As in the proof of Parts \pref{assert:nullRay}, \pref{assert:nonNullRay}, and \pref{assert:extremeRayCount} of \pref{thm:chamberRays}, the assumption that $n \ge 3$ ensures that $\rayFn \circ \erays \circ \psi$ defines a bijection from the set $\mathfrak{C}_s$ of chambers of $\slngeo$ to the set $\Chn_{\text{bi}}(n,n)$ of bi-signed $n$-chains in $\posetz_n$.
In light of \pref{thm:faceRays}\pref{assert:extremeRayFaceCount}, it suffices to count bi-signed $n$-chains $C \subset \posetz_n$ such that $\min\{|C_+|,|C_-|\} \le 1$.  We call such chains \emph{admissible} and let $a(n)$ denote the number of admissible chains in $\posetz_n$ so that $a(n)$ is the number of simplicial chambers in $\slngeo$.

For all bi-signed chains $C$, we have $|C_+|,|C_-| \ge 1$ so that $\min\{|C_+|,|C_-|\} = 1$. Note that every element of $\posetz_n$ of odd level has non-zero signature because $\ell(p) + \sigma(p)$ is even for all $p \in \posetz_n$.  It follows that if $C$ is an $n$-chain, then $|C_+|+|C_-| \ge \ceil*{\frac{n}{2}} \ge 3$ because $n \ge 5$ and $C$, being an $n$-chain, contains an element of every level in $[n]$.
This discussion motives the following definition.  Call an $n$- chain $C \subset \posetz_n$ \emph{negatively} (resp.\ \emph{positively}) \emph{admissible} if $|C_-| = 1$.  The discussion of this paragraph shows the set of admissible chains is the disjoint union of the set of positively admissible chains and the set of negatively admissible chains.  Let $a_+(n)$ denote the number of positively admissible chains in $\posetz_n$.  By symmetry, there are also $a_+(n)$ negatively admissible chains in $\posetz_n$, so that there are $2a_+(n)$ admissible chains in $\posetz_n$ in total.

\begin{figure}
\centering{
\raisebox{0cm}{  \includegraphics[scale=.7]{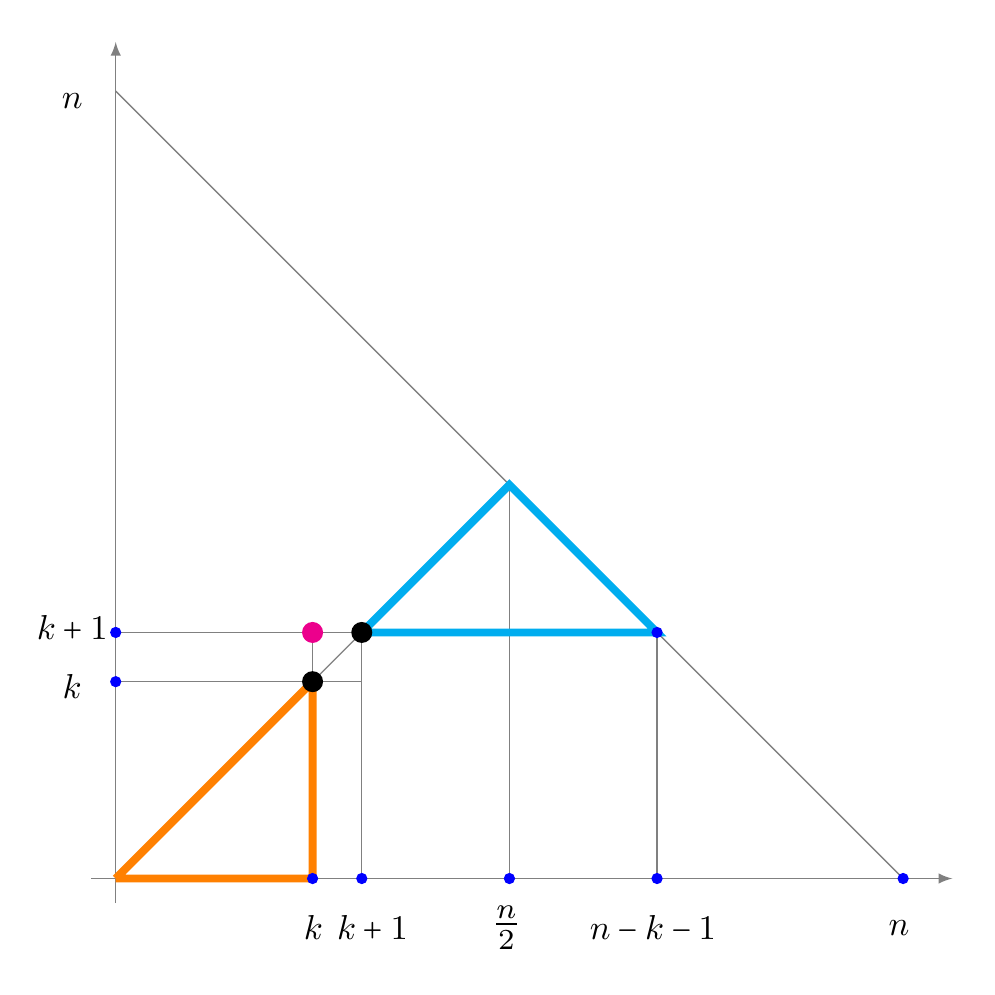} }
\raisebox{0cm}{  \includegraphics[scale=.7]{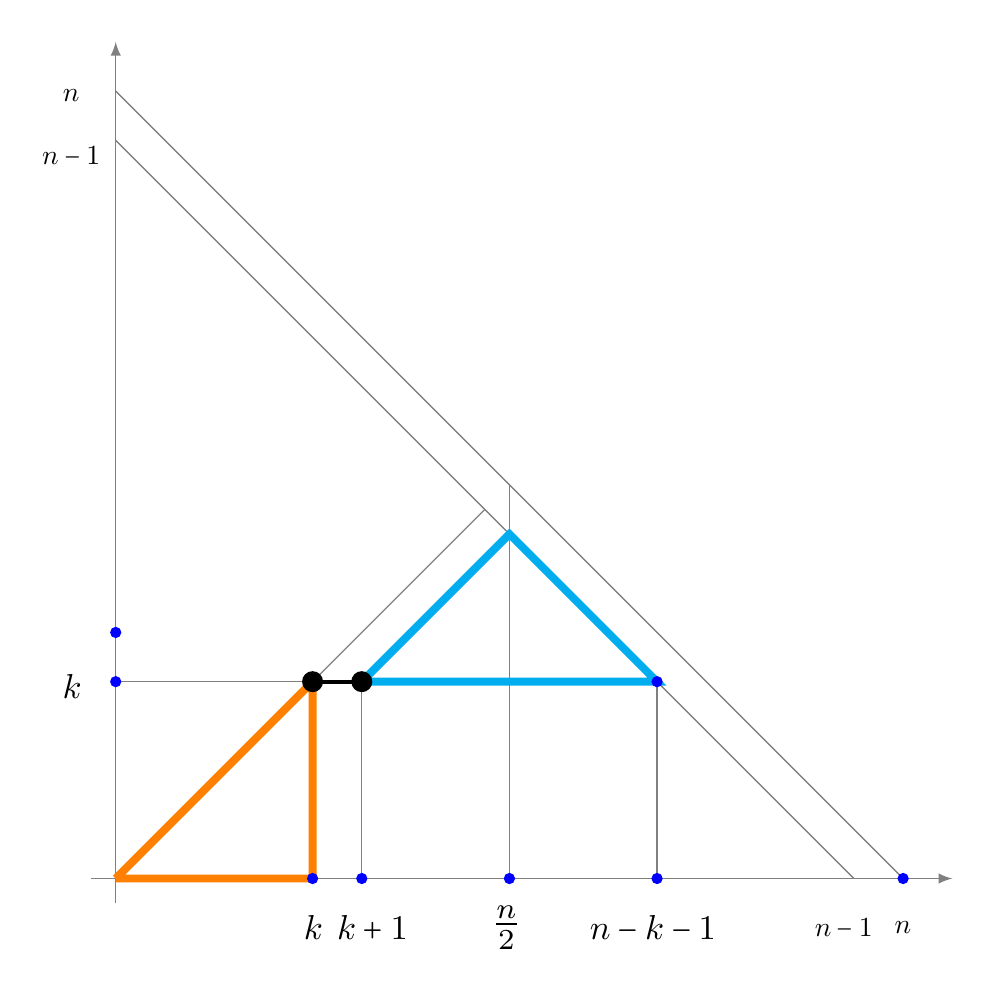} }

}
\caption{ A Negative admissible chain corresponding to a  simplicial chamber. 
  \label{fig:NegativeAdmissibleChain}}
\end{figure}

We compute $a_+(n)$ by using a bijection. A northeastern lattice path  of length $n$ starting at the origin  can be represented by a $n$-sequence of plusses and minuses  $(\epsilon_1, \epsilon_2, \ldots, \epsilon_n)$. 
When $\epsilon_i=+1$ (resp.\ $\epsilon_i=-1$) indicates that the $i$th step is a translation by $(1,0)$ (resp.\ $(0,1)$). 
We call ${\mathcal A}_+(n)$ the set of lattice paths of $\poset_n$ that cross the line $y=x+1$ exactly at one point. They are in bijection with the set of positively admissible $n$-chains. 
We call ${\mathcal A}_0(n)$ the set of lattice paths of $\posetz_n$ that do not cross the line $y=x$, but are allowed to touch it. 
A lattice path does not cross the line $y=x$  if and only if    for all $i$ in $[n]$  the  partial sum $S_i=\epsilon_1+\cdots +\epsilon_i$ is nonnegative. 
A lattice path crosses the line $y=x$ exactly once if and only if there is one and only one index $i\in[n]$ such that the partial sum $S_i=-1$. 
We define a bijection between ${\mathcal A}_+(n)$ and ${\mathcal A}_0(n-1)$. 

The image of a path of ${\mathcal A}_+(n)$ defined by the sequence $(\epsilon_1, \ldots, \epsilon_n)$ is the sequence obtained by removing the unique entry $i$ for which the partial sum $S_i$ is equal to $-1$. 
The inverse image consists of identifying the last entry $j$ of a sequence of ${\mathcal A}_0(n-1)$  at which $S_j$ is zero and introducing the entry $-1$ just before $j$. 
If  the sequence does not have a vanishing partial sum, introduce an entry $-1$ in front of the first entry of the sequence. 

\pref{prop:DyckWithNoRestrictedEndpt} asserts that $$\big|{{\mathcal A}_0(n)}\big|= \binom{n}{\ceil*{n/2}}.$$ 
It follows from the bijection between ${\mathcal A}_+(n)$ and ${\mathcal A}_0(n)$ that  
$$
a_+(n)=\big|{\mathcal A}_+(n)\big|=\big|{\mathcal A}_0(n-1)\big|=\binom{n-1}{\floor*{\frac{n-1}{2}}}.
$$
\end{proof}

\section{Recursions to count chains and ensembles with bounds on signature}
\label{app:Recursions}

To prove \pref{prop:A0} (resp.\ \pref{prop:C0}), we will derive recurrence relations satisfied by the sequence $a(n,k,\sigma)$ in \pref{app:chainRecursions} (resp.\ $c(n,k,\sigma)$ in \pref{app:ensembleRecursions}).
As a result, we will recover in \pref{app:recoverUnCounts} the enumerations of faces and flats of $\glngeo$ first proven in \cite{gl}.
In the next section, we will manipulate generating to solve the recursions.

\subsection{Recursions to count chains with bounds on signature}
\label{app:chainRecursions}
The following lemma is an unsimplified version of the recursion satisfied by the sequence $a(n,k,\sigma)$ that counts the number of elements of the set $\Chn(n,k,\sigma)$.  We will simplify the formula in \pref{prop:simpleRecursionChn}.

\begin{lemma}
\label{lem:uglyRecursionChn}
For all $n \ge 1$ and $\sigma \ge 0$, the number $a(n,k,\sigma)$ of  $k$-chains in $\posetz_n$ all of whose elements have signature at most $\sigma$ satisfy  the following recurrence relation
$$
a(n,k,\sigma)=\delta_{k,0} + \sum_{j=1}^n \sum_{t=0}^{j} a(n-j,k-1,\sigma+2t-j).
$$
\end{lemma}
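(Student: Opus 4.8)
The plan is to prove \pref{lem:uglyRecursionChn} by a ``first-level'' decomposition of a chain, recording the level and the displacement of its minimal element. Recall that $\posetz_n \subset \mathbb{N}^2$ is the set of points $(a,b)$ with $0 < a+b \le n$, and that a $k$-chain in $\posetz_n$ is a totally ordered subset of size $k$ under the Cartesian order; under the identification $\erays(\cdot)$, such a chain corresponds to a $k$-face of $\glngeo$. The key observation is that $\posetz_n$ is graded by level $\ell(a,b) = a+b$, and that the level-$j$ slice of $\posetz_n$ consists exactly of the $j+1$ points $(t, j-t)$ for $0 \le t \le j$, whose signatures are $\sigma(t,j-t) = t - (j-t) = 2t - j$.

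First I would handle the degenerate case. The empty chain ($k=0$) always lies in $\Chn(n,k,\sigma)$ since $\sigma \ge 0$ by the definition of $\Chn$, contributing the term $\delta_{k,0}$; and it is the only chain of size $0$. For $k \ge 1$, every nonempty $k$-chain $C$ has a unique minimal element $p = (t, j-t)$, where $j = \ell(p) \in [1,n]$ is its level and $0 \le t \le j$ determines its position within that level. The constraint that $C \in \Chn(n,k,\sigma)$ forces $\sigma(p) = 2t-j \le \sigma$, i.e. $t \le (j+\sigma)/2$; I will keep this constraint implicit for now and see it reappear below. The remaining $k-1$ elements of $C$ form a $(k-1)$-chain all of whose elements are strictly greater than $p$ in the Cartesian order.

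Next I would set up the shift. If $p = (t, j-t)$, then every element $q > p$ can be written $q = p + q'$ with $q' \in \mathbb{N}^2$, $q' \ne (0,0)$, and the map $q \mapsto q'$ is an order isomorphism from $\{q \in \posetz_n : q > p\}$ onto $\posetz_{n-j} \setminus \{(0,0)\}$ — note $\ell(q) = \ell(q') + j \le n$ iff $\ell(q') \le n-j$, so $q'$ ranges over $\poset_{n-j}$; and $q \ne p$ forces $q' \ne (0,0)$, so in fact $q' \in \posetz_{n-j}$. Moreover $\sigma(q) = \sigma(q') + \sigma(p) = \sigma(q') + (2t - j)$, so the upper-bound-$\sigma$ condition on $q$ translates to $\sigma(q') \le \sigma - (2t-j) = \sigma + (j - 2t)$. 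Therefore, deleting $p$ and applying this shift gives a bijection between $k$-chains in $\posetz_n$ with minimal element $(t,j-t)$ and all elements of signature $\le \sigma$, and $(k-1)$-chains in $\posetz_{n-j}$ all of whose elements have signature $\le \sigma + j - 2t$; the latter set is $\Chn'(n-j, k-1, \sigma+j-2t)$, which equals $\Chn(n-j,k-1,\sigma+j-2t)$ when that bound is nonnegative and is empty otherwise — exactly matching the convention baked into the definition of $\Chn(n,k,\sigma)$, so that the minimal-element constraint $t \le (j+\sigma)/2$ need not be imposed separately. Summing the sizes over $j \in [1,n]$ and $t \in [0,j]$, and reindexing the inner sum via $t \mapsto t$ so that $\sigma + j - 2t = \sigma + (j - 2t)$ matches the stated $\sigma + 2t - j$ after the substitution $t \mapsto j - t$ (which is a bijection of $[0,j]$ with itself), yields
$$
a(n,k,\sigma) = \delta_{k,0} + \sum_{j=1}^n \sum_{t=0}^{j} a(n-j, k-1, \sigma + 2t - j),
$$
as claimed.

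The only genuinely delicate point — and the part I expect to need the most care in the writeup rather than the most ideas — is confirming that the $\varnothing$ convention in the definition of $\Chn(n,k,\sigma)$ for negative $\sigma$ exactly absorbs the ``minimal element has signature $\le \sigma$'' constraint, so that one may sum over \emph{all} $t \in [0,j]$ without an extra cutoff. This is a bookkeeping check: when $\sigma + 2t - j < 0$ the summand $a(n-j,k-1,\sigma+2t-j)$ is declared to be $0$, and this happens precisely for those $t$ (after the reindexing) for which $(t,j-t)$ would violate the signature bound as a minimal element, since such a $t$ also makes any $(k-1)$-chain above it impossible to fit under the bound. I would also double-check the two edge behaviors: $j = n$ forces $n - j = 0$, and $\posetz_0 = \poset_0 \setminus \{(0,0)\} = \varnothing$, so $a(0,k-1,\cdot) = \delta_{k-1,0} = \delta_{k,1}$, consistent with a chain consisting of a single top-level element; and the case $k=1$ correctly counts all of $\posetz_n$ weighted by the signature constraint. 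With these conventions verified, the recursion follows purely from the order-isomorphism above.
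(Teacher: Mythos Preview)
Your proposal is correct and follows essentially the same approach as the paper: decompose a nonempty chain by its minimal element $p$, translate the remainder by $-p$ to land in $\posetz_{n-\ell(p)}$ with the shifted signature bound, and sum over all possible $p$ grouped by level. The only cosmetic difference is that the paper parametrizes the level-$j$ points so that $\sigma(p)=j-2t$ directly, whereas you take $p=(t,j-t)$ so that $\sigma(p)=2t-j$ and then reindex $t\mapsto j-t$; your discussion of how the $\varnothing$-convention for negative $\sigma$ absorbs the minimal-element constraint is a welcome clarification that the paper leaves implicit.
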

The strategy of the proof is to sum over the possible minima of an element of $\Chn(n,k,\sigma)$.
\begin{proof}
The lemma is obvious for $k \le 0$  since $\Chn(n,k,\sigma)$ is empty for $k< 0$ and consists of one point when $k=0$.  Assume for the remainder of the proof that $k \ge 1$.
Define $\mu_{n,k,\sigma}: \Chn(n,k,\sigma) \to \posetz_n$ by $\mu_{n,k,\sigma}(S) = \min S,$ which is defined because $\Chn(n,k,\sigma)$ consists of chains.
We will enumerate the fibers $\mu_{n,k,\sigma}^{-1}(p)$.

\begin{figure}
\centering{
\raisebox{0cm}{  \includegraphics[scale=.7]{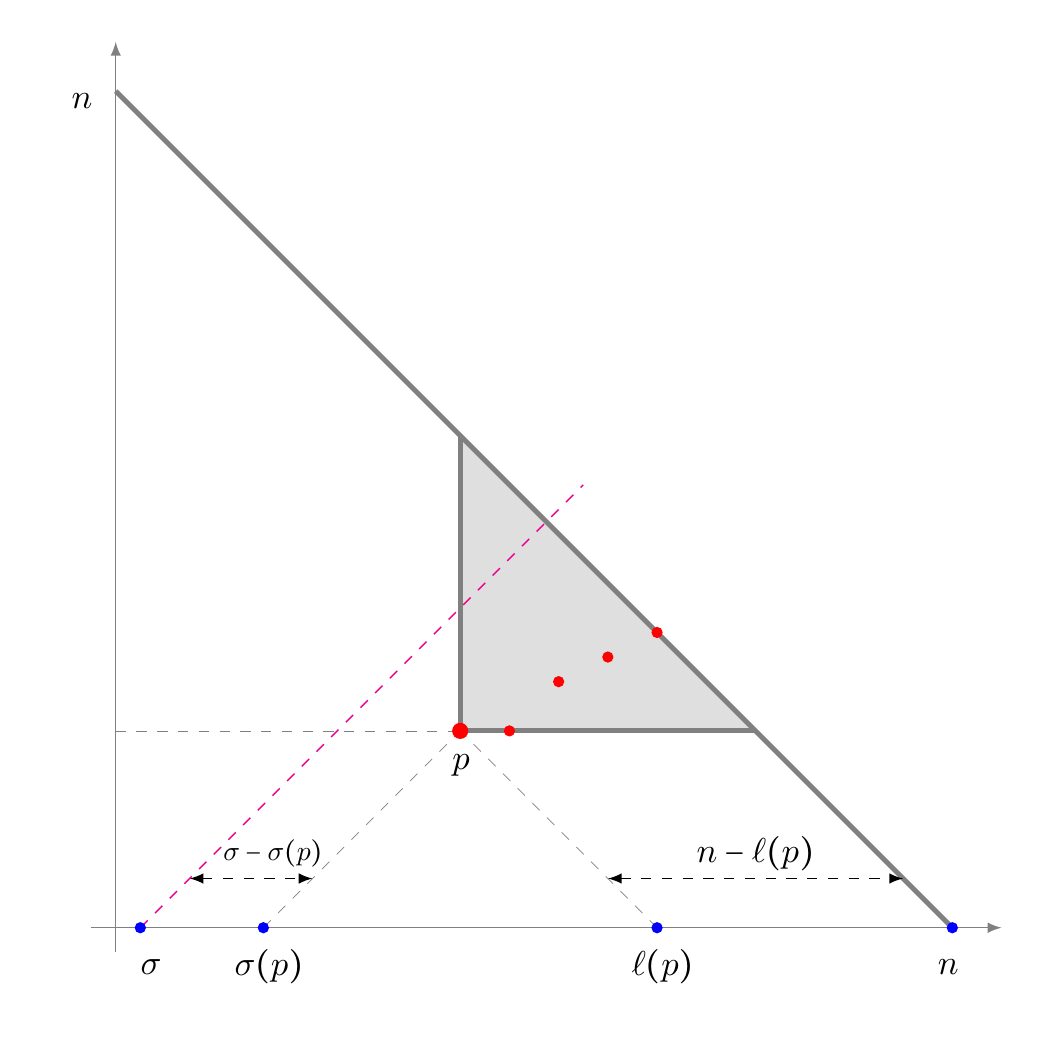} }

}\caption{ {\bf The geometry behind the  recursion formula for $a(n,k,\sigma)$ (Lemma \ref{lem:uglyRecursionChn})}. 
Given a  point $p=(p_x,p_y)\in \posetz_n$. The elements of $\Chn(n,k,\sigma)$ starting at $p$ are in bijection with the set 
$\Chn(n-\ell(p),k-,\sigma-\sigma(p))$.  This is because all the points of the chain are inside the colored triangle which corresponds to the interval $(p,\infty)\cap \posetz_n$. 
Since $p$ is already accounted for, there are $k-1$ points left to determine. They are all in the interval   $(p,\infty)\cap \posetz_n$, which  is in bijection with $\posetz_{n-\ell(p)}$. 
The bijection is a translation $\theta_{n,k,\sigma}$. Under such a translation, the signature is shifted by $-\sigma(p)$. 
  \label{fig:uglyRecursionChn}}
\end{figure}

Suppose that $p \in \posetz_n$.  Define a function $\theta_{n,k,\sigma}: \mu_{n,k,\sigma}^{-1}(p) \to \Chn(n-\ell(p),k-1,\sigma-\sigma(p))$ by 
$$
\theta_{n,k,\sigma}(S)=\left(-p + S\right) \setminus \{(0,0)\}.
$$
The function $\theta_{n,k,\sigma}$ translates a chain $S$ by shifting $p$ to $\hat{0}$, and then removes $\hat{0}$ from the result.
After a shift by $p$, the level (resp.\ signature) is shifted by  $-\ell(p)$ (resp.\  $-\sigma(p)$). 
Thus, translation by $p$ induces a bijection from the set $\Chn(n-\ell(p),k-1,\sigma-\sigma(p))$ to the set of chains of  $\Chn(n,k,\sigma)$ that start at $p$.
Define a function $\omega_{n,k,\sigma}: \Chn(n-\ell(p),k-1,\sigma-\sigma(p)) \to \mu_{n,k,\sigma}^{-1}(p)$ by
$$
\omega_{n,k,\sigma}(S)=\left(p + S\right) \cup \{p\}.
$$
The function $\omega_{n,k,\sigma}$ translates a chain $S$ by shifting $\hat{0}$ to $p$, and then includes $p$ in the result.
We see immediately that $\theta_{n,k,\sigma}$ and $\omega_{n,k,\sigma}$ are well-defined and form a pair of inverse bijections.  It follows that
 $$
\left|\mu_{n,k,\sigma}^{-1}(p)\right| = a(n-\ell(p),k-1,\sigma-\sigma(p)).
$$

We sum this equation over $p \in \posetz_n$ to complete the proof of the lemma.  Note that
$$
\left\{\sigma(p) \mid \ell(p) = j\right\} = \{-j,2-j,\ldots,j-2,j\} = \{j-2t \mid 0 \le t \le j\}.
$$
It follows that
$$
\sum_{\ell(p) = j}a(n-\ell(p),k-1,\sigma-\sigma(p)) = \sum_{t=0}^\ell a(n-\ell,k-1,\sigma+2t-\ell).
$$
Summing over the possible levels $j$ yields that
$$
a(n,k,\sigma)=\sum_{\ell=1}^n \sum_{t=0}^{\ell} a(n-\ell,k-1,\sigma+2t-\ell)
$$
for $k \ge 1$. We have to introduce the correction term $\delta_{k,0}$ in order to deal with the case of $k \le 0$.
\end{proof}

The following proposition simplifies the recursive formula of \pref{lem:uglyRecursionChn} to contain only finitely many terms.

\begin{proposition}
\label{prop:simpleRecursionChn}\quad 
The number $a(n,k,\sigma)$ of  $k$-chains in $\posetz_n$ all of whose elements have signature at most $\sigma$ satisfies  the following recursion relation
\begin{enumerate}[(a)]
\item 
\label{assert:genSimpleRecursionChn}
For all $n \ge 3$ and $\sigma \ge 1$, we have
$$
\begin{aligned}
a(n,k,\sigma) &=  a(n-1,k,\sigma-1)+a(n-1,k,\sigma+1)-a(n-2,k,\sigma)\\
&+a(n-1,k-1,\sigma-1)+a(n-1,k-1,\sigma+1)-a(n-2,k-1,\sigma)
\end{aligned}
$$

\item
\label{assert:zeroSimpleRecursionChn}
For all $n \ge 2$, we have 
$$
a(n,k,0)= a(n-1,k,1)+a(n-1,k-1,1)
$$
\end{enumerate}
\end{proposition}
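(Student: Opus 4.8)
The idea is to massage the double-sum recursion of \pref{lem:uglyRecursionChn} into a finite one by a telescoping/inclusion--exclusion argument on arithmetic progressions. Write $T(n,k,\sigma) = \sum_{j=1}^n \sum_{t=0}^{j} a(n-j,k-1,\sigma+2t-j)$, so that \pref{lem:uglyRecursionChn} reads $a(n,k,\sigma) = \delta_{k,0} + T(n,k,\sigma)$ for all $n \ge 1$ and $\sigma \ge 0$; equivalently $T(n,k,\sigma) = a(n,k,\sigma) - \delta_{k,0}$ in that range. The whole proof of part \pref{assert:genSimpleRecursionChn} will consist of showing the intermediate identity
$$
T(n,k,\sigma) = a(n-1,k-1,\sigma-1) + a(n-1,k-1,\sigma+1) + T(n-1,k,\sigma-1) + T(n-1,k,\sigma+1) - T(n-2,k,\sigma) - a(n-2,k-1,\sigma)
$$
valid for $n \ge 3$, and then replacing each $T$-term by $a - \delta_{k,0}$; since $\sigma \ge 1$ guarantees $\sigma-1 \ge 0$, this substitution is legitimate for $T(n-1,k,\sigma-1)$ as well, and the three $\delta_{k,0}$'s on the right collapse to the single $-\delta_{k,0}$ on the left, leaving exactly the claimed formula.

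\textbf{Key steps for \pref{assert:genSimpleRecursionChn}.} First I would reindex $j = \ell+1$ in $T(n,k,\sigma)$, isolating the $\ell=0$ summand, which is precisely $a(n-1,k-1,\sigma-1)+a(n-1,k-1,\sigma+1)$. For each $\ell \ge 1$ the key is the purely algebraic identity, valid for every function $f$,
$$
\sum_{t=0}^{\ell+1} f(\sigma+2t-\ell-1) = \sum_{t=0}^{\ell} f((\sigma-1)+2t-\ell) + \sum_{t=0}^{\ell} f((\sigma+1)+2t-\ell) - \sum_{t=0}^{\ell-1} f(\sigma+2t-(\ell-1)),
$$
which one checks by noting that the right-hand side is a union-minus-intersection of the step-$2$ arithmetic progressions of signatures involved (both sides equal $f(\sigma-\ell-1) + f(\sigma+\ell+1)$ plus the sum of $f$ over the $\ell$ middle values). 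Applying this with $f(s) = a(n-1-\ell,k-1,s)$ and summing over $\ell = 1,\dots,n-1$, the first two sums become $T(n-1,k,\sigma-1)$ and $T(n-1,k,\sigma+1)$ after matching the index ranges, while the third, after a further shift $\ell = m+1$ and isolating the $m=0$ term $a(n-2,k-1,\sigma)$, becomes $a(n-2,k-1,\sigma) + T(n-2,k,\sigma)$. Assembling these gives the boxed intermediate identity above. For part \pref{assert:zeroSimpleRecursionChn} ($\sigma = 0$) I would run a parallel but lighter computation: split off $j=1$ (whose $t=0$ summand $a(n-1,k-1,-1)$ vanishes because $\Chn$ of negative signature is empty, leaving $a(n-1,k-1,1)$), reindex $j = \ell+1$ in the remaining sum, and observe that for each $\ell \ge 1$ the $t=0$ term $a(n-1-\ell,k-1,-\ell-1)$ again vanishes; shifting the $t$-index then turns $\sum_{t=0}^{\ell+1}$ into $\sum_{t=0}^{\ell}$ with signature offset $+1$, so the remaining sum is exactly $T(n-1,k,1)$. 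Hence $T(n,k,0) = a(n-1,k-1,1) + T(n-1,k,1)$, and substituting $T = a - \delta_{k,0}$ (legitimate since $n-1 \ge 1$) yields $a(n,k,0) = a(n-1,k,1) + a(n-1,k-1,1)$.

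\textbf{Main obstacle.} The computation itself is routine, so the real care is in the bookkeeping: keeping the index shifts $j \mapsto \ell+1 \mapsto m+1$ straight, checking that the truncated upper summation limits ($n$, $n-1$, $n-2$) land exactly where the telescoped $T$'s need them, and tracking the $\delta_{k,0}$ contributions through every substitution. The one genuinely delicate point is the hypothesis $\sigma \ge 1$ in \pref{assert:genSimpleRecursionChn}: the step $T(n-1,k,\sigma-1) = a(n-1,k,\sigma-1) - \delta_{k,0}$ uses \pref{lem:uglyRecursionChn}, which is only asserted for signature $\ge 0$, and indeed for $\sigma = 0$ one has $a(n,k,-1) = 0$ while $T(n,k,-1)$ need not vanish; this is precisely why the $\sigma = 0$ case must be peeled off as the separate statement \pref{assert:zeroSimpleRecursionChn}, where the offending term is instead killed by the vanishing of $\Chn$ on negative signatures.
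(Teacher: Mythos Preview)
Your proof is correct and follows essentially the same approach as the paper: both derive the finite recursion by telescoping the double sum of \pref{lem:uglyRecursionChn}. The paper's organization is slightly more economical---it simply subtracts the instance of \pref{lem:uglyRecursionChn} at $(n,k,\sigma)$ from the instance at $(n-1,k,\sigma+1)$ to collapse the double sum to a single sum, then subtracts again at $(n-1,k,\sigma-1)$ to telescope that single sum---whereas you work more explicitly with an inclusion--exclusion identity on each inner arithmetic progression; but the underlying algebra is identical, and your handling of the $\sigma=0$ boundary case and the $\delta_{k,0}$ bookkeeping is correct.
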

\begin{proof}
First, we prove \pref{prop:simpleRecursionChn}\pref{assert:genSimpleRecursionChn}.
For $n \ge 2$ and $\sigma \ge 0$, subtracting the results of the applications of \pref{lem:uglyRecursionChn} to $(n,k,\sigma)$ and $(n-1,k,\sigma+1)$ yields that
\begin{equation}
\label{eq:genSimpleStep1Chn}
a(n,k,\sigma)-a(n-1,k,\sigma+1) =
a(n-1,k-1,\sigma+1)+\sum_{j=1}^n a(n-j,k-1,\sigma-j).
\end{equation}
Subtracting the results of the applications of the previous equation  \pref{eq:genSimpleStep1Chn} to $(n,k,\sigma)$ and $(n-1,k,\sigma-1)$ yields that
\begin{multline*}
a(n,k,\sigma)-a(n-1,k,\sigma-1)-a(n-1,k,\sigma+1)+a(n-2,k,\sigma)\\
=a(n-1,k-1,\sigma+1)-a(n-2,k-1,\sigma)+a(n-1,k-1,\sigma-1),
\end{multline*}
and the Proposition \pref{prop:simpleRecursionChn}\pref{assert:genSimpleRecursionChn} follows.

Next, we prove \pref{prop:simpleRecursionChn}\pref{assert:zeroSimpleRecursionChn}.  Applying \pref{eq:genSimpleStep1Chn} to $(n,k,0)$ yields that
$$
a(n,k,0)-a(n-1,k,1) = a(n-1,k-1,1)
$$
because $a(n,k,\sigma) = 0$ for $\sigma < 0$, and \pref{prop:simpleRecursionChn}\pref{assert:zeroSimpleRecursionChn} follows.
\end{proof}

\subsection{Recursions to count ensembles with bounds on signature}
\label{app:ensembleRecursions} We now perform the analogous calculations for ensembles.
The following lemma is an unsimplified version of the recursion satisfied by the sequence $c(n,k,\sigma)$.  We will simplify the formula in \pref{prop:simpleRecursionEns}.

\begin{lemma}
\label{lem:uglyRecursionEns}
For all $n \ge 1$ and $\sigma \ge 0$, we have
\begin{multline*}
c(n,k,\sigma)=c(n-1,k-1,\sigma-1)+c(n-1,k-1,\sigma+1)-c(n-2,k-2,\sigma)\\
+\delta_{k,0} + \sum_{j=2}^n \sum_{t=1}^{j-1} c(n-j,k-1,\sigma+2t-j).
\end{multline*}
\end{lemma}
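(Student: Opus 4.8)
The proof of \pref{lem:uglyRecursionEns} follows the same strategy as the proof of \pref{lem:uglyRecursionChn}: we stratify the set $\Ens(n,k,\sigma)$ according to the bottom of an ensemble and sum the sizes of the strata.  The new feature is that an ensemble need not be a chain, so its set of elements of minimal level can be a non-trivial interval; this accounts for the extra correction terms that are not present in the chain case.

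The plan is as follows.  The cases $k \le 0$ are handled by the $\delta_{k,0}$ term exactly as before, so assume $k \ge 1$.  Given $S \in \Ens(n,k,\sigma)$, let $I$ denote the bottom interval of $S$, that is, the constituent interval $[A_0,B_0]$ in the canonical decomposition of $S$ (recall $A_0 = 0$).  I would divide into cases according to the level $j = \ell(B_0)$ of the top of this bottom interval.  If $j = 1$, then $S_{(0)} = \{(0,0)\}$ and $S_{(1)}$ is either $\{(1,0)\}$, $\{(0,1)\}$, or $\{(1,0),(0,1)\}$; the third possibility is forbidden by the definition of an ensemble (condition (3) would force $B_0 = (1,0)$ or $(0,1)$, not both, since $B_0+(1,1) \le A_1$), so in fact $B_0 \in \{(1,0),(0,1)\}$ and removing the point $(0,0)$ and translating by $-(1,0)$ or $-(0,1)$ gives a bijection with $\Ens(n-1,k-1,\sigma-1)$ respectively $\Ens(n-1,k-1,\sigma+1)$.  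This produces the terms $c(n-1,k-1,\sigma-1) + c(n-1,k-1,\sigma+1)$.  The overcount $-c(n-2,k-2,\sigma)$ corrects for the ensembles whose bottom interval is the single point $(0,0)$ but which would be double counted, or equivalently it accounts for the configuration where the second-lowest interval starts immediately — I would track the inclusion–exclusion carefully here, comparing with the analogous step in \pref{lem:uglyRecursionChn} where no such correction appeared because a chain meeting two consecutive levels forces a genuine chain structure.

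For $j \ge 2$ I would argue as in the chain case.  Fix the bottom interval $I = [0,B_0]$ with $\ell(B_0) = j$; since $B_0 = (p_x,p_y)$ with $p_x + p_y = j$ and both $p_x,p_y \ge 1$ (as $j \ge 2$ and $I$ is an honest interval of positive height containing $(0,0)$ at one corner — more precisely the bottom interval of an ensemble with $\ell(B_0) \ge 2$ must have $B_0$ with both coordinates positive by condition (2) together with $A_0 = 0$), removing $I$ and translating by $-B_0$ sets up a bijection between $\{S \in \Ens(n,k,\sigma) : B_0 \text{ fixed}\}$ and $\Ens(n-j, k-1, \sigma - \sigma(B_0))$, using condition (3) to see that everything above level $j$ fits in the translated poset $\posetz_{n-j}$ and that $k$ drops by exactly $1$ since level $j$ is a single new level.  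Then I would sum over the allowed positions of $B_0$: for fixed $j$, writing $B_0 = (t, j-t)$ with $1 \le t \le j-1$ gives $\sigma(B_0) = 2t - j$, so $\sum_{t=1}^{j-1} c(n-j,k-1,\sigma - (2t-j)) = \sum_{t=1}^{j-1} c(n-j,k-1,\sigma+2t-j)$ after reindexing $t \mapsto j-t$, and summing over $j$ from $2$ to $n$ yields the displayed double sum.

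The main obstacle I expect is getting the inclusion–exclusion in the $j=1$ case exactly right, in particular justifying the $-c(n-2,k-2,\sigma)$ term.  The subtlety is that when the bottom interval has height at least two but its top sits at level $1$ — which cannot happen — versus when the bottom interval is $\{(0,0)\}$ alone followed (after the mandatory gap) by another interval: these boundary configurations must be counted consistently, and the cleanest route is probably to mimic the derivation of \pref{prop:simpleRecursionChn} in reverse, i.e. first prove a ``raw'' recursion summing over all possible bottom intervals $I$ (not just their top level) and then telescope, checking that the telescoped form matches the claimed expression.  Everything else is routine translation-invariance bookkeeping analogous to \pref{fig:uglyRecursionChn}.
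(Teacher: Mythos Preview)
Your proposal has a genuine gap in the $j \ge 2$ analysis.  You stratify by the top $B_0$ of the bottom interval $I = [0,B_0]$ and claim that removing $I$ and translating by $-B_0$ drops the level count $k$ by exactly~$1$.  But $[0,B_0] \cap \posetz_n$ occupies \emph{all} levels $1,2,\ldots,\ell(B_0)=j$, so removing it drops $k$ by $j$, not~$1$; moreover the translated remainder is only the subset of $\Ens(n-j,k-j,\cdot)$ with trivial bottom interval, not all of it.  This does not produce the term $c(n-j,k-1,\sigma+2t-j)$ in the recursion.

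The paper's approach is to stratify not by $B_0$ but by $p = \inf S$, the coordinatewise infimum of $S$ in $\poset_n$.  The point is that when $\ell(p) \ge 2$ and $p$ lies off the axes one can check that $B_0 = 0$ in the canonical decomposition and $p = A_1 \in S$; removing just the single point $p$ and translating by $-p$ then gives a bijection with $\Ens(n-\ell(p),\,k-1,\,\sigma-\sigma(p))$, and summing over such $p$ yields the double sum.  (The case $\ell(p)\ge 2$ with $p$ on an axis is empty: such a $p$ cannot be the infimum of an ensemble.)

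The low-level cases $p \in \{(0,0),(1,0),(0,1)\}$ are handled by a different device.  The fibres over $(0,0)$ and $(1,0)$ are grouped together and put in bijection with $\Ens(n-1,k-1,\sigma-1)$ via a ``shift by $-(1,0)$ and clip'' map (the inverse must re-fill the column $x=0$, which is why the extra term $\{q\in S:\sigma(q)=-\ell(q)\}$ appears in the paper's $\phi'$).  The fibre over $(0,1)$ is then in bijection with the complement of the analogous grouped set inside $\Ens(n-1,k-1,\sigma+1)$, which produces $c(n-1,k-1,\sigma+1)-c(n-2,k-2,\sigma)$.  So your intuition that $-c(n-2,k-2,\sigma)$ is an inclusion--exclusion correction is right, but it comes from the $p=(0,1)$ fibre, not from a separate ``$B_0=(0,0)$'' case (that possibility is absorbed into the $\ell(p)\ge 2$ analysis via $p = A_1$).
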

The strategy of the proof of \pref{lem:uglyRecursionEns} is the same as that of \pref{lem:uglyRecursionChn}, but the analysis is more delicate due to the subtleties involved in the definition of an ensemble.  Indeed, the functions $\mu_{n,k,\sigma}$,$\theta_{n,k,\sigma}$, and $\omega_{n,k,\sigma}$ from the proof of \pref{lem:uglyRecursionChn} reappear in similar contexts, albeit with different domains and codomains.
\begin{proof}
The lemma is obvious for $k \le 0$.  Assume for the remainder of the proof that $k \ge 1$.
Define a function $\mu_{n,k,\sigma}: \Ens(n,k,\sigma) \to \poset_n$ by $\mu_{n,k,\sigma}(S) = \inf S$, where $\inf S$ is defined because $\poset_n$ is a clearly a finite meet-semilattice.  We will enumerate the fibers of $\mu_{n,k,\sigma}$ using several slightly different bijections.  Let $p \in \poset_n$.
\begin{itemize}
\item Case 1: $\ell(p) > 1$ and $\sigma(p) \in [1-\ell(p),\ell(p)-1] \cap \mathbb{Z}$.  Define a function $\theta_{n,k,\sigma}: \mu_{n,k,\sigma}^{-1}(p) \to \Ens(n-\ell(p),k-1,\sigma-\sigma(p))$ by 
$$
\theta_{n,k,\sigma}(S)=\left(-p + S\right) \setminus \{(0,0)\}.
$$
The function $\theta_{n,k,\sigma}$ translates an ensemble $S$ by shifting $p$ to $\hat{0}$, and then removes $\hat{0}$ from the result.
Define a function $\omega_{n,k,\sigma}: \Ens(n-\ell(p),k-1,\sigma-\sigma(p)) \to \mu_{n,k,\sigma}^{-1}(p)$ by
$$
\omega_{n,k,\sigma}(S)=\left(p + S\right) \cup \{p\}.
$$
The function $\omega_{n,k,\sigma}$ translates an ensemble $S$ by shifting $\hat{0}$ to $p$, and then includes $p$ in the result.
A direct calculation shows that $\theta_{n,k,\sigma}$ and $\omega_{n,k,\sigma}$ are well-defined and mutually inverse.  It follows that
$$
\left|\mu_{n,k,\sigma}^{-1}(p)\right| = c(n-\ell(p),k-1,\sigma-\sigma(p)).
$$
\item Case 2: $\ell(p) = 0$ or ($\ell(p)=1$ and $\sigma(p)=1$).  We group the two possibilities together.  Let $\Xi_{n,k,\sigma} = \mu_{n,k,\sigma}^{-1}(0,0) \cup \mu_{n,k,\sigma}^{-1}(1,0)$.  Define a function $\theta'_{n,k,\sigma}: \Xi_{n,k,\sigma} \to \Ens(n-1,k-1,\sigma-1)$ by
$$
\theta'_{n,k,\sigma}(S) = \left(-(1,0)+S\right) \cap \posetz_{n-1}.
$$
The function $\theta'_{n,k,\sigma}$ translates an ensemble $S$ by shifting $(1,0)$ to $\hat{0}$, then removes $\hat{0}$ and any elements of the translate with negative $x$-coordinate.
Define a function $\phi'_{n,k,\sigma}: \Ens(n-1,k-1,\sigma-1) \to \Xi_{n,k,\sigma}$ by
$$
\phi'_{n,k,\sigma}(S) = \left((1,0)+S\right) \cup \{q \in S \mid \sigma(q)=-\ell(q)\} \cup \{(1,0)\}
$$
The function $\phi'_{n,k,\sigma}$ translates an ensemble $S$ by shifting $\hat{0}$ to $(1,0)$, then includes $(1,0)$ in the result and completes the lowest interval of the result to have minimum $\hat{0}$.  The subtlety of completing the lowest interval arises because intervals with minima along the $x$-axis are not permitted in the definition of an ensemble.  Because the additional elements
$$
\{q \in S \mid \sigma(q)=-\ell(q)\}
$$
all have negative signature, their signatures cannot exceed the upper bound $\sigma$ involved in the definition of $\Ens(n,k,\sigma)$.

It follows from the definition of an ensemble that $\theta'_{n,k,\sigma}$ and $\phi$ are well-defined and mutually inverse.  It follows that
$$
\left|\Xi_{n,k,\sigma}\right| = c(n-1,k-1,\sigma-1).
$$
\item Case 3: $\ell(p) = 1$ and $\sigma(p) = -1$.  Define $\theta_{n,k,\sigma}$ as in Case 1.  Note that $(1,1) \notin S$ for all $S \in \mu_{n,k,\sigma}^{-1}(p)$.  Indeed, if $(1,1) \in S$ and $\inf S = (0,1)$, the axioms defining ensembles imply that $(0,1) \in S$ and hence $(1,0) \in S$, which contradicts the hypothesis that $\inf S = (0,1)$.  It follows that $(1,0) \notin \theta_{n,k,\sigma}(S)$ for all $S \in \mu_{n,k,\sigma}^{-1}(p)$.

Therefore, $\theta_{n,k,\sigma}$ gives rise to a function
$$
\theta_{n,k,\sigma}: \mu_{n,k,\sigma}^{-1}(p) \to \left(\Ens(n-1,k-1,\sigma+1) \setminus \Xi_{n-1,k-1,\sigma+1}\right).
$$
Define
$$
\omega_{n,k,\sigma}: \left(\Ens(n-1,k-1,\sigma+1) \setminus \Xi_{n-1,k-1,\sigma+1}\right) \to \mu_{n,k,\sigma}^{-1}(p)
$$
as in Case 1.  The functions $\theta_{n,k,\sigma}$ and $\omega_{n,k,\sigma}$ both clearly behave exactly as they did in Case 1.  We see immediately that $\theta_{n,k,\sigma}$ and $\omega_{n,k,\sigma}$ are mutually inverse.  Case 2 then implies that
$$
\left|\mu_{n,k,\sigma}^{-1}(p)\right| = c(n-1,k-1,\sigma+1)-c(n-2,k-2,\sigma).
$$
\item Case 4: $\ell(p) > 1$ and $\sigma(p) \in \{-\ell(p),\ell(p)\}$.  The definition of an ensemble ensures that $\mu_{n,k,\sigma}^{-1}(p) = \varnothing.$
\end{itemize}
Summing the results of the casework yields the proposition.
We have to introduce the correction term $\delta_{k,0}$ in order to deal with the case of $k \le 0$.
\end{proof}

The following proposition simplifies the recursive formula of \pref{lem:uglyRecursionEns} to contain only finitely many terms.  It is analogous to \pref{prop:simpleRecursionChn}.

\begin{proposition}
\label{prop:simpleRecursionEns}
For all $n \ge 3$ and $\sigma \ge 1$, we have
\begin{multline}
\label{eq:genSimpleRecursionEns}
c(n,k,\sigma)=c(n-1,k,\sigma-1)+c(n-1,k,\sigma+1)-c(n-2,k,\sigma)\\
+c(n-1,k-1,\sigma-1)+c(n-1,k-1,\sigma+1)+c(n-1,k-1,\sigma+1)\\
-c(n-2,k-1,\sigma-2)-c(n-2,k-1,\sigma)-c(n-2,k-1,\sigma+2)\\
+c(n-3,k-1,\sigma-1)+c(n-3,k-1,\sigma+1)-c(n-2,k-2,\sigma)\\
+c(n-3,k-2,\sigma+1)+c(n-3,k-2,\sigma-1)-c(n-4,k-2,\sigma).
\end{multline}
For all $n \ge 2$, we have
\begin{multline}
\label{eq:zeroSimpleRecursionEns}
c(n,k,0)=c(n-1,k,1)+c(n-1,k-1,1)-c(n-2,k-1,2)\\
-c(n-2,k-2,0)+c(n-3,k-2,1).
\end{multline}
\end{proposition}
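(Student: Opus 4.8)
The statement to prove is \pref{prop:simpleRecursionEns}, which simplifies the unwieldy recursion of \pref{lem:uglyRecursionEns} into a finite-term recursion for $c(n,k,\sigma)$.

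\medskip

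The plan is to mimic exactly the proof of \pref{prop:simpleRecursionChn}, but starting from \pref{lem:uglyRecursionEns} instead of \pref{lem:uglyRecursionChn}; the only genuine complication is bookkeeping the extra ``boundary'' terms that appear in the ensemble recursion. First I would rewrite \pref{lem:uglyRecursionEns} by isolating the double-sum part. Write
$$
T(n,k,\sigma) = \sum_{j=2}^n \sum_{t=1}^{j-1} c(n-j,k-1,\sigma+2t-j),
$$
so that for $n \ge 1$ and $\sigma \ge 0$ the lemma reads
$$
c(n,k,\sigma) = c(n-1,k-1,\sigma-1) + c(n-1,k-1,\sigma+1) - c(n-2,k-2,\sigma) + \delta_{k,0} + T(n,k,\sigma).
$$
Denote by $R(n,k,\sigma)$ the sum of the first three terms plus $\delta_{k,0}$, i.e.\ the ``non-$T$ part.'' The strategy is to eliminate $T$ by taking the same second difference in $n$ and $\sigma$ that worked for chains: the inner-sum structure of $T$ is essentially the same as that of the chain sum $\sum_{j=1}^n\sum_{t=0}^{j}$ up to endpoint corrections, so a telescoping identity in the spirit of \pref{eq:genSimpleStep1Chn} should collapse it.

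\medskip

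Concretely, the first step is to compute $T(n,k,\sigma) - T(n-1,k,\sigma+1)$. As in the chain case, the $j$-sum re-indexes: the term with summation variable $j$ in $T(n,k,\sigma)$ matches the term with variable $j-1$ in $T(n-1,k,\sigma+1)$, except for the top term $j=n$ (giving $\sum_{t=1}^{n-1} c(0,k-1,\sigma+2t-n)$) and a shifted contribution from the $t$-endpoints. One obtains an identity analogous to \pref{eq:genSimpleStep1Chn} of the form
$$
T(n,k,\sigma) - T(n-1,k,\sigma+1) = (\text{finitely many } c\text{-terms}) + \sum_{j} c(n-j,k-1,\sigma-j+\text{const}),
$$
where the remaining infinite tail is again removed by subtracting the analogous relation at $(n-1,k,\sigma-1)$. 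Because $T$ appears in \pref{lem:uglyRecursionEns} added to $R(n,k,\sigma)$, the same two-fold difference applied to the whole equation $c = R + T$ gives
$$
c(n,k,\sigma) - c(n-1,k,\sigma-1) - c(n-1,k,\sigma+1) + c(n-2,k,\sigma) = \big(R\text{-differences}\big) + \big(T\text{-differences}\big),
$$
and both right-hand pieces are explicit finite sums of $c$-values. Expanding $R(n,k,\sigma) = c(n-1,k-1,\sigma-1)+c(n-1,k-1,\sigma+1)-c(n-2,k-2,\sigma)+\delta_{k,0}$ through its own two-fold difference (the $\delta_{k,0}$ drops out since it is independent of $n,\sigma$) and collecting terms should reproduce exactly the fifteen terms on the right of \pref{eq:genSimpleRecursionEns}. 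The hypotheses $n \ge 3$ and $\sigma \ge 1$ are precisely what is needed so that all the index-shifted applications of \pref{lem:uglyRecursionEns} are legitimate (each argument stays at $n' \ge 1$, $\sigma' \ge 0$) and no spurious $\delta$ or negative-signature correction survives.

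\medskip

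For \pref{eq:zeroSimpleRecursionEns}, the $\sigma = 0$ case, I would follow the analogue of \pref{prop:simpleRecursionChn}\pref{assert:zeroSimpleRecursionChn}: take only the single difference $T(n,k,0) - T(n-1,k,1)$ (rather than the double difference), using $c(n,k,\sigma) = 0$ for $\sigma < 0$ to kill the tail. Substituting this into $c(n,k,0) = R(n,k,0) + T(n,k,0)$ and using $c = R + T$ once more at $(n-1,k,1)$ to re-express $T(n-1,k,1)$ yields \pref{eq:zeroSimpleRecursionEns} after collecting terms; the requirement $n \ge 2$ ensures $n - 1 \ge 1$ so that \pref{lem:uglyRecursionEns} applies. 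The main obstacle is purely organizational: correctly tracking the endpoint corrections of the double sum $T$ under the re-indexing (the $t=1$ and $t=j-1$ boundary terms, which are absent in the chain case where $t$ runs over $[0,j]$) so that the fifteen-term right side of \pref{eq:genSimpleRecursionEns} comes out with exactly the stated signs and index shifts. I expect no conceptual difficulty beyond careful telescoping, so I would present it as: state the difference identity for $T$ as an intermediate lemma (the analogue of \pref{eq:genSimpleStep1Chn}), prove it by the re-indexing above, and then assemble \pref{eq:genSimpleRecursionEns} and \pref{eq:zeroSimpleRecursionEns} by direct substitution.
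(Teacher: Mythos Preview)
Your approach is correct and essentially identical to the paper's own proof: the paper also proceeds by subtracting the instance of \pref{lem:uglyRecursionEns} at $(n-1,k,\sigma+1)$ from that at $(n,k,\sigma)$ to obtain an intermediate identity (the analogue of your single-difference step, labeled \pref{eq:genSimpleStep1Ens}), then subtracts the instance of that identity at $(n-1,k,\sigma-1)$ to collapse the remaining tail, while the $\sigma=0$ case follows from the intermediate identity alone using $c(\cdot,\cdot,\sigma)=0$ for $\sigma<0$. Your decomposition into $R+T$ is a harmless organizational variant of the same telescoping.
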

The proof of \pref{prop:simpleRecursionEns} is very similar to that of \pref{prop:simpleRecursionChn}.
\begin{proof}
First, we prove \pref{eq:genSimpleRecursionEns}.
For $n \ge 2$ and $\sigma \ge 0$, subtracting the results of the applications of \pref{lem:uglyRecursionEns} to $(n,k,\sigma)$ and $(n-1,k,\sigma+1)$ yields that
\begin{multline}
\label{eq:genSimpleStep1Ens}
c(n,k,\sigma)-c(n-1,k,\sigma+1) =
c(n-1,k-1,\sigma-1)+c(n-1,k-1,\sigma+1)\\
-c(n-2,k-1,\sigma+2)-c(n-2,k-2,\sigma)\\
+c(n-3,k-2,\sigma+1)+\sum_{j=3}^n c(n-j,k-1,\sigma+2-j).
\end{multline}
Subtracting the results of the applications of \pref{eq:genSimpleStep1Ens} to $(n,k,\sigma)$ and $(n-1,k,\sigma-1)$ yields that
\begin{multline}
c(n,k,\sigma)-c(n-1,k,\sigma-1)-c(n-1,k,\sigma+1)+c(n-2,k,\sigma)\\
=c(n-1,k-1,\sigma-1)+c(n-1,k-1,\sigma+1)+c(n-1,k-1,\sigma+1)\\
-c(n-2,k-1,\sigma-2)-c(n-2,k-1,\sigma)-c(n-2,k-1,\sigma+2)\\
+c(n-3,k-1,\sigma-1)+c(n-3,k-1,\sigma+1)-c(n-2,k-2,\sigma)\\
+c(n-3,k-2,\sigma+1)+c(n-3,k-2,\sigma-1)-c(n-4,k-2,\sigma),
\end{multline}
and \pref{eq:genSimpleRecursionEns} follows.

Next, we prove \pref{eq:zeroSimpleRecursionEns}.  Applying \pref{eq:genSimpleStep1Ens} to $(n,k,0)$ yields that
\begin{multline*}
c(n,k,0)-c(n-1,k,1) =
c(n-1,k-1,1)-c(n-2,k-1,\sigma)\\
-c(n-2,k-2,0)+c(n-3,k-2,1)
\end{multline*}
because $c(n,k,\sigma) = 0$ for $w < 0,$ and \pref{eq:zeroSimpleRecursionEns} follows.
\end{proof}

\subsection{Recovering the enumerations of faces and flats for   \glngeoForTitle}
\label{app:recoverUnCounts}
We can specialize recursions for $a(n,k,\sigma)$ and $c(n,k,\sigma)$ to recursions for $g(n,k)$ and $h(n,k)$, respectively, due to the following observation.

\begin{lemma}
\label{lem:UnIsHighW}
For all $\sigma \ge n$, we have
\begin{align*}
a(n,k,\sigma) &= g(n,k)\\
c(n,k,\sigma) &= h(n,k).
\end{align*}
\end{lemma}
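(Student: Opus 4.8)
The statement $a(n,k,\sigma) = g(n,k)$ and $c(n,k,\sigma) = h(n,k)$ for $\sigma \ge n$ is essentially tautological once we unwind the definitions, so the plan is to make the trivial observation rigorous. Recall that $g(n,k)$ counts all $k$-chains in $\posetz_n$ and $a(n,k,\sigma) = |\Chn(n,k,\sigma)|$ counts $k$-chains all of whose elements have signature at most $\sigma$; similarly $h(n,k)$ counts all $k$-ensembles and $c(n,k,\sigma)$ counts $k$-ensembles with signatures bounded by $\sigma$. First I would observe that for any $p = (a,b) \in \posetz_n$ we have $\sigma(p) = a - b \le a + b = \ell(p) \le n$. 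Hence every element of $\posetz_n$ automatically has signature at most $n$, and a fortiori at most any $\sigma \ge n$.

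It follows that when $\sigma \ge n$, the signature constraint ``$\sigma(q) \le \sigma$ for all $q$ in the chain (resp.\ ensemble)'' is vacuous: $\Chn'(n,k,\sigma)$ is the set of all $k$-chains in $\posetz_n$ and $\Ens'(n,k,\sigma)$ is the set of all $k$-ensembles in $\posetz_n$. Since $\sigma \ge n \ge 0$, the case distinction in the definitions of $\Chn(n,k,\sigma)$ and $\Ens(n,k,\sigma)$ selects the ``$\sigma \ge 0$'' branch, so $\Chn(n,k,\sigma) = \Chn'(n,k,\sigma)$ and $\Ens(n,k,\sigma) = \Ens'(n,k,\sigma)$. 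Comparing cardinalities gives $a(n,k,\sigma) = g(n,k)$ and $c(n,k,\sigma) = h(n,k)$, as desired. (Strictly speaking one should note the degenerate case $n = 0$: then $\posetz_0 = \varnothing$, there are no nonempty chains or ensembles, and the identity reads $\delta_{k,0} = \delta_{k,0}$ on both sides, which holds.)

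There is no real obstacle here; the only thing to be careful about is that the bound $\ell(p) \le n$ really is part of the definition of $\posetz_n$ (it is, via $\poset_n = \{(a,b) \in \mathbb{N}^2 \mid 0 \le a+b \le n\}$ in \pref{def:PosetForExtRays}), so that the inequality $\sigma(p) \le n$ is genuinely uniform over $\posetz_n$ and does not require any hypothesis on $k$. The lemma is used downstream only to terminate the infinite sums appearing in Lemmas~\ref{lem:uglyRecursionChn} and~\ref{lem:uglyRecursionEns} (equivalently, to identify the ``top'' coefficients of the $z$-graded generating functions $A$ and $C$ with $G$ and $H$), so this minimal verification is all that is needed.
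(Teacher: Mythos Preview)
Your proof is correct and follows exactly the same idea as the paper's own proof, which is the single sentence ``Simply recall that every element of $\posetz_n$ has signature at most $n$.'' You have simply unpacked this observation in more detail (making explicit the inequality $\sigma(p)=a-b\le a+b=\ell(p)\le n$ and handling the $\sigma\ge 0$ branch and the $n=0$ edge case), but there is no substantive difference in approach.
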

\begin{proof}
Simply recall that every element of $\posetz_n$ has signature at most $n$.
\end{proof}

In light of \pref{lem:UnIsHighW}, the recursions of Propositions~\ref{prop:simpleRecursionChn} and~\ref{prop:simpleRecursionEns} can be specialized to yield recursive formulas for $g(n,k)$ and $h(n,k),$ respectively.

\begin{proposition}  \label{prop:recursionsForUn}
For all $n \ge 3,$ the number of $k$-flats and $k$-faces in $\glngeo$ satisfy respectively the following recurrence relations
\begin{enumerate}[(a)]
\item \label{assert:UnFaceRecursion}

$$
g(n,k)=2g(n-1,k)-g(n-2,k)+2g(n-1,k-1)-g(n-2,k-1)
$$
\item 
\label{assert:UnFlatRecursion}
\begin{multline*}
h(n,k)=2h(n-1,k)-h(n-2,k)+2h(n-1,k-1)-3h(n-2,k-1)\\
+2h(n-3,k-1)-h(n-2,k-2)+2h(n-3,k-2)-h(n-4,k-2).
\end{multline*}
\end{enumerate}
\end{proposition}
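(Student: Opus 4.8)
The plan is to specialize the two finite recursions already established in this section---Proposition~\ref{prop:simpleRecursionChn}\pref{assert:genSimpleRecursionChn} for chains and the identity \pref{eq:genSimpleRecursionEns} of Proposition~\ref{prop:simpleRecursionEns} for ensembles---by taking $\sigma = n$, and then to invoke \pref{lem:UnIsHighW} to replace every bounded-signature count by the corresponding unbounded one. Since $\sigma = n \ge 3 \ge 1$ whenever $n \ge 3$, both recursions are legitimately applicable with this choice.

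The one point that genuinely needs checking is that the substitution $\sigma = n$ is large enough for \emph{every} term on the right-hand side, not just for the left-hand term, because the arguments appearing there have both $n$ and $\sigma$ decreased. Concretely, I would run through the right-hand side: in the chain recursion the pairs $(n',\sigma')$ that occur are $(n,n)$, $(n-1,n\pm 1)$, and $(n-2,n)$, each satisfying $\sigma' \ge n'$; in the ensemble recursion the tightest cases are the terms of the form $c(n-1,\cdot,\sigma-1)$ and $c(n-2,\cdot,\sigma-2)$, i.e.\ $(n-1,n-1)$ and $(n-2,n-2)$, which are still admissible (with equality), while every remaining term has $n'$ strictly smaller and $\sigma'$ no less than $n-2$. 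Once this is verified, \pref{lem:UnIsHighW} gives $a(n',k',\sigma') = g(n',k')$ and $c(n',k',\sigma') = h(n',k')$ uniformly, so the signature parameter drops out of both recursions entirely.

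From here the two parts follow by collecting terms. For part~\pref{assert:UnFaceRecursion}, the right-hand side of Proposition~\ref{prop:simpleRecursionChn}\pref{assert:genSimpleRecursionChn} becomes $g(n-1,k)+g(n-1,k)-g(n-2,k)+g(n-1,k-1)+g(n-1,k-1)-g(n-2,k-1)$, which is exactly $2g(n-1,k)-g(n-2,k)+2g(n-1,k-1)-g(n-2,k-1)$. For part~\pref{assert:UnFlatRecursion}, I would substitute into \pref{eq:genSimpleRecursionEns} and group the resulting terms by the index pair $(n',k')$: the $k'=k$ terms contribute $2h(n-1,k)-h(n-2,k)$, the $k'=k-1$ terms contribute $2h(n-1,k-1)-3h(n-2,k-1)+2h(n-3,k-1)$, and the $k'=k-2$ terms contribute $-h(n-2,k-2)+2h(n-3,k-2)-h(n-4,k-2)$; summing these yields the stated eight-term relation.

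I do not expect any real obstacle: the argument is a one-line specialization plus routine term collection, and the only thing to be careful about is the bookkeeping in the ensemble case---confirming the signature domination for each of the many terms in \pref{eq:genSimpleRecursionEns} and then tallying the coefficients of $h(n-1,k-1)$, $h(n-2,k-1)$, $h(n-3,k-1)$, etc.\ correctly.
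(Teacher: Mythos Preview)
Your proposal is correct and follows exactly the paper's approach: specialize Propositions~\ref{prop:simpleRecursionChn} and~\ref{prop:simpleRecursionEns} and invoke \pref{lem:UnIsHighW}. You are in fact more careful than the paper's one-line proof, since you explicitly verify the hypothesis $\sigma' \ge n'$ for every right-hand term before substituting; the paper simply asserts that the two ingredients combine to give the result.
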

\begin{proof}
\pref{assert:UnFaceRecursion} follows from \pref{prop:simpleRecursionChn} and \pref{lem:UnIsHighW}.
\pref{assert:UnFlatRecursion} follows from \pref{prop:simpleRecursionEns} and \pref{lem:UnIsHighW}.
\end{proof}

We can recover the enumerations of faces and flats of $\glngeo$ from \pref{prop:recursionsForUn} (see \cite[Theorems~\ref*{gln-thm:faceCount} and~\ref*{gln-thm:flatCount}]{gl}).

\begin{proof}[Alternative Proof of \pref{thm:glnFaces}]
Consider the polynomial
$$
P_1 = 1-2x+x^2-2xy+x^2y \in R^\times.
$$
By explicit computation of $g(n,k)$ for $n \ge 3$, one can verify that
$$
G(x,y) = 1+(1+2 y) x+(1+5 y+4 y^2) x^2+(1+9 y+16 y^2+8 y^3) x^3+O(x^4)
$$
so that
$$
P_1 \cdot G = 1-x + O(x^4).
$$
It follows from \pref{prop:recursionsForUn}\pref{assert:UnFaceRecursion} in \pref{prop:recursionsForUn} that  the product $P_1 \cdot G$ has no terms of degree at least 4 in $x$.
Therefore, we have
$$
P_1 \cdot G = 1-x
$$
which yields that
$$
G = \frac{1-x}{P_1} = \frac{1-x}{1-2x+x^2-2xy+x^2y},
$$
as desired.
\end{proof}

\begin{proof}[Alternative Proof of \pref{thm:glnFlats}]
Consider the polynomial
$$
Q_1 = 1 - 2 x + x^2 - 2 x y + 3 x^2 y - 2 x^3 y + x^2 y^2 - 2 x^3 y^2 + 
 x^4 y^2 \in R^\times.
$$
By explicit computation of $h(n,k)$ for $n \le 3$, one can verify that
$$
H(x,y) = 1+(1+y) x+(1+3 y+y^2) x^2+(1+5 y+6 y^2+y^3) x^3+O(x^4)
$$
so that
$$
Q_1 \cdot H = 1 - x - x y + 2 x^2 y - x^3 y + O(x^4).
$$
It follows from  \pref{prop:recursionsForUn}\pref{assert:UnFlatRecursion} in \pref{prop:recursionsForUn} that $Q_1\cdot H$ has no terms of degree at least 4 in $x$.  Therefore, we have
$$
Q_1 \cdot H = 1 - x - x y + 2 x^2 y - x^3 y,
$$
which yields that
$$
H = \frac{1 - x - x y + 2 x^2 y - x^3 y}{Q_1},
$$
as desired.
\end{proof}

\section{Algebraic manipulations to prove Propositions~\ref{prop:A0} and~\ref{prop:C0}}
\label{app:AlgManipulation}

In Section~\ref{app:FuncEqn}, we convert Propositions~\ref{prop:genFuncRelationFaces} and~\ref{prop:genFuncRelationFlats} into equations satisfied by $A,C,$ respectively.  In \pref{app:functionalEqSolveA0C0}, we solve the equations to prove Propositions~\ref{prop:A0} and~\ref{prop:C0}.

\subsection{Functional equations satisfied by the generating functions}
\label{app:FuncEqn}
Translating \pref{prop:simpleRecursionChn} into generating function form and incorporating initial conditions for the recursion yields the following result.

\begin{proposition}
\label{prop:genFuncRelationFaces}
We have
\[
A \cdot P + A_0 (K_0z - J_0z - J_1x) = \rho \cdot z,
\]
in $R\llbracket z \rrbracket,$ where $\rho$ is an auxiliary series defined as
$$
\rho = \rho(x,y,z)= \frac{1 - x z}{1-z} \in R\llbracket z\rrbracket
$$
and $J_0,K_0,P$ are auxiliary polynomials defined as
\begin{align*}
J_0 =1+x^2+x^2y,\quad 
J_1 =-(1+y)\quad
K_0 =1,\quad
P = J_1x+J_0z+J_1xz^2.
\end{align*}
\end{proposition}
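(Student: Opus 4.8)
The plan is to convert the recursions of \pref{prop:simpleRecursionChn} into an identity in $R\llbracket z\rrbracket$ by the usual device of reading an index shift as multiplication by a monomial, while tracking the finitely many places where the recursion fails or a $z^{-1}$-shift loses the constant term.

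First I would introduce the \emph{bulk defect}
$$
R(n,k,\sigma)=a(n,k,\sigma)-a(n-1,k,\sigma-1)-a(n-1,k,\sigma+1)+a(n-2,k,\sigma)-a(n-1,k-1,\sigma-1)-a(n-1,k-1,\sigma+1)+a(n-2,k-1,\sigma),
$$
with the convention that $a(n,k,\sigma)=0$ as soon as one of $n,k,\sigma$ is negative. Then \pref{prop:simpleRecursionChn}\pref{assert:genSimpleRecursionChn} says exactly that $R(n,k,\sigma)=0$ for $n\ge 3$ and $\sigma\ge 1$, so the triple sum $\sum_{n,k,\sigma\ge0}R(n,k,\sigma)\,x^ny^kz^\sigma$ is concentrated on the boundary $\{n\le 2\}\cup\{\sigma=0\}$. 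On the line $\sigma=0$, \pref{prop:simpleRecursionChn}\pref{assert:zeroSimpleRecursionChn} collapses $R$ to $R(n,k,0)=a(n-2,k,0)+a(n-2,k-1,0)$ for $n\ge 2$, while $R(0,k,0)=\delta_{k,0}$ and $R(1,k,0)=0$; on the strips $n\le 2$, $\sigma\ge1$, a direct inspection of the small posets $\posetz_0,\posetz_1,\posetz_2$ (or \pref{lem:uglyRecursionChn} itself) gives $R(0,k,\sigma)=\delta_{k,0}$, $R(1,k,\sigma)=-\delta_{k,0}$ and $R(2,k,\sigma)=0$. Summing these boundary contributions by inclusion--exclusion over $\{n\le 2\}$ and $\{\sigma=0\}$ yields $\sum_{n,k,\sigma\ge0}R(n,k,\sigma)\,x^ny^kz^\sigma=\frac{1-xz}{1-z}+x^2(1+y)A_0$.

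Next I would compute the same triple sum algebraically. A shift $n\mapsto n-j$ contributes a factor $x^j$, a shift $k\mapsto k-1$ a factor $y$, a shift $\sigma\mapsto\sigma-1$ a factor $z$, and a shift $\sigma\mapsto\sigma+1$ a factor $z^{-1}$ applied to $A-A_0$ rather than to $A$, since the slice $\sigma=0$ has no preimage. This gives $\sum R=A\bigl(1+x^2+x^2y-xz-xyz\bigr)-x(1+y)z^{-1}(A-A_0)$; multiplying through by $z$ turns the coefficient of $A$ into precisely $P=J_1x+J_0z+J_1xz^2$ and leaves $z\sum R=A\cdot P+x(1+y)A_0$. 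Equating this with $z$ times the boundary evaluation, one obtains $A\cdot P+x(1+y)A_0=\rho z+x^2z(1+y)A_0$, hence $A\cdot P+x(1+y)(1-xz)A_0=\rho z$; finally the elementary identity $x(1+y)(1-xz)=K_0z-J_0z-J_1x$ rewrites the middle term and produces the stated equation.

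The main obstacle is the correct accounting of the two effects that produce the $A_0$ contribution: the downshift $\sigma\mapsto\sigma+1$ is not $z^{-1}$-multiplication on the nose (it drops the $z^0$-coefficient $A_0$, which is what puts the term $J_1x$ inside $P$), and \pref{prop:simpleRecursionChn}\pref{assert:genSimpleRecursionChn} is only asserted for $n\ge 3$, so the values $a(n,k,\sigma)$ for $n\le 2$ — and in particular the vanishing of $R(2,k,\sigma)$ for $\sigma\ge1$ — must be verified by hand. These are finitely many small computations in $\posetz_0,\posetz_1,\posetz_2$; once they are in place the rest is mechanical manipulation in $R\llbracket z\rrbracket$.
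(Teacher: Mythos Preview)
Your argument is correct. You set up the recursion defect $R(n,k,\sigma)$, compute its support on the boundary $\{n\le 2\}\cup\{\sigma=0\}$ by direct inspection of $\posetz_0,\posetz_1,\posetz_2$ and by \pref{prop:simpleRecursionChn}\pref{assert:zeroSimpleRecursionChn}, and then match this against the formal expansion of $\sum R\,x^ny^kz^\sigma$ obtained by reading index shifts as monomial multiplication. The bookkeeping is accurate, including the crucial point that the $\sigma\mapsto\sigma+1$ shift produces $z^{-1}(A-A_0)$ rather than $z^{-1}A$, and the closing identity $x(1+y)(1-xz)=K_0z-J_0z-J_1x$ is a one-line check.

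The paper's own proof takes a different route. Rather than summing the defect $R$ and evaluating its boundary support, the paper writes the left-hand side as $\sum_{\sigma\ge 0}\rho'_\sigma z^{\sigma+1}$ for certain series $\rho'_\sigma\in R$, and then proves $\rho'_\sigma=\rho_\sigma$ in two steps: first, a stabilization lemma (\pref{lem:errorStabilization}) showing $A_\sigma-A_{\sigma-1}=O(x^\sigma)$ forces $\rho'_\sigma-\rho'_{\sigma-1}=O(x^\sigma)$; second, the recursion implies each $\rho'_\sigma$ has $x$-degree at most $3$, so $\rho'_\sigma$ is constant for $\sigma\ge 3$ and it suffices to compute $\rho'_0,\rho'_1,\rho'_2,\rho'_3$ to order $x^3$. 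Your approach is the standard ``sum the recurrence and account for boundary corrections'' technique and is arguably more transparent about the origin of the extra $A_0$ term. The paper's approach trades those boundary checks for the reusable \pref{lem:errorStabilization}, which it then also applies verbatim in the more complicated flats case (\pref{prop:genFuncRelationFlats}); that reuse is the main payoff of organizing the argument their way.
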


In order to prove \pref{prop:genFuncRelationFaces}, we will need to show that the coefficient of $x^ny^kz^\sigma$ in the left-hand-side stabilizes as $\sigma \to \infty$.  One step in the proof of this stabilization is established in the following lemma, which we will also use later in the proof of the analogue of \pref{prop:genFuncRelationFaces} for flats (\pref{prop:genFuncRelationFlats}).

\begin{lemma}
\label{lem:errorStabilization}
For all $\sigma \ge 0$, we have
\begin{align*}
A_\sigma -A_{\sigma-1} &= O(x^\sigma)\\
C_\sigma -C_{\sigma-1} &= O(x^\sigma).
\end{align*}
\end{lemma}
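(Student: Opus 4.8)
The plan is to prove both estimates by the same combinatorial argument, interpreting the difference $A_\sigma - A_{\sigma-1}$ (resp.\ $C_\sigma - C_{\sigma-1}$) as a generating function counting chains (resp.\ ensembles) in $\posetz_n$ that are permitted to use an element of signature exactly $\sigma$. First I would recall that $a(n,k,\sigma)$ counts $k$-chains in $\posetz_n$ all of whose elements have signature $\le \sigma$, so that $a(n,k,\sigma) - a(n,k,\sigma-1)$ counts those $k$-chains that contain at least one element of signature exactly $\sigma$ (with all elements of signature $\le \sigma$). The key observation is the parity/level constraint already exploited elsewhere in the paper: for any $p \in \posetz_n$ one has $\ell(p) + \sigma(p) \in 2\mathbb{Z}$ and $\sigma(p) \le \ell(p) \le n$, so an element of signature $\sigma$ can only exist in $\posetz_n$ when $n \ge \sigma$; more precisely an element of signature exactly $\sigma$ has level at least $\sigma$, so it lives only in $\posetz_n$ for $n \ge \sigma$. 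Hence $a(n,k,\sigma) - a(n,k,\sigma-1) = 0$ whenever $n < \sigma$, which is exactly the statement that $A_\sigma - A_{\sigma-1} = O(x^\sigma)$ as an element of $R$. The same reasoning applies verbatim with $c(n,k,\sigma)$ in place of $a(n,k,\sigma)$: an ensemble contributing to the difference must contain an element of signature exactly $\sigma$, which forces $n \ge \sigma$, so $c(n,k,\sigma) - c(n,k,\sigma-1) = 0$ for $n < \sigma$ and therefore $C_\sigma - C_{\sigma-1} = O(x^\sigma)$.

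In writing this up I would first state and justify the auxiliary fact that every $p \in \posetz_n$ satisfies $\sigma(p) \le \ell(p) \le n$: indeed if $p = (a,b)$ then $\sigma(p) = a - b \le a + b = \ell(p)$, and $\ell(p) \le n$ by the definition of $\posetz_n$. From this it follows that if a chain (resp.\ ensemble) $S \subseteq \posetz_n$ contains an element $p$ with $\sigma(p) = \sigma$, then $n \ge \ell(p) \ge \sigma(p) = \sigma$. Consequently, for $n < \sigma$, every element of $S$ has signature $< \sigma$ (in fact $\le n < \sigma$), so $S$ is counted by $a(n,k,\sigma-1)$ iff it is counted by $a(n,k,\sigma)$ — equivalently $\Chn(n,k,\sigma) = \Chn(n,k,\sigma-1)$ for $n < \sigma$, and likewise $\Ens(n,k,\sigma) = \Ens(n,k,\sigma-1)$. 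Thus $a(n,k,\sigma) - a(n,k,\sigma-1) = 0 = c(n,k,\sigma) - c(n,k,\sigma-1)$ for all $k \ge 0$ whenever $n < \sigma$, which is precisely the vanishing of all coefficients of $x^n y^k$ with $n < \sigma$ in $A_\sigma - A_{\sigma-1}$ and $C_\sigma - C_{\sigma-1}$, i.e.\ both differences are $O(x^\sigma)$ in $R$.

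I do not anticipate a genuine obstacle here; the lemma is essentially a bookkeeping consequence of the elementary inequality $\sigma(p) \le \ell(p) \le n$. The only mild care needed is to make sure the case $\sigma = 0$ is handled correctly — there $A_{-1}$ and $C_{-1}$ are not literally defined by the formulas, but by the convention $\Chn(n,k,\sigma) = \Ens(n,k,\sigma) = \varnothing$ for $\sigma < 0$ (stated just before \pref{prop:A0} and \pref{prop:C0}), so $A_{-1} = C_{-1} = 0$ and the claim $A_0 - A_{-1} = A_0 = O(1)$, $C_0 - C_{-1} = C_0 = O(1)$ holds trivially since $O(x^0)$ is no constraint at all. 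With that convention in place, the statement for $\sigma \ge 1$ is the content above and the statement for $\sigma = 0$ is vacuous, completing the proof of \pref{lem:errorStabilization}.
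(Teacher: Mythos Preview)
Your proposal is correct and follows essentially the same argument as the paper: both observe that every $p\in\posetz_n$ has $\sigma(p)\le\ell(p)\le n$, so for $n<\sigma$ no element of signature $\sigma$ exists and hence $\Chn(n,k,\sigma)=\Chn(n,k,\sigma-1)$ (and likewise for ensembles). Your write-up is more detailed than the paper's, in particular in explicitly handling the $\sigma=0$ boundary case via the convention $A_{-1}=C_{-1}=0$, which the paper leaves implicit.
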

\begin{proof}
Suppose that $n < \sigma$.  Because every element of $\posetz_n$ has signature at most $n$, every $k$-chain (resp.\ $k$-ensemble) in $\posetz_n$ lies in $\Chn(n,k,\sigma-1)$ (resp.\ $\Ens(n,k,\sigma-1)$).  In particular, we have $\Chn(n,k,\sigma-1) = \Chn(n,k,\sigma)$ (resp.\ $\Ens(n,k,\sigma-1) = \Ens(n,k,\sigma)$), so that $a(n,k,\sigma-1) = a(n,k,\sigma)$ (resp.\ $c(n,k,\sigma-1)=c(n,k,\sigma)$).  The lemma follows.
\end{proof}

\begin{proof}[Proof of \pref{prop:genFuncRelationFaces}]
We need the Taylor expansion of $\rho$ in $z$ to prove the proposition.  Define
\begin{equation}
\label{eq:defnOfRhoW}
\rho_\sigma  = \rho_\sigma (x,y) = \begin{cases}
1 & \text{if }  \sigma = 0\\
1-x& \text{if } \sigma  > 0.
\end{cases} \in \mathbb{Z}[x,y]
\end{equation}
so that
$$
\rho = \sum_{\sigma=0}^\infty \rho_\sigma (x,y) z^\sigma \in R\llbracket z\rrbracket.
$$
For $\sigma \in \mathbb{Z}_{\ge 0}$, define  the series $\rho'_\sigma  \in R$ by
$$
\rho'_\sigma  = \begin{cases}
J_1A_{\sigma-1}x+J_0A_\sigma +J_1A_{\sigma+1}x & \text{if }  \sigma   \ge 1\\
K_0A_0+J_1A_1x& \text{if } \sigma  = 0.
\end{cases}
$$
Here, we define $A_{-1} = 0$.  The definitions of $A$ and $P$ ensure that 
$$
\frac{A-A_0}{z} \cdot P + A_0(K_0+J_1z)\\
= \sum_{\sigma=0}^\infty \rho'_\sigma z^\sigma.
$$
Multiplying by $z$ and grouping together terms involving $A_0$ yields the identity
$$
A \cdot P + A_0 (K_0z-J_0z-J_1x) = \sum_{\sigma=0}^\infty \rho'_\sigma z^{\sigma+1}.
$$
Therefore, it suffices to prove that $\rho_\sigma =\rho'_\sigma$ in order to prove the proposition.

It follows from \pref{lem:errorStabilization} that
\begin{align*}
\rho'_\sigma -\rho'_{\sigma-1} &= J_1x(A_{\sigma-1}-A_{\sigma-2}+A_{\sigma+1}-A_\sigma )+J_0(A_\sigma -A_{\sigma-1})\\
&=J_1x \cdot O(x^{\sigma-1}) + J_0 \cdot O(x^\sigma) = O(x^\sigma).
\end{align*}
\pref{prop:simpleRecursionEns} implies that $\rho'_\sigma$ contains no terms with degree at least 4 in $x$ for all $\sigma$.  It follows
that $\rho'_3=\rho'_\sigma$ for all $\sigma \ge 3$.
In light of \pref{eq:defnOfRhoW} and the fact that $\rho'_\sigma$ contains no terms with degree at least 4 in $x$, it suffices to prove the equalities
\begin{equation}
\rho'_0 = 1+O(x^4), \quad 
\rho'_1 = 1-x+O(x^4), \quad 
\rho'_2 = 1-x+O(x^4), 
\quad \rho'_3 = 1-x+O(x^4)
 \label{eq:rho'desire}.
\end{equation}
We can the compute $\rho'_0,\rho'_1,\rho'_2,\rho'_3$ by using an expansion of $A$ to order $x^3$.  Explicit enumeration of chains in $\posetz_n$ for $n \le 3$ yields that
\begin{equation}
\begin{aligned}
A_0 &= 1 + x(1+y) + x^2(1+3y+2y^2) + x^3(1+5y+7y^2+3y^3) + O(x^4)\\
A_1 &= 1 + x(1+2y) + x^2(1+4y+3y^2) + x^3(1+7y+12y^2+6y^3) + O(x^4)\\
A_2 &= 1 + x(1+2y) + x^2(1+5y+4y^2) + x^3(1+8y+14y^2+7y^3) + O(x^4)\\
A_3 &= 1 + x(1+2y) + x^2(1+5y+4y^2) + x^3(1+9y+16y^2+8y^3) + O(x^4)
\end{aligned}
\end{equation}
It follows from the definition of $\rho'_\sigma$ that 
\begin{align*}
\rho'_0 = 1+O(x^4), \quad
\rho'_1 = 1-x+O(x^4),\quad
\rho'_2 = 1-x+O(x^4),\quad
\rho'_3 = 1-x+O(x^4),
\end{align*}
which are precisely the identities \pref{eq:rho'desire} that we needed to prove.
\end{proof}

The analogue of \pref{prop:genFuncRelationFaces} for flats is the following result.

\begin{proposition}
\label{prop:genFuncRelationFlats}
We have
\begin{equation}
C \cdot Q + C_1 (M_1z^2-L_1xz^2-L_2x^2z)+C_0 (M_0z^2-L_0z^2-L_1xz-L_2x^2) = \xi\cdot z^2
\end{equation}
in $R\llbracket z\rrbracket$, where $\xi$ is an auxiliary series defined as
$$
\xi = \xi(x,y,z) = x - x^2 y + x^3 y + \frac{1 - x - x y + 2 x^2 y - x^3 y}{1 - z} \in R\llbracket z\rrbracket.
$$
and $L_0,L_1,L_2,M_0,M_1,P$ are auxiliary polynomials defined as
\begin{align*}
L_0 &= 1 + x^2 + x^2 y + x^2 y^2 + x^4 y^2\\
L_1 &= -(1 + y + x^2 y + x^2 y^2)\\
L_2 &= y\\
M_0 &= 1 + x^2 y^2\\
M_1 &= -(x + x y + x^3 y^2)\\
Q &= L_2x^2 + L_1 xz + L_0 z^2 + L_1 xz^3 + L_2 x^2z^4.
\end{align*}
\end{proposition}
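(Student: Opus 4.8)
The plan is to follow the proof of \pref{prop:genFuncRelationFaces} line for line, with ensembles in place of chains.

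First I would Taylor-expand $\xi$ in the variable $z$, writing $\xi = \sum_{\sigma \ge 0}\xi_\sigma z^\sigma$ with $\xi_\sigma \in R$; a direct computation gives $\xi_0 = 1 - xy + x^2 y$ and $\xi_\sigma = 1 - x - xy + 2x^2 y - x^3 y$ for every $\sigma \ge 1$ (the stable value being exactly the numerator of the series $H$ appearing in \pref{thm:glnFlats}). Next, using $C = \sum_\sigma C_\sigma z^\sigma$ together with the explicit shape of the polynomials $Q, L_0, L_1, L_2, M_0, M_1$, I would expand the left-hand side of the claimed identity and read off the coefficient of $z^{\sigma+2}$ for each $\sigma \ge 0$. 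Since $Q$ has degree $4$ in $z$ while both correction polynomials have degree $\le 2$ in $z$, the coefficient of $z^{\sigma+2}$ is, for $\sigma \ge 1$, the $R$-linear combination
$$\xi'_\sigma = L_2 x^2 C_{\sigma-2} + L_1 x C_{\sigma-1} + L_0 C_\sigma + L_1 x C_{\sigma+1} + L_2 x^2 C_{\sigma+2}$$
(with the convention $C_{-1}=0$ when $\sigma = 1$), and for $\sigma = 0$ the modified combination $\xi'_0 = M_0 C_0 + M_1 C_1 + L_2 x^2 C_2$; moreover the coefficients of $z^0$ and $z^1$ on the left-hand side collapse identically to $0$, matching the right-hand side. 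Thus the claimed identity is equivalent to the family of equalities $\xi'_\sigma = \xi_\sigma$ for all $\sigma \ge 0$. The appearance of two boundary series $C_0, C_1$ — rather than the single series $A_0$ of the faces case — is forced by the fact that the ensemble recursion couples level $\sigma$ to levels $\sigma \pm 2$; consequently the two lowest levels $\sigma = 0$ (governed by \pref{eq:zeroSimpleRecursionEns}) and $\sigma = 1$ obey modified relations and must be carried as free data, exactly the data encoded by the $M_0, M_1$ correction polynomials.

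The core of the argument is then a stabilization step identical in spirit to the faces case. On one hand, \pref{lem:errorStabilization} gives $C_\sigma - C_{\sigma-1} = O(x^\sigma)$, and comparing $\xi'_\sigma$ with $\xi'_{\sigma-1}$ term by term — the worst contribution being $L_2 x^2(C_{\sigma-2}-C_{\sigma-3}) = x^2 \cdot O(x^{\sigma-2})$ — yields $\xi'_\sigma - \xi'_{\sigma-1} = O(x^\sigma)$ for all sufficiently large $\sigma$. On the other hand, \pref{prop:simpleRecursionEns} (the recursion \pref{eq:genSimpleRecursionEns}, valid for $n \ge 3$) identifies each $[x^n y^k]\xi'_\sigma$, for $\sigma$ large, with a combination of $c$-values that vanishes for $n \ge 3$, so that $\xi'_\sigma$ is a polynomial in $x$ of degree bounded by an absolute constant $N$. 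Combining the two facts, $\xi'_\sigma = \xi'_{\sigma-1}$ for all $\sigma > N$, so the infinitely many identities $\xi'_\sigma = \xi_\sigma$ collapse to the finitely many cases $\sigma \le N$. These I would verify by computing $C_0,\ldots,C_{N+1}$ modulo $x^{N+1}$ from a direct enumeration of $\Ens(n,k,\sigma)$ for $n \le N$ and checking equality against $\xi_0$ and against $\xi_\sigma$ ($\sigma \ge 1$) as computed above.

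I expect the main obstacle to be the bookkeeping at the two boundary levels. Getting the correction polynomials $M_0, M_1$, and the precise way $C_1$ enters, exactly right — so that the $z$-expansion of $C\cdot Q$ plus the two $C_0$- and $C_1$-terms telescopes cleanly onto $\xi\cdot z^2$ — requires tracking the low-level cases (Cases~2 and~3 in the proof of \pref{lem:uglyRecursionEns}), where the ensemble recursion genuinely departs from its generic form, together with the correct small-$n$ initial data. Once the coefficient identification $\xi'_\sigma = \xi_\sigma$ is set up correctly, the remainder is a mechanical stabilization-plus-finite-check, entirely parallel to the proof of \pref{prop:genFuncRelationFaces}.
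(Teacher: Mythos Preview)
Your proposal is correct and follows essentially the same route as the paper: expand both sides in $z$, identify the coefficient of $z^{\sigma+2}$ as the five-term combination $\xi'_\sigma$ (with a modified expression at $\sigma=0$), use \pref{lem:errorStabilization} together with \pref{prop:simpleRecursionEns} to show $\xi'_\sigma$ stabilizes and is a polynomial in $x$ of degree at most $3$, and then verify the finitely many remaining cases $\sigma\le 3$ by direct enumeration of $\Ens(n,k,\sigma)$ for $n\le 3$. Your boundary formula $\xi'_0 = M_0C_0 + M_1C_1 + L_2x^2C_2$ is correct (the paper writes $M_2C_2$ for the last term, with $M_2$ undefined---this is a typo for $L_2x^2$); also, only the level $\sigma=0$ actually requires a modified $\xi'$-formula, since for $\sigma=1$ the generic expression applies with the convention $C_{-1}=0$.
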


The proof of \pref{prop:genFuncRelationFlats} is similar to that of \pref{prop:genFuncRelationFaces}.

\begin{proof}
We need the Taylor expansion of $\xi$ in $z$ to prove the proposition.  Define
\begin{equation}
\label{eq:defnOfVarpiW}
\xi_\sigma  = \xi_\sigma (x,y) = \begin{cases}
1-xy+x^2y & \text{if }  \sigma = 0\\
1-x-xy+2x^2y-x^3y & \text{if } \sigma  > 0.
\end{cases} \in \mathbb{Z}[x,y]
\end{equation}
so that
$$
\xi = \sum_{\sigma=0}^\infty \xi_\sigma (x,y) z^\sigma \in R\llbracket z\rrbracket.
$$
For $w \in \mathbb{Z}_{\ge 0}$, define series $\xi'_\sigma  \in R$ by
$$
\xi'_\sigma  = \begin{cases}
L_2C_{\sigma-2}x^2+L_1C_{\sigma-1}x+L_0C_\sigma +L_1C_{\sigma+1}x+L_2C_{\sigma+2}x^2 & \text{if }  \sigma  \ge 1\\
M_0C_0+M_1C_1+M_2C_2 & \text{if }  \sigma = 0.
\end{cases}
$$
Here, we define $C_{-1} = 0$.  The definitions of $C$ and $Q$ ensure that 
\begin{multline*}
\frac{C-C_1z-C_0}{z^2} \cdot Q + C_1 (M_1z^2+L_0z+L_1xz^2+L_2x^2z^3)+C_0 (M_0+L_1xz+L_2x^2z^2)\\
= \sum_{\sigma=0}^\infty \xi'_\sigma z^\sigma.
\end{multline*}
Multiplying by $z^2$ and grouping together terms involving $C_0,C_1$ yields the identity
$$
C \cdot Q + C_1 (M_1z^2-L_1xz^2-L_2x^2z)+C_0 (M_0z^2-L_0z^2-L_1xz-L_2x^2) = \sum_{\sigma=0}^\infty \xi'_\sigma z^{\sigma+2}.
$$
Therefore, it suffices to prove that $\xi_\sigma =\xi'_\sigma$ in order to prove the proposition.

It follows from \pref{lem:errorStabilization} that
\begin{align*}
\xi'_\sigma -\xi'_{\sigma-1} &= \begin{array}{l}
L_2x^2(C_{\sigma-2}-C_{\sigma-3}+C_{\sigma+2}-C_{\sigma+1})\\
+L_1x(C_{\sigma-1}-C_{\sigma-2}+C_{\sigma+1}-C_\sigma )+L_0(C_\sigma -C_{\sigma-1})
\end{array}\\
&=L_2x^2\cdot O(x^{\sigma-2})+L_1x \cdot O(x^{\sigma-1}) + L_0 \cdot O(x^\sigma) = O(x^\sigma).
\end{align*}
\pref{prop:simpleRecursionEns} implies that $\xi'_\sigma$ contains no terms with degree at least 4 in $x$ for all $\sigma$.  It follows
that $\xi'_3=\xi'_\sigma$ for all $\sigma \ge 3$.
In light of \pref{eq:defnOfVarpiW} and the fact that $\xi'_\sigma$ contains no terms with degree at least 4 in $x$, it suffices to prove the equalities
\begin{equation}
\begin{aligned}
\xi'_0 &= 1-xy+x^2y+O(x^4)
&& \xi'_1 = 1-x-xy+2x^2y-x^3y+O(x^4) \\
\xi'_2 &= 1-x-xy+2x^2y-x^3y+O(x^4)
&&\xi'_3 = 1-x-xy+2x^2y-x^3y+O(x^4).
\end{aligned}
 \label{eq:tau'desire}
\end{equation}
We can the compute $\xi'_0,\xi'_1,\xi'_2,\xi'_3$ by using an expansion of $C$ to order $x^3$.  Explicit enumeration of ensembles in $\posetz_n$ for $n \le 3$ yields that
\begin{align*}
C_0 &= 1 + x + x^2 (1 + 2 y) + x^3 (1 + 3 y + 2 y^2) + O(x^4)\\
C_1 &= 1 + x (1 + y) + x^2 (1 + 3 y) + x^3 (1 + 5 y + 5 y^2) + O(x^4)\\
C_2 &= 1 + x (1 + y) + x^2 (1 + 3 y + y^2) + x^3 (1 + 5 y + 6 y^2) + O(x^4)\\
C_3 &= 1 + x (1 + y) + x^2 (1 + 3 y + y^2) + 
  x^3 (1 + 5 y + 6 y^2 + y^3) + O(x^4)\\
C_4 &= 1 + x (1 + y) + x^2 (1 + 3 y + y^2) + 
  x^3 (1 + 5 y + 6 y^2 + y^3) + O(x^4)\\
C_5 &= 1 + x (1 + y) + x^2 (1 + 3 y + y^2) + 
  x^3 (1 + 5 y + 6 y^2 + y^3) + O(x^4)
\end{align*}
It follows from the definition of $\xi'_\sigma$ that
\begin{align*}
\xi'_0 &= 1-xy+x^2y+O(x^4), &&
\xi'_1 = 1-x-xy+2x^2y-x^3y+O(x^4)\\
\xi'_2 &= 1-x-xy+2x^2y-x^3y+O(x^4), 
&& \xi'_3 = 1-x-xy+2x^2y-x^3y+O(x^4),
\end{align*}
which are precisely the identities \pref{eq:tau'desire} that we needed to prove.
\end{proof}

\subsection{Solving the functional equations}
\label{app:functionalEqSolveA0C0}
To compute $A_0$, we reduce the equation of \ref{prop:genFuncRelationFaces} modulo $P$.  In order to work with the quotient ring $R\llbracket z \rrbracket/(P)$ explicitly, we find a zero $z=\kappa$ of $P(x,y,z)$ with $\kappa \in R$.

It is convenient to change coordinates from $z$ to $t$ at various points during the proof of Propositions~\ref{prop:A0} and~\ref{prop:C0} in order to exploit the symmetries $P(x,y,z)=z^2P(x,y,z^{-1})$ and $Q(x,y,z)=z^4Q(x,y,z^{-1})$.
Define the series
$$
t = \frac{z}{z^2+1} \in \mathbb{Q}\llbracket z\rrbracket,
$$
so that
\begin{equation}
\label{eq:zFromt}
z = \frac{1-\sqrt{1-4t^2}}{2t} = \frac{2t}{1+\sqrt{1-4t^2}} \in \mathbb{Q}\llbracket t\rrbracket
\end{equation}
by the formal inverse function theorem.

\begin{proof}[Proof of \pref{prop:A0}]
Work in the notation of \pref{prop:genFuncRelationFaces}.
Note that
$$
P = (z^2+1)J_0\left(t+\frac{J_1}{J_0}x\right) \propto a (z-\kappa),
$$
where
$$
\kappa = \frac{-2J_1x}{J_0+\sqrt{J_0^2-4J_1^2x^2}}
$$
and $u \propto v$ denotes $u \in v\cdot \left(R\llbracket z \rrbracket\right)^\times.$

Because $\kappa$ lies in the maximal ideal of $R,$ the universal property of $R\llbracket z \rrbracket$ guarantees the existence of a unique $R$-algebra homomorphism $\pi: R\llbracket z \rrbracket \to R$ given by evaluation at $z=\kappa$.
Applying $\pi$ to the equation of \pref{prop:genFuncRelationFaces} yields the identity
$$
A_0(K_0\kappa-J_0\kappa-J_1x)=\rho(x,y,\kappa)\kappa
$$
in $R$.  It follows that
$$A_0 = \frac{\kappa \cdot \rho(x,y,\kappa)}{K_0\kappa-J_0\kappa-J_1x}
$$
because $K_0\kappa-J_0\kappa-J_1x \not= 0$ and $R$ is an integral domain.  Simplifying the equation  yields the theorem.
\end{proof}

\begin{remark}
\label{rem:doNotComputeA}
Note that one can apply \pref{prop:A0} and \pref{prop:genFuncRelationFaces} to determine $A$.
The expression for $A$ is complicated and not used in this paper, and so we do not give its explicit form.
\end{remark}

The computation of $C_0$ is similar, but slightly more delicate.
We will reduce the equation of \pref{prop:genFuncRelationFlats} modulo $P$.  In order to work with an explicit basis for the quotient ring $R\llbracket z \rrbracket/(P)$ as an $R$-module, we will find a monic polynomial in $R[z]$ that differs from $P$ only by multiplication by a unit in $\left(R\llbracket t \rrbracket\right)^\times$.  The existence of such a polynomial is guaranteed by the Weierstrass Preparation Theorem, but we find such a polynomial explicitly by exploiting the symmetry $Q(x,y,z)=z^4Q(x,y,z^{-1})$.  The symmetry allow us to avoid having to apply the quartic formula.

\begin{proof}[Proof of \pref{prop:C0}]
Work in the notation of \pref{prop:genFuncRelationFlats}.
The quadratic formula yields that
\begin{align*}
Q &= (z^2+1)^2(L_0-2L_2x^2)\cdot \left(t^2+\frac{L_1x}{L_0-2L_2}t+\frac{L_2x^2}{L_0-2L_2x^2}\right)\\
&=(z^2+1)^2(L_0-2L_2x^2)(t-r_1)(t-r_2)
\end{align*}
where
$$
r_1,r_2=x\cdot \frac{-L_1\pm\sqrt{L_1^2-4L_2(L_0-2L_2x^2)}}{2(L_0-2L_2x^2)}.
$$
Note that $r_1,r_2 \in R$ because
$$
\left.L_1^2-4L_2(L_0-2L_2x^2)\right|_{(x,y)=(0,0)} = 1.
$$
Combined with \pref{eq:zFromt}, it follows that
$$
Q \propto (z-\upsilon_1)(z-\upsilon_2),
$$
where
$$
\{\upsilon_i\} = \frac{x\left(-L_1\pm\sqrt{L_1^2-4L_2(L_0-2L_2x^2)}\right)}{L_0-2L_2x^2+\sqrt{(L_0-2xL_1)^2-x^2\left(-L_1\pm\sqrt{L_1^2-4L_2(L_0-2L_2x^2)}\right)^2}}
$$
and $u \propto v$ denotes $u \in v \cdot \left(R\llbracket z\rrbracket\right)^\times$.
Note that $\upsilon_1,\upsilon_2 \in R$ because all the expressions under the radical signs evaluate to 1 at $(x,y)=(0,0)$ and the denominators of $\upsilon_1,\upsilon_2$ evaluate to 1 at $(x,y)=(0,0)$.  The fact that the numerators of $\upsilon_1,\upsilon_2$ are divisible by $x$ ensures that $\upsilon_i(0,0)=0$ for $i \in \{1,2\}$.

Because $\upsilon_1,\upsilon_2$ lie in the maximal ideal of $R,$ the universal property of $R\llbracket z \rrbracket$ guarantees the existence of unique $R$-algebra homomorphisms $\pi_1,\pi_2: R\llbracket z \rrbracket \to R$ given by evaluation at $z=\upsilon_1$ and $z=\upsilon_2$, respectively.
Applying $\pi_1,\pi_2$ to the equation of \pref{prop:genFuncRelationFlats} yields the identities
$$
C_1(M_1\upsilon_i^2-L_1x\upsilon_i^2-L_2x^2\upsilon_i)+C_0(M_0\upsilon_i^2-L_0\upsilon_i^2-L_1x\upsilon_i-L_2)=\xi(x,y,\upsilon_i)\upsilon_i^2
$$
in $R$ for $i=1,2$.  This is a system of two linear equations in the two unknowns $C_0,C_1$, which we can solve explicitly.  When the sand settles, we obtain the theorem.
\end{proof}

\begin{remark}
The proof of \pref{prop:C0} also explains how to determine $C_1$.  One can then apply \pref{prop:genFuncRelationFlats} to determine $C$.  The expressions for $C_1$ and $C$ are complicated and not used in this paper, and so we do not give their explicit forms.
(See also \pref{rem:doNotComputeA}).
\end{remark}

\section{Basic combinatorics of northeastern lattice paths}
\label{app:NEpaths}
We review some basic results on enumerative combinatorics of northeastern lattice paths that were of use in the paper. 
For a general review, see \cite{LatticePaths} and references within.

\begin{definition}
A northeastern lattice path  is a sequence of lattice points $(p_0, p_1, \ldots, p_m)$ of $\mathbb{N}^2$ such that the difference of two consecutive points  is $(1,0)$ or $(0,1)$. 
The point $p_0$ and $p_m$ are respectively called the {\em start} and the end of the {\em path}. 
\end{definition}

\begin{proposition}\label{prop:NEpaths} For any $a,b \in \mathbb{Z}_{\geq 0}$, 
there are $\binom{a+b}{a}$ northeastern lattice paths connected $(0,0)$ to $(a,b)$.
\end{proposition}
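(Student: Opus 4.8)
\textbf{Proof proposal for Proposition~\ref{prop:NEpaths}.}

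The plan is to give the standard bijective/counting argument: a northeastern lattice path from $(0,0)$ to $(a,b)$ consists of exactly $a+b$ unit steps, of which $a$ are horizontal steps $(1,0)$ and $b$ are vertical steps $(0,1)$, and the path is determined by the order in which these steps occur. First I would encode a path $(p_0,p_1,\ldots,p_m)$ by the sequence of its increments $\epsilon_i = p_i - p_{i-1} \in \{(1,0),(0,1)\}$. Since each increment contributes $1$ to exactly one coordinate, after $m$ steps the endpoint is $p_m = \left(\#\{i : \epsilon_i = (1,0)\},\ \#\{i : \epsilon_i = (0,1)\}\right)$. Requiring $p_0 = (0,0)$ and $p_m = (a,b)$ forces $m = a+b$, with precisely $a$ horizontal increments and $b$ vertical increments.

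Next I would observe that the assignment sending a path to its increment sequence is a bijection between the set of northeastern lattice paths from $(0,0)$ to $(a,b)$ and the set of sequences in $\{(1,0),(0,1)\}^{a+b}$ containing exactly $a$ copies of $(1,0)$: injectivity is immediate because $p_i = p_0 + \sum_{j\le i}\epsilon_j$, and surjectivity holds because any such sequence, partial-summed from $(0,0)$, yields a valid path (all partial sums lie in $\mathbb{N}^2$ automatically, and the final sum is $(a,b)$). Such a sequence is in turn determined by the subset of positions $\{1,\ldots,a+b\}$ at which a horizontal step occurs, a set of size $a$; there are $\binom{a+b}{a}$ such subsets. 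Composing the bijections gives the count $\binom{a+b}{a}$.

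There is essentially no obstacle here: the result is elementary and the only thing to be careful about is checking that the endpoint condition pins down the step multiset (so that $m$ is not a free parameter) and that every increment sequence of the right type does define a genuine lattice path staying in $\mathbb{N}^2$ — both of which are immediate since the coordinates are non-decreasing along the path. I would present the argument in one short paragraph in the final writeup, citing \cite{LatticePaths} for context.
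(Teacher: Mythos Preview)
Your argument is correct and is exactly the approach the paper takes: both proofs encode a path by its sequence of unit steps and count by choosing which $a$ of the $a+b$ positions carry a $(1,0)$ step. The paper's version is simply a one-line statement of this same bijection, without the explicit verification of injectivity/surjectivity that you spell out.
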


\begin{proof}
Such paths are defined by choosing    $a$   $(1,0)$  steps within a total of $a+b$ steps. Those not selected are  $(0,1)$ steps. 
\end{proof}
\begin{proposition}
Northeastern lattice paths of length $m$ starting at the origin are in bijection with the power-set $\{-1,+1\}^{[m]}$.  
\end{proposition}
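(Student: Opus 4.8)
The statement is that northeastern lattice paths of length $m$ starting at the origin are in bijection with $\{-1,+1\}^{[m]}$. The plan is to exhibit the bijection explicitly via the sequence of step directions, and then verify it is a bijection by constructing an inverse.

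First I would recall that a northeastern lattice path $(p_0,p_1,\ldots,p_m)$ starting at the origin has $p_0=(0,0)$, and each difference $p_i-p_{i-1}$ is either $(1,0)$ or $(0,1)$ by the definition of a northeastern lattice path given just above. Hence the path is completely determined by the sequence of $m$ steps $(p_i-p_{i-1})_{i=1}^m$, each of which lies in the two-element set $\{(1,0),(0,1)\}$. Conversely, any sequence $(s_1,\ldots,s_m)$ with each $s_i \in \{(1,0),(0,1)\}$ determines a unique northeastern lattice path starting at the origin by setting $p_0 = (0,0)$ and $p_i = p_{i-1}+s_i$ for $1 \le i \le m$; the partial sums always lie in $\mathbb{N}^2$, so this is indeed a lattice path. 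Thus the set of length-$m$ northeastern lattice paths starting at the origin is in canonical bijection with the set of functions $[m] \to \{(1,0),(0,1)\}$, i.e. with $\{(1,0),(0,1)\}^{[m]}$.

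Next I would fix the obvious bijection $\{(1,0),(0,1)\} \to \{-1,+1\}$ — for instance sending $(1,0) \mapsto +1$ and $(0,1) \mapsto -1$, matching the convention $\epsilon_i = +1$ for a translation by $(1,0)$ used in \pref{app:simplicialChambers} — and compose it pointwise to get a bijection $\{(1,0),(0,1)\}^{[m]} \to \{-1,+1\}^{[m]}$. Composing this with the bijection of the previous paragraph yields the claimed bijection; one may note for completeness that $\{-1,+1\}^{[m]}$ is canonically identified with the power set of $[m]$ (a subset corresponding to the positions where the entry is $+1$), which is the phrasing in the statement.

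There is essentially no obstacle here: the only thing to be careful about is stating precisely that a northeastern lattice path starting at a fixed point is recovered from, and determined by, its step sequence, which is immediate from the telescoping identity $p_i = p_0 + \sum_{j \le i} (p_j - p_{j-1})$ together with the constraint that each increment lies in a two-element set. I would present the argument in two or three sentences along the lines above.
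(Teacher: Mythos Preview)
Your proposal is correct and follows essentially the same approach as the paper: encode each step $(1,0)$ as $+1$ and each step $(0,1)$ as $-1$, and exhibit the inverse map by reconstructing the path from its step sequence. The paper's proof is slightly terser but identical in substance.
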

\begin{proof}Given a northeastern lattice path $(p_0,\ldots, p_m)$ starting at the origin, we define an element of $\{-1,+1\}^{[m]}$ by $(s_1,\ldots, s_m)$ such that $s_i=+1$ if $p_i-p_{i-1}=(1,0)$ and $s_i=-1$ if $p_i-p_{i-1}=(0,1)$.
The inverse is  given by 
$$
p_0=(0,0), \quad p_{i+1} =\begin{cases} p_i+(1,0)\quad \text{if} \quad s_i=+1 \\  p_i+(0,1)\quad \text{if} \quad s_i=-1\end{cases} 
$$
where $ i\in [m]$ and $(s_1, \ldots, s_n)\in \{-1,1\}^{[m]}$.
\end{proof}

Recall that the  level (resp.\ the signature) of a lattice point $(a,b)$ is $a+b$ (resp.\  $a-b$).
\begin{proposition}\label{prop:weaklyPath}
Given a point $(a,b)$ of nonnegative signature, there are 
$$
\binom{a+b}{a}-\binom{a+b}{a+1}=\frac{a-b+1}{a+b+1}\binom{a+b}{a}
$$
lattice paths connected $(0,0)$ and $(a,b)$ while staying weakly below the diagonal $y= x$.
\end{proposition}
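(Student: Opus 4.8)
The plan is to run the classical reflection argument (the ``ballot problem''). First, by \pref{prop:NEpaths} there are $\binom{a+b}{a}$ northeastern lattice paths from $(0,0)$ to $(a,b)$ in total, so it suffices to count and subtract the paths that \emph{fail} to stay weakly below $y=x$. Since along a northeastern path the difference $y-x$ starts at $0$ and changes by exactly $\pm1$ at each step, a path leaves the region $\{y\le x\}$ if and only if it visits a lattice point on the line $\ell:y=x+1$; I will call such paths \emph{bad}.

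The heart of the argument is a bijection between the bad paths from $(0,0)$ to $(a,b)$ and \emph{all} northeastern paths from $(-1,1)$ to $(a,b)$. Given a bad path, I would let $T$ be the first lattice point on $\ell$ that it visits and reflect the initial segment from $(0,0)$ to $T$ across $\ell$, i.e. apply the map $(u,v)\mapsto(v-1,u+1)$ to each point of that segment, leaving the segment from $T$ to $(a,b)$ untouched; this map fixes $\ell$ pointwise, interchanges horizontal and vertical steps, and sends $(0,0)$ to $(-1,1)$, so the output is a northeastern path from $(-1,1)$ to $(a,b)$. For the inverse, note that $(-1,1)$ lies strictly above $\ell$ while $(a,b)$ lies strictly below it (because $b-a\le 0<1$), so every northeastern path from $(-1,1)$ to $(a,b)$ must meet $\ell$; reflecting its initial segment up to the first such meeting returns a bad path from $(0,0)$ to $(a,b)$. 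I expect the only delicate point to be checking that these two operations are mutually inverse, which comes down to the observation that reflection across $\ell$ preserves the property of a point being the \emph{first} visit to $\ell$; this is the step I would write out most carefully.

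To finish: a northeastern path from $(-1,1)$ to $(a,b)$ uses $a+1$ horizontal steps and $b-1$ vertical steps, so by the same counting as in \pref{prop:NEpaths} there are $\binom{a+b}{a+1}$ of them (this is $0$ when $b=0$, as it must be). Hence the number of paths from $(0,0)$ to $(a,b)$ that stay weakly below $y=x$ is $\binom{a+b}{a}-\binom{a+b}{a+1}$, and the remaining equality in the statement then follows by collecting terms after substituting $\binom{a+b}{a+1}=\frac{b}{a+1}\binom{a+b}{a}$.
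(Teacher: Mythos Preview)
Your argument is correct and is essentially the same reflection (ballot-problem) argument the paper gives: both count bad paths by reflecting the initial segment across $y=x+1$ to biject with all northeastern paths from $(-1,1)$ to $(a,b)$. Your write-up is in fact more explicit about the inverse map and the verification that the two reflections are mutually inverse than the paper's terse version.
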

\begin{proof}
The total number of paths from the origin to the point $(a,b)$ is $\binom{a+b}{a}$ as stated in \pref{prop:NEpaths}. We only have to remove those paths that cross the line $y=x$. These are the paths that intersect  the line $y=x+1$. 
Given a path from the origin to $(a,b)$ that touches the line $y=x+1$, we call the initial portion of the path, the path obtained by removing all the steps after it touches the line $y=x+1$ for the first time. 
For any northeastern path that touches the line $y=x+1$, consider the  path obtained by  replacing  the initial portion of the path by its mirror image with respect to the line $y=x+1$ while keeping the rest of the path unchanged. All such paths start at $(-1,1)$, the mirror-image  of $(1,0)$. 
 This shows that 
the paths that cross the diagonal are in bijection with the paths that start at $(-1,1)$ and reach the point $(a,b)$. There are $\binom{a+b}{a+1}$ of them.  Taking the difference gives the result. 
\end{proof}

\begin{remark} The numbers  $\frac{a-b+1}{a+b+1}\binom{a+b}{a}$  $(a\geq b)$ are called {\em ballot numbers} since they are solutions to the classical {\em ballot problem}. 	 
When $a=b=n$, the ballot numbers  become the Catalan numbers  
$$\frac{1}{n+1}\binom{2n}{n}.$$
\end{remark}

The following result was used in the enumeration of bi-signed $n$-chains and simplicial chambers of $\slngeo$.
\begin{proposition}
\label
{prop:DyckWithNoRestrictedEndpt}
There are 
$$\binom{n}{\floor*{\frac{n}{2}}}$$
lattice paths connecting the origin to an arbitrary point of level $n$  while staying weakly below the diagonal $y= x$.
\end{proposition}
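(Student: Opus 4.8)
The plan is to reduce to the ballot-number count already established in \pref{prop:weaklyPath} and observe that the resulting sum telescopes. A northeastern lattice path of length $n$ starting at the origin ends at a point $(a,b)$ with $a+b=n$; if the path stays weakly below $y=x$ then in particular its endpoint does, so $a\geq b$, i.e.\ $a\geq \ceil*{\frac{n}{2}}$. Conversely, every point $(a,n-a)$ with $\ceil*{\frac{n}{2}}\leq a\leq n$ has nonnegative signature, so \pref{prop:weaklyPath} applies and tells us that the number of paths from $(0,0)$ to $(a,n-a)$ staying weakly below the diagonal is
$$
\binom{n}{a}-\binom{n}{a+1}.
$$
(When $n$ is even and $a=b=\tfrac n2$ the endpoint lies on the diagonal, which is permitted since we only require the path to stay \emph{weakly} below $y=x$; the formula then specializes to the Catalan number $\tfrac{1}{n+1}\binom{n}{n/2}$, as it should.)

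Summing over all admissible endpoints, the total number of paths in question is
$$
\sum_{a=\ceil*{\frac{n}{2}}}^{n}\left(\binom{n}{a}-\binom{n}{a+1}\right)
=\binom{n}{\ceil*{\frac{n}{2}}}-\binom{n}{n+1}
=\binom{n}{\ceil*{\frac{n}{2}}},
$$
the middle equality being the telescoping of the sum and the last using $\binom{n}{n+1}=0$. Finally, the symmetry $\binom{n}{k}=\binom{n}{n-k}$ gives $\binom{n}{\ceil*{\frac{n}{2}}}=\binom{n}{n-\ceil*{\frac{n}{2}}}=\binom{n}{\floor*{\frac{n}{2}}}$, which is the claimed count. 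The case $n=0$ is the trivial path and the formula holds.

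There is essentially no hard step here: the only points requiring a word of care are (i) noting that a weakly-below path is automatically confined to endpoints of nonnegative signature, so that endpoints above the diagonal contribute nothing and \pref{prop:weaklyPath} covers all nonzero contributions, and (ii) verifying that the index ranges in the two halves of the telescoping sum match up so that only the boundary term $\binom{n}{\ceil*{n/2}}$ survives. An alternative, equally short, route would be to run the reflection argument of \pref{prop:weaklyPath} directly on paths of length $n$ without fixing the endpoint, but invoking the already-proved ballot formula and telescoping is the cleanest presentation.
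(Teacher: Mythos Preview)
Your proof is correct and is essentially the same as the paper's: both fix the endpoint, invoke the ballot-number count of \pref{prop:weaklyPath}, and telescope the resulting sum. The only cosmetic difference is that the paper starts the sum at $\floor*{\frac{n}{2}}$ rather than $\ceil*{\frac{n}{2}}$ (the extra term vanishes when $n$ is odd by symmetry of binomials), so it lands directly on $\binom{n}{\floor*{\frac{n}{2}}}$ without your final symmetry step.
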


\begin{proof}

Let $e(k)$ denote the number of such paths that end on $(k,n-k$).  Furthermore, note that $e(k) = 0$ for $k < \floor*{\frac{n}{2}}$ because $(k,n-k)$ lies above the line $y = x$ for $k <\floor*{\frac{n}{2}}$. 
For the others we have 
$$e(k) = \binom{n}{k} - \binom{n}{k+1}.$$
 It follows that
$$\sum_{k=\floor*{\frac{n}{2}}}^{n} e(k)$$
counts the number of northeastern lattice paths from $(0,0)$ to the line $x+y=n$ that lie in the half-plane $y \le x$.
The formula for $E(k)$ yields that
$$\sum_{k=\floor*{\frac{n}{2}}}^{n} e(k) = \binom{n}{\floor*{\frac{n}{2}}},$$
and the proposition follows.
\end{proof}

\begin{longtable}{rcl}
\caption{{\bf List of notations}}
\label{tab:notations}
\endfirsthead
\multicolumn{3}{c}%
{\tablename\ \thetable\ -- \textit{Continued from previous page}} \\ \\
\endhead
\\ \multicolumn{2}{r}{\textit{Continued on next page}} \\
\endfoot
\endlastfoot

$[n]$ &:& the set $\{1,2,\cdots, n\}$ \\
$\mathbb{R}$ &:& the set of real numbers \\
$\mathbb{R}_{\geq 0}$ &:& the set of non-negative real numbers \\
$\mathbb{N}$ &:& the set of non-negative integers\\
$|A|$ &:& the cardinality of a (finite) set $A$\\
$2^A$ &:& the power set of a set $A$\\
$\chi_S$ &:& the characteristic function of the set $S$
\\
$\epsilon$ &:& an element of $\{0,+,-\}$
\\
$\h$ &:& the split real form of the usual Cartan subalgebra of $\mathfrak{gl}_n$\\
$\h_s$ &:& the split real form of the usual Cartan subalgebra of $\mathfrak{sl}_n$\\
$\h^*$ &:& the linear dual of $\h$\\
$\sigma$ &:& the signature function $x_1+\cdots+x_n \in \h^*$\\
$E_0$ &:& the set of elements of $E$ of signature 0\\
$E_+$ &:& the set of elements of $E$ of positive signature\\
$E_-$ &:& the set of elements of $E$ of negative signature\\
$W$ &:& the  Weyl chamber in $\h$\\
$W_s$ &:& the Weyl chamber in $\h_s$\\
$V$ &:& the defining (vector) representation of $\mathfrak{gl}_n$ and $\mathfrak{sl}_n$\\
$\bigwedge^2$ &:& the antisymmetric representation of $\mathfrak{gl}_n$ and $\mathfrak{sl}_n$\\
& & \\
$\lambda^\perp$ &:& the vanishing locus in $\h$ of $\lambda \in \h^*$\\
$\lambda^+$ &:& the locus in $\h$ where $\lambda$ is non-negative\\
$\lambda^-$ &:& the locus in $\h$ where $\lambda$ is non-positive\\
$\weights$ &:& the set of weights of $V \oplus \bigwedge^2$\\
$\weights_0(S)$ &:& the set of elements of $\weights$ that vanish on $S$\\
$\weights_+(S)$ &:& \hspace{-5pt}\begin{tabular}{l}
the set of elements of $\weights$ that are non-negative\\
on $S$ but do not vanish on $S$
\end{tabular}\\
$\weights_-(S)$ &:& \hspace{-5pt}\begin{tabular}{l}
the set of elements of $\weights$ that are non-positive\\
on $S$ but do not vanish on $S$
\end{tabular}\\
\\
$\mathrm{I}(\mathfrak{g},{\bf R})$ &:& \hspace{-9pt} \begin{tabular}{l}
the hyperplane arrangement in the dual\\
Weyl chamber of $\mathfrak{g}$ given by weights of ${\bf R}$
\end{tabular}\\
$\mathfrak{C}$ &:& the set of chambers of $\glngeo$\\
$\mathfrak{C}_{\text{bi}}$ &:& the set of bi-signed chambers of $\glngeo$\\
$\mathfrak{C}_s$ &:& the set of chambers of $\slngeo$\\
$\mathfrak{F}$ &:& the set of faces of $\glngeo$\\
$\mathfrak{F}_{\text{null}}(n,k)$ &:& the set of null $k$-faces of $\glngeo$\\
$\mathfrak{F}_{\text{bi}}(n,k)$ &:& the set of bi-signed $k$-faces of $\glngeo$\\
$\mathfrak{F}_s$ &:& the set of faces of $\slngeo$\\
$\mathfrak{L}$ &:& the set of flats of $\glngeo$\\
$\mathfrak{L}_{\text{null}}(n,k)$ &:& the set of null $k$-flats of $\glngeo$\\
$\mathfrak{L}_{\text{bi}}(n,k)$ &:& the set of bi-signed $k$-flats of $\glngeo$\\
$\mathfrak{L}_{\text{bi},\text{unbal}}(n,k)$ &:& the set of bi-signed unbalanced $k$-flats of $\glngeo$\\
$\mathfrak{L}_s$ &:& the set of flats of $\slngeo$\\
\\
$\vecspan(\{v_1,\ldots,v_n\})$ &:& $\mathbb{R}_{\geq 0}\ v_1+\ldots+\mathbb{R}_{\geq 0}\  v_n$\\
$\erays(S)$ &:& the set of extreme rays of $\glngeo$ lying in $S$\\
$\ray{a,b}$ &:& the vector $(\underbrace{1,\ldots ,1}_{a}, \underbrace{0,\ldots ,0}_{n-a-b} , \underbrace{-1,\ldots ,-1}_{b}) \in \h$\\
$\ray{(a,b),(c,d)}$ &:& \hspace{-8pt}
\begin{tabular}{l}
the vector
$(\underbrace{\mu,\ldots,\mu}_a,\underbrace{\nu ,\ldots,\nu}_{c-a},\underbrace{0,\ldots,0}_{n-c-d},\underbrace{-\nu,\ldots,-\nu}_{d-b},\underbrace{-\mu,\ldots,-\mu}_b) \in \h_s,$\\
where $\mu=|b-a|+|d-c|$ and $\nu=|b-a|$
\end{tabular}\\
\\
$\ell(p)$ &:& the level $a+b$ of $p=(a,b)$\\
$\sigma(p)$ &:& the signature $a-b$ of $p=(a,b)$ \\
$\posetz_n$ &:& the poset of elements of $\mathbb{Z}_{\ge 0}^2$ with level in $[1,n]$\\
$\poset_n$ &:& the poset $\posetz_n \sqcup \{(0,0)\}$\\
$\poseti_n$ &:& the poset $\poset_n \sqcup \{\infty\}$\\
$\Chn_{\text{null}}(n,k)$ &:& the set of null $k$-chains in $\posetz_n$\\
$\Chn_{\text{bi}}(n,k)$ &:& the set of bi-signed $k$-chains in $\posetz_n$\\
$\Chn'(n,k,\sigma)$ &:& the set of $k$-chains of elements of signature at most $\sigma$ in $\posetz_n$\\
$\Chn(n,k,\sigma)$ &:& the set $\Chn'(n,k,\sigma)$ if $\sigma \ge 0$ and $\varnothing$ otherwise\\
$\Ens_{\text{null}}(n,k)$ &:& the set of null $k$-ensembles in $\posetz_n$\\
$\Ens_{\text{bi}}(n,k)$ &:& the set of bi-signed $k$-ensembles in $\posetz_n$\\
$\Ens_{\text{bi},\text{bal}}(n,k)$ &:& the set of bi-signed balanced $k$-ensembles in $\posetz_n$\\
$\Ens_{\text{bi},\text{unbal}}(n,k)$ &:& the set of bi-signed unbalanced $k$-ensembles in $\posetz_n$\\
$\Ens'(n,k,\sigma)$ &:& the set of $k$-ensembles of elements of signature at most $\sigma$ in $\posetz_n$\\
$\Ens(n,k,\sigma)$ &:& the set $\Ens'(n,k,\sigma)$ if $\sigma \ge 0$ and $\varnothing$ otherwise\\
\\
$\phi$ &:& the bijection from $\mathfrak{C}_s$ to $\mathfrak{C}$ defined in \pref{def:phiForChambers}\\
$\psi$ &:& the injection from $\mathfrak{F}_s$ to $\mathfrak{F}$ defined in \pref{def:psiForFaces}\\
$\tau$ &:& the injection from $\mathfrak{L}_s$ to $\mathfrak{L}$ defined in \pref{def:etaForFlats}\\
\\
$a(n,k,\sigma)$ &:& the size of $\Chn(n,k,\sigma)$\\
$b(n,k)$ &:& the number of $k$-faces of $\slngeo$\\
$c(n,k,\sigma)$ &:& the size of $\Ens(n,k,\sigma)$\\
$d(n,k)$ &:& the number of null $k$-faces of $\glngeo$\\
$f(n,k)$ &:& the number of $k$-flats of $\slngeo$\\
$g(n,k)$ &:& the number of $k$-faces of $\glngeo$\\
$h(n,k)$ &:& the number of $k$-flats of $\glngeo$\\
\\
$R$ &:& the formal power series ring $\mathbb{Q}\llbracket x,y\rrbracket$\\
$A_\sigma(x,y)$ &:& the generating function for $a(n,k,\sigma)$\\
$A(x,y,z)$ &:& the 3-variable generating function for $a(n,k,\sigma)$\\
$B(x,y)$ &:& the generating function for $b(n,k)$\\
$C_\sigma(x,y)$ &:& the generating function for $c(n,k,\sigma)$\\
$C(x,y,z)$ &:& the 3-variable generating function for $c(n,k,\sigma)$\\
$D(x,y)$ &:& the generating function for $d(n,k)$\\
$F(x,y)$ &:& the generating function for $f(n,k)$\\
$G(x,y)$ &:& the generating function for $g(n,k)$\\
$H(x,y)$ &:& the generating function for $h(n,k)$\\
\\
$\alpha$ &:& $1 - 2 x^2 + x^4 - 6 x^2 y + 2 x^4 y - 4 x^2 y^2 + x^4 y^2$ \\
$\beta$ &:& $1 - x + x^2 - x y - x^2 y - x^3 y + x^2 y^2 - x^3 y^2 + x^4 y^2$\\
$\gamma$ &:& $1 - 2 y - 2 x^2 y + y^2 + 8 x^2 y^2 + x^4 y^2 - 2 x^2 y^3 - 2 x^4 y^3 + x^4 y^4$\\
$\zeta$ &:& $1 + x^4 - 2 x^2 y - 2 x^4 y - 7 x^4 y^2 - 2 x^4 y^3 - 2 x^6 y^3 + x^4 y^4 + x^8 y^4$\\
$\eta$ &:& $2(x^2 + x^2 y + x^4 y + x^4 y^2)$
\end{longtable}

\newpage

\end{document}